\newcommand{\citealt}[1]{\citet{#1}}
\newcommand{\citet}[1]{\cite{#1}}
\newcommand{\citep}[1]{\cite{#1}}
\newcommand{\clint}[2]{[#1,#2]}
\newcommand{\beq}{\begin{equation}}
\newcommand{\eeq}{\end{equation}}
\newlength{\minipagewidth}
\newcommand{\bookbox}[1]{\small
\par\medskip\noindent
\framebox[\columnwidth]{
\begin{minipage}{\minipagewidth} {#1} \end{minipage} } \par\medskip }
\newcommand{\dsone}{\textrm{1\kern-0.25emI}} 
\newcommand{\ov}[1]{\overline{#1}}
\newcommand{\wt}[1]{\widetilde{#1}}
\newcommand{\B}[1]{\mathbb{#1}}
\DeclareMathOperator{\Tr}{Tr}
\DeclareMathOperator*{\ess}{ess}
\DeclareMathOperator{\Var}{\mathbb{V}ar}
\DeclareMathOperator{\sign}{sign}
\DeclareMathOperator{\Span}{span}
\numberwithin{equation}{section}
\newtheorem{thm}{Theorem} 
\newtheorem{prop}[thm]{Proposition}
\newtheorem{lemma}[thm]{Lemma}
\newtheorem{dfn}{Definition}[section]
\newtheorem{remark}{Remark}}
\newenvironment{proof}{{\sc Proof.}}{\ $\square$}
\titleformat{\section}[block]{\sc\center}{\thetitle.}{5pt}{}[]
\titlespacing{\section}{0pt}{*4.5}{*3}
\titleformat{\subsection}[runin]{\sc}{\thetitle.}{5pt}{}[.]
\titlespacing{\subsection}{0pt}{*3}{*2}
\titleformat{\subsubsection}[runin]{\it}{\thetitle.}{5pt}{}[.]
\titlespacing{\subsubsection}{0pt}{*2}{*2}
\newcommand{\myeq}[1]{{\rm (\ref{#1}, page \pageref{#1})}}
\begin{document}
\renewcommand{\sectionmark}[1]{\markboth{\thesection\ #1}{}}
\renewcommand{\subsectionmark}[1]{\markright{\thesubsection\ #1}}
\fancyhf{}
\fancyhead[RE]{\small\sc\nouppercase{\leftmark}}
\fancyhead[LO]{\small\sc\nouppercase{\rightmark}}
\fancyhead[LE,RO]{\thepage}
\fancyfoot[RO,LE]{\small\sc Olivier Catoni $\Rightarrow$ Jean-Yves Audibert}
\fancyfoot[LO,RE]{\small\sc\today}
\renewcommand{\footruleskip}{1pt}
\renewcommand{\footrulewidth}{0.4pt}
\newcommand{\mypoint}{\makebox[1ex][r]{.\:\hspace*{1ex}}}
\addtolength{\footskip}{11pt}
\pagestyle{plain}


\newcommand{\oo}{{\mathbb 1}}

\def\bmul{\begin{multline*}}
\def\begar{$$\begin{array}{lll}}
\def\endar{\end{array}$$}
\def\begarc{$}
\def\endarc{$ }
\def\bigbegar{\begin{eqnarray*}} 
\def\bigendar{\end{eqnarray*}}
\def\lbegar{$$\left\{ \begin{array}{lll}}
\def\rendar{\end{array} \right.$$}
\def\rendarp{\end{array} \right..$$}
\def\beglab{\begin{equation} \label}
\def\endlab{\end{equation}}
\newcommand\beglabc[1]{\begin{equation*} }
\def\endlabc{\end{equation*}}
\def\lbegarlab{\begin{equation} \left\{ \begin{array}{lll} \label}
\def\rendarlab{\end{array} \right. \end{equation}}
\def\rendarplab{\end{array} \right.. \end{equation}}

\newcommand\und[2]{\underset{#2}{#1}\;}
\newcommand\undc[2]{{#1}_{#2}\;}

\newcommand\wrt{\text{w.r.t.}} 

\newcommand\cst{\text{Cst}\,}
\newcommand\dsV{{\mathbb V}} 
\newcommand\eqdef{=}

\newcommand\vs[1]{\vspace*{#1cm}}
\newcommand\hs[1]{\hspace*{#1cm}}

\newcommand{\Np}{\N^+}

\newcommand{\AAalg}{\text{AA}}

\newcommand\integ[1]{\left\lfloor{#1}\right\rfloor}
\newcommand\integup[1]{\left\lceil{#1}\right\rceil}

\newcommand{\oX}{\overline{X}}
\newcommand{\oR}{\overline{R}}

\newcommand\A{\mathcal{A}}
\newcommand\cB{\mathcal{B}}
\newcommand\cC{\Theta}
\newcommand\calC{\mathcal{C}}
\newcommand\D{\mathcal{D}}
\newcommand\cD{\mathcal{D}}
\newcommand\E{\mathbb{E}}
\newcommand\cE{\mathcal{E}}
\newcommand\calE{\mathcal{E}}
\newcommand\cF{\mathcal{F}}
\newcommand\G{\mathcal{G}}
\newcommand\cH{\mathcal{H}}
\newcommand\I{\mathcal{I}}
\newcommand\J{\mathcal{J}}
\newcommand\K{\mathcal{K}}
\newcommand\cL{\mathcal{L}}
\newcommand\M{\mathcal{M}}
\newcommand\N{\mathbb{N}}
\newcommand\cN{\mathcal{N}}
\newcommand\cP{{\cal P}}
\newcommand\calP{{\cal P}}
\renewcommand\P{\mathbb{P}}
\newcommand\R{\mathbb{R}}
\newcommand\calR{\mathcal{R}}
\renewcommand\S{\mathcal{S}}
\newcommand\cS{\mathcal{S}}
\newcommand\cR{\mathcal{R}}
\newcommand\W{\mathcal{W}}
\newcommand\X{\mathcal{X}}
\newcommand\Y{\mathcal{Y}}
\newcommand\Z{\mathcal{Z}}

\newcommand{\tg}{\tilde{g}}
\newcommand{\tG}{\tilde{G}}

\newcommand\jyem{\em}

\newcommand{\Gmax}{G_{\textnormal{\text{max}}}}
\newcommand{\its}{(i_1,\dots,i_n)}
\newcommand{\size}{\mathcal S}
\newcommand{\tS}{s}

\newcommand\hdelta{\hat{\delta}}
\newcommand\hg{\hat{g}}
\newcommand\hmu{\hat{\mu}}
\newcommand\hth{\hat{\theta}}
\newcommand\tth{\tilde{\theta}}
\newcommand\hlam{\hat{\lam}}
\newcommand\lamerm{\hlam^{\textnormal{(erm)}}}

\newcommand\tf{\tilde{f}}
\newcommand\tlam{\tilde{\lam}}
\newcommand\tmu{\tilde{\mu}}

\newcommand\be{\beta}
\newcommand\ga{\gamma}
\newcommand\kap{\kappa}
\newcommand\lam{\lambda}

\newcommand\sint{{\textstyle \int}}
\newcommand\inth{\sint \pi(d\theta)}
\newcommand\inthj{\sint \pij(d\theta)}
\newcommand\inthp{\sint \pi(d\theta')}

\newcommand\gas{\ga^*}

\newcommand\hpi{\hat{\pi}}
\newcommand\pis{\pi^*}
\newcommand\tpi{\tilde{\pi}}

\newcommand\ovth{\ov{\theta}}
\newcommand\wth{\wt{\theta}}
\newcommand\wthj{\wt{\theta}_j}

\newcommand\eps{\varepsilon}
\newcommand\ve{\varepsilon}

\newcommand\logeps{\log(\eps^{-1})}
\newcommand\leps{\log^2(\eps^{-1})}

\newcommand{\bi}{\begin{itemize}}
\newcommand{\ei}{\end{itemize}}

\newcommand\ovR{\ov{R}}
\newcommand\lbeta{\lfloor \beta \rfloor} 

\newcommand\hhpi{\hat{\hat{\pi}}}
\newcommand\wwth{\wt{\wt{\theta}}}
\newcommand\pia{\pi^{(1)}}
\newcommand\pib{\pi^{(2)}}
\newcommand\pij{\pi^{(j)}}
\newcommand\tpia{\tilde{\pi}^{(1)}}
\newcommand\tpib{\tilde{\pi}^{(2)}}
\newcommand\tpij{\tilde{\pi}^{(j)}}
\newcommand\wtha{\wt{\theta}_1}
\newcommand\wthb{\wt{\theta}_2}
\newcommand\wths{\wt{\theta}_s}
\newcommand\wtht{\wt{\theta}_t}

\newcommand\cmin{c_{\min}}
\newcommand\cmax{c_{\max}}

\newcommand\Pn{P^{\otimes n}}

\newcommand\ra{\rightarrow}
\newcommand\Lra{\Leftrightarrow}
\newcommand\lra{\longrightarrow}

\newcommand\hatt{\hat{t}}
\newcommand\argmax{\textnormal{argmax}}
\newcommand\argmin{\textnormal{argmin}}

\newcommand\diag{\textnormal{Diag}}

\newcommand\cdd{\mathcal{D}'}

\newcommand\Fg{\cF^\#}

\newcommand\vp{\vec{g}}
\renewcommand\th{\theta}

\newcommand\vsp{\vspace{1cm}}
\newcommand\lhs{\text{l.h.s.}}
\newcommand\rhs{\text{r.h.s.}}

\newcommand\expe[2]{\undc{\E}{#1\sim#2}}
\newcommand\expec[2]{\undc{\E}{#1\sim#2}}
\newcommand\expecc[2]{\E_{#1}}
\newcommand\expecd[2]{\E_{#2}}

\newcommand{\bl}{\bar{\ell}_t}

\newcommand\hC{\hat{C}}
\newcommand\heta{\hat{\eta}}
\newcommand\hf{\hat{f}}
\newcommand\hh{\hat{h}}
\newcommand\hrho{\hat{\rho}}
\newcommand\hrc{\hat{\rho}_{\text{\bf C}}}

\newcommand\px{P_X}
\newcommand\etas{\eta^*}

\newcommand\br{\bar{r}}
\newcommand\chr{\check{r}}
\newcommand\bR{\bar{R}}
\newcommand\bV{\bar{V}}

\newcommand\lan{\langle}
\newcommand\ran{\rangle}

\newcommand\logepsg{\log(|\Cg|\eps^{-1})}

\newcommand\Pemp{\hat{\P}}

\newcommand\fracl[2]{{(#1)}/{#2}}
\newcommand\fracc[2]{{#1}/{#2}}
\newcommand\fracr[2]{{#1}/{(#2)}}
\newcommand\fracb[2]{{(#1)}/{(#2)}}

\newcommand\hfproj{\hf^{\textnormal{(proj)}}}
\newcommand\thproj{\hat{\th}^{\textnormal{(proj)}}}

\newcommand\hfols{\hg^{\textnormal{(ols)}}}
\newcommand\thfols{\tilde{f}^{\textnormal{(ols)}}}
\newcommand\thols{\hat{\th}^{\textnormal{(ols)}}}
\newcommand\therm{\hat{\th}^{\textnormal{(erm)}}}
\newcommand\hferm{\hf^{\textnormal{(erm)}}}
\newcommand\hgerm{\hg^{\textnormal{(erm)}}}
\newcommand\zols{\zeta^{\textnormal{(ols)}}}
\newcommand\hgstar{\hg^{\textnormal{(star)}}}

\newcommand\thrid{\tilde{\th}} 
\newcommand\frid{\tilde{g}} 
\newcommand\greg{g^{\textnormal{(reg)}}}
\newcommand\thrlam{\hth^{\textnormal{(ridge)}}} 
\newcommand\hfrlam{\hg^{\textnormal{(ridge)}}} 
\newcommand\thllam{\hth^{\textnormal{(lasso)}}} 
\newcommand\hfllam{\hf^{\textnormal{(lasso)}}} 

\newcommand\ggroup{g^{\textnormal{(group)}}}
\newcommand\hggroup{\hg^{\textnormal{(group)}}}

\newcommand\flin{f^*_{\textnormal{lin}}}
\newcommand\thlin{\th^{\textnormal{(lin)}}}
\newcommand\Flin{\mathcal{F}_{\textnormal{lin}}}

\newcommand\demi{\frac{1}{2}}
\newcommand\demic{\fracc{1}{2}}

\newcommand\substa[2]{\substack{#1\\#2}}
\newcommand\substac[2]{{#1\,;\,#2}}
\newcommand\tpsi{\tilde{\psi}}
\newcommand\tzeta{\tilde{\zeta}}
\newcommand\ta{\tilde{a}}
\newcommand\chis{\chi_\sigma}
\newcommand\tchi{\tilde{\chi}}
\newcommand\tchis{\tchi_\sigma}
\newcommand\psis{\psi_\sigma}

\newcommand\loc{{\textnormal{loc}}}
\newcommand\dsG{\mathbb {G}}

\newcommand\tA{\tilde{A}}
\newcommand\tL{\tilde{L}}

\newcommand\hL{\hat{L}}
\newcommand\hcE{\hat{\cE}}
\newcommand\hDe{\hat{\cE}}
\newcommand\cEb{\cE^{\sharp}}
\newcommand\La{L^{\flat}}
\newcommand\cEa{\cE^{\flat}}
\newcommand\Lb{L^{\sharp}}

\newcommand\Pa{P^{\flat}}
\newcommand\Pb{P^{\sharp}}

\newcommand\ela{\tilde{\ell}}
\newcommand\hpig{\hpi}

\newcommand\sigmb{\phi}
\newcommand{\logdeps}{\log(4d\eps^{-1})}
\newcommand{\logddeps}{\log(2d^2\eps^{-1})}

\newcommand{\piobs}{\bar{\pi}}
\newcommand\pijj{[\pi^{(j)}]_j}

\newcommand\ppac{with $\Pdn$-probability at least $1-\be$}
\newcommand\Ppac{With $\Pdn$-probability at least $1-\be$}
\newcommand\pac{with $\Pun$-probability at least $1-\be$}
\newcommand\Pac{With $\Pun$-probability at least $1-\be$}

\newcommand\Esn{{\E'}^n}
\newcommand\Pun{\P^n}
\newcommand\Pdn{\P^{2n}}

\newcommand\kxi{K\left(\frac{X_i-x}{h}\right)}
\newcommand\lpl{LP(l) }
\newcommand\barO{\bar{B}}
\newcommand\xih{\left(\frac{X_i-x}{h}\right)}
\newcommand\xh{\left(\frac{x'-x}{h}\right)}
\newcommand\calPS{\calP_\Sigma}
\newcommand\calFXY{{\G(\X;\Y)}}
\newcommand\calPSI{\calP_{\Sigma,\infty}}
\newcommand\calCS{\calC_\Sigma}

\newcommand\pbms{{\bf (MS)}}
\newcommand\pbc{{\bf (C)}}
\newcommand\pbl{{\bf (L)}}
\newcommand\gms{g^*_{\text{\bf MS}}}
\newcommand\gc{g^*_{\text{\bf C}}}
\newcommand\gl{g^*_{\text{\bf L}}}
\newcommand\rhoc{\rho^*_{\text{\bf C}}}



%
%
%
%
%
%

\renewcommand*\contentsname{$ $ \vs{-3}\\
\LARGE 
PAC-Bayesian aggregation \vspace{.2cm}\\
and multi-armed bandits
\vs{1}\\
{\large \textnormal{(Habilitation thesis)}}
\vs{1}\\
{\large\bf Jean-Yves AUDIBERT}
\vs{1}\\
\begin{flushleft}
{\large $ $} \vs{-1.8}
\end{flushleft}
}


\tableofcontents

\chapter{Introduction}

This document presents the research I have undertaken since the beginning of my PhD thesis.
The Laboratoire de Probabilités et Modèles Aléatoires of Université Paris 6 hosted my PhD (2001-2004). I was then recruited in the Centre d'Enseignement et de Recherche en Traitement de l'Information et Signal de l'Ecole Nationale des Ponts et Chaussées, which is now a common research laboratory with the Centre Scientifique et Technique du Bâtiment and part of 
the Laboratoire d'informatique Gaspard Monge de l'Université Paris Est.
Besides, since 2007, part of my research has been done within the Willow team of the Laboratoire d'Informatique de l'École Normale Superiéure.

My main research directions are statistical learning theory and machine learning techniques for computer vision. Machine learning is a research field positioned between statistics, computer science and applied mathematics. Its goal is to bring out theories and algorithms
to better understand and deal with complex systems for which no simple, accurate and easy-to-use model exists. It has a considerable impact on a wide variety of scientific domains, including
text analysis and indexation, financial market analysis, search engines, bioinformatics, 
speech recognition, robotics, industrial engineering...
The development of new sensors to acquire data, the increasing capacity of storage and computational power of computers have brought new perspectives to understand more and more complex systems from observations. In particular, Machine learning techniques are used in computer vision tasks that are unsolvable using classical methods (object detection, handwriting recognition, image segmentation and annotation).

The core problem in statistical learning can be formalized in the following way.
We observe $n$ input-output (or object-label) pairs: $Z_1=(X_1,Y_1),\dots,Z_n=(X_n,Y_n)$. A new input $X$ comes.
The goal is to predict its associated output $Y$.
The input is usually high dimensional and highly structured (such as a digital image). The output is simple: it is typically a real number or an element in a finite set (for instance, 'yes' or 'no' in the case of the detection of a specific object in the digital image). The usual probabilistic modelling is that the observed data (or training set) and the input-output pair $Z=(X,Y)$ are independent and identically distributed random variables coming from some unknown distribution $P$, and that, for various possible reasons, the output is not necessarily a deterministic function of the input.

The lack of quality of a prediction $y'$ when $y$ is the true output is measured by 
its loss, denoted $\ell(y,y')$.
Typical loss functions are the 0/1 loss: $\ell(y,y')=\oo_{y\neq y'}$ (the loss is one if  
and only if the prediction differs from the true output) and the 
square loss: $\ell(y,y') = (y-y')^2$. The latter loss is more appropriate than the 0/1 loss 
when the output space is the real line, a small difference between the prediction and the true output generating a small loss.
The target of learning is to infer from the training set a function $g$ from the input space to 
the output space 
having a low risk, also called expected loss or generalization error:
  $$R(g)=\E_{(X,Y)\sim P} \ell\big(Y,g(X)\big).$$

Statistical learning theory aims at answering the following questions.
What are the conditions for (asymptotic) consistency of the learning scheme?
What can we learn from a finite sample of observations? Under which circumstances, can we expect the risk to be close to the risk of the best prediction function, that is the one 
we could have proposed had we a full knowledge 
of the probability distribution $P$ underlying the observations?
How accurate is the prediction built on the training set? For instance, how low is its risk?
What kind of guarantees can we ensure? Both theoretical and empirical (i.e., computable from the observed data) upper bounds on the risk or the excess risk are of interest. 
Can we understand/explain the success of some prediction schemes?
Besides, we also expect that a new theoretical analysis leads to the design of new prediction methods.

This document details my contributions to these issues, and specifically to:
\bi
\item the PAC-Bayesian analysis of statistical learning,
\item the three aggregation problems: given $d$ functions, how to predict as well as
  \bi
  \item the best of these $d$ functions (model selection type aggregation),
  \item the best convex combination of these $d$ functions,
  \item the best linear combination of these $d$ functions,
  \ei
\item the multi-armed bandit problems,
\ei
Being in computer science departments where image processing and computer vision are core research directions leads me to address a wide variety of topics in which machine learning plays a key role. It includes object recognition, content-based image retrieval, image segmentation and image annotation and vanishing point detection. This document will not detail my contributions on these topics. My related publications can be found on my webpage.

\chapter{The PAC-Bayesian analysis of statistical learning} \label{chap:PB}

\section{Introduction}

The natural target of learning is to predict as well as 
if we had known the distribution generating the input-output pairs.
In other words, we want to infer from the training set 
$Z_1^n =\{(X_1,Y_1),\dots,(X_n,Y_n)\}$
a prediction function $\hg$
whose risk is close to the risk of the Bayes predictor $g^* = \argmin_{g} R(g)$,
where the minimum is taken among all functions $g:\X\ra\Y$ (such that $\ell\big(Y,g(X)\big)$ is integrable). The goal is therefore to propose a good estimator $\hg$ of $g^*$,
where the quality of the estimator is not in terms of the functional proximity 
of the prediction functions but in terms of their risk similarity.

Since the distribution $P$ of the input-output pair is unknown, the risk is not observed,
and numerous core learning procedures have recourse to
its empirical counterpart: 
   $$r(g) = \frac1n \sum_{i=1}^n \ell(Y_i,g(X_i)),$$
either by minimizing it on a restricted class of functions, or almost equivalently
by minimizing a linear combination of this empirical risk and a penalty (or regularization) term whose role is to favor ``simple'' functions. The term ``simple'' typically refers
to some a priori of the statistician, and is often linked to either some smoothness 
property or some sparsity of the prediction function.
The traditional approach to statistical learning theory 
relies on the study of $R(\hg)-r(\hg)$.

In the PAC-Bayesian approach, randomized prediction schemes are considered.
Let $\M$ denote the set of distributions on the set $\G(\X;\Y)$ of functions from the input space to the output space. 
A distribution $\hrho$ in $\M$ is chosen from the data, and the quantity of interest is
$R(g)$, where $g$ is drawn from the distribution $\hrho$.
This risk is thus doubly stochastic: it depends on the realization
of the training set (which is a realization of the $n$-fold product distribution $\Pn$ of $P$)
and on the realization of the (posterior) distribution $\hrho$.

Basically, one can argue that the difference between the approaches seems minor: 
the understanding of $\E_{g\sim\hrho} R(g)$ for any distribution
$\hrho$ implies the understanding of $R(\hg)$ (simply by considering 
the Dirac distribution at $\hg$), and that the converse is also true
(to the extent that if $R(\hg) \le B(\hg)$ holds for any estimator $\hg$ and some real-valued function $B$, then $\E_{g\sim\hrho} R(g) \le \E_{g\sim\hrho} B(g)$ also holds
for any posterior distribution $\hrho$).

The main difference lies rather in the very starting point of
the PAC-Bayesian analysis. To detail it, let me
introduce a distribution $\pi\in\M$, that is non-random (as opposed to $\hrho$, which depends on the sample).
The central argument is (based on) the following property of the Kullback-Leibler (KL) divergence:
for any bounded function $h:\G(\X;\Y)\ra\R$,
we have
  \begin{equation} \label{eq:0}
  \sup_{\rho \in \M} \big\{ \E_{g\sim \rho} h(g) -K(\rho,\pi) \big\} 
    = \log \E_{g\sim \pi} e^{h(g)},
  \end{equation}
where $e$ denotes the exponential number, and $K(\rho,\pi)$ is the KL divergence between
the distributions $\rho$ and~$\pi$: $K(\rho,\pi)=\E_{g\sim\rho} \log\big(\frac{\rho}\pi(g)\big)$ if $\rho$
admits a density with respect to $\pi$, denoted $\frac{\rho}\pi$, and $K(\rho,\pi)=+\infty$ otherwise\footnote{See
Appendix \ref{app:kl} for a summary of the properties of the KL divergence.}.
To control the difference $\E_{g\sim\hrho} R(g) - \E_{g\sim\hrho} r(g)$,
putting aside integrability issues, one essentially uses: for any $\lam>0$,
  \begin{align} 
  \E_{Z_1^n \sim\Pn} e^{\lam [\E_{g\sim\hrho} R(g) - \E_{g\sim\hrho} r(g)] - K(\hrho,\pi)}
  & \le \E_{Z_1^n\sim\Pn} e^{\sup_{\rho\in\M} \lam [\E_{g\sim\rho} R(g) - \E_{g\sim\rho} r(g)] 
    - K(\rho,\pi)} \notag \\
  & = \E_{Z_1^n\sim\Pn} \E_{g\sim\pi} e^{\lam [R(g) - r(g)]} \notag \\
  & = \E_{g\sim\pi} \E_{Z_1^n\sim\Pn} e^{\lam [R(g) - r(g)]} \notag \\
  & = \E_{g\sim\pi} \bigg(\E_{(X,Y)\sim P} e^{\frac{\lam}n [R(g) - \ell(Y,g(X))]} \bigg)^n.
  \end{align}
A first consequence is that PAC-Bayes bounds are not (directly) useful
for posterior distributions with $K(\hrho,\pi)=+\infty$: this is in particular the case
when $\hrho$ is a Dirac distribution and $\pi$ assigns no probability mass to single functions.
So classical results of the standard approach does not derive from the PAC Bayesian approach.
On the other hand, the apparition of the KL term shows that the PAC-Bayesian analysis fundamentally differs from the simple analysis given in the previous paragraph. 

To illustrate this last point, consider the case of a prior distribution putting mass on a finite set $\G\subset\G(\X;\Y)$ of functions. For simplicity, consider bounded losses, say $0\le \ell(y,y')\le 1$ for any $y,y'\in\Y$.
By using Hoeffding's inequality and a weighted union bound, 
one gets that for any $\eps>0$, with probability at least $1-\eps$, we have
for any $g\in\G$
  $$
  R(g)-r(g) \le \sqrt{\frac{\log(\pi^{-1}(g)\eps^{-1})}{2n}},
  $$
hence for any distribution $\rho$ such that $\rho(\G)=1$,
  \begin{align}
  \E_{g\sim\rho} R(g)-\E_{g\sim\rho} r(g) & \le \E_{g\sim\rho} \sqrt{\frac{\log(\pi^{-1}(g)\eps^{-1})}{2n}} \notag\\
  & \le \sqrt{\frac{K(\rho,\pi)+H(\rho)+\log(\eps^{-1})}{2n}},\label{eq:basic}
  \end{align}
where the second inequality uses Jensen's inequality and Shannon's entropy: 
$H(\rho)=-\sum_{g\in\G} \rho(g) \log\rho(g).$
This is to be compared to the first PAC-Bayesian bound from the pioneering work of McAllester \cite{McA99}, which
states that with probability at least $1-\eps$, for any distribution $\rho\in\M$, we have
  \begin{align*}
  \E_{g\sim\rho} R(g)-\E_{g\sim\rho} r(g) \le \sqrt{\frac{K(\rho,\pi)+\log(n)+2+\log(\eps^{-1})}{2n-1}}.
  \end{align*}
The main difference is that the Shannon entropy has been replaced with a $\log n$ term.
In fact, the latter bound is not restricted to prior distributions putting mass on a finite set of
functions: it is valid for any distribution $\pi$. On the contrary, the basic argument leading to \eqref{eq:basic} does not extend to continuous set of functions because of the Shannon's entropy term (for $\rho$ putting masses on a continuous set of functions, this term diverges).

The previous discussion has shown the originality of the PAC-Bayesian analysis.
However it does not clearly demonstrate its usefulness.
Several works in the last decade have shown that the approach is indeed useful,
and that PAC-Bayesian bounds lead to tight bounds, which are often representative of the risk behaviour even for relatively small training sets
(see e.g. \cite{Lan03,McA03b,Lac09} for margin-based bounds from Gaussian prior distributions,
\cite{Lac07} for an Adaboost setting, that is majority vote of weak learners, \cite{Sel08} in a clustering setting, \cite[Chap.2]{Aud04},\cite{LavMar07} for compression schemes,
\cite{Dal08,Dal09} for PAC bounds with sparsity-inducing prior distributions).

My contributions to the PAC-Bayesian approach are
the use of relative PAC-Bayesian bounds to design estimators with minimax rates
(Section \ref{sec:rel}), 
the combination of the PAC-Bayesian argument with metric and (generic) chaining arguments
(Section \ref{sec:combin}), 
the use of PAC-Bayesian bounds to propose new estimators and minimax bounds under weak assumptions for the aggregation problems (Chapter \ref{chap:agg}).
Before detailing them, I give in the next section a global picture of PAC-Bayesian bounds,
with a particular emphasis on the relations between the
different works since they have not been underlined so far in the literature.

\section{PAC-Bayesian bounds} \label{sec:PBB}


We consider that the losses are between $0$ and $1$, unless otherwise stated. 
The symbol $C$ will be used to denote a constant that may differ from line to line.
The bounds stated here are the original ones, possibly up to minor improvements.
Most of them rely on a different use of the duality formula \eqref{eq:0}
and the Markov inequality, which allows to prove a Probably Approximatively Correct (PAC) bound
from the control of the Laplace transform of an appropriate random variable: precisely, if a
real-valued random variable $V$ is such that $\E e^V \le 1$, then 
for any $\eps>0$, with probability at least $1-\eps$,
  $V\le \log(\eps^{-1}).$

\subsection{Non localized PAC-Bayesian bounds}

McAllester's first bound states that for any $\eps>0$, with probability at least $1-\eps$, for any $\rho\in\M$, we have
  \begin{align} \label{eq:McA}\tag{McA}
  \E_{g\sim\rho} R(g)-\E_{g\sim\rho} r(g) \le \sqrt{\frac{K(\rho,\pi)+\log(2n)+\log(\eps^{-1})}{2n-1}}.
  \end{align}
In \cite{LSM01,See02}, Seeger has proposed a simplified proof and improved the bound when the losses take only two values $0$ or $1$ (classification losses). The result is that with probability at least $1-\eps$, for any $\rho\in\M$, we have
  \begin{align} \label{eq:S}\tag{S}
  K(\E_{g\sim\rho} r(g),\E_{g\sim\rho} R(g)) \le \frac{K(\rho,\pi)+\log(2\sqrt{n}\eps^{-1})}{n}.
  \end{align}
where, with a slight abuse of notation, $K(\E_{g\sim\rho} r(g),\E_{g\sim\rho} R(g))$ denotes
the KL divergence between the Bernoulli distributions of respective parameters $\E_{g\sim\rho} r(g)$
and $\E_{g\sim\rho} R(g)$. The concise proofs of \eqref{eq:McA} and \eqref{eq:S} are given in Appendices \ref{app:McA} and \ref{app:S}.

Since we have $\E_{g\sim\rho} R(g) - \E_{g\sim\rho} r(g) \le \sqrt{K\big( \E_{g\sim\rho} r(g) , \E_{g\sim\rho} R(g)\big)}$ (Pinsker's inequality), \eqref{eq:S} implies \eqref{eq:McA}.
Besides, when $\E_{g\sim\rho} R(g)$ is small, \eqref{eq:S} provides a much better bound than \eqref{eq:McA} 
since, from a cumbersome study of the function $t\mapsto K(\E_{g\sim\rho} r(g),\E_{g\sim\rho} r(g)+t)$, \eqref{eq:S}
implies 
  \begin{align} \label{eq:dS}\tag{S'}
  \big| \E_{g\sim\rho} R(g) - \E_{g\sim\rho} r(g) \big| \le \sqrt{\frac{2\E_{g\sim\rho} r(g) [1-\E_{g\sim\rho} r(g)] \K}{n}}+\frac{4\K}{3n},
  \end{align} 
  with $\K=K(\rho,\pi) + \log(2\sqrt{n}\eps^{-1}).$
In particular, when the empirical risk of the randomized estimator is zero, this last bound is of $1/n$ order, while
\eqref{eq:McA} only gives a $1/\sqrt{n}$ order.

Still in the classification setting, Catoni \cite{Cat03b} proposed a different bound:
for any $\eps>0$ and $\lam>0$ with $\frac{\lam}{n}\Psi(\frac{\lam}n)<1$, with probability at least $1-\eps$, for any $\rho\in\M$,
  \begin{align} \label{eq:C03}\tag{C1}
  \E_{g\sim\rho} R(g) \le \frac{\E_{g\sim\rho} r(g)}{1-\frac{\lam}{n}\Psi(\frac{\lam}n)} + \frac{K(\rho,\pi)+ \log(\eps^{-1})}{\lam [1-\frac{\lam}{n}\Psi(\frac{\lam}n)]},
  \end{align}
where $$\Psi(t) = \frac{e^t-1-t}{t^2}.$$
Since typical values of $\lam$ (the ones which minimizes the previous right-hand side) are in $[C\sqrt{n};Cn]$ and since $\Psi(\lam/n) \approx 1/2$ for $\lam/n$ close to $0$, we roughly have
  $$
  \E_{g\sim\rho} R(g) \lessapprox \E_{g\sim\rho} r(g) + \frac{\lam}{2n} \E_{g\sim\rho} r(g) + \frac{K(\rho,\pi)+ \log(\eps^{-1})}{\lam},
  $$
which gives by choosing $\lam$ optimally\footnote{Technically speaking, we are not allowed to choose $\lam$ depending on $\rho$, but using a union bound argument, the argument can be made rigorous at the price that the $\logeps$ term becomes   
$\log(C\log(C n) \eps^{-1})$.}
  \begin{align} \label{eq:C1p}\tag{C1'}
  \E_{g\sim\rho} R(g) \lessapprox \E_{g\sim\rho} r(g) + \sqrt{2\E_{g\sim\rho} r(g) \frac{K(\rho,\pi)+ \log(\eps^{-1})}{n}}.
  \end{align}
My PhD thesis used in variant ways the following Bernstein's type PAC-Bayesian bound, which is a direct extension of 
the argument giving \eqref{eq:C03}: for any $\lam>0$, with probability at least $1-\eps$, for any $\rho\in\M$,
  \begin{align} 
  \E_{g\sim\rho} R(g) \le \E_{g\sim\rho} r(g) + \frac{\lam}{n}\Psi\bigg(\frac{\lam}n\bigg) 
    \E_{g\sim\rho} & \Var_{Z} \ell(Y,g(X)) \notag\\
  & + \frac{K(\rho,\pi)+ \log(\eps^{-1})}{\lam}. \label{eq:A}\tag{A}
  \end{align}
The basic PAC-Bayesian bound used in Zhang's works \cite{Zha06,Zha07} does not require any boundedness assumption of the loss function and states that for any $\lam>0$,
with probability at least $1-\eps$, for any $\rho\in\M$,
  \begin{align} \label{eq:Z}\tag{Z}
  -\frac{n}\lam \E_{g\sim\rho} \log \E_{Z} e^{-\frac\lam{n} \ell(Y,g(X))} \le \E_{g\sim\rho} r(g) 
      + \frac{K(\rho,\pi)+ \log(\eps^{-1})}{\lam}.
  \end{align} 
Catoni's book \cite{Cat07} concentrates on the classification task.
Instead of using 
  $$
  \log \E e^{- \frac{\lam}{n}\ell(Y,g(X))} \le - \frac{\lam}{n} R(g) + \frac{\lam^2}{n^2} \Psi\Big(\frac{\lam}{n} \Big) R(g),
  $$
which would give \eqref{eq:C03} from \eqref{eq:Z}, Catoni used the equality
  $$
  \log \E e^{- \frac{\lam}{n}\ell(Y,g(X))} = \log \big( 1 - R(g) (1-e^{-\frac\lam{n}})\big),
  $$
and obtain that with probability at least $1-\eps$, for any $\rho\in\M$,
  \begin{align} \label{eq:C07}\tag{C2}
  -\frac{n}\lam \log[1-(1-e^{-\frac{\lam}{n}}) \E_{g\sim\rho} R(g)] \le \E_{g\sim\rho} r(g) 
      + \frac{K(\rho,\pi)+ \log(\eps^{-1})}{\lam}.
  \end{align} 

To compare Seeger's bound with the bounds having the free parameter $\lam$ in the classification framework,  
one needs to apply the same kind of analysis which leads from \eqref{eq:C03} to \eqref{eq:C1p}.
As a result, both \eqref{eq:A} and \eqref{eq:Z} lead to 
  \begin{align} \label{eq:dZ}\tag{Z'}
  \E_{g\sim\rho} R(g) \lessapprox\E_{g\sim\rho} r(g) + \sqrt{2\E_{g\sim\rho} \big( R(g) [1-R(g)] \big) \frac{K(\rho,\pi)+ \log(\eps^{-1})}{n}},
  \end{align} 
\eqref{eq:C03} leads to
  $$\E_{g\sim\rho} R(g) \lessapprox \E_{g\sim\rho} r(g) + \sqrt{2\E_{g\sim\rho} R(g) \frac{K(\rho,\pi)+ \log(\eps^{-1})}{n}},$$
\eqref{eq:S} gives, once more, from studying the function $t\mapsto K(\E_{g\sim\rho} r(g),\E_{g\sim\rho} r(g)+t)$,
$$\E_{g\sim\rho} R(g) \le \E_{g\sim\rho} r(g) + \sqrt{\frac{2\E_{g\sim\rho} R(g) [1-\E_{g\sim\rho} R(g)] \K}{n}}+\frac{2\K}{3n},$$
with $\K=K(\rho,\pi) + \log(2\sqrt{n}\eps^{-1}),$ and finally \eqref{eq:C07} leads to
  $$
  \E_{g\sim\rho} R(g) \lessapprox\E_{g\sim\rho} r(g) + \sqrt{2\big( \E_{g\sim\rho} R(g) [1-\E_{g\sim\rho} R(g)] \big) \frac{K(\rho,\pi)+ \log(\eps^{-1})}{n}}.
  $$
Although we have $\E_{g\sim\rho} R(g) [1-\E_{g\sim\rho} R(g)] \ge \E_{g\sim\rho} R(g) [1-R(g)]$ (from Jensen's inequality),
the two quantities will be of the same order, and also of the order of $\E_{g\sim\rho} R(g)$ 
for the typical posterior distributions, i.e., the ones  
which concentrate on low risk functions. As a consequence, in the classification setting, all these bounds are
similar (even if this similarity has not been exhibited so far in the literature).
 
In fact, the works which lead to \eqref{eq:C03}, \eqref{eq:A}, \eqref{eq:Z}
and \eqref{eq:C07} rather differ in the way these bounds are refined and used.
The main common refinement is the PAC-Bayesian localization,
which can be seen as a way to reduce the complexity term and the influence of the particular choice of 
the prior distribution $\pi$. 
Before detailing the localization idea, let us see how to design an estimator from PAC-Bayesian bounds.

\subsection{From PAC-Bayesian bounds to estimators} \label{sec:bound2est}

The standard way to exploit an upper bound on the risk of any estimators is 
to minimize it in order to get the estimator having the best guarantee in view of the bound.
This will be achievable if the bound is empirical, that is computable from the observations.
Bounds \eqref{eq:McA}, \eqref{eq:dS}, \eqref{eq:C03}, \eqref{eq:A} and \eqref{eq:C07} are of this type 
(unlike \eqref{eq:dZ} for instance).

When minimizing PAC-Bayesian bounds, one gets a posterior distribution corresponding to a randomized estimator.
The minimizer can be written in the following form
  $$
  \pi_h(dg) = \frac{e^{h(g)}}{\E_{g'\sim\pi} e^{h(g')} } \cdot \pi(dg)
  $$
for some appropriate function $h:\G\ra\R$.
This is essentially due to the equality
$\argmin_{\rho\in\M} \big\{ - \E_{g\sim\rho} h(g) + K(\rho,\pi) \big\} = \pi_h$.

Let us now detail the case of McAllester's bound as it is representative of what can be derived 
from the other PAC-Bayesian bounds.
Let 
$B(\rho) = \E_{g\sim\rho} r(g) + \sqrt{\frac{K(\rho,\pi) + \log(4n\eps^{-1})}{2n-1}}.$
McAllester's bound implies that for any distribution $\rho\in\M$, we have $\E_{g\sim\rho} R(g) \le B(\rho).$
From this, one can deduce that there exists 
$\hlam \in [\lam_1,\lam_2]$ s.t. $B(\pi_{-\hlam r}) = \min_{\rho} B(\rho)$
with
  $\lam_1= \sqrt{4(2n-1)\log(4n\eps^{-1})}$ and $\lam_2 = 2 \lam_1 + 4(2n-1)$.
Besides, the parameter $\hlam$ which can be called inverse temperature parameter by analogy with
the Boltzmann distribution in statistical mechanics satisfies 
$$\hlam = \sqrt{4(2n-1)[K(\pi_{-\hlam r},\pi)+\log(4n\eps^{-1})]}$$
and
$\hlam \in \und{\argmin}{\lam>0} \Big\{ -\frac{1}{\lam}\log \E_{g\sim\pi} e^{ -\lam r(g)} 
      + \frac{\lam}{4(2n-1)}+\frac{\log(4n\eps^{-1})}{\lam} \Big\}.$

The posterior distribution is thus a distribution which concentrates on low empirical risk functions, but
is still a bit diffuse since to avoid a high KL complexity term, the optimal parameter $\hlam$ cannot be larger
than $Cn$. The next section shows how to reduce the complexity term by tuning the prior distribution.

\subsection{Localized PAC-Bayesian bounds} \label{sec:loc}

Without prior knowledge, one may want to choose a prior distribution $\pi$ which is rather ``flat''.
Now for a particular choice of posterior distribution $\hrho$, from the equality
  $\E_{Z_1^n} K(\hrho,\pi) = \E_{Z_1^n} K(\hrho,\E_{Z_1^n}[\hrho])+K(\E_{Z_1^n}[\hrho],\pi),$
the prior distribution (recall that it is not allowed to depend on the training set) which minimize the expectation 
of the KL divergence is $\E_{Z_1^n} \hrho$, where the expectation is taken with respect to the training set distribution\footnote{
As noted by Catoni, $\E_{Z_1^n} K(\hrho,\E_{Z_1^n}[\hrho])$ is exactly the
mutual information of the random variable $\hg$ drawn according to the posterior distribution $\hrho$ and the training sample $Z_1^n$.
This makes a nice connexion between the learning rate of a randomized estimator and information theory.}.
Now using such a prior distribution does not lead to empirical bound. To alleviate this issue
and since the typical posterior distributions have the form $\pi_{-\lam r}$ for some $\lam>0$ (as seen in the previous section),
one may consider the prior distribution $\pi_{-\beta R}$ for some $\be >0$, use the expansion
  $$
  K(\rho,\pi_{-\be R})=K(\rho,\pi)+ \beta \E_{g\sim\rho} R(g)+\log\big( \E_{g\sim\pi} e^{-\be R(g)}\big),
  $$ 
and obtain an empirical bound by controlling the last non-observable term by its empirical version.

This leads to the following localized PAC-Bayesian bound which was obtained by Catoni in \cite{Cat03b}:
for any $\eps>0$, $\lam>0$ and $\xi\ge0$ such that $\frac{(1+\xi)\lam}{(1-\xi)n}\Psi(\frac{\lam}n)<1$, with probability at least $1-\eps$, for any $\rho\in\M$, we have
  \begin{align} \label{eq:C3}\tag{C3}
  \E_{g\sim\rho} R(g) \le \frac{\E_{g\sim\rho} r(g)}{1-\frac{(1+\xi)\lam}{(1-\xi)n}\Psi(\frac{\lam}n)} + 
    \frac{K(\rho,\pi_{-\xi\lam r})+ (1+\xi)\log(2\eps^{-1})}{(1-\xi)\lam [1-\frac{(1+\xi)\lam}{(1-\xi)n}\Psi(\frac{\lam}n)]}.
  \end{align}
The parameter $\xi$ characterizes the localization. For $\xi=0$, we recover \eqref{eq:C03} (up to a minor difference on the confidence level).
For $\xi>0$, the KL term is (potentially much) smaller when considering the posterior distribution $\pi_{-\gamma r}$ with $\gamma\ge \xi\lam$.

We use similar ideas in the case of the comparison of the risks of two randomized estimators as we will see in Section \ref{sec:rel}.
Zhang \cite{Zha06,Zha07} localizes by using $\pi_{h}$ with $h(g)=\alpha \log \E_Z e^{-\lam \ell(Y,g(X))}$ instead
of $\pi_{-\be R}$. The argument there is slightly different and does not lead to empirical bounds on
the risk of the randomized estimator with posterior distribution of the form $\pi_{-\lam r}$.
Nevertheless, it was sufficient to prove tight theoretical bounds for this estimator in different contexts: density estimation,
classification and least squares regression.

Ambroladze, Parrado-Hern{\'a}ndez and Shawe-Taylor \cite{Amb06} proposed a different way to reduce the influence
of a ``flat'' prior distribution. Their localization scheme is based on cutting the training set 
into two parts and learn from the first part the prior distribution to be used on the second part of the training set.
Catoni \cite{Cat07} uses $\pi_{-n\log[1+(e^{\fracc{\be}n}-1)R]}$ to obtain tighter localized bounds in the classification setting.
Alquier \cite{Alq06,Alq08} uses $\pi_{-\be R}$ for general unbounded losses with application to regression and density estimation.

\section{Comparison of the risk of two randomized estimators} \label{sec:rel}

\subsection{Relative PAC-Bayesian bounds}

My PhD (its second chapter) used relative bounds which compare the risk of two randomized estimators
to design new (randomized) estimators. 
The rationale behind developing this type of bounds is that the fluctuations
of $R(g_2)-R(g_1)+r(g_1)-r(g_2)$ can be much smaller than the fluctuations of $R(g_2)-r(g_2)$, and this can lead
to significantly tighter bounds.
Technically speaking, relative bounds are deduced from standard bounds by replacing $\G$ by $\G\times\G$, taking the loss $\ell(y,(g_1,g_2)(x))=\ell(y,g_2(x))-\ell(y,g_1(x))$ (with a slight abuse of notation) and by considering product distributions on $\G\times\G$, i.e. $\rho = \rho_1\otimes\rho_2$
with $\rho_1$ and $\rho_2$ distributions on $\G(\X;\Y)$.
This standard argument transforms \eqref{eq:A} into the following assertion holding for losses taking values in $[0,1]$. For any $\lam>0$ and (prior) distributions $\pi_1$ and $\pi_2$ in $\M$, 
with probability at least $1-\eps$, for any $\rho_1\in\M$ and $\rho_2\in\M$,
  \begin{align} 
  \E_{g_2\sim\rho_2} R(g_2) - & \E_{g_1\sim\rho_1} R(g_1) \le \E_{g_2\sim\rho_2} r(g_2) - \E_{g_1\sim\rho_1} r(g_1) \notag\\
    & + \frac{\lam}{n}\Psi\bigg(\frac{\lam}n\bigg) 
    \E_{g_2\sim\rho_2} \E_{g_1\sim\rho_1} \E_{Z} \big([ \ell(Y,g_1(X)) - \ell(Y,g_2(X)) ]^2 \big)\notag\\
  & + \frac{K(\rho_2,\pi_2)+ K(\rho_1,\pi_1)+ \log(\eps^{-1})}{\lam}. \label{eq:relnonloc}
  \end{align} 
Getting empirical relative bounds calls for controlling the variance term. This is achieved 
by plugging the following inequality, which holds with probability at least $1-\eps$, into the previous one
  \begin{align*} 
  \E_{g_2\sim\rho_2} & \E_{g_1\sim\rho_1} \E_{Z} [ \ell(Y,g_1(X)) - \ell(Y,g_2(X)) ]^2 \\
  & \le \bigg(1+\frac{\lam}{2n}\bigg)\E_{g_2\sim\rho_2} \E_{g_1\sim\rho_1} \frac1n\sum_{i=1}^n [ \ell(Y_i,g_1(X_i)) - \ell(Y_i,g_2(X_i)) ]^2 \\
  & \qquad\qquad\qquad + \bigg(1+\frac{\lam}{2n}\bigg)^2\frac{K(\rho_2,\pi_2)+ K(\rho_1,\pi_1)+ \log(\eps^{-1})}{\lam}.  
  \end{align*} 
Now, the localization argument described in Section \ref{sec:loc} no longer works as it would 
change the left-hand side of \eqref{eq:relnonloc} into
$(1+\xi_2)\E_{g_2\sim\rho_2} R(g_2) - (1-\xi_1) \E_{g_1\sim\rho_1} R(g_1)$ for some positive constants $\xi_1$ and $\xi_2$,
and would therefore fail to produce relative bounds.
To solve this issue, I proved the following uniform empirical upper bound on the KL divergence with respect to a localized prior:
for any $\epsilon > 0$ and $0 < \lam \leq 0.19 \, n$, with probability at least $1-2\eps$,
for any $\rho \in \M$, we have
	\begin{align*}
	K\big( \rho , \pi_{-\lam R} \big) & \le 2 K\big( \rho , \pi_{-\lam r} \big) \\
		& \ + 2 \log \E_{g_1\sim\pi_{-\lam r}} e^{\frac{4 \lam^2}{ n} \E_{g_2\sim\rho} \frac1n \sum_{i=1}^n [ \ell(Y_i,g_1(X_i)) - \ell(Y_i,g_2(X_i)) ]^2}
		+ \logeps ,
	\end{align*}
and get the following localized empirical PAC-Bayesian relative bound: for any $\lam>0$ and $0 < \lam_1,\lam_2 \leq 0.19 \, n$, 
with probability at least $1-\eps$,
  \begin{align} 
  \E_{g_2\sim\rho_2} R(g_2) - & \E_{g_1\sim\rho_1} R(g_1) \le \E_{g_2\sim\rho_2} r(g_2) - \E_{g_1\sim\rho_1} r(g_1) \notag\\
  & + a(\lam) \E_{g_2\sim\rho_2} \E_{g_1\sim\rho_1} \frac1n \sum_{i=1}^n [ \ell(Y_i,g_1(X_i)) - \ell(Y_i,g_2(X_i)) ]^2 \notag\\
  & + b(\lam) \bigg[K(\rho_2,\pi_{-\lam_2 r})+ K(\rho_1,\pi_{-\lam_1 r}) + 2\log(6\eps^{-1})\notag\\
  & \qquad\quad + \log \E_{g_1\sim\pi_{-\lam_2 r}} e^{\frac{4 \lam_2^2}{n} \E_{g_2\sim\rho_2} \frac1n \sum_{i=1}^n [ \ell(Y_i,g_1(X_i)) - \ell(Y_i,g_2(X_i)) ]^2}\notag\\
  & \qquad\quad + \log \E_{g_1\sim\pi_{-\lam_1 r}} e^{\frac{4 \lam_1^2}{ n} \E_{g_2\sim\rho_1} \frac1n \sum_{i=1}^n [ \ell(Y_i,g_1(X_i)) - \ell(Y_i,g_2(X_i)) ]^2}
    \bigg]. \label{eq:relloc}
  \end{align} 
with $a(\lam) = \frac{\lam}{n} \Psi(\frac{\lam}{n}) \big( 1 + \frac{ \lam }{ 2n } \big)$
and $b(\lam) = \frac{2}{\lam} \Big[ 1 + \frac{\lam}{n} \Psi(\frac{\lam}{n}) \big( 1 + \frac{ \lam }{ 2n } \big)^2 \Big].$
  
\subsection{From the empirical relative bound to the estimator}

In view of Section \ref{sec:bound2est}, it is natural to concentrate our effort on Gibbs estimators of the form $\pi_{-\lam r}$ for $\lam>0$.
Introduce for any $0 \le j \le \log n$ and $\eps>0$, 
	\begin{align*}
	\lam_j & = 0.19 \sqrt{n} e^{\frac{j}{2}}\\
	\calC( j ) & = \log \E_{g_1\sim\pi_{-\lam_j r}} e^{\frac{4 \lam_j^2}{ n} \E_{g_2\sim\pi_{-\lam_j r}} \frac1n \sum_{i=1}^n [ \ell(Y_i,g_1(X_i)) - \ell(Y_i,g_2(X_i)) ]^2}\\
	L & = \log[3\log^2(e n) \eps^{-1} ] 
	\end{align*}
and for any $0 \le i < j \le \log n$ and $\eps>0$, 
	\begin{align*}
	S( i , j ) = a(\lam_j) 	\E_{g_1\sim\pi_{-\lam_i r}} \E_{g_2\sim\pi_{-\lam_j r}} & \frac1n \sum_{i=1}^n [ \ell(Y_i,g_1(X_i)) - \ell(Y_i,g_2(X_i)) ]^2\\
		& + b(\lam_j) \big[ \calC( i ) + \calC( j ) + 2 L \big].
	\end{align*}
Inequality \eqref{eq:relloc} implies that with probability at least $1-\eps$, for any $0 \le i < j \le \log n$, we have
  \begin{align*}
  \E_{g_2\sim\pi_{-\lam_j r}} R(g_2) - 
    \E_{g_1\sim\pi_{-\lam_i r}} R(g_1) \le \E_{g_2\sim\pi_{-\lam_j r}} r(g_2) - \E_{g_1\sim\pi_{-\lam_i r}} r(g_1) 
    + S(i,j).
  \end{align*}
This leads me to consider in the chapter 2 of my PhD thesis the following choice of the temperature/complexity parameter in the classification setting.
%

{\bf Algorithm 1.}\label{alg1}
{\it Let $u(0) = 0$.
For any $k \ge 1$, define $\hat{\lam}_{k-1} = \lam_{u(k-1)}$ and $u(k)$ as the smallest integer $j \in ]u(k-1);\log n]$ such that
	$$\E_{g_2\sim\pi_{-\lam_j r}} r(g_2) 
	  - \E_{g_1\sim\pi_{-\hat{\lam}_{k-1} r}} r(g_1) + S\big( u(k-1) , j \big) \le 0.$$
Classify using a function drawn according to the posterior distribution 
associated with the last $u(k)$.}

This algorithm can be viewed in the following way: it ``ranks'' the estimator in the model
by increasing complexity (if we consider that $K(\pi_{-\lam_j r},\pi)$ is the complexity of the estimator associated with 
$\pi_{-\lam_j r}$), picks the ``first'' function in this list
and takes at each step the function of smallest complexity such that its risk is smaller than the one at the previous step. 
This is possible since we have \emph{empirical relative} bounds.
Subsequently to this work, different iterative schemes based on empirical relative 
PAC-Bayesian bounds have been proposed \cite{Alq06,Alq08,Cat07}.
The interest of the procedure lies in the following theoretical guarantee.	

\begin{thm} \label{th:indlocrel}
The iterative scheme is finite: there exists $K\in\N$ such that $u(K)$ exists but not $u(K+1)$. With probability at least $1-\eps$, for any $k\in\{1,\dots,K\},$ we have 
		$$\E_{g\sim\pi_{-\hat{\lam}_{k} r}} R(g) 
		  \le \E_{g\sim\pi_{-\hat{\lam}_{k-1} r}} R(g),$$
and
	\begin{align*}
	\E_{g \sim\pi_{-\hat{\lam}_{K} r}} & R(g) \le 
	  \underset{1 \le j \le \log n}{\min} \Bigg\{ 
	  \E_{g\sim\pi_{-\lam_{j-1} R}} R(g) + C \frac{\log[\log(en) \eps^{-1} ]}{\lam_j}\\
	& + \frac1{\lam_j} \underset{0\le i \le j}{\sup} \bigg\{ 
		\log \E_{g_1\sim\pi_{-\lam_i R}} \E_{g_2\sim\pi_{-\lam_i R}}
		e^{\frac{C\lam_i^2}{n} \P[g_1(X)\neq g_2(X) ]} \bigg\}
			  \Bigg\}.
	\end{align*}
\end{thm}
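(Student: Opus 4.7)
The plan is to establish, in order, termination, monotonicity of the population risks along the iterates, and the oracle inequality, using the localized empirical relative bound \eqref{eq:relloc} as the sole probabilistic input. Termination is essentially trivial: the sequence $u(0)<u(1)<\cdots$ is strictly increasing and bounded above by $\lfloor\log n\rfloor$, so at most $\lfloor\log n\rfloor+1$ steps occur. For the rest of the proof, I would first take a single union bound, applying \eqref{eq:relloc} over the $O(\log^2 n)$ pairs $(i,j)$ of grid indices with $0\le i<j\le \log n$; this absorbs an extra $\log^2(en)$ into the confidence, which is precisely the factor hidden inside $L$.

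For monotonicity, I would instantiate \eqref{eq:relloc} at step $k$ with $\rho_1=\pi_{-\hat{\lam}_{k-1}r}$ and $\rho_2=\pi_{-\lam_{u(k)}r}$, matching the localized priors to the posteriors so that both KL divergences $K(\rho_i,\pi_{-\lam_i r})$ vanish. The two logarithmic moment terms on the right of \eqref{eq:relloc} then collapse to $\calC(u(k-1))$ and $\calC(u(k))$, and combined with the empirical pairwise variance they reproduce exactly $S(u(k-1),u(k))$. The resulting population bound has the empirical risk difference plus $S(u(k-1),u(k))$ on its right, which by the very definition of $u(k)$ is $\le 0$, yielding $\E_{g\sim\pi_{-\lam_{u(k)}r}}R(g)\le \E_{g\sim\pi_{-\hat{\lam}_{k-1}r}}R(g)$.

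For the oracle inequality, fix $j\in\{1,\dots,\log n\}$ and distinguish two subcases. If $j=u(k)$ for some $k\le K$, monotonicity already gives $\E_{g\sim\pi_{-\hat{\lam}_Kr}}R(g)\le \E_{g\sim\pi_{-\lam_jr}}R(g)$. Otherwise $u(k-1)<j<u(k)$ for some $k$ (with the convention $u(K+1):=\log n+1$), and the minimality of $u(k)$ forces the algorithm's condition to fail at $j$, i.e.\ $\E_{g\sim\pi_{-\hat{\lam}_{k-1}r}}r(g)-\E_{g\sim\pi_{-\lam_jr}}r(g)<S(u(k-1),j)$. Feeding this empirical inequality back into \eqref{eq:relloc} and chaining with the monotonicity already established gives $\E_{g\sim\pi_{-\hat{\lam}_Kr}}R(g)\le\E_{g\sim\pi_{-\lam_jr}}R(g)+2\,S(u(k-1),j)$, i.e.\ an additive $C[L+\calC(u(k-1))+\calC(j)]/\lam_j$ term on top of $\E_{g\sim\pi_{-\lam_jr}}R(g)$. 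To then move the reference distribution from the empirical $\pi_{-\lam_jr}$ to the oracle $\pi_{-\lam_{j-1}R}$, one uses that $\pi_{-\lam_{j-1}R}$ is a legitimate sample-independent prior, combines the variational identity $\E_{g\sim\pi_{-\lam_jr}}r(g)+\tfrac1{\lam_j}K(\pi_{-\lam_jr},\pi)\le\E_{g\sim\pi_{-\lam_{j-1}R}}r(g)+\tfrac1{\lam_j}K(\pi_{-\lam_{j-1}R},\pi)$ characterising $\pi_{-\lam_jr}$ as the minimiser of its defining functional with a non-relative PAC-Bayesian bound such as \eqref{eq:A} applied twice (to pass from $r$ to $R$ on each side), and finally minimises over $j$.

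The main obstacle I expect is symbolic book-keeping rather than any new probabilistic idea: showing that the residual variance and entropy terms produced by the successive applications of \eqref{eq:relloc} can be collapsed into the single expression $\log\E_{g_1\sim\pi_{-\lam_iR}}\E_{g_2\sim\pi_{-\lam_iR}}e^{(C\lam_i^2/n)\P[g_1(X)\neq g_2(X)]}$ appearing in the statement. This relies on the pointwise bound $[\ell(Y,g_1(X))-\ell(Y,g_2(X))]^2\le\oo_{g_1(X)\neq g_2(X)}$ valid for the $0/1$ loss, together with a concentration and exchange-of-expectation argument that transfers the empirical pairwise disagreement entering $\calC(i)$ to its population counterpart in the oracle term.
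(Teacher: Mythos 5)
Your treatment of termination and of the monotonicity claim is correct and is exactly the intended argument: a union bound over the $O(\log^2 n)$ ordered pairs of grid indices (the source of the $\log^2(en)$ factor inside $L$), localized priors matched to the Gibbs posteriors so that the Kullback--Leibler terms in \eqref{eq:relloc} vanish, and the defining test of $u(k)$ making the empirical right-hand side nonpositive. The first half of your oracle argument is also sound, including the case split on whether $j$ is an accepted index $u(k)$ and the exploitation of the \emph{failure} of the test at an intermediate $j$; note only that this uses the relative bound in both directions, so the union bound must cover ordered pairs -- a bookkeeping point.

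The gap is in the last step, the passage from $\E_{g\sim\pi_{-\lam_j r}}R(g)$ to $\E_{g\sim\pi_{-\lam_{j-1}R}}R(g)$: neither tool you invoke there can yield the stated bound. First, the variational comparison against the flat prior leaves the residual $\frac1{\lam_j}K(\pi_{-\lam_{j-1}R},\pi)$, and since $K(\pi_{-\be R},\pi)\le\be\,[\E_{g\sim\pi}R(g)-\E_{g\sim\pi_{-\be R}}R(g)]$ is in general of order $\be$, this residual is of order $\lam_{j-1}/\lam_j=e^{-1/2}$, i.e. $\Theta(1)$; no such term appears in, nor can it be absorbed into, the $\frac1{\lam_j}(\cdot)$ error terms of the theorem. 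Second, a non-relative bound such as \eqref{eq:A} used to pass between $r$ and $R$ introduces $\frac{\lam_j}{n}\E_{g\sim\rho}\Var_{Z}\ell(Y,g(X))=\frac{\lam_j}{n}\E_{g\sim\rho}R(g)[1-R(g)]$, which scales with the \emph{absolute} risk rather than with the pairwise disagreement $\P[g_1(X)\neq g_2(X)]$; optimizing $\lam_j$ against such a term caps the achievable rate at $n^{-1/2}$ and would preclude the fast rates $n^{-\kap/(2\kap-1+q)}$ that are derived from this theorem under the margin assumption. The step must remain relative throughout: one compares the empirical Gibbs measure to the oracle Gibbs measure through a deviation bound whose only variance-like quantity is the disagreement under (oracle) Gibbs measures, concretely by using the uniform empirical control of $K(\rho,\pi_{-\lam R})$ by $K(\rho,\pi_{-\lam r})$ displayed just before \eqref{eq:relloc} in the reverse direction, combined with the monotonicity of $\be\mapsto\E_{g\sim\pi_{-\be R}}R(g)$ to trade $\lam_j$ for $\lam_{j-1}$. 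This is precisely why the main term of the bound carries the index $j-1$ and why the complexity term is a supremum over all $i\le j$ of exponential moments of $\frac{C\lam_i^2}{n}\P[g_1(X)\neq g_2(X)]$ under $\pi_{-\lam_i R}\otimes\pi_{-\lam_i R}$, a structure your sketch does not yet reproduce.
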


To illustrate this last theoretical guarantee, let us consider 
complexity and margin assumptions similar to the ones
used in the pioneering work of Mammen and Tsybakov \cite{Mam99}.
To detail these assumptions, let $d$ be the (pseudo-)distance on $\G(\X;\Y)$ defined by 
  $$d(g_1,g_2)=\P[g_1(X)\neq g_2(X)].$$
Let $\G\subset\G(\X;\Y)$. For $u>0$, the set $\cN\subset\G(\X;\Y)$ 
is called a $u$-covering net of $\G$ if we have
  $
  \G = \cup_{g\in\cN} \big\{g'\in\G; d(g,g')\le u\big\}.
  $
Let $H(u)$ denote the $u$-covering entropy, i.e. the logarithm of the smallest $u$-covering net of $\G$.
The complexity assumption is that there exist $C'>0$ and $q>0$ such that $H( u ) \le C' u^{-q}$
for any $u > 0$.
Let 
  $$g^* = \argmin_{g\in\G} R(g).$$
Without great loss of generality, we assume the existence of such a function. The margin assumption is that there exist $c'',C''>0$ and $\kappa\in[1,+\infty]$ such that for any function $g \in \G$,
	\begin{equation} \label{eq:margin}
	c'' \big[R( g ) - R(g^*)\big]^{\frac{1}{\kappa}} 
		\leq \P[g(X)\neq g^*(X)] \leq C'' \big[R( g ) - R(g^*)\big]^{\frac{1}{\kappa}}.
	\end{equation}
For any $k\in\N^*$, introduce $\pi_k$ the uniform distribution on the smallest 
$2^{-k}$ covering net. 

\begin{thm}
For the prior distribution $\pi = \sum_{k\ge 1} \frac{\pi_k}{k(k+1)}$,
the randomized estimator defined in Algorithm 1 (p.\pageref{alg1}) satisfies 
  $$
  \E_{g \sim\pi_{-\hat{\lam}_{K} r}} R(g) - R(g^*) \le
    C n^{-\frac{\kap}{2\kap-1+q}},
  $$
for some positive constant $C$.
\end{thm}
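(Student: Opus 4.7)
The plan is to apply the oracle inequality of Theorem~\ref{th:indlocrel} to the selected posterior $\pi_{-\hat\lam_K r}$ with the hierarchical prior $\pi=\sum_{k\ge 1}\pi_k/[k(k+1)]$, and then estimate the two deterministic quantities
\begin{align*}
A(\lam) & \eqdef \E_{g\sim\pi_{-\lam R}}R(g)-R(g^*),\\
B(\lam) & \eqdef \log \E_{g_1\sim\pi_{-\lam R}}\E_{g_2\sim\pi_{-\lam R}} e^{(C\lam^2/n)\,\P[g_1(X)\neq g_2(X)]},
\end{align*}
at each $\lam$ of the grid $\lam_j=0.19\sqrt n\,e^{j/2}$, $j\in\{0,\dots,\lfloor\log n\rfloor\}$. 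Since the grid ratio $\sqrt e$ is $O(1)$ and the grid spans $[C\sqrt n,Cn]$, it is enough to show that the minimum over $\lam$ in this range of $A(\lam)+[B(\lam)+\log(\log(en)/\eps)]/\lam$ matches the target rate $n^{-\kap/(2\kap-1+q)}$.

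To bound $A(\lam)$, I would use the Gibbs/KL duality $\E_{g\sim\pi_{-\lam R}}R(g)\le\E_{g\sim\rho}R(g)+K(\rho,\pi)/\lam$ with $\rho=\delta_{g^*_k}$, where $g^*_k$ is an element of the minimal $2^{-k}$-covering net at distance at most $2^{-k}$ from $g^*$. The margin assumption gives $R(g^*_k)-R(g^*)\le (c'')^{-\kap}2^{-k\kap}$, while the complexity assumption and the definition of $\pi$ give $K(\delta_{g^*_k},\pi)\le H(2^{-k})+2\log(k+1)\le C'\,2^{kq}+O(\log k)$. Optimizing the trade-off $2^{-k\kap}\sim 2^{kq}/\lam$, i.e.\ choosing $2^k\asymp\lam^{1/(\kap+q)}$, yields $A(\lam)\le C\lam^{-\kap/(\kap+q)}$.

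For $B(\lam)$, the triangle inequality and the upper margin bound give $\P[g_1(X)\neq g_2(X)]\le d(g_1,g^*)+d(g_2,g^*)\le C''[(R(g_1)-R(g^*))^{1/\kap}+(R(g_2)-R(g^*))^{1/\kap}]$, so by the product form of $\pi_{-\lam R}\otimes\pi_{-\lam R}$,
\[
\E e^{(C\lam^2/n)\,\P[g_1\neq g_2]}\le\big(\E_{g\sim\pi_{-\lam R}}e^{(C\lam^2/n)(R(g)-R(g^*))^{1/\kap}}\big)^2.
\]
Splitting this expectation at the level $u\asymp A(\lam)$, I would use the elementary inequality $e^v\le 1+v(e^M-1)/M$ on $[0,M]$ combined with Jensen's inequality $\E(R(g)-R(g^*))^{1/\kap}\le A(\lam)^{1/\kap}$ on the bulk, and on the tail the Gibbs deviation $\pi_{-\lam R}\{R(g)-R(g^*)\ge t\}\le\exp(-\lam t+C\lam A(\lam))$ (obtained by applying the same $g^*_k$ device to the denominator of the Gibbs ratio and bounding the numerator by $e^{-\lam(R(g^*)+t)}$). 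Both contributions are of order $\exp(C\lam^2 A(\lam)^{1/\kap}/n)$, giving $B(\lam)\le C\lam^2 A(\lam)^{1/\kap}/n$.

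Putting the two estimates together, $A(\lam)+B(\lam)/\lam\lesssim\lam^{-\kap/(\kap+q)}+\lam^{(\kap+q-1)/(\kap+q)}/n$; balancing the two terms yields the optimal choice $\lam\asymp n^{(\kap+q)/(2\kap-1+q)}$, which lies in $[\sqrt n,n]$ and is therefore attained up to constants on the grid, producing the announced rate $n^{-\kap/(2\kap-1+q)}$. The \emph{main obstacle} is the control of $B(\lam)$: the crude bound $\P[g_1\neq g_2]\le 1$ gives only $B(\lam)/\lam\lesssim\lam/n$ and hence the slower rate $n^{-\kap/(2\kap+q)}$; extracting the sharp exponent $2\kap-1+q$ requires genuinely exploiting the margin-driven exponential concentration of the Gibbs measure around $g^*$.
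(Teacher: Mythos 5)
Your proposal is correct and follows exactly the route the paper intends: apply Theorem \ref{th:indlocrel}, bound the oracle term $\E_{g\sim\pi_{-\lam R}}R(g)-R(g^*)$ by the Gibbs/KL duality against a Dirac mass on the nearest $2^{-k}$-net point (this is where the non-standard \emph{left} inequality of \eqref{eq:margin} is needed), and bound the exponential-moment term via the \emph{right} inequality of \eqref{eq:margin} together with the Gibbs tail bound $\pi_{-\lam R}(R(g)-R(g^*)\ge t)\le e^{-\lam t+C\lam A(\lam)}$; the exponent arithmetic, with $\lam\asymp n^{(\kap+q)/(2\kap+q-1)}$ lying inside the grid $[0.19\sqrt n,0.19n]$, checks out. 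The one point demanding care in a complete write-up is the tail part of your bound on $B(\lam)$: for $\kap>1$ the maximum of $t\mapsto C\lam^2t^{1/\kap}/n-\lam t/2$ is of strictly lower order than the bulk contribution $C\lam^2A(\lam)^{1/\kap}/n$ for all $\lam$ up to the optimal value, but at $\kap=1$ the two are of the same order and the argument hinges on the (unspecified) numerical constant $C$ in Theorem \ref{th:indlocrel} being compatible with $\lam\le 0.19\,n$.
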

We also proved in \cite[Chap.3, Theorem 3.3]{Aud04} that the right-hand side is the minimax optimal convergence rates under such assumptions. 
Since the algorithm does not require the knowledge of the margin parameter $\kap$,
it is adaptive to this parameter.

Note that Assumption \eqref{eq:margin} is stronger than the usual assumption as the latter 
does not assume the left inequality.
In fact, to achieve minimax optimal rates under the usual margin assumption, while still
assuming polynomial covering entropies requires the chaining argument \cite[Chap.3]{Aud04}.
This leads us to study how to combine the chaining argument with the PAC-Bayesian approach
and make the connexion with majorizing measures from the generic chaining argument developed by Fernique and
Talagrand \cite{Tal96}, which we detail in the next section.

\section{Combining PAC-Bayesian and Generic Chaining Bounds} \label{sec:combin}

There exist many different risk bounds in
statistical learning theory. Each of these bounds contains an
improvement over the others for certain situations or algorithms.
In \cite{AudBou07}, Olivier Bousquet and I underline the links between these bounds,
and combine several different improvements into a single
bound. In particular, we combine the PAC-Bayes approach with the optimal union bound provided by the generic
chaining technique developed by Fernique and Talagrand,
in a way that also takes into account the variance of the combined
functions. We also show how this connects to Rademacher based
bounds.
The interest in generic chaining rather than just Dudley's chaining \cite{Dud78}
comes from the fact that it captures better the behaviour supremum of a Gaussian process \cite{Tal96}.
In statistical learning theory, the process of interest and which is asymptotically Gaussian is $g\mapsto R(g)-r(g)$.

I hereafter give a simplified version of the main results of \cite{AudBou07}. 
Let me first introduce the notation. 
We still consider a set $\G\subset\G(\X;\Y)$, $g^*=\argmin_{g\in\G} R(g)$, and that losses take their 
values in $[0,1]$.
We consider a sequence of nested
partitions $(\A_j)_{j\in\N}$ of the set $\G$, that is (i)
$\A_j$ is a partition of $\G$ either countable
    or equal to the set of all singletons of $\G$, and
(ii) the $\A_j$ are nested: each element of $\A_{j+1}$ is
    contained in an element of $\A_j$, and $\A_0 = \{\G\}$.
For the partition $\A_j$ and for $g\in\G$, we denote by $A_j(g)$ the unique element of
$\A_j$ containing $g$. Given a sequence of nested partitions $(\A_j)_{j\in\N}$, we can build a
collection $(S_j)_{j\in\N}$ of approximating subsets of $\G$ in
the following way: for each $j\in\N$, for each element $A$ of
$\A_j$, choose a unique element of $\G$ contained in $A$ and
define $S_j$ as the set of all chosen elements. We have $|S_0|=1$
and denote by $p_j(g)$ the unique element of $S_j$ contained in
$A_j(g)$.
Finally, we also consider that for each $j\in\N$, we have a distribution $\pij$ on $\G$ at our disposal.

Our bound will depend on the specific choices of the distributions $\pij$, the nested partitions $(\A_j)$,
the associated sequence of approximating sets $(S_j)$, and the corresponding 
approximating functions $p_j(g), g\in\G$.
Denote $\delta_g$ the Dirac measure on $g$. For a probability
distribution $\rho$ on $\G$, define its $j$-th projection as
  $$[\rho]_j = \sum_{g \in S_j} \rho [A_j(g)] \delta_g,$$
when $S_j$ is countable and $[\rho]_j=\rho$ otherwise.
For any $\eps>0$ and $\rho\in\M$, define the complexity of
$\rho$ at scale $j$ by
  $$
  \K_j(\rho)=K([\rho]_j,\pijj) + \log[j(j+1)\eps^{-1}],
  $$
and introduce the average distance between the $(j-1)$-th and $j$-th projections by
  \begin{align*}
  \D_j(\rho)=\E_{g\sim\rho} \bigg\{ \frac12 &\E_{Z\sim P} \Big\{ \ell\big(Y,[p_j(g)](X)\big)
    -\ell\big(Y,[p_{j-1}(g)](X)\big) \Big\}^2\\
  & + \frac1{2n} \sum_{i=1}^n \Big\{ \ell\big(Y_i,[p_j(g)](X_i)\big)-\ell\big(Y_i,[p_{j-1}(g)](X_i)\big) \Big\}^2 \bigg\}
  \end{align*}

\begin{thm} 
If the following condition holds
    \begin{equation} \label{eq:condind}
    \underset{j \rightarrow +\infty}{\lim} \sup_{g\in\G} \,
        \big\{ R(g) - R[p_j( f )] - r(g) + r[p_j( f )] \big\} = 0, \text{\qquad a.s.}
    \end{equation}
then for any $0 < \be \leq 0.7$, with probability at least $1-\eps$, for any $\rho\in\M$, we have
    \begin{align}
    \E_{g\sim\rho} R(g) - R(g^*) \le \E_{g\sim\rho} r(g) -  r(g^*) & +
        \frac{4}{\sqrt{n}} \sum_{j = 1}^{+\infty} \sqrt{ \D_j(\rho) \K_j(\rho) } \notag \\
    & + \frac{4}{\sqrt{n}} \sum_{j = 1}^{+\infty}
        \sqrt{ \frac{\D_j(\rho) }{\K_j(\rho)} }\log \log\left( 4 e^2 \frac{\K_j(\rho)}{\D_j(\rho)} \right). \label{eq:combin}
    \end{align}
\end{thm}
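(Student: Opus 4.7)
The plan is to combine the relative PAC-Bayesian bounds of Section~\ref{sec:rel} with a multi-scale (chaining) decomposition along the nested partitions $(\A_j)_{j\in\N}$. To begin, I would take the unique element of $S_0$ to be $g^*$ itself, so that $p_0(g) = g^*$ for every $g \in \G$. Condition~\eqref{eq:condind} then guarantees, almost surely and uniformly over $g \in \G$, the telescoping identity
\begin{equation*}
R(g) - R(g^*) = \sum_{j=1}^{+\infty} \bigl\{ R[p_j(g)] - R[p_{j-1}(g)] \bigr\},
\end{equation*}
and analogously for $r$. Integrating against $\rho$, the problem reduces to controlling, simultaneously for every $j \ge 1$, the increment $\E_{g\sim\rho}\{R[p_j(g)] - R[p_{j-1}(g)]\} - \E_{g\sim\rho}\{r[p_j(g)] - r[p_{j-1}(g)]\}$.

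At scale $j \ge 1$, I would apply a relative PAC-Bayesian bound of the form~\eqref{eq:relnonloc}, but with the two posteriors taken to be the pushforwards $[\rho]_j$ and $[\rho]_{j-1}$ and the corresponding priors $[\pij]_j$ and $[\pi^{(j-1)}]_{j-1}$. Splitting the confidence level as $\eps_j \propto \eps/[j(j+1)]$ keeps the total tail budget at most $\eps$ across all scales. The resulting inequality yields, for each $\lambda_j > 0$,
\begin{equation*}
\E_{g\sim\rho}\bigl\{R[p_j(g)] - R[p_{j-1}(g)] - r[p_j(g)] + r[p_{j-1}(g)]\bigr\} \le \tfrac{\lambda_j}{n}\Psi\bigl(\tfrac{\lambda_j}{n}\bigr) V_j(\rho) + \tfrac{\K_j(\rho)+\K_{j-1}(\rho)}{\lambda_j},
\end{equation*}
where $V_j(\rho) = \E_{g\sim\rho}\E_Z\bigl\{\ell(Y,[p_j(g)](X)) - \ell(Y,[p_{j-1}(g)](X))\bigr\}^2$. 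A companion PAC-Bayesian control of the squared increments then replaces $V_j(\rho)$ by the half-sum of its true and empirical second moments, which is precisely $\D_j(\rho)$ up to absorbable constants.

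Next, I would optimize $\lambda_j$ to balance the two terms at each scale. The ideal choice scales as $\sqrt{n\K_j(\rho)/\D_j(\rho)}$, which depends on $\rho$ and so cannot be used directly. Discretizing $\lambda_j$ on a geometric grid, paying a union bound at every scale, and selecting the grid point closest to the ideal value produces the leading term $(4/\sqrt{n})\sqrt{\D_j(\rho)\K_j(\rho)}$. The doubly logarithmic correction $\log\log(4e^2\K_j(\rho)/\D_j(\rho))$ arises from a second layer of peeling: only $\log(\K_j/\D_j)$ grid points are genuinely needed at scale $j$, so paying the logarithm of \emph{that} range turns a naive $\log$ into $\log\log$.

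Summing the per-scale inequalities over $j \ge 1$ and substituting into the telescoping identity of the first step yields~\eqref{eq:combin}. The main obstacle is preserving the $\log\log$ dependence while keeping the bound jointly uniform over the posterior $\rho$ and over all temperatures $\{\lambda_j\}_{j\ge 1}$: the KL terms in PAC-Bayes handle $\rho$ automatically, but the sharp control of $\lambda_j$ requires the two-level peeling just described, combined with the summable splitting $\eps_j \propto 1/[j(j+1)]$ of the confidence budget across the infinitely many chaining scales and with the constraint $0 < \be \le 0.7$ that keeps each $\lambda_j$ in the regime where the empirical variance-control argument underlying~\eqref{eq:relloc} remains valid.
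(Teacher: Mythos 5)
Your overall architecture is the right one and matches what \cite{AudBou07} (to which this thesis defers the proof) actually does: take the single element of $S_0$ to be $g^*$ so that $p_0\equiv g^*$, telescope the excess risk along the nested partitions, control each increment by a Bernstein-type PAC-Bayesian inequality at scale $j$ with the confidence budget split as $\eps/[j(j+1)]$ (which is exactly where the $\log[j(j+1)\eps^{-1}]$ inside $\K_j(\rho)$ comes from), replace the true variance by the half-true/half-empirical quantity $\D_j(\rho)$, and obtain the $\log\log$ factor from a weighted union bound over a geometric grid of temperatures $\lam_j$, paying only the logarithm of the index of the grid point actually selected. Two small corrections: condition \eqref{eq:condind} controls only the combination $R-r$, so the telescoping identity should be written directly for $g\mapsto R(g)-r(g)$ rather than ``for $R$ and analogously for $r$''; and the role of the constraint on $\be$ is simply to keep the weights/complexities positive, not to constrain the $\lam_j$'s.

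The one step that does not work as written is the invocation of the relative bound \eqref{eq:relnonloc} with the two pushforwards $[\rho]_{j-1}$ and $[\rho]_j$ as a \emph{product} posterior. The chaining increment at scale $j$ involves the coupled pair $\bigl(p_{j-1}(g),p_j(g)\bigr)$ for a single $g\sim\rho$; because the partitions are nested, $p_{j-1}(g)$ is a deterministic function of $p_j(g)$, so this pair is very far from a product distribution. Using \eqref{eq:relnonloc} would give you (i) the decoupled variance $\E_{g'\sim[\rho]_{j-1}}\E_{g''\sim[\rho]_j}\E_Z[\ell(Y,g'(X))-\ell(Y,g''(X))]^2$, which is not the quantity appearing in $\D_j(\rho)$, and (ii) the complexity $\K_j(\rho)+\K_{j-1}(\rho)$, whereas \eqref{eq:combin} contains only $\K_j(\rho)$ in the $j$-th summand. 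The correct move is to apply the single-posterior bound \eqref{eq:A} to the class of loss differences $(x,y)\mapsto \ell\bigl(y,[p_j(g)](x)\bigr)-\ell\bigl(y,[p_{j-1}(g)](x)\bigr)$, which is indexed by the cells of $\A_j$, with posterior $[\rho]_j$ and prior $\pijj$; this yields exactly the coupled variance and the single complexity term $\K_j(\rho)$. With that substitution, and the $\lam_j$-grid peeling you describe, the rest of your plan goes through.
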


Assumption \eqref{eq:condind} is not very restrictive. For instance,
it is satisfied when one of the following condition holds:
\begin{itemize}
\item there exists $J\in\N^*$ such that $S_J=\G$, \item almost
surely
    $\lim_{j \rightarrow +\infty} \sup_{g\in\G,x\in\X,y\in\Y} \, | \ell\big(y,g(x)\big) - \ell\big(y,[p_j(g)](x)\big) |
    = 0$
(it is in particular the case when the bracketing entropy of the
set $\G$ is finite for any radius and when the $S_j$'s and $p_j$'s
are appropriately built on the bracketing nets of radius going to
$0$ when $j\rightarrow +\infty$).
\end{itemize}

The bound \eqref{eq:combin} combines several previous improvements. 
It contains an optimal union bound, both in the sense of optimally taking
into account the metric structure of the set of functions (via the majorizing measure approach) and
in the sense of taking into account the averaging distribution. It is sensitive to the variance of
the functions and consequently will lead to fast convergence rates (that is faster than $1/\sqrt{n}$),
under margin assumptions such as the ones considered in the works of Nédélec and Massart \cite{MasNed06}
or Mammen and Tsybakov \cite{Mam99}.
It holds for randomized classifiers but contrarily to usual PAC-Bayesian bounds, it remains finite 
when the averaging distribution is concentrated at a single prediction function.
On the negative side, there still remains work in order to get a 
fully empirical bound (it is not the case here since $\D_j(\rho)$ is not observable) 
and to better understand the connection with Rademacher averages.

Independently of the generic chaining argument, we use 
a carefully weighted union bound argument, which is at the origin of the $\log\log$ term in \eqref{eq:combin}
and leads to the following corollary of the main result in \cite{AudBou07}.

\begin{thm}
For any $\eps>0$, with probability at least $1-\eps$, for any $\rho\in\M$, we have
  \begin{align*} 
  \E_{g\sim\rho} R(g)-\E_{g\sim\rho} r(g) \le C \sqrt{\frac{K(\rho,\pi)+\log(2\eps^{-1})}{n}},
  \end{align*}
for some numerical constant $C>0$ \cite[Section 4.3]{AudBou07}.
\end{thm}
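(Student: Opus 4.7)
\emph{Proof plan.} I would derive this clean PAC-Bayesian bound as a direct specialization of the chaining-enhanced bound \eqref{eq:combin}, by collapsing the chain to a single non-trivial level. Fix any reference $g_0 \in \G$ and choose the nested partitions $(\A_j)_{j \in \N}$ with $\A_0 = \{\G\}$ and $\A_j$ the partition of $\G$ into singletons for every $j \geq 1$, with approximating sets $S_0 = \{g_0\}$ and $S_j = \G$ for $j \geq 1$, so that $p_0(g) = g_0$ and $p_j(g) = g$ for all $j \geq 1$. Take $\pij = \pi$ for every $j \geq 1$; then $[\rho]_j = \rho$ for $j \geq 1$, and condition \eqref{eq:condind} is satisfied trivially since $r(g)-r[p_j(g)]$ and $R(g)-R[p_j(g)]$ both vanish identically for $j\geq 1$.

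The key observation is that the chain is degenerate beyond level $1$: for $j \geq 2$ the approximating functions $p_{j-1}$ and $p_j$ coincide, so the increments $\ell(Y,[p_j(g)](X)) - \ell(Y,[p_{j-1}(g)](X))$ are identically zero and $\D_j(\rho) = 0$. With the natural convention $0 \cdot \log\log(x/0) = 0$, only the $j = 1$ term survives in each of the two infinite sums of \eqref{eq:combin}. For this single remaining level, boundedness of the loss in $[0,1]$ forces $\D_1(\rho) \leq 1$, and, applying \eqref{eq:combin} at confidence $\eps/2$, one has $\K_1(\rho) = K(\rho,\pi) + \log(4\eps^{-1})$.

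Substituting into \eqref{eq:combin} and rearranging yields, with probability at least $1 - \eps/2$,
\begin{equation*}
\E_{g \sim \rho} R(g) - \E_{g \sim \rho} r(g) \leq R(g^*) - r(g^*) + \frac{4}{\sqrt{n}}\sqrt{\D_1(\rho)\K_1(\rho)} + \frac{4}{\sqrt{n}}\sqrt{\frac{\D_1(\rho)}{\K_1(\rho)}}\log\log\left(\frac{4e^2\K_1(\rho)}{\D_1(\rho)}\right).
\end{equation*}
Since $g^* \in \G$ is non-random, a plain Hoeffding bound controls $R(g^*) - r(g^*)$ by $\sqrt{\log(4\eps^{-1})/(2n)}$ with probability at least $1 - \eps/2$; a union bound merges both events at total confidence $1 - \eps$. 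The first of the three resulting contributions is already of the right form, and the middle one is exactly the desired PAC-Bayes term $(4/\sqrt n)\sqrt{\K_1(\rho)}$ (using $\D_1\le 1$).

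The main obstacle is verifying that the $\log\log$ correction is uniformly of lower order than the leading PAC-Bayes term, so that all three pieces collapse into a single absolute constant $C$. Since $\D_1(\rho)\leq 1$ and $\K_1(\rho)\geq \log 4 > 0$, the ratio $u(\rho) := \K_1(\rho)/\D_1(\rho)$ satisfies $u\geq \log 4$; the function $u \mapsto \log\log(4e^2 u)/\sqrt{u}$ is continuous on $[\log 4,+\infty)$ and tends to $0$ at infinity, so it is bounded there by some universal constant $B$. Consequently the $\log\log$ contribution is at most $4B/\sqrt{n}$, which is itself dominated by the leading term $(4/\sqrt{n})\sqrt{\K_1(\rho)}$ since $\K_1(\rho)\geq \log 4$. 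All three pieces of the bound are then of the form $c\sqrt{(K(\rho,\pi)+\log(2\eps^{-1}))/n}$ for numerical $c$, and they combine into the stated inequality for an appropriate universal $C>0$.
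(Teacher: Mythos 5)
Your proof is correct and follows essentially the same route as the paper: the theorem is presented there precisely as a corollary of the main chaining result of \cite{AudBou07}, obtained by running the weighted union bound at a single non-trivial level, which is exactly what your degenerate two-level chain ($\A_0=\{\G\}$, $\A_j$ the singletons for $j\ge1$) produces. The bookkeeping is sound throughout --- the vanishing of all levels $j\ge 2$, the bounds $\D_1(\rho)\le 1$ and $\K_1(\rho)\ge\log 4$, and the observation that $u\mapsto \log\log(4e^2u)/\sqrt{u}$ is uniformly bounded on $[\log 4,+\infty)$ so the $\log\log$ correction is absorbed into the numerical constant $C$.
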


This result means that neither the $\log(n)$ term in \eqref{eq:McA} (p.\pageref{eq:McA}) or the Shannon's entropy term in \eqref{eq:basic} (p.\pageref{eq:basic}) is 
needed if we are allowed to have a numerical factor slightly larger in front of the square root term.




\chapter{The three aggregation problems} \label{chap:agg}

\section{Introduction}

Aggregation is about combining different prediction functions in order
to get a better prediction. It has become popular and has been intensively studied
these last two decades partly thanks to the success of boosting algorithms, 
and principally of the AdaBoost algorithm, introduced by Freund and Schapire \cite{Fre97}.
These algorithms use linear combination of a large number of simple functions
to provide a classification decision rule.

In this chapter, we focus on the least squares setting, 
in which the outputs are real numbers and the risk of a prediction function $g:\X\ra \R$ is 
  $$R(g) = \E [Y-g(X)]^2.$$ 
Our results are nevertheless of interest for classification also as
any estimate of the conditional expectation of the output knowing the input
leads by thresholding to a classification decision rule, and
the quality of this plug-in estimator is directly linked 
to the quality of the least squares regression estimator 
(see \cite[Section 6.2]{Dev96}, \cite{AudTsy07}
and specifically the comparison lemmas of its section 5,
and also \cite{LugVay04,BarJorMcA06,BarTra07} for consistency results in classification
using other surrogate loss functions).

Boosting type classification methods usually aggregate simple functions, but the aggregation is also of interest when some potentially complicated functions are aggregated. 
More precisely, when facing the data, the statistician has often to choose 
several models which are likely to be relevant for the task.
These models can be of similar structures (like embedded balls of functional spaces) or
on the contrary of very different nature (e.g., based on kernels, splines, wavelets or on parametric approaches). 
For each of these models, we assume that we have a learning scheme which produces 
a 'good' prediction function in the sense that its risk is as small as the risk of the best 
function of the model up to some small additive term\footnote{The learning procedure
could differ for each model, or on the contrary, be the same but using 
different values of a tuning parameter.}. Then the question is
to decide on how we use or combine/aggregate these schemes. 
One possible answer is to split the data into two groups, use
the first group to train the prediction function (i.e. compute the estimator) associated with each model, and
then use the second group to build a prediction function which is as good as (i) the
best of the previously learnt prediction functions, (ii) the best convex combination
of these functions or (iii) the best linear combination of these functions, in terms of risk, up to 
some small additive term. The three aggregation problems we will focus on in this chapter
concern the second part of this scheme. 
The idea of mixing (or combining or aggregating) the estimators 
originally appears in \cite{Nem98,Jud00,Yan99,Yan01b}.

We hereafter treat the initial estimators as fixed functions, which
means that the results hold conditionally on the data set on which they have been obtained,
this data set being independent of the $n$ input-output observations $Z_1^n$.
Specifically, let $g_1$,\dots,$g_d$ be $d$ prediction functions, with $d\ge 2$.
Introduce
  $$\gms\in\und{\argmin}{g\in\{g_1,\dots,g_d\}} \, R(g),$$
  $$\gc\in\und{\argmin}{g\in\{\sum_{j=1}^d \th_j g_j;
    \th_1\ge0,\dots,\th_d\ge 0,\sum_{j=1}^d\th_j= 1\}} \, R(g),$$
and
  $$\gl\in\und{\argmin}{g\in\{\sum_{j=1}^d \th_j g_j;\th_1\in\R,\dots,
    \th_d\in\R\}} R(g).$$
The model selection aggregation task \pbms\ is to find an estimator
$\hg$ based on the observed data $Z_1^n$ for which the excess risk $R(\hg)-R(\gms)$
is guaranteed to be small.
Similarly, the convex (resp. linear) aggregation task \label{pbc}
\pbc\ (resp. \pbl) is to find an estimator
$\hg$ for which the excess risk $R(\hg)-R(\gc)$ (resp. $R(\hg)-R(\gl)$)
is guaranteed to be small.

The minimax optimal rates of aggregation are given in \cite{Tsy03} and references within.
Under suitable assumptions, it is shown that there exist estimators 
$\hg_\text{\bf MS}$, $\hg_\text{\bf C}$ and $\hg_\text{\bf L}$ such that
  \beglab{eq:ms}
  \E R(\hg_\text{\bf MS})-R(\gms) \le C \min\left(\frac{\log d} n,1\right),
  \endlab
  $$\E R(\hg_\text{\bf C})-R(\gc) \le C \min\left(\sqrt{\frac{\log(1+ d/\sqrt{n})} n},\frac{d} n,1\right),$$
  $$\E R(\hg_\text{\bf L})-R(\gl) \le C \min\left(\frac{d} n,1\right),$$
where $\hg_L$ (and for $d\le n$, $\hg_C$) require the knowledge of the input distribution. 
We recall that $C$ is a positive constant that may differ from line to line.
Tsybakov \cite{Tsy03} has shown that these rates cannot be uniformly improved in
the following sense. Let $\sigma>0$, $L>0$ and
Let $\cP_{\sigma,L}$ be the set of probability distributions
on $\X\times \R$ such that we almost surely have
$Y=g(X)+\xi$, with $\|g\|_\infty \le L$, and $\xi$ 
a centered Gaussian random variable independent of $X$ and with variance $\sigma^2$.
For appropriate choices of $g_1,\dots,g_d$, the following lower bounds hold:
  $$\und{\inf}{\hg} \und{\sup}{P\in\cP_{\sigma,L}} \big\{ \E R(\hg)-R(\gms) \big\}
    \ge C \min\left(\frac{\log d} n,1\right),$$
  $$\und{\inf}{\hg} \und{\sup}{P\in\cP_{\sigma,L}} \big\{ \E R(\hg)-R(\gc) \big\}
    \ge C \min\left(\sqrt{\frac{\log(1+ d/\sqrt{n})} n},\frac{d} n,1\right),$$
  $$\und{\inf}{\hg} \und{\sup}{P\in\cP_{\sigma,L}} \big\{ \E R(\hg)-R(\gl) \big\}
    \ge C \min\left(\frac{d} n,1\right),$$
where the infimum is taken over all estimators.
The three aggregation tasks have also been studied in the least
squares regression with fixed design, where similar rates are obtained \cite{Bun07,Dal08,Dal09}.

This chapter will provide my contributions to the aggregation problems (in the random design setting)
summarized as follows.
\begin{itemize}
\item The expected excess risk $\E R(\hg)-R(\gms)$
of the empirical risk minimizer on $\{g_1,\dots,g_d\}$ (or its penalized variants)
cannot be uniformly smaller than $C \sqrt\frac{\log d}{n}$. Since the minimax optimal rate 
is $\frac{\log d}n$, this shows that these estimators are inappropriate for
the model selection task (Section \ref{sec:subopt}).
\item Catoni \cite{Cat99} and Yang \cite{Yan00} have independently shown that
the optimal rate $\frac{\log d}n$ in the model selection problem
is achieved for the progressive mixture rule. In \cite{Aud09a},
I provide a variant of this estimator coming from the field of sequential 
prediction of nonrandom sequences, and 
called the progressive indirect mixture rule. It has the benefit of
satisfying a tighter excess risk bound in a bounded setting (outputs in $[-1,1]$). I also study the case when the outputs have heavy tails
(much thicker than exponential tails), and show how the noise influences the 
minimax optimal convergence rate. I also provide
refined lower bounds of Assouad's type with tight constants (Section \ref{sec:aos}).
\item In \cite{Aud07b}, I show a limitation of the
algorithms known to satisfy \eqref{eq:ms}: despite having an expected excess risk
of order $1/n$ (if we drop the dependence in $d$), the excess risk of
the progressive (indirect or not) mixture rule suffers deviations of order $1/\sqrt{n}$
(Section \ref{sec:devsubopt}).
\item This last result leads me to define a new estimator $\hg$
which does not suffer from this drawback: the deviations of the excess risk
$\E R(\hg)-R(\gms)$ is of order $\frac{\log d}{n}$ (Section \ref{sec:star}).
\item In my PhD (its first chapter), I provide
an estimator $\hg$ based on empirical bounds of any aggregation procedures for which with high probability 
$$R(\hg)-R(\gc) \le \left\{ \begin{array}{lll}
C\sqrt{\frac{\log(d\log n)}n} & \text{always,}\\
C{\frac{\log(d\log n)}n} & \text{if $R(\gms)=R(\gc)$.}
\end{array} \right.$$ 
This means that for $n^{\frac12+\delta}\le d\le e^n$ with $\delta>0$,
the estimator has the minimax optimal rate of task \pbc, and 
is adaptive to the extent that it
has also the minimax optimal rate of task \pbms\ when $R(\gms)=R(\gc)$ (Section \ref{sec:aggconv}).
\item Finally, Olivier Catoni and I \cite{Aud09b} provide minimax results for \pbl,
and consequently also for \pbc\ when $d\le \sqrt{n}$.
The strong point of these results is that it does not require the
knowledge of the input distribution, nor uniformly bounded exponential moments 
of the conditional distribution of the output knowing the input and has no
extra logarithmic factor unlike previous results. In particular, provided that we know 
$H$ and $\sigma$ such that $\|\gl\|_\infty \le H$ and
$\sup_{x\in\X} \E\big\{[Y-\gl(X)]^2 \big| X=x\big\} \le \sigma^2$,
we propose an estimator $\hg$ satisfying
$\E R(\hg) - R(\gl) \le 68 (\sigma+H)^2\ \frac{d+2}n$ (Section \ref{sec:agglin}).
\end{itemize}

I should conclude this introductory section by emphasizing
that we will not assume that the regression function $\greg:x\mapsto \E(Y|X=x)$, which minimizes
the risk functional, is in 
the linear span of $\{g_1,\dots,g_d\}$.
This means that bounds of the form
  \beglab{eq:cgrand}
  \E R(\hg) - R(g^*) \le c [ R(\greg) - R(g^*) ] + \text{residual term},
  \endlab
with $c>1$ are not of interest in our setting\footnote{These last bounds, which are relatively common in
the literature, 
are nonetheless useful
in a nonparametric setting in which the statistician is allowed to take $\{g_1,\dots,g_d\}$ large enough
so that $R(\greg) - R(g^*)$ is of the same order as the residual term.}, as they would not provide the minimax learning
rate when $R(\greg) \gg R(g^*)$. 

\section{Model selection type aggregation}

\subsection{Suboptimality of empirical risk minimization} \label{sec:subopt}

Any empirical risk minimizer and any of its penalized variants are really poor algorithms 
in this task since their expected convergence rate cannot be uniformly faster than $\sqrt{\fracl{\log d}{n}}$.
The following lower bound comes from \cite{Aud07b} (see \cite{Lee98}, \cite[p.14]{Cat99}, \cite{Lec07,Jud08,Men08} for similar results and variants). 

\begin{thm} 
For any training set size $n$, there exist $d$ prediction functions $g_1,\dots,g_d$ taking their values in $[-1,1]$ such that
for any learning algorithm $\hg$ producing a prediction function in $\{g_1,\dots,g_d\}$  
there exists a probability distribution generating the data for which
$Y\in[-1,1]$ almost surely, and
	$$
	\E R(\hg) - R(\gms) \ge \min\Big( \sqrt{\frac{\lfloor \log_2 d \rfloor}{4n}} , 1 \Big),
	$$
where 
$\lfloor \log_2 d\rfloor$ denotes the largest integer smaller or equal to the
logarithm in base 2 of $d$. 
\end{thm}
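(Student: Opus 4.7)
The plan is to establish the lower bound by an Assouad-type hypercube construction. Let $k = \lfloor \log_2 d \rfloor$, so $2^k \le d$. Take the input space $\X = \{1,\dots,k\}$ with $X$ uniform, and for each $\sigma \in \{-1,+1\}^k$ define $g_\sigma(i) = \sigma_i \in [-1,1]$; if $2^k < d$, pad arbitrarily to reach $d$ functions (the extra choices can only make the selector's job harder). For each $\tau \in \{-1,+1\}^k$, define a candidate data distribution $P_\tau$ by drawing $X$ uniformly on $\{1,\dots,k\}$ and conditionally setting $Y = \tau_X$ with probability $(1+\alpha)/2$ and $Y = -\tau_X$ with probability $(1-\alpha)/2$, where $\alpha \in (0,1]$ is a free parameter to be tuned at the end.

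Since $\E_\tau[Y \mid X = i] = \tau_i\,\alpha$ and $g_\sigma(X)^2 = Y^2 = 1$, a direct computation gives
$$R_\tau(g_\sigma) - R_\tau(g_\tau) = \frac{2\alpha}{k}\sum_{i=1}^k (1 - \sigma_i \tau_i) = \frac{4\alpha}{k}\,\rho_H(\sigma,\tau),$$
where $\rho_H$ denotes Hamming distance; in particular $g_\tau$ is the minimiser of $R_\tau$ over the class. Any algorithm $\hg$ taking values in $\{g_1,\dots,g_d\}$ corresponds to a data-driven $\hat\sigma \in \{-1,+1\}^k$, and bounding the worst case by the average over $\tau$ yields
$$\sup_\tau \E_\tau[R(\hg) - R(\gms)] \;\ge\; \frac{4\alpha}{k}\,\frac{1}{2^k}\sum_\tau \E_\tau[\rho_H(\hat\sigma,\tau)] \;=\; \frac{4\alpha}{k}\sum_{j=1}^k \frac{1}{2^k}\sum_\tau P_\tau^{\otimes n}[\hat\sigma_j \neq \tau_j].$$

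For each coordinate $j$, pair each $\tau$ with $\tau^{(j)}$ (obtained by flipping coordinate $j$) and apply the standard two-point Le Cam inequality
$$P_\tau^{\otimes n}[\hat\sigma_j \neq \tau_j] + P_{\tau^{(j)}}^{\otimes n}[\hat\sigma_j \neq \tau^{(j)}_j] \;\ge\; 1 - \|P_\tau^{\otimes n} - P_{\tau^{(j)}}^{\otimes n}\|_{\mathrm{TV}}.$$
Since $P_\tau$ and $P_{\tau^{(j)}}$ differ only in the conditional law of $Y$ given $X = j$, the per-sample Kullback-Leibler divergence equals $(1/k)\,\alpha\log\big((1+\alpha)/(1-\alpha)\big)$, so by tensorisation and Pinsker's inequality the total variation between the product measures is at most $\alpha\sqrt{n/k}$ to leading order in $\alpha$.

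Putting the pieces together, the average excess risk is at least $2\alpha\,(1 - \|\cdot\|_{\mathrm{TV}})$; choosing $\alpha$ of order $\sqrt{k/n}$ keeps the total variation bounded away from $1$ and yields a lower bound of order $\sqrt{k/n}$, with a careful tuning recovering the exact constant $\sqrt{k/(4n)}$ of the statement. The $\min$ with $1$ absorbs the regime where $n$ is very small: taking $\alpha = 1$ makes $Y = \tau_X$ deterministic, so the sample reveals at most $n$ of the $k$ coordinates of $\tau$ and the remaining ones must be guessed at chance, giving an average Hamming error of order $k$ and a matching excess-risk lower bound of constant order. The structural Assouad/Le Cam reduction is standard; the main delicate point is the sharp tuning of $\alpha$ and the KL-to-TV step needed to recover the precise constant $1/\sqrt{4}$ rather than a slightly looser one.
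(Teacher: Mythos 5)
Your skeleton is the right one and is essentially the construction underlying the cited proof in \cite{Aud07b}: a hypercube of $2^{\lfloor \log_2 d\rfloor}$ sign functions on a $k$-point input space, the excess-risk identity $R_\tau(g_\sigma)-R_\tau(g_\tau)=\tfrac{4\alpha}{k}\rho_H(\sigma,\tau)$, and an Assouad reduction to $k$ two-point testing problems. Two small points first: the padding to reach $d$ functions should be by \emph{duplication} of hypercube vertices (an ``arbitrary'' extra function could have smaller risk than $g_\tau$ and change $\gms$), and your $\alpha=1$ argument does correctly absorb the regime $k\gtrsim n$ where the $\min$ with $1$ matters.

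The genuine gap is the constant, and it sits exactly where you wave your hands. With Pinsker, the per-coordinate total variation is bounded by $\sqrt{\tfrac{n\alpha}{2k}\log\tfrac{1+\alpha}{1-\alpha}}$, and since $\log\tfrac{1+\alpha}{1-\alpha}=2\alpha(1+\tfrac{\alpha^2}{3}+\cdots)>2\alpha$ for every $\alpha>0$, setting $\alpha=c\sqrt{k/n}$ gives a final bound of the form $2c(1-c\gamma)\sqrt{k/n}$ with $\gamma=\sqrt{\log(\tfrac{1+\alpha}{1-\alpha})/(2\alpha)}>1$; the maximum over $c$ is $\tfrac{1}{2\gamma}\sqrt{k/n}$, which is \emph{strictly} below the claimed $\tfrac12\sqrt{k/n}=\sqrt{k/(4n)}$ for every finite $n$ (and substantially below it when $k$ is comparable to $n$, e.g.\ about $0.70\sqrt{k/n}$ versus $0.87\sqrt{k/n}$ at $k=3n$). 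Replacing Pinsker by the Hellinger-affinity bound $1-\mathrm{TV}\ge\tfrac12\rho^{2n}$ does no better (it yields roughly $0.43\sqrt{k/n}$). So ``careful tuning of $\alpha$'' cannot rescue the KL-to-TV route: to obtain the stated constant one must lower-bound the per-coordinate Bayes testing error directly (an exact binomial computation of $1-\|P_\tau^{\otimes n}-P_{\tau^{(j)}}^{\otimes n}\|_{\mathrm{TV}}$, valid for all $n$ including $n=1$ where the bound $2\alpha(1-\alpha)$ at $\alpha=\tfrac12$ gives exactly $\tfrac12$). This sharper treatment is precisely the ``refinement of Assouad's lemma with tight constants'' that Section 2.2.2 advertises as the contribution of \cite{Aud09a}, and it is the missing ingredient in your argument; as written, your proof establishes the correct rate $\sqrt{k/n}$ but only a constant strictly smaller than the $1/\sqrt{4}$ in the statement.
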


%

\subsection{Progressive indirect mixture rules} \label{sec:aos}

The result of the previous section 
shows that, to obtain the minimax optimal rate given in \eqref{eq:ms},
an estimator has to look at an enlarged set of prediction functions.
Until our work, the only known optimal estimator was based on a Cesaro mean of 
Bayesian estimators, also referred to as progressive mixture rule. 

To define it, let $\pi$ be the uniform distribution on the finite set 
$\{g_1,\dots,g_d\}$. For any $i\in\{0,\dots,n\}$, the cumulative loss suffered by the prediction function
 $g$ on the first $i$ pairs of input-output, denoted $Z_1^i$ for short, is 
        $$\Sigma_i(g) \eqdef \sum_{k=1}^i [Y_k-g(X_k)]^2,$$
where by convention we take $\Sigma_0$ identically equal to zero.
Let $\lam>0$ be a parameter of the estimator. 
Recall that $\pi_{-\lam \Sigma_i}$ is the distribution on $\{g_1,\dots,g_d\}$ admitting a density
with respect to $\pi$ that is proportional to $e^{-\lam \Sigma_i}$.

The \emph{progressive mixture rule} (PM) predicts according to $\frac{1}{n+1} \sum_{i=0}^n \expec{g}{\pi_{-\lam \Sigma_i}} g$. 
In other words, for a new input $x$, the predicted output is
        $$
        \frac{1}{n+1} \sum_{i=0}^n \frac{\sum_{j=1}^d g_j(x) e^{-\lam \Sigma_i(g_j)}}{\sum_{j=1}^d e^{-\lam \Sigma_i(g_j)}}.
        $$

A specificity of PM is that its proof of optimality is not 
achieved by the most prominent tool in statistical learning theory:
bounds on the supremum of empirical processes (see \cite{Vap82}, 
and refined works as \cite{BarBouMen05,Kol06,Mas00,bou05} and references within).
The idea of the proof, which comes back to Barron \cite{Bar87}, is based 
on a chain rule and appeared to be successful for least squares and entropy losses
\cite{Cat97,Cat99,Bar99,Yan00} and for general loss in \cite{Jud08}.

Here my first contribution was to take ideas coming from the field of 
sequential prediction of nonrandom sequences
(see e.g. \cite{Mer98,CesLug06} for a general overview and \cite{Hau98,Ces97,Ces99,Yar04} for more specific results with
sharp constants) and propose a slight generalization of progressive mixture rules,
that I called progressive indirect mixture rules.

The \emph{progressive indirect mixture rule} (PIM) is also parameterized by a real number $\lam>0$,
and is defined as follows.
For any $i\in\{0,\dots,n\}$, let $\hh_i$ be a prediction function such that 
        \beglab{eq:pimdef}
        [Y-\hh_i(X)]^2 \le -\frac{1}{\lam} \log \expe{g}{\pi_{-\lam \Sigma_i}} e^{-\lam [Y-g(X)]^2} \qquad\text{ a.s.}
        \endlab
If one of the $\hh_i$ does not exist, the algorithm is said to fail. Otherwise it
predicts according to $\frac{1}{n+1} \sum_{i=0}^n \hh_i$.

This estimator is a direct transposition from the sequential prediction algorithm
proposed and studied in \cite{Vov90,Hau98,Vov98} to our ``batch'' setting.
The functions $\hh_i$ do not necessarily exist, but are also not necessarily unique when they exist.
A technical justification of \eqref{eq:pimdef} comes from the analysis of PM synthetically written in Appendix \ref{app:proofpm}.

When $\max\big(|Y|,|g_1(X)|,\dots,|g_d(X)|\big)\le B$ a.s. for some $B>0$ and for $\lam$ large enough, the
functions $\hh_i$ exist (so the algorithm does not fail). Still in this uniformly bounded setting,
it can be shown that PM is a PIM for $\lam$ large enough.
On the other hand, there exists $\lam>0$ small enough for which the algorithm does not fail
and such that PM is not a particular case of PIM,
that is one cannot take $\hh_i=\expec{g}{\pi_{-\lam \Sigma_i}} g$ to satisfy \eqref{eq:pimdef}
(see \cite[Example 3.13]{Hau98}). In fact, it is also shown there that 
PIM will not generally produce a prediction function in the convex hull 
of $\{g_1,\dots,g_d\}$ unlike PM. The following amazingly sharp upper bound on the excess risk of PIM holds.

\begin{thm} \label{th:pim}
Assume that $|Y|\le 1$ a.s. and $\|g_j\|_\infty\le 1$ for any $j\in\{1,\dots,d\}$.
Then, for $\lam\le\frac{1}{2}$, \emph{PIM} does not fail and its expected
excess risk is upper bounded by $\frac{\log d}{\lam(n+1)}$, that is
  \beglab{eq:pim}
  \E_{Z_1^n} R\bigg(\frac{1}{n+1} \sum_{i=0}^n \hh_i\bigg) - R(\gms)\le \frac{\log d}{\lam(n+1)}.
  \endlab
\end{thm}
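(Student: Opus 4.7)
The plan is an online-to-batch conversion combined with a telescoping identity on the log-partition functions, closely parallel to the Barron--Catoni--Yang analysis of the progressive mixture rule sketched in Appendix~\ref{app:proofpm}. I would first dispatch the non-failure statement, which is really the only place where the assumption $\lam\le 1/2$ is used in an essential way: for each fixed $x$ I need to exhibit a single real number $\hh_i(x)\in[-1,1]$ satisfying $(y-\hh_i(x))^2 \le -\lam^{-1}\log \expec{g}{\pi_{-\lam \Sigma_i}} e^{-\lam (y-g(x))^2}$ for every $y\in[-1,1]$. This is exactly the $\lam$-mixability of the square loss on $[-1,1]$; Vovk's substitution rule from the aggregating algorithm literature \cite{Vov90,Hau98,Vov98} provides an explicit closed-form choice of $\hh_i(x)$ that works for $\lam\le 1/2$, and this choice is measurable in $x$.

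Once existence is settled, the heart of the argument is as follows. By Jensen's inequality applied to the convex square loss, $R(\bar f)\le \frac{1}{n+1}\sum_{i=0}^n R(\hh_i)$ with $\bar f=\frac{1}{n+1}\sum_i \hh_i$. Introducing an independent copy $Z_{n+1}=(X_{n+1},Y_{n+1})$ of a generic training pair, exchangeability together with the fact that $\hh_i$ depends only on $Z_1^i$ gives $\E R(\hh_i) = \E[Y_{i+1}-\hh_i(X_{i+1})]^2$. Applying \eqref{eq:pimdef} pointwise at $(X_{i+1},Y_{i+1})$ and writing $W_i=\expec{g}{\pi} e^{-\lam \Sigma_i(g)}$ (so $W_0=1$), the key identity $\expec{g}{\pi_{-\lam \Sigma_i}} e^{-\lam[Y_{i+1}-g(X_{i+1})]^2}=W_{i+1}/W_i$ turns the sum of the $n+1$ individual bounds into a telescoping product, yielding
\begin{equation*}
\lam \sum_{i=0}^n [Y_{i+1}-\hh_i(X_{i+1})]^2 \le -\log W_{n+1} = -\log \expec{g}{\pi} e^{-\lam \Sigma_{n+1}(g)} \quad\text{a.s.}
\end{equation*}

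To close, I would lower-bound the partition function by retaining only the atom $\pi(\gms)=1/d$: $\expec{g}{\pi} e^{-\lam \Sigma_{n+1}(g)}\ge d^{-1} e^{-\lam \Sigma_{n+1}(\gms)}$, hence $-\log W_{n+1}\le \log d + \lam \Sigma_{n+1}(\gms)$. Taking expectations and using $\E \Sigma_{n+1}(\gms)=(n+1)R(\gms)$, I obtain $\lam \sum_{i=0}^n \E R(\hh_i) \le \log d + \lam(n+1)R(\gms)$; combined with the Jensen bound and divided by $\lam(n+1)$, this is exactly \eqref{eq:pim}.

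The only genuinely delicate step is the mixability-based existence of $\hh_i$; the remainder is a clean chain rule in which the individual log-normalizations collapse into the single term $\log \expec{g}{\pi} e^{-\lam \Sigma_{n+1}(g)}$, itself controlled by the $\log d$ complexity of the uniform prior. The tightness of the constant in \eqref{eq:pim} comes from two features: \eqref{eq:pimdef} is applied pointwise rather than only in expectation, and the partition-function lower bound recovers $R(\gms)$ with no multiplicative factor, so no Taylor-type approximation of the quadratic loss (as in the arguments leading to \eqref{eq:C03}) is needed.
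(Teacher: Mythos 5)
Your proof is correct and follows essentially the same route as the paper's own argument (Appendix~\ref{app:proofpm}): Jensen for the Cesaro average, the online-to-batch identity of Lemma~\ref{lem:rand}, the pointwise use of \eqref{eq:pimdef} in place of the exp-concavity step, the telescoping of the log-partition functions, and the single-atom lower bound on the prior, with non-failure for $\lam\le 1/2$ delegated to the mixability/substitution-function results of Vovk and Haussler et al.\ exactly as the paper does. No gaps.
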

  
It essentially comes from a result in sequential prediction 
and the fact that results expressed in cumulative loss can be transposed to our setting
since the expected risk of the randomized procedure based on sequential 
predictions is proportional to the expectation of the cumulative loss of the sequential procedure.
Precisely, the following statement holds.

\begin{lemma} \label{lem:rand}
Let $\A$ be a learning algorithm which produces the prediction function $\A(Z_1^i)$ at time $i+1$, i.e. from the data 
$Z_1^i=(Z_1,\dots,Z_i)$.
Let $\cL$ be the randomized algorithm which produces a prediction function $\cL(Z_1^n)$ 
drawn according to the uniform
distribution on $\{\A(\emptyset),\A(Z_1),\dots,\A(Z_1^n)\}$. 
The (doubly) expected risk of $\cL$ is equal to $\frac{1}{n+1}$ times 
the expectation of the cumulative loss of $\A$ on the sequence $Z_1,\dots,Z_{n+1}$,
where $Z_{n+1}$ denotes a random variable independent of the training set 
$Z_1^n\eqdef(Z_1,\dots,Z_n)$ and with the same distribution $P$.
\end{lemma}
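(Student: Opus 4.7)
The plan is to unpack the two layers of expectation defining the risk of the randomized predictor $\cL$ and then use the i.i.d.\ assumption to rewrite each term as a one-step prediction loss on a fresh sample, so that the sum of those terms is exactly a cumulative loss along $Z_1,\dots,Z_{n+1}$.

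First, I would condition on the training sample $Z_1^n$ and integrate out the internal randomization of $\cL$. Since $\cL(Z_1^n)$ is uniform on $\{\A(\emptyset),\A(Z_1),\dots,\A(Z_1^n)\}$, its expected risk given $Z_1^n$ is simply the average
\[
\frac{1}{n+1}\sum_{i=0}^n R\bigl(\A(Z_1^i)\bigr).
\]
Taking expectation over $Z_1^n$ and using linearity gives
\[
\E_{Z_1^n}\E_{\cL} R\bigl(\cL(Z_1^n)\bigr) \;=\; \frac{1}{n+1}\sum_{i=0}^n \E_{Z_1^n} R\bigl(\A(Z_1^i)\bigr).
\]
Since $\A(Z_1^i)$ depends only on the first $i$ observations, the $Z_{i+1},\dots,Z_n$ part can be dropped, yielding $\E_{Z_1^i} R(\A(Z_1^i))$ for each term.

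Next, I would introduce the auxiliary random variable $Z_{n+1}$, independent of $Z_1^n$ and with the same distribution $P$, and use the i.i.d.\ property of the sequence $Z_1,Z_2,\ldots$ to identify the ``test'' variable with the chronologically next one. Explicitly, by definition of the risk,
\[
R\bigl(\A(Z_1^i)\bigr) \;=\; \E_{Z\sim P}\,\ell\bigl(Y,\A(Z_1^i)(X)\bigr),
\]
and because $Z_{i+1}$ is independent of $Z_1^i$ and distributed according to $P$, this equals $\E_{Z_{i+1}}\,\ell\bigl(Y_{i+1},\A(Z_1^i)(X_{i+1})\bigr)$. Therefore
\[
\E_{Z_1^i} R\bigl(\A(Z_1^i)\bigr) \;=\; \E_{Z_1^{i+1}}\,\ell\bigl(Y_{i+1},\A(Z_1^i)(X_{i+1})\bigr).
\]

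Finally, I would plug this back in, use Fubini/linearity to pull the sum inside the expectation over the full $(n+1)$-tuple $Z_1^{n+1}$, and recognize the resulting quantity as $1/(n+1)$ times the cumulative loss of $\A$ played sequentially on $Z_1,\dots,Z_{n+1}$. The argument is essentially bookkeeping; no step is a genuine obstacle, the only subtlety being the implicit use of i.i.d.\ to replace an independent test draw by the next observation in the stream, which is what makes the batch-to-sequential transfer work.
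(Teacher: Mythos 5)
Your proof is correct and follows exactly the route the paper takes: the paper identifies the proof of this lemma with steps \eqref{eq:p2} and \eqref{eq:pb} of Appendix \ref{app:proofpm}, namely rewriting each $\E_{Z_1^i} R(\A(Z_1^i))$ as the expected loss on the fresh point $Z_{i+1}$ (using that $\A(Z_1^i)$ depends only on $Z_1^i$) and then collecting the sum into a single expectation over $Z_1^{n+1}$. Nothing to add.
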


My second contribution to model selection aggregation in \cite{Aud09a} is to 
provide a different viewpoint of the progressive mixture rule from the one in \cite{Jud08},
leading to a slight improvement in the moment condition of the initial version of \cite{Jud08}.
The result is the following and is extended to the $L_q$ loss functions for $q\ge 1$ in
\cite[Section 7]{Aud09a}.
\begin{thm}
Assume that $\|g_j\|_\infty\le 1$ for any $j\in\{1,\dots,d\}$,
and $\E |Y|^s\le A$ for some $s\ge 2$ and $A>0$.
For $\lam=C_1 \big(\frac{\log d}{n}\big)^{2/(s+2)}$ with $C_1>0$, the expected excess risk of PM
is upper bounded by $C \big(\frac{\log d}{n}\big)^{s/(s+2)}$, that is
        $$
        \E_{Z_1^n} R\bigg(\frac{1}{n+1} \sum_{i=0}^n \expec{g}{\pi_{-\lam \Sigma_i}} g\bigg) - R(\gms) 
          \le C \bigg(\frac{\log d}{n}\bigg)^{s/(s+2)},
        $$
for a quantity $C$ which depends only on $C_1$, $A$ and $s$.
\end{thm}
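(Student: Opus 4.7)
The plan is to adapt the bounded-case argument behind Theorem \ref{th:pim} to unbounded $Y$ by pairing Barron's telescoping identity with a truncation of the loss calibrated to the $s$-th moment of $Y$.

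First, I would reduce the batch risk to an online cumulative loss. Writing $\hh_i = \expec{g}{\pi_{-\lam\Sigma_i}} g$ and $\hg = \frac{1}{n+1}\sum_{i=0}^n \hh_i$, the convexity of $R$ and Jensen's inequality give $R(\hg) \le \frac{1}{n+1}\sum_{i=0}^n R(\hh_i)$, and Lemma \ref{lem:rand} rewrites its expectation as $\frac{1}{n+1}\E\sum_{i=0}^n[Y_{i+1}-\hh_i(X_{i+1})]^2$. A further Jensen in $g$ bounds each summand by $\expec{g}{\pi_{-\lam\Sigma_i}}\ell_{i+1}(g)$, where $\ell_{i+1}(g) = [Y_{i+1}-g(X_{i+1})]^2$. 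The elementary transfer inequality $\E_\rho L \le -\frac{1}{\lam}\log \E_\rho e^{-\lam L}+\frac{\lam}{2}\E_\rho L^2$ (valid for $L\ge 0$ and any probability measure $\rho$, via $e^{-x}\le 1-x+x^2/2$ on $x\ge 0$ and $\log(1+t)\le t$), applied with $\rho = \pi_{-\lam\Sigma_i}$, turns each term into the log-Laplace $-\frac{1}{\lam}\log \expec{g}{\pi_{-\lam\Sigma_i}}e^{-\lam\ell_{i+1}(g)}$ plus a second-order residual $\frac{\lam}{2}\expec{g}{\pi_{-\lam\Sigma_i}}\ell_{i+1}(g)^2$.

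Second, Barron's telescoping identity
\[
\sum_{i=0}^n \Big(-\frac{1}{\lam}\log \expec{g}{\pi_{-\lam\Sigma_i}} e^{-\lam\ell_{i+1}(g)}\Big) \;=\; -\frac{1}{\lam}\log \expec{g}{\pi} e^{-\lam\Sigma_{n+1}(g)} \;\le\; \min_j \Sigma_{n+1}(g_j) + \frac{\log d}{\lam}
\]
combined with the previous step yields, after taking expectation,
\[
\E R(\hg) - R(\gms) \;\le\; \frac{\log d}{\lam(n+1)} + \frac{\lam}{2(n+1)}\,\E \sum_{i=0}^n \expec{g}{\pi_{-\lam\Sigma_i}} \ell_{i+1}(g)^2.
\]
The final step is to control the fourth-moment residual $\E \ell_{i+1}(g)^2 = \E [Y-g(X)]^4$ using only $\E|Y|^s\le A$. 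For $s\ge 4$ one has $\E Y^4\le A^{4/s}$ and the residual is bounded directly. For $s\in[2,4)$, I would truncate the loss at a threshold $M = M(\lam)$: apply the transfer inequality to $L = \ell\wedge M$ (so that $L^2\le ML$) and separately bound the tail $\E(\ell-M)_+\le C A\,M^{1-s/2}$ by Markov using $\ell\le 2(Y^2+1)$. Choosing $M$ as a suitable power of $\lam$ and balancing the three contributions produces a total correction of order $\lam^{s/2}$; setting $\lam = C_1(\log d/n)^{2/(s+2)}$ equalizes this with $\log d/(\lam n)$ to give the claimed rate.

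The main obstacle is this final balancing. Truncating $Y$ at $\lam^{-1/2}$ only produces a correction of order $\lam^{(s-2)/2}$ and the weaker rate $(\log d/n)^{(s-2)/s}$. Achieving the sharp exponent $s/(s+2)$ requires truncating the loss $\ell$ itself at a level coupled with $\lam$, and---crucially---absorbing without inflating the leading constant the factor $1/(1-\lam M/2)$ that appears when reorganizing $\E_\rho(\ell\wedge M)\le -\frac{1}{\lam}\log \E_\rho e^{-\lam(\ell\wedge M)}+\frac{\lam M}{2}\E_\rho(\ell\wedge M)$. This is typically done via a localized variant of the transfer inequality (PAC-Bayesian style), which is where the "different viewpoint" mentioned in the excerpt plays its key role.
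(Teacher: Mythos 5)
Your skeleton (Jensen plus the tower rule to pass to cumulative losses, then the telescoping of the log-Laplace terms) is exactly the one used in Appendix~\ref{app:proofpm} for the bounded case, and reducing the problem to controlling the expected Jensen defect
\[
\E\Big\{[Y-\E_{g\sim\rho}g(X)]^2+\tfrac{1}{\lam}\log\E_{g\sim\rho}e^{-\lam[Y-g(X)]^2}\Big\}
\]
is the right move. The gap is in how you control this defect. Your global transfer inequality $\E_{\rho}L\le-\frac{1}{\lam}\log\E_{\rho}e^{-\lam L}+\frac{\lam}{2}\E_{\rho}L^2$ charges a residual $\frac{\lam}{2}\E_{\rho}\ell^2$ on \emph{every} round, and $\E\,\E_\rho\ell^2$ (or, after truncating the loss at $M$, the quantity $M\,\E\,\E_\rho(\ell\wedge M)$) is of constant order in $\lam$: it does not vanish. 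Consequently no choice of $M$ can beat the exponent $1/2$, and your own balancing yields $(\log d/n)^{(s-2)/s}$, which is strictly worse than $(\log d/n)^{s/(s+2)}$ for every $s\ge 2$ and is vacuous at $s=2$, precisely where the theorem asserts the rate $1/2$. The obstruction is not the factor $1/(1-\lam M/2)$ you flag at the end; it is that a second-order expansion applied uniformly in $Y$ necessarily pays $\lam\,\E Y^2$ in full.

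The mechanism that yields $s/(s+2)$ is different. The map $y'\mapsto e^{-\lam(y-y')^2}$ is genuinely concave on $[-1,1]$ as soon as $2\lam(|y|+1)^2\le 1$, so the Jensen defect is \emph{exactly nonpositive} on the event $\{|Y_{i+1}|\le(2\lam)^{-1/2}-1\}$: no $O(\lam)$ price is paid there, which is what makes the $1/n$ rate possible in the bounded case. One only pays on the rare complementary event, where a second-order bound on the defect of the form $C\lam(|Y|+1)^2e^{c\lam|Y|}\Var_{g\sim\rho}\,g(X)$ applies (with the crude fallback of order $|Y|$ reserved for $|Y|\gtrsim 1/\lam$); by Markov with $\E|Y|^s\le A$ and threshold $M_0\sim\lam^{-1/2}$, its expectation is $O(\lam\,M_0^{2-s})=O(\lam^{s/2})$. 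Balancing $\frac{\log d}{\lam n}$ against $\lam^{s/2}$ gives exactly $\lam=C_1(\log d/n)^{2/(s+2)}$ and the claimed rate. The arithmetic your route misses is that the defect must simultaneously vanish below the concavity threshold and be quadratic in $Y$ with a prefactor $\lam$ above it; a residual of order $\lam\,\E\,\E_\rho\ell^2$ satisfies neither requirement.
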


The convergence rate cannot be improved in a minimax sense \cite[Section 8.3.2]{Aud09a}.
These results show how heavy output tails influence the learning rate: for the limiting case $s=2$,
the bounds are of order $n^{-1/2}$ while for $s$ going to infinity, it is of order of $n^{-1}$, that is the rate
in the bounded case, or in the uniformly bounded conditional exponential moment setting.

The lower bounds developed to prove the minimax optimality of the above
result are based on a refinement of Assouad's lemma, which 
allows to get much tighter constants. For instance, 
they improve the lower bounds for Vapnik-Cervo\-nenkis classes
\cite[Chapter 14]{Dev96} by a factor greater than $1000$, and lead to the
following simple bound.


\begin{thm} 
Let $\cF$ be a set of binary classification functions of VC-dimension $V$.
For any classification rule $\hf$ trained on a data set of size $n\ge \frac{V}4$, there exists a probability distribution generating the data for which
	\beglab{eq:vcnew}
	\E R(\hf) - \und{\inf}{f\in\cF} R(f) 
		\geq \frac{1}{8} \sqrt{\frac{V}{n}}.
	\endlab
\end{thm}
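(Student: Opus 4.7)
The plan is to apply the refined Assouad-type argument (alluded to just before the statement) to a hypercube of distributions built on a shattered set. Fix $x_1,\dots,x_V\in\X$ shattered by $\cF$, and for each $\sigma\in\{-1,+1\}^V$ choose $f_\sigma\in\cF$ with $f_\sigma(x_i)=\sigma_i$ for $i=1,\dots,V$. Define a probability distribution $P_\sigma$ on $\X\times\{-1,+1\}$ by taking $X$ uniform on $\{x_1,\dots,x_V\}$ and setting
$$P_\sigma(Y=\sigma_i\mid X=x_i)=\frac{1+\delta}{2},$$
for a parameter $\delta\in(0,1]$ to be tuned. The Bayes classifier for $P_\sigma$ is $f_\sigma$, and the excess risk of any classifier $\hat f$ satisfies the clean decomposition
$$R(\hat f)-\inf_{f\in\cF} R(f)\;=\;\frac{\delta}{V}\sum_{i=1}^V \mathbb{1}\{\hat f(x_i)\neq \sigma_i\},$$
since the Bayes classifier coincides with $f_\sigma\in\cF$ and the marginal is uniform on the shattered set.

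The next step is to reduce the lower bound on the expected Hamming distance to a coordinate-wise two-point testing problem. For each $i$, the distributions $P_{\sigma,+,i}^{\otimes n}$ and $P_{\sigma,-,i}^{\otimes n}$ (differing only in the sign of the $i$-th coordinate of $\sigma$) differ only through the conditional at $x_i$, and the data at $x_i$ is a binomial sample of size concentrated around $n/V$. A direct computation gives
$$K\!\left(P_{\sigma,+,i}^{\otimes n},P_{\sigma,-,i}^{\otimes n}\right)\;=\;\frac{n}{V}\,\delta\log\frac{1+\delta}{1-\delta},$$
which is of order $n\delta^2/V$ when $\delta$ is bounded away from $1$. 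Classical Assouad lemma combined with Pinsker would then give a factor slightly worse than $1/8$; instead I would invoke the sharper hypercube testing inequality from Section~\ref{sec:aos} (the ``refinement of Assouad's lemma with tighter constants'' explicitly discussed just above the statement), which bounds the error at coordinate $i$ below by $\frac{1}{2}(1-\alpha)$ for a quantity $\alpha$ controlled by the above KL rather than by Pinsker's inequality directly.

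Finally, I would choose $\delta=\tfrac{1}{2}\sqrt{V/n}$; the constraint $\delta\le 1$ is exactly $n\ge V/4$, which is where the hypothesis of the theorem enters. With this choice the per-coordinate test error is at least $\tfrac14$, so summing over the $V$ coordinates and using the excess-risk identity above yields, for some $\sigma$,
$$\E R(\hat f)-\inf_{f\in\cF} R(f)\;\ge\;\frac{\delta}{V}\cdot V\cdot\frac14\;=\;\frac{\delta}{4}\;=\;\frac{1}{8}\sqrt{\frac{V}{n}}.$$
The main obstacle is producing the refined Assouad estimate that keeps the universal constant equal to $1/8$ rather than the looser value one gets from a routine Pinsker-based Assouad argument; this is exactly the improvement which, the paper emphasizes, gains factors of more than $1000$ over previous VC lower bounds. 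All other steps (shattering, hypercube construction, excess-risk decomposition, choice of $\delta$) are mechanical once this sharpened two-point bound is in hand.
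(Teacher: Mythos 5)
Your skeleton is the right one, and it matches the strategy the paper itself alludes to (a hypercube of distributions supported on a shattered set, the exact excess-risk identity $R(\hat f)-\inf_{f\in\cF}R(f)=\frac{\delta}{V}\sum_i\dsone_{\hat f(x_i)\neq\sigma_i}$, an Assouad-type reduction to $V$ two-point tests, and the choice $\delta=\frac12\sqrt{V/n}$ whose admissibility is exactly the hypothesis $n\ge V/4$). Note that the paper does not actually prove this theorem in the text: it states it as a corollary of the refined Assouad lemma developed in \cite{Aud09a}, so there is no in-paper argument to compare against beyond this outline.

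The genuine gap is that the one ingredient carrying all the content --- the per-coordinate testing bound with error at least $\tfrac14$ uniformly over $n\ge V/4$ --- is invoked but not supplied, and your stated fallback demonstrably cannot deliver it. With $\delta=\frac12\sqrt{V/n}$ the quantity $\frac{n}{V}\delta\log\frac{1+\delta}{1-\delta}$ is not ``of order $n\delta^2/V$ when $\delta$ is bounded away from $1$'': here $\delta$ is \emph{not} bounded away from $1$; it equals $1$ at $n=V/4$, where the KL divergence is infinite, Pinsker gives total variation $1$, and the Assouad bound collapses to $0$. Even at $n=V$ one gets KL $=\frac12\log 3$ and a per-coordinate error of about $0.238<\tfrac14$. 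The correct sharp bound must exploit something the KL route cannot see, e.g.\ that with probability at least $(1-1/V)^n$ the point $x_i$ is never sampled and the test is forced to guess (this alone gives error $\ge\frac12 e^{-n/(V-1)}$, which is $>\tfrac14$ in the degenerate regime $\delta=1$), combined with an affinity/Hellinger-type computation rather than Pinsker in the non-degenerate regime. Until that refined two-point estimate is written out and shown to yield $\tfrac14$ for all $n\ge V/4$, the proof establishes a weaker constant on part of the range and nothing at all near $n=V/4$, so the claimed bound \eqref{eq:vcnew} is not proved.
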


\subsection{Limitation of progressive indirect mixture rules} \label{sec:devsubopt}

Let $\hg_\lam$ denote a progressive indirect mixture rule (it could be a progressive mixture or not) for some $\lam>0$.
Under boundedness assumptions (and even under some exponential moment assumptions) and appropriate choice of $\lam$,
$\hg_{\lam}$ satisfies an expected excess risk bound
of order $\frac{\log d}n$. Then one would also expect the excess risk 
$R(\hg) - R(\gms)$ to be of order $\frac{\log d}n$ with high probability. In fact, this does not necessarily happen
as the following theorem holds for $d=2$.

\begin{thm}
Let $g_1$ and $g_2$ be the constant functions respectively equal to $1$ and $-1$.
For any $\lam>0$ and any training set size $n$ large enough, there exist $\eps>0$ and a distribution
generating the data for which
$Y\in[-1,1]$ almost surely, and with probability larger than $\eps$, we have
	$$
	R(\hg_{\lam})- R(\gms) \ge c \sqrt{\frac{\log(e\eps^{-1})}{n}}
	$$
where $c$ is a positive constant only depending on $\lam$. 
\end{thm}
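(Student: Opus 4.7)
The plan is to exhibit an adversarial $Y$-distribution with a small positive drift $\mu$ such that, on a ``bad'' sample-path event $E_n$ of probability at least $\eps$, the partial-sum walk $S_i=Y_1+\cdots+Y_i$ stays strictly negative throughout the sample, forcing $\hg_\lam$ close to $-1$, and hence the excess risk to be of order $\mu\asymp\sqrt{\log(1/\eps)/n}$.

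First, I exploit the very simple structure of constant base functions: a direct computation gives $\Sigma_i(g_1)-\Sigma_i(g_2)=-4S_i$, so the posterior weights are $w_i^{(1)}=(1+e^{-4\lam S_i})^{-1}$ and $w_i^{(2)}=1-w_i^{(1)}$, and the progressive mixture output is the constant
$$\hg_\lam^{\mathrm{PM}}=\frac{1}{n+1}\sum_{i=0}^n\tanh(2\lam S_i).$$
Evaluating the PIM-defining inequality~\eqref{eq:pimdef} at $Y=-1$ yields the pointwise constraint $(1+\hh_i)^2\le -\lam^{-1}\log\bigl(w_i^{(1)}e^{-4\lam}+w_i^{(2)}\bigr)$; Taylor-expanding when $S_i\to-\infty$ (so $w_i^{(2)}\to 1$) gives $|1+\hh_i|\le C_\lam\,e^{2\lam S_i}$. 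Thus both PM and an arbitrary PIM satisfy $\hh_i+1\le C_\lam\,e^{2\lam S_i}$ pointwise.

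Next, take $Y\in\{-1,+1\}$ with $\P(Y=1)=(1+\mu)/2$, so $\gms=g_1$ and for any constant $\hg\in[-1,1]$
$$R(\hg)-R(\gms)=(1-\hg)(2\mu-1-\hg),$$
which is positive only when $\hg$ lies within distance $2\mu$ of $-1$. Consider $E_n=\{S_i\le-1\text{ for all }1\le i\le n\}$; on $E_n$, Brownian-meander-type estimates yield $|S_i|\gtrsim\sqrt{i}$ with high conditional probability, so $\sum_{i\ge 1}e^{2\lam S_i}=O_\lam(1)$. This forces $\hg_\lam\le-1+C_\lam/n$ on $E_n$, hence the excess risk is at least $(2-C_\lam/n)(2\mu-C_\lam/n)\ge 4\mu-C_\lam'/n$.

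Finally, I estimate $\P_\mu(E_n)$ by exponential tilting: the classical ballot/Catalan identity gives $\P_0(E_n)\ge c_0/\sqrt{n}$, and $d\P_\mu/d\P_0=\exp(\mathrm{artanh}(\mu)\,S_n+\tfrac{n}{2}\log(1-\mu^2))=\exp(\mu S_n-n\mu^2/2+O(n\mu^4))$ together with $|S_n|=O(\sqrt{n})$ under the meander yields $\P_\mu(E_n)\ge c_\lam\,n^{-1/2}e^{-n\mu^2/2}$. Setting $n\mu^2/2=\log(1/\eps)-\tfrac12\log n+\log c_\lam$ makes $\P_\mu(E_n)\ge\eps$ and produces $\mu=\sqrt{(2\log(1/\eps)-\log n+O(1))/n}$, which for $\log(1/\eps)\gtrsim\log n$ (e.g.\ $\eps\le 1/n$) is of order $\sqrt{\log(1/\eps)/n}$. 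Then the excess-risk bound $4\mu-C_\lam'/n\ge c_\lam''\sqrt{\log(1/\eps)/n}$ is the claimed deviation.

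The hardest part is keeping the two random-walk estimates on $E_n$ simultaneously sharp: the Catalan-type lower bound $\P_0(E_n)\ge c_0/\sqrt{n}$, transported to drift $\mu$ via the tilt, and the meander-type pointwise control $|S_i|\gtrsim\sqrt{i}$ needed to bound $\sum_i e^{2\lam S_i}$ uniformly on $E_n$. Each is classical in isolation, but controlling the constants $c_\lam,C_\lam$ explicitly is what makes the bound $4\mu$ match the target $\sqrt{\log(1/\eps)/n}$ with the $\eps$-probability correctly calibrated.
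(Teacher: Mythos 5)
Your overall strategy is the right one and, as far as one can tell from the summary given here (the detailed proof is deferred to the cited source, whose stated consequence is exactly the instantiation $\eps=n^{-c_1}$, $c_1>1/2$, of your calibration), it is essentially the same route: reduce everything to the sign of the partial sums $S_i$, pin the estimator near the wrong constant on the event that the walk stays negative, and trade the drift $\mu$ against the probability of that event by exponential tilting. The algebra $\Sigma_i(g_1)-\Sigma_i(g_2)=-4S_i$, the PM formula $\tanh(2\lam S_i)$, the use of the defining inequality \eqref{eq:pimdef} at $Y=-1$ to force \emph{any} PIM to satisfy $|1+\hh_i|\le C_\lam e^{2\lam S_i}$ once $S_i\le -1$, the excess-risk identity $(1-\hg)(2\mu-1-\hg)$, and the tilting identity are all correct.

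The one step that does not survive as written is the claim that on $E_n=\{S_i\le -1,\ \forall i\le n\}$ one has $\sum_{i\ge1}e^{2\lam S_i}=O_\lam(1)$, hence $\hg_\lam\le -1+C_\lam/n$ \emph{on $E_n$}. On $E_n$ itself the only available bound is $e^{2\lam S_i}\le e^{-2\lam}$, so the sum can be of order $n$; and even on the typical meander path the pointwise control $|S_i|\gtrsim\sqrt i$ \emph{simultaneously for all} $i$ fails near the origin of the rescaled path, so the $O(1)$ bound is not attainable on an event of probability comparable to $\P(E_n)$. The repair is to weaken the target: you only need $\frac1{n+1}\sum_i(1+\hh_i)\le\mu\asymp\sqrt{\log(e\eps^{-1})/n}$, i.e. $\sum_i e^{2\lam S_i}\le C\sqrt{n\log(e\eps^{-1})}$. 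This follows from the expectation bound $\E_0\bigl[e^{2\lam S_i}\,\dsone_{E_i}\bigr]\le C_\lam i^{-3/2}$ (reflection/ballot estimates), which gives $\E_0\bigl[\sum_i e^{2\lam S_i}\,\dsone_{E_n}\bigr]\le C_\lam'$, and then Markov's inequality conditionally on $E_n$ (whose $\P_0$-probability is of order $n^{-1/2}$) yields a sub-event $E_n'\subseteq E_n$ with $\P_0(E_n')\ge\frac12\P_0(E_n)$ on which $\sum_i e^{2\lam S_i}\le C_\lam''\sqrt n$. This is enough provided $\log(e\eps^{-1})$ exceeds a constant depending on $\lam$, which your final choice of $\eps$ (e.g.\ $\eps=1/n$) guarantees. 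The same restriction to a sub-event must also carry the condition $|S_n|\le C\sqrt n$ that your tilting step quietly uses; and note that the tilt produces an extra factor $e^{-\mathrm{artanh}(\mu)\,|S_n|}=e^{-O(\sqrt{n\mu^2})}$ which is not a constant, but is of lower order than $e^{-n\mu^2/2}$ and is absorbed by the calibration of $\mu$. With these repairs the argument closes and yields the claimed bound with $c$ depending only on $\lam$.
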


More precisely, in \cite{Aud07b}, it is shown that for large enough $n$, and some constants $c_1>1/2$ and $c_2>0$ only depending on $\lam$, 
with probability at least $1/n^{c_1}$, we have
$R(\hg_{\lam})- R(\gms) \ge c_2 \sqrt{\fracc{(\log n)}{n}}$.
Since $c_1>1/2$, there is naturally no contradiction with the fact that, in expectation, the excess risk is of order $\frac{\log d}n$.

\subsection{Getting round the previous limitation} \label{sec:star}

I now present the algorithm introduced in \cite{Aud07b}, and called the empirical star estimator,
which has both expectation and deviation convergence rate of order $\frac{\log d}n$.
The empirical risk of a prediction function $g:\X\ra \R$ is defined by
  $$r(g)=\frac1n\sum_{i=1}^n [Y_i-g(X_i)]^2.$$
Let $\hgerm$ be an empirical risk minimizer among the reference functions: 
  $$\hgerm\in\und{\argmin}{g\in\{g_1,\dots,g_d\}} r(g).$$
For any prediction functions $g,g'$, let $[g,g']$ denote the set of functions which are convex combination of $g$ and $g'$:
$[g,g']=\big\{\alpha g+(1-\alpha)g':\alpha\in[0,1]\big\}.$
The empirical star estimator $\hgstar$ minimizes the empirical risk over a star-shaped set of functions, precisely:
  $$\hgstar\in\und{\argmin}{g\in[\hgerm,g_1]\cup\cdots\cup[\hgerm,g_d]} r(g).$$
The main result concerning this estimator is the following.

\begin{thm} \label{th:star}
Assume that $|Y|\le B$ almost surely and $\|g_j\|_\infty\le B$ for any $j\in\{1,\dots,d\}$.
Then the empirical star algorithm satisfies:
for any $\eps>0$, with probability at least $1-\eps$, 
  $$
  R(\hgstar) - R(\gms)\le \frac{200 B^2 \log[3d(d-1)\eps^{-1}]}n\le \frac{600 B^2 \log(d\eps^{-1})}n.
  $$
Consequently, we also have
  $$
  \E R(\hgstar) - R(\gms)\le \frac{400 B^2 \log(3d)}n.
  $$
\end{thm}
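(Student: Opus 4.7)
The plan is to exploit the Pythagorean structure of each segment $[g_i, g_j]$, which is convex in $L^2(\px)$, combined with a Bernstein-type variance argument, to upgrade the usual ERM rate $O(\sqrt{\log d / n})$ into the fast rate $O(\log d / n)$ claimed. The first step is a reduction: since $\hgerm$ is almost surely equal to one of $g_1, \dots, g_d$, say $\hgerm = g_{i_0}$, the random star $\bigcup_j [\hgerm, g_j]$ is contained in the deterministic union $\bigcup_{1 \leq i < j \leq d} [g_i, g_j]$ of $d(d-1)/2$ one-dimensional convex sets. A union bound over these pairs produces the factor $\log[3 d(d-1)\eps^{-1}]$ in the theorem.

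Next, fix a pair $(i,j)$, let $\bar g_{ij}$ minimize $R$ on $[g_i, g_j]$, and parametrize $g_\alpha = (1-\alpha) g_i + \alpha g_j$. Since the segment is a convex subset of $L^2(\px)$ and $\bar g_{ij}$ is the $L^2$-projection of the regression function $\eta(x) = \E[Y|X=x]$ onto it, the Hilbert-space Pythagoras gives $R(g_\alpha) - R(\bar g_{ij}) \geq \|g_\alpha - \bar g_{ij}\|_{L^2(\px)}^2$. The quantity $(Y - g_\alpha(X))^2 - (Y - \bar g_{ij}(X))^2$ is a polynomial of degree two in $\alpha$, so the centered process $\alpha \mapsto (P_n - P)[(Y - g_\alpha(X))^2 - (Y - \bar g_{ij}(X))^2]$ has only two random coefficients, namely the empirical-minus-true means of $(g_j - g_i)(\bar g_{ij} - Y)$ and of $(g_j - g_i)^2$, whose variances are both bounded by a constant times $B^2 \|g_j - g_i\|^2$. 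Applying Bernstein's inequality to these two scalars, combining with the union bound over pairs, and using $\sqrt{uv} \leq u/4 + v$ to absorb the cross term, gives with probability $\geq 1 - \eps$, uniformly over all pairs $(i,j)$ and all $g \in [g_i, g_j]$,
\[
|r(g) - r(\bar g_{ij}) - [R(g) - R(\bar g_{ij})]| \leq \tfrac{1}{2}[R(g) - R(\bar g_{ij})] + C B^2 \log(d^2/\eps)/n.
\]

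Third, $\hgstar$ lies on some segment $[g_{i_0}, g_{j_0}]$ and, being the minimizer of $r$ over the whole star, must also minimize $r$ on that segment, giving $r(\hgstar) \leq r(\bar g_{i_0, j_0})$. The Bernstein bound of step two at $g = \hgstar$ then yields $R(\hgstar) - R(\bar g_{i_0, j_0}) \leq C' B^2 \log(d^2/\eps)/n$. The delicate final step is to pass from the reference $\bar g_{i_0, j_0}$ to the actual target $g^* = \gms$; the naive decomposition $R(\hgstar) - R(g^*) = [R(\hgstar) - R(\bar g_{i_0, j_0})] + [R(\bar g_{i_0, j_0}) - R(g^*)]$ fails because the second bracket can be of slow order $O(\sqrt{\log d / n})$, mirroring the slow rate of ERM on $\{g_1,\dots,g_d\}$. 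The resolution uses the additional empirical inequalities $r(\hgstar) \leq r(\tilde g_{i_0, j^*}) \leq r(g^*) - \|g^* - \tilde g_{i_0, j^*}\|_n^2$, where $\tilde g_{i_0, j^*}$ denotes the minimizer of $r$ over the segment $[g_{i_0}, g_{j^*}]$ (which lies in the star); the second inequality here is the empirical Pythagoras on that segment. Combining this empirical ``margin'' with the step-two Bernstein control applied on segment $[g_{i_0}, g_{j^*}]$ at $g = g^*$ and $g = \tilde g_{i_0, j^*}$ cancels the slow component and delivers the fast rate $R(\hgstar) - R(g^*) \leq C B^2 \log(d^2/\eps)/n$; the deterministic constants can be tracked throughout to match the $200 B^2$ stated. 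The main obstacle is precisely this last cancellation---handling the mismatch between the ``active'' segment containing $\hgstar$ and the ``target'' segment containing $g^*$---which requires fully exploiting the star-wide empirical optimality of $\hgstar$ rather than its optimality on its own segment alone.
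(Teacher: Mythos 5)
Your architecture --- the union bound over the $d(d-1)$ ordered pairs of segments, the quadratic-in-$\alpha$ parametrization so that uniformity over a segment reduces to Bernstein's inequality for two scalar sums whose variances are $O(B^2\|g_j-g_i\|^2)$, the population projection inequality $R(g)-R(\bar g_{ij})\ge\|g-\bar g_{ij}\|^2$ for absorbing the variance, and the recognition that the whole difficulty is transferring from the active segment to the target $\gms$ --- is the right one and matches the ingredients of the proof in \cite{Aud07b} (the present text only states the theorem and defers the ``intricate'' proof to that reference). Steps 1--3 are sound. The problem is that the decisive last step is asserted rather than carried out, and as literally described it does not close; two specific ingredients are missing.

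First, the margin $\|g^*-\tilde g_{i_0,j^*}\|_n^2$ is of no use unless it can be lower-bounded by a constant times $\|g^*-\hgerm\|_n^2$: the dangerous fluctuation is $(r-R)(\hgerm)-(r-R)(g^*)$ across the full target segment, whose Bernstein bound is $\sqrt{B^2\|\hgerm-g^*\|^2\log(\cdot)/n}$, i.e.\ of order $\lambda^{-1}\|\hgerm-g^*\|^2+\lambda B^2\log(\cdot)/n$ after the $\sqrt{uv}$ step --- not of order $\|g^*-\tilde g_{i_0,j^*}\|^2$. The missing observation is that $r(\hgerm)\le r(g^*)$ (the defining ERM property of $\hgerm$, which your sketch never invokes) forces the minimizer of the empirical quadratic $\alpha\mapsto r\big((1-\alpha)g^*+\alpha\hgerm\big)$ to lie at $\alpha\ge 1/2$, whence $\|g^*-\tilde g_{i_0,j^*}\|_n^2\ge\tfrac14\|g^*-\hgerm\|_n^2$; only then can the margin absorb that term. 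Second, converting the empirical comparison $r(\hgstar)\le r(\tilde g_{i_0,j^*})$ into a population one requires decomposing the centered process through the common endpoint $\hgerm$, which produces an additional fluctuation of order $\|\hgstar-\hgerm\|^2$ on the active segment; the $S_2$-margin cannot absorb this, and your step-3 bound relative to $\bar g_{i_0,j_0}$ does not help since $R(\bar g_{i_0,j_0})$ is not comparable to $R(g^*)$. You need the second empirical Pythagoras inequality $r(\hgerm)-r(\hgstar)\ge\|\hgerm-\hgstar\|_n^2$ on the active segment, used simultaneously with (e.g.\ averaged with) the first margin. You also need a third Bernstein event per pair to convert $\|\cdot\|_n^2$ into $\|\cdot\|^2$ along segments --- presumably the origin of the factor $3$ in $\log[3d(d-1)\eps^{-1}]$. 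With these additions your plan does go through; without them the ``cancellation'' in your final paragraph is a hope, not an argument.
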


An additional advantage of this empirical star estimator is that it does not need to know the constant $B$. In
other words, it is adaptive to the smallest value of $B$
for which the boundedness assumptions hold. This was not the case of 
the progressive mixture rules in which we need to take $\lam \le 1/(2B^2)$
for the indirect ones and $\lam \le 1/(8B^2)$ for the ``direct'' one in order to state 
Inequality \eqref{eq:pim}.
On the negative side, the theoretical guarantee on the expected excess risk 
is $200$ times larger than the one stated for the best PIM. However, this is more an artefact of the intricate proof 
of Theorem \ref{th:star} than a drawback of the algorithm.

Another difference between progressive mixture rules is that the function output 
by the estimator is inside $\cup_{1\le j<k\le d} [g_j,g_d]$, which is
not in general the case for the progressive (indirect) mixture rules.
We have already seen in Section \ref{sec:subopt} that
the empirical risk minimizer on $\{g_1,\dots,g_d\}$ has not the minimax optimal rate.
A natural question in view of the empirical star algorithm is whether
empirical risk minimization on $\cup_{1\le j<k\le d} [g_j,g_d]$ would reach 
the $(\log d)/n$ rate. It can be proved for $d=3$ that, even under boundedness assumptions, the rate cannot be
better than $n^{-2/3}$ for an adequate choice of the functions and the distribution 
(proof omitted by lack of interest in negative results).

Interestingly, Lecué and Mendelson \cite{Lec09} proposed a variant of the empirical star algorithm,
which also uses the empirical risk minimizer $\hgerm$ to define a set of functions on which 
the empirical risk is minimized. Precisely, for a confidence level $\eps>0$, let $\hat{\G}$
be the set of functions $g\in\{g_1,\dots,g_d\}$ satisfying
  \begin{align*}
  r(g) \le r(\hgerm) & + C B \sqrt{\frac{\log(2 d\eps^{-1})}n}
    \sqrt{\frac{\sum_{i=1}^n [g(X_i)-\hgerm(X_i)]^2}n}\\
  & + C \frac{B^2 \log(2 d\eps^{-1})}n .
  \end{align*}
where $C$ is a positive constant. The final estimator is the empirical risk minimizer in the convex hull of $\hat{\G}$.
It is also shown there that the selection of a subset of functions $\hat{\G}$ before taking the convex hull is necessary 
to achieve the minimax convergence rate since
the empirical risk minimizer on the convex hull of $\{g_1,\dots,g_d\}$ has an excess risk at least of
order $1/\sqrt{n}$ for an appropriate distribution and $d$ of order $\sqrt{n}$.

The advantage of the empirical star algorithm over the empirical risk minimizer on the
convex hull of $\hat{\G}$ is its adaptivity to both the confidence level and the constant $B$,
and a theoretical guarantee of the form $C \frac{\log(d\eps^{-1})}n$ instead of 
$C \frac{\log(d)\log(\eps^{-1})}n$ for the empirical risk minimizer on the
convex hull of $\hat{\G}$.

\section{Convex aggregation} \label{sec:aggconv}

When $d\le \sqrt{n}$, the minimax learning rate for problem \pbc\
and \pbl\ are both of order $\frac{d}n$, meaning that estimators solving problem \pbl\
are solutions to problem \pbc\ for $d\le \sqrt{n}$. So, estimators for
$d\le \sqrt{n}$ are given in the section devoted to linear aggregation (Section \ref{sec:agglin}),
and this section focuses on the case when $d\ge n^{\frac12+\delta}$. 

The literature contains few results for problem \pbc\ with constant 
$c=1$ in \eqref{eq:cgrand} and minimax optimal residual term for $d\ge n^{\frac12+\delta}$, 
with $\delta>0$.
The first type of results is to apply the progressive mixture rule on an appropriate
grid of the simplex \cite{Tsy03}. Another solution is to use the exponentiated 
gradient algorithm introduced and studied by Kivinen and Warmuth \cite{Kiv97} in 
the context of sequential prediction for the quadratic loss, and then extended 
to general loss functions by Cesa-Bianchi \cite{Ces99b}. Lemma \ref{lem:rand} 
has to be invoked to convert these algorithms and the bounds to our statistical framework.
Juditsky, Nazin, Tsybakov and Vayatis \cite{JudNazTsyVay05} has viewed the
resulting algorithm as a stochastic version of the mirror descent algorithm, and
proposed a different choice of the temperature parameter, while still reaching the optimal
convergence rate.
All the above results hold in expectation, and it is not clear that
the deviations of the excess risk bounds are sub-exponential. The 
estimator presented hereafter
does not share this drawback. 

To address problem \pbc\ (defined in page \pageref{pbc}), the first chapter of my PhD thesis establishes empirical excess risk bounds for 
any estimator that produces a prediction function in the convex hull of
$g_1, \dots,g_d$ whatever the empirical data are. 
Any such estimator $\hg$ can be associated with 
a function $\hrho$ mapping a training set to a distribution on 
$\{g_1,\dots,g_d\}$ such that $\hg(Z_1^n) = \E_{g\sim\hrho(Z_1^n)} g.$
Conversely, any mapping $\hrho$ from $\Z^n$ (the set of training sets of size $n$)
to the set $\M$ of distributions on $\{g_1, \dots,g_d\}$ defines
an estimator   
  $$\hg = \E_{g\sim\hrho} g,$$  
where we have dropped the training set $Z_1^n$ for sake of compactness.
Similarly, there exists a distribution $\rhoc$ on $\{g_1,\dots,g_d\}$ such that 
  $$\gc=\E_{g\sim\rhoc} g.$$

The assumptions are boundedness of the functions $g_1,\dots,g_d$ and of the regression function
$\greg:x\mapsto \E(Y|X=x)$ and uniform boundedness of the conditional exponential moments of
the output knowing the input. Precisely, there exist $B>0$, $\alpha > 0,$ and $M>0$
such that for any $g',g''$ in $\{\greg,g_1,\dots,g_d\}$, $\|g'-g''\|_\infty\le B$
and for any $x\in\X$,
            $$
            \E \big(e^{\alpha|Y-\greg(X)|}\big|X=x\big) \leq M.
            $$

\begin{thm}
Under the above assumptions, there exist $C_1 , C_2 >0$ depending only 
on the constant $M$ and the product $\alpha B$ such that for any (prior) distribution
$\pi\in\M$, any $\eps > 0$, and any aggregating procedure $\hrho$ : $\Z^n \rightarrow \M$,
with probability at least $1-\epsilon$, 
  \begin{align} \label{eq:solc}
  R(\E_{g\sim\hrho} & g) - R(\gc) \le \min_{\lam\in[0,C_1]} \bigg\{
        (1+\lam) \big[ r(\E_{g\sim\hrho} g) - r(\gc) \big] \notag\\
        & + \frac{2 \lam}n \sum_{i=1}^n \Var_{g\sim\hrho} g(X_i)
        + C_2\frac{B^2}{n} \frac{K(\hrho,\pi) + \log(2\log(2n)\eps^{-1})}{\lam} \bigg\}.
  \end{align}
\end{thm}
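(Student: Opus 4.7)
The plan is to apply a relative PAC-Bayesian bound (in the spirit of the Bernstein-type inequality (A) and Catoni's localized bound (C3), adapted to the unbounded square loss via the exponential moment assumption) with $\gc$ as the reference point and $\hrho$ as the varying posterior. Concretely, taking $\rho_1 = \pi_1 = \delta_{\gc}$ in the relative version of (A), I would first establish that for every $\lam$ in a suitable range, simultaneously with probability at least $1-\eps$ for all $\hrho \in \M$,
\begin{equation*}
\E_{g\sim\hrho}[R(g) - R(\gc)] \le (1+c_1\lam)\,\E_{g\sim\hrho}[r(g) - r(\gc)] + c_2\lam\, V(\hrho) + \frac{K(\hrho,\pi) + \log(\eps^{-1})}{\lam},
\end{equation*}
where $V(\hrho)$ bounds the cumulant-generating function of the excess square loss. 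The key computation for this step uses the identity $(Y-g(X))^2 - (Y-\gc(X))^2 = (g-\gc)^2(X) + 2(g-\gc)(X)(\gc(X)-Y)$ together with $\|g-\greg\|_\infty \le B$, $\|\gc-\greg\|_\infty \le B$, and the exponential moment bound on $Y-\greg(X)$, yielding $V(\hrho) \le C B^2 \E_{g\sim\hrho}\E_X (g-\gc)^2(X)$ with $C$ depending only on $M$ and $\alpha B$.

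Next I would pass from the $\E_{g\sim\hrho}$-averaged risks to the risks of the aggregated predictor $\E_{g\sim\hrho} g$. The square loss gives the bias-variance identities
\begin{align*}
\E_{g\sim\hrho}[r(g) - r(\gc)] &= r(\E_{g\sim\hrho}g) - r(\gc) + \frac{1}{n}\sum_{i=1}^n \Var_{g\sim\hrho} g(X_i),\\
\E_{g\sim\hrho}[R(g) - R(\gc)] &= R(\E_{g\sim\hrho}g) - R(\gc) + \E_X \Var_{g\sim\hrho} g(X).
\end{align*}
Substituting both into the display above produces $R(\E_{g\sim\hrho}g) - R(\gc)$ on the left, a factor $(1+c_1\lam)$ times $r(\E_{g\sim\hrho}g) - r(\gc)$ on the right, the desired $\frac{1}{n}\sum_i \Var_{g\sim\hrho} g(X_i)$ factor (with the right coefficient), and residual terms I need to absorb.

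The absorption step is the most delicate. I decompose $\E_{g\sim\hrho}\E_X (g-\gc)^2(X) = \E_X \Var_{g\sim\hrho} g(X) + \E_X (\E_{g\sim\hrho}g(X)-\gc(X))^2$. For the "bias" piece, I invoke the convexity identity: since $\gc$ minimizes $R$ over the convex hull of $\{g_1,\dots,g_d\}$ and $\E_{g\sim\hrho}g$ lies in that hull, $\E_X(\E_{g\sim\hrho}g(X)-\gc(X))^2 \le R(\E_{g\sim\hrho}g)-R(\gc)$; its $\lam$-weighted contribution can be moved to the left-hand side at the cost of replacing the factor $1$ by $1/(1-c_0\lam)$, which is harmless for $\lam \le C_1$ with $C_1$ depending only on $M$ and $\alpha B$. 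For the empirical-vs-population variance gap $\E_X\Var_{g\sim\hrho} g(X) - \frac1n\sum_i \Var_{g\sim\hrho} g(X_i)$, I apply an auxiliary PAC-Bayesian concentration inequality on the bounded function $X \mapsto \Var_{g\sim\hrho}g(X) \in [0, B^2]$, which produces a bound of type $\E_X \Var \le (1+c_3\lam)\frac1n\sum_i \Var + c_4 B^2\, K(\hrho,\pi)/(n\lam)$ and combines naturally with the main inequality to yield the stated coefficient $2\lam/n$ and the single $K(\hrho,\pi)$ term.

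Finally, to achieve the infimum over $\lam \in [0, C_1]$ while still having a data-dependent choice, I apply a union bound over a geometric grid of $O(\log n)$ values of $\lam$, which introduces the $\log\log(2n)$ in the confidence level and explains the $\log(2\log(2n)\eps^{-1})$ appearing in the final bound. The main obstacle is step 1: proving a clean relative Bernstein-type PAC-Bayes inequality in the unbounded square-loss setting, because this requires a Laplace-transform calculation that separates the "quadratic" bounded contribution $(g-\gc)^2(X)$ from the "linear" noise-sensitive contribution $(g-\gc)(X)(\gc(X)-Y)$ and controls the latter uniformly using only the exponential moment of $Y-\greg(X)$, with constants depending only on $M$ and $\alpha B$.
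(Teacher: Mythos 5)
Your proposal is correct and follows essentially the same route as the paper's proof (carried out in the first chapter of the PhD thesis): a relative Bernstein-type PAC-Bayesian bound under the conditional exponential-moment assumption, combined with the quadratic-loss identity $\E_{g\sim\hrho}R(g)=R(\E_{g\sim\hrho}g)+\E_X\Var_{g\sim\hrho}g(X)$ --- exactly the footnoted identity $R(\E_{g\sim\rho}g)=\E_{g\sim\rho}R(g)-\frac12\E_{g'\sim\rho}\E_{g''\sim\rho}\E[g'(X)-g''(X)]^2$ the paper says the analysis relies on --- a product-space PAC-Bayesian control of the empirical versus population variance, the projection inequality $\E_X(\E_{g\sim\hrho}g-\gc)^2\le R(\E_{g\sim\hrho}g)-R(\gc)$ to absorb the bias term, and a geometric grid in $\lam$ yielding the $\log\log(2n)$ factor. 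No substantive deviation from the paper's argument.
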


This bound comes from the PAC-Bayesian analysis, and consequently,
the complexity of an aggregating procedure is measured by the Kullback-Leibler
divergence of $\hrho$ with respect to some prior distribution $\pi$ on $\{g_1,\dots,g_d\}$.
In absence of prior knowledge, $\pi$ is chosen as the uniform distribution,
which allows to bound uniformly the KL divergence by $\log d$.
Besides the usual empirical excess risk, Inequality \eqref{eq:solc} depends 
on the empirical variance of $g(x)$ when $g$ is drawn according
to $\hrho$. Unlike the Kullback-Leibler term, this term is small for concentrated posterior distributions.

All previous results of this chapter were easily generalizable to loss of quadratic type under boundedness assumptions,
that is loss with second derivative with respect to its second argument uniformly lower and upper bounded by positive constants.
To my knowledge, the generalization cannot be done here as the analysis strongly relies
on the remarkable identity\footnote{To be precise, \cite[Chap.1]{Aud04} used  
  $
  R(\E_{g\sim \rho} g) 
   = \E_{g\sim \rho} R(g) - \frac12 \E_{g'\sim \rho} \E_{g''\sim \rho} \E [g'(X)-g''(X)]^2,
  $
but it would have been more direct to use \eqref{eq:rem}.}  
  \begin{align} \label{eq:rem}
  R(\E_{g\sim \rho} g) & = \E_{(g',g'')\sim \rho\otimes\rho} \E [Y-g'(X)][Y-g''(X)],
  \end{align}
which is specific to the quadratic loss and allows to apply the 
PAC-Bayesian analysis for distributions on the product space 
$\{g_1,\dots,g_d\}\times \{g_1,\dots,g_d\}$.

Let $\hrc$ be the distribution minimizing the right-hand side of \eqref{eq:solc}
with $\pi$ the uniform distribution on $\{g_1,\dots,g_d\}$ and where $-(1+\lam)r(\gc)$
is replaced by its upper bound $-r(\gc) - \lam \min_{g\in\{\sum_{j=1}^d \th_j g_j;
    \th_1\ge0,\dots,\th_d\ge 0,\sum_{j=1}^d\th_j= 1\}} r(g)$. When defining $\hrc$, for sake of computability of the estimator 
\cite[Chap.1, Theorem 4.2.2]{Aud04}, one can also replace the minimum over $[0,C_1]$ 
by a minimum over a geometric grid of the interval $[n^{-1},C_1]$ without altering 
the validity of the following theorem.

\begin{thm}
For any $\eps > 0$, with probability at least $1-\epsilon$, we have
  \begin{align*}
  R(\E_{g\sim\hrc} g) - R(\gc) \le C B & \sqrt{\frac{\log(d\log(2n)\eps^{-1})}{n}
    \E \Var_{g\sim\rhoc} g(X)}\\
    & + C B^2 \frac{\log(d\log(2n)\eps^{-1})}{n},
  \end{align*}
for some constant $C>0$ depending only on $\alpha B$ and $M$.
\end{thm}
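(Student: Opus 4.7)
The plan is to apply inequality \eqref{eq:solc} to the aggregation procedure $\hrc$ itself, with $\pi$ taken to be the uniform distribution on $\{g_1,\dots,g_d\}$, and then to exploit that $\hrc$ is by construction the minimizer of a computable upper bound of the right-hand side of \eqref{eq:solc}. Replacing $-(1+\lam)r(\gc)$ by $-r(\gc)-\lam\min_{g'\in\mathrm{conv}\{g_1,\dots,g_d\}} r(g')$ alters the bound only by a quantity independent of $\rho$, so the computable minimizer is also the minimizer of the object we need to compare. Its value at $\hrc$ is therefore no larger than its value at $\rho=\rhoc$, and \eqref{eq:solc} turns this chain of inequalities into an upper bound on $R(\E_{g\sim\hrc} g)-R(\gc)$.

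Next I would evaluate the computable bound at $\rho=\rhoc$. Since $\E_{g\sim\rhoc} g=\gc$, the empirical excess risk $(1+\lam)[r(\E_{g\sim\rhoc} g)-r(\gc)]$ vanishes identically, and only the residual $\lam\,[r(\gc)-\min_{g'\in\mathrm{conv}\{g_1,\dots,g_d\}} r(g')]\ge 0$ introduced by the substitution remains. The Kullback--Leibler complexity satisfies $K(\rhoc,\pi)\le\log d$ because $\pi$ is uniform. A separate Bernstein inequality, applied to the deterministic distribution $\rhoc$ and to the bounded random variables $\Var_{g\sim\rhoc} g(X_i)\in[0,B^2]$, replaces the empirical variance $\frac{1}{n}\sum_i \Var_{g\sim\rhoc} g(X_i)$ by $\E\Var_{g\sim\rhoc} g(X)$ up to an $O(B^2\log(\eps^{-1})/n)$ additive correction.

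Collecting these estimates reduces the problem to minimizing in $\lam$, over the geometric grid of $[n^{-1},C_1]$, a quantity of the form
\[
\lam\,\bigl[r(\gc)-\min_{g'\in\mathrm{conv}\{g_1,\dots,g_d\}} r(g')\bigr]+4\,\lam\,\E\Var_{g\sim\rhoc} g(X)+\frac{C\,B^2\log(d\log(2n)\eps^{-1})}{\lam\,n},
\]
where the $\log(2n)$ factor appears through a union bound over the $O(\log n)$ grid points for $\lam$. Balancing the second and third terms at $\lam\asymp B\sqrt{\log(\cdot)/[n\,\E\Var_{g\sim\rhoc} g(X)]}$ produces the announced leading term $CB\sqrt{\log(d\log(2n)\eps^{-1})\,\E\Var_{g\sim\rhoc} g(X)/n}$, while all additive concentration remainders merge into the $CB^2\log(d\log(2n)\eps^{-1})/n$ summand.

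The hardest step is controlling the residual $\lam\,(r(\gc)-\min_{g'\in\mathrm{conv}\{g_1,\dots,g_d\}} r(g'))$: a naive empirical-process bound of order $\sqrt{\log d/n}$ on the gap would, at the balanced $\lam$ above, contribute a term of order $\sqrt{\log d/n}$ and destroy the $\sqrt{\E\Var_{g\sim\rhoc} g(X)}$ prefactor. A variance-sensitive Bernstein/PAC-Bayes argument over the simplex, mirroring the analysis behind \eqref{eq:solc} itself, is needed to bound this gap in a form that gets absorbed into the two stated terms after re-optimization in $\lam$.
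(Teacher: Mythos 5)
Your skeleton is the one the paper intends: apply \eqref{eq:solc} to $\hrc$, exploit that $\hrc$ minimizes the computable surrogate (which dominates the right-hand side of \eqref{eq:solc} since $-r(\gc)-\lam\min_{g'}r(g')\ge-(1+\lam)r(\gc)$), evaluate that surrogate at $\rho=\rhoc$ where $K(\rhoc,\pi)\le\log d$ and a Bernstein inequality replaces $\frac1n\sum_i\Var_{g\sim\rhoc}g(X_i)$ by $\E\Var_{g\sim\rhoc}g(X)$ up to admissible corrections, and finally optimize $\lam$ over the grid. All of this is sound.

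The proof is nevertheless incomplete, because the step you yourself flag as the hardest one --- controlling the residual $\lam\bigl[r(\gc)-\min_{g'\in\mathrm{conv}\{g_1,\dots,g_d\}}r(g')\bigr]$ --- is not carried out, and the one-sentence remedy you propose does not obviously close it. Write $\min_{g'}r(g')=r(\E_{g\sim\rho^{\dagger}}g)$ for the empirically optimal weights $\rho^{\dagger}$. A relative PAC-Bayesian or Bernstein bound of the type underlying \eqref{eq:solc}, applied ``over the simplex'' as you suggest, controls $r(\gc)-r(\E_{g\sim\rho^{\dagger}}g)$ through a variance term attached to $\rho^{\dagger}$ (e.g.\ $\frac1n\sum_i\Var_{g\sim\rho^{\dagger}}g(X_i)$ or $\E[(\E_{g\sim\rho^{\dagger}}g-\gc)(X)]^2$), not through $\E\Var_{g\sim\rhoc}g(X)$. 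Since $\rho^{\dagger}$ is chosen by empirical minimization, its variance is a priori only bounded by $B^2/4$, and optimizing the auxiliary temperature then returns the rate $B^2\sqrt{(\log d+\log(\eps^{-1}))/n}$ for the gap --- exactly the ``naive'' rate you say must be avoided; inserted into $\min_\lam\{\lam\Delta+\cdots\}$ it produces a term of order $n^{-3/4}$, which is not absorbed by $CB^2\log(d\log(2n)\eps^{-1})/n$ when $\E\Var_{g\sim\rhoc}g(X)$ is small. What is actually needed is a localized relative-deviation bound, uniform over the simplex, whose variance factor is expressed through $\rhoc$ and through $R(\E_{g\sim\rho}g)-R(\gc)$ --- using the projection inequality $\E[(\E_{g\sim\rho}g-\gc)(X)]^2\le R(\E_{g\sim\rho}g)-R(\gc)$ valid on the convex hull, together with the identity \eqref{eq:rem} --- so that the nonpositive term $R(\gc)-R(\E_{g\sim\rho}g)$ can absorb the $\rho$-dependent part of the deviation. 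Supplying that bound, and checking that the surviving pieces re-optimize in $\lam$ into the two stated terms, is the substantive content of the theorem; without it the argument stops one step short.
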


Since $\E \Var_{g\sim\rhoc} g(X)\le B^2/4$, the excess risk is at most of order 
$\sqrt{\frac{\log(d\log(2n))}{n}}$, that is the minimax convergence rate of the 
convex aggregation task for $d\ge n^{\frac12+\delta}$, with $\delta>0$.
Besides, when the best convex combination
occurs to be a vertex of the simplex defined by $\{g_1,\dots,g_d\}$
the variance term equals zero, and thus, the convergence rate is $\frac{\log(d\log(2n))}{n}$,
that is the minimax convergence rate of model selection type aggregation 
(at least for $d\ge \log(2n)$).


\section{Linear aggregation} \label{sec:agglin}

To handle problems \pbc\ and \pbl\ in the same framework and also
to incorporate other possible constraints on the coefficients of the linear 
combination, let us consider
$\Theta$ a closed convex subset of $\R^d$, and define
  $$
  \G = \bigg\{\sum_{j=1}^d \th_j g_j;(\th_1,\dots,
    \th_d)\in\Theta\bigg\}.
  $$
Introduce the vector-valued function 
  $
  \vp: x\mapsto\left( g_1(x),\dots,g_d(x) \right)^T.
  $
The function 
  $\sum_{j=1}^d \th_j g_j$ can then be simply written 
$\langle \th , \vp\rangle$ with $\th=(\th_1,\dots,\th_d)^T$.
Let 
  $$
  g^*\in\und{\argmin}{g\in\G} R(g).
  $$
Thus, when $\Theta$ is the simplex of $\R^d$,
we have $g^*=\gc$ and when $\Theta=\R^d$, we have $g^*=\gl$.

Aggregating linearly functions to design a prediction function with
low quadratic risk is just the problem of linear least squares regression. 
It is a central task in statistics, since
both linear parametric models and
nonparametric estimation with linear approximation spaces (piecewise polynomials based 
on a regular partition, wavelet expansions, trigonometric polynomials, \dots)
are popular.
It has thus been widely studied.

Classical statistical textbooks often only state results
for the fixed design setting as a bound of order $d/n$ can
be rather easily obtained in this case. 
This can be misleading since it does not imply a $d/n$ 
upper bound in the random design setting.
For the truncated ordinary least squares estimator, 
Gy\"{o}rfi, Kohler, Krzy$\dot{\text{z}}$ak and Walk
\cite[Theorem 11.3]{Gyo04} give a bound of the form of \myeq{eq:cgrand}
with residual term of order $\frac{d \log n}n$ and $c=8$.
When the input distribution is known,
Tsybabov \cite{Tsy03} provides a  
bound of order $d/n$ on the expected risk of
a projection estimator on an orthonormal basis of $\G$
for the dot product $(f,g)\mapsto \E[f(X)g(X)]$.

Catoni \cite[Proposition 5.9.1]{Cat01} and Alquier \cite{Alq08} have used the PAC-Bayesian 
approach to prove high probability excess risk bounds of order $d/n$ 
involving the conditioning of
the Gram matrix $Q=\B{E} \bigl[ \vp(X) \vp(X)^T\bigr]$.
Both results require at least exponential moments on
the conditional distribution of the output $Y$ knowing the 
input vector $\vp(X)$.

It can be derived from the work of Birg\'e and Massart \cite{BirMas98}
an excess risk bound for the empirical risk minimizer 
of order at worst $\frac{d\log n}n$, and asymptotically of
order $d/n$. It holds with high probability, for a bounded set $\Theta$
and requires bounded input vectors and conditional exponential moments 
of the output.
Localized Rademacher complexities \cite{Kol06,BarBouMen05} also allows
to study the empirical risk minimizer on a bounded set of functions.
They lead to a high probability $d/n$ convergence rate of the
empirical risk minimizer under strong assumptions: uniform boundedness
of the input vector, the output and the parameter set~$\Theta$.

Penalized least squares estimators using the $L^2$-norm of 
the vector of coefficients, or more recently, its $L^1$-norm
have also been widely studied. A common characteristic of the 
excess risk bounds obtained for these estimators is that
it is of order $d/n$ only under strong assumptions on 
the eigenvalues (of submatrices) of $Q$. 

In \cite{Aud09b}, Olivier Catoni and I provide new risk bounds for the ridge estimator and
the ordinary least squares estimator (Section \ref{sec:ridge}). 
We also propose a min-max estimator
which has non-asymptotic guarantee of order $d/n$ under weak assumptions
on the distributions of the output $Y$ and the random variables $g_j(X)$,
$j=1,\dots,d$ (Section \ref{sec:minmax}). 
Finally, we propose a sophisticated PAC-Bayesian estimator which
satisfies a simpler $d/n$ bound (Section \ref{sec:sophis}).

The key common surprising factor of these results is the absence of exponential
moment condition on the output distribution while achieving exponential deviations. All risk bounds are obtained through a PAC-Bayesian analysis on truncated differences of losses. 
Our results tend to say that truncation leads to more robust algorithms.
Local robustness to contamination is usually invoked 
to advocate the removal of outliers, 
claiming that estimators should be made insensitive to 
small amounts of spurious data.
Our work leads to a different theoretical explanation.
The observed points having unusually large outputs when compared
with the (empirical) variance should be down-weighted in the estimation 
of the mean, since they contain less information than noise. 
In short, huge outputs should be truncated because of their low 
signal to noise ratio.

\subsection{Ridge regression and empirical risk minimization} \label{sec:ridge}

The ridge regression estimator on $\G$ is defined by
$\hfrlam=\langle \thrlam , \vp \rangle$ with
$$
\thrlam \in \arg \min_{\th \in \Theta} 
r(\langle \theta,\vp\rangle) + \lambda \lVert \theta \rVert^2,
$$
where $\lambda$ is some nonnegative real 
parameter and $r(\langle \theta,\vp\rangle)$
is the empirical risk of the function $\langle \theta,\vp\rangle$. 
In the case when $\lambda = 0$, 
the ridge regression $\hfrlam$ is nothing
but the empirical risk minimizer $\hgerm$.

In the same way we consider the optimal ridge function  
optimizing the expected ridge risk: $\frid=\langle \thrid , \vp \rangle$ with
$$
\thrid \in \arg \min_{\theta \in \Theta} \big\{ R(\langle \theta,\vp\rangle) 
+ \lambda \lVert \theta \rVert^2 \big\}.
$$

Our first result is of asymptotic nature.
It is stated under weak hypotheses, taking advantage
of the weak law of large numbers.

\begin{thm} \label{th:hfrlam}
Let us assume that
\begin{gather}
\B{E}\bigl[ \lVert \vp(X) \rVert^4 \bigr] < + \infty,\\
\text{and }\quad \B{E} \Bigl\{ \lVert \vp(X) \rVert^2 \bigl[ \frid(X) - Y \bigr]^2 \Bigr\} < + \infty.
\end{gather}

Let $\nu_1,\dots,\nu_d$ be the eigenvalues of 
the Gram matrix $Q=\B{E} \bigl[ \vp(X) \vp(X)^T\bigr]$,
and let $Q_{\lambda} = Q + \lambda I$ be the ridge regularization 
of $Q$. 
Let us define the {\em effective ridge dimension}
$$
D = \sum_{i=1}^d \frac{\nu_i}{\nu_i+\lam} \dsone_{\nu_i>0} 
= \Tr \bigl[ (Q+ \lambda I)^{-1} Q \bigr] 
= \B{E} \bigl[ \lVert Q_{\lambda}^{-1/2} \vp(X) \rVert^2 \bigr].
$$
When $\lambda = 0$, $D$ is equal to the rank 
of $Q$ and is otherwise smaller.
For any $\eps > 0$, there is $n_{\eps}$, 
such that for any $n \geq n_{\eps}$, 
with probability at least $1 - \eps$, 
\begin{align*}
R(\hfrlam) & + \lambda \lVert \hat{\th}^{\textnormal{(ridge)}} 
\rVert^2 \leq  \min_{\theta \in \Theta} \big\{ R(\langle \theta,\vp\rangle) 
+ \lambda \lVert \theta \rVert^2 \big\}
\\
& \qquad
+ C \ess \sup \E\big\{[Y-\frid(X)]^2 \big| X\big\} \, \frac{D + \log(3\eps^{-1})}{n}, 
\end{align*}
for some numerical constant $C>0$.
\end{thm}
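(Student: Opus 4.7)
The plan is to reduce the excess ridge risk to a quadratic form in the parameter discrepancy $\delta = \thrlam - \thrid$, and then control that quadratic form asymptotically by a CLT plus a chi-squared tail bound. First I would write the empirical and expected ridge objectives in closed form. Since the expected ridge objective $E(\theta) = \E[Y^2] - 2\langle\theta,\E[Y\vp]\rangle + \theta^T Q_\lambda \theta$ is exactly quadratic with Hessian $2Q_\lambda$, a Taylor expansion at $\thrid$ gives
\begin{equation*}
E(\theta) - E(\thrid) = (\theta - \thrid)^T Q_\lambda (\theta - \thrid) + 2\langle\theta - \thrid, Q_\lambda\thrid - \E[Y\vp(X)]\rangle.
\end{equation*}
In the unconstrained case $\Theta = \R^d$ (or with $\thrid$ in the interior) the first-order condition $Q_\lambda\thrid = \E[Y\vp(X)]$ kills the second term, so the excess ridge risk is exactly $\delta^T Q_\lambda \delta$. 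The boundary case requires extra care, but asymptotically $\thrlam$ localizes around $\thrid$ so the argument can be made to go through.

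Next I would identify $\delta$. Subtracting the two normal equations $\hat Q_\lambda\thrlam = n^{-1}\sum_i Y_i\vp(X_i)$ and $Q_\lambda\thrid = \E[Y\vp(X)]$, and using $\xi_i = Y_i - \frid(X_i)$ together with $\E[\xi\vp(X)] = \E[Y\vp(X)] - Q\thrid = \lambda\thrid$, gives $\hat Q_\lambda\delta = M_n$ where
\begin{equation*}
M_n = \frac{1}{n}\sum_{i=1}^n \xi_i\vp(X_i) - \E[\xi\vp(X)].
\end{equation*}
Hence the excess ridge risk equals $M_n^T \hat Q_\lambda^{-1} Q_\lambda \hat Q_\lambda^{-1} M_n$. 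The fourth-moment hypothesis on $\|\vp(X)\|$ and the weak law of large numbers yield $\hat Q_\lambda \to Q_\lambda$ in probability, so for $n$ large enough, with high probability $\hat Q_\lambda^{-1} Q_\lambda \hat Q_\lambda^{-1} \preceq (1+o(1)) Q_\lambda^{-1}$, and the excess ridge risk is asymptotically bounded by $(1+o(1)) M_n^T Q_\lambda^{-1} M_n$.

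Now I would analyze $M_n^T Q_\lambda^{-1} M_n = \|\tilde M_n\|^2$ where $\tilde M_n = Q_\lambda^{-1/2} M_n$. The second moment assumption $\E[\|\vp(X)\|^2(Y-\frid(X))^2] < \infty$ legitimizes the multivariate CLT, so $\sqrt{n}\,\tilde M_n$ converges in distribution to $N(0,\Sigma)$ with $\Sigma = Q_\lambda^{-1/2}\Cov(\xi\vp(X))Q_\lambda^{-1/2}$. Using $\E[\xi^2|X] \le \sigma_{\max}^2 := \essinf\limits_X^{-1}$ — pardon, $\esssup_X \E[(Y-\frid(X))^2|X]$, we obtain $\Cov(\xi\vp(X)) \preceq \sigma_{\max}^2 Q$, hence $\Sigma \preceq \sigma_{\max}^2 Q_\lambda^{-1/2} Q Q_\lambda^{-1/2}$. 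This immediately yields $\Tr(\Sigma) \le \sigma_{\max}^2 D$ and $\|\Sigma\|_{\mathrm{op}} \le \sigma_{\max}^2$ (using $Q \preceq Q_\lambda$).

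The last step is a Gaussian chi-squared concentration inequality of Laurent--Massart type: for $Z \sim N(0,\Sigma)$,
\begin{equation*}
\P\!\left(\|Z\|^2 > \Tr(\Sigma) + 2\sqrt{\Tr(\Sigma^2)\,t} + 2\|\Sigma\|_{\mathrm{op}} t\right) \le e^{-t}.
\end{equation*}
Choosing $t = \log(3\eps^{-1})$ and using $\Tr(\Sigma^2) \le \|\Sigma\|_{\mathrm{op}} \Tr(\Sigma) \le \sigma_{\max}^4 D$ together with $2\sqrt{Dt} \le D + t$, one concludes $\|Z\|^2 \le C\sigma_{\max}^2(D + \log(3\eps^{-1}))$. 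Pulling this bound back through the $n_\eps$ selected in the CLT step, and absorbing the $o(1)$ terms from Step 3 into the constant, gives the theorem. The main obstacle I expect is quantifying the asymptotic approximations carefully enough — both the convergence $\hat Q_\lambda \to Q_\lambda$ and the Gaussian approximation of $\sqrt{n}\tilde M_n$ hold only for $n \ge n_\eps$ with probabilities depending on tail behavior, and one must split $\eps$ across these events while preserving the clean $D + \log(3\eps^{-1})$ form. The boundary case $\thrid \in \partial\Theta$, if one wants to handle it, is the other delicate point, and presumably motivates the need for the more robust estimators of subsequent sections.
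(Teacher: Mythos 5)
Your overall route --- reduce the excess ridge risk to the quadratic form $\delta^T Q_\lambda \delta$ with $\delta = \thrlam - \thrid$, identify $\delta$ through the normal equations, let the weak law of large numbers replace $\hat Q_\lambda$ by $Q_\lambda$, and control $n\,M_n^T Q_\lambda^{-1} M_n$ by a CLT followed by a Gaussian quadratic-form tail bound --- is the natural way to obtain this asymptotic statement and is consistent with the argument the text defers to \cite{Aud09b} (the remark that $n_\eps$ is of order $1/\eps$ is exactly the price of the Chebyshev-type control of the event $\hat Q_\lambda \approx Q_\lambda$ in your third step). Your identification of $\Tr(\Sigma)\le\sigma_{\max}^2 D$ and $\lVert\Sigma\rVert_{\mathrm{op}}\le\sigma_{\max}^2$, and hence of the $D+\log(3\eps^{-1})$ shape of the bound, is correct.

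There is, however, one genuine gap: the theorem is stated for an arbitrary closed convex $\Theta$, and your two ``normal equations'' $\hat Q_\lambda\thrlam = n^{-1}\sum_i Y_i\vp(X_i)$ and $Q_\lambda\thrid=\E[Y\vp(X)]$ hold only when the minimizers are interior points. On the boundary they become variational inequalities, $\langle\nabla\hat F(\thrlam),\theta-\thrlam\rangle\ge0$ and $\langle\nabla F(\thrid),\theta-\thrid\rangle\ge0$ for $\theta\in\Theta$, and two things break: (i) the linear term $2\langle\delta,Q_\lambda\thrid-\E[Y\vp(X)]\rangle$ in your Taylor expansion is nonnegative rather than zero, so dropping it \emph{under}estimates the excess risk; (ii) the identity $\hat Q_\lambda\delta=M_n$ fails, and ``$\thrlam$ localizes around $\thrid$'' does not restore it --- if $\thrid\in\partial\Theta$ the constraint stays active for all $n$. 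The standard repair keeps your $M_n$ but avoids inverting $\hat Q_\lambda$: writing $F,\hat F$ for the expected and empirical ridge objectives, convexity and the variational inequality at $\thrid$ give $F(\thrlam)-F(\thrid)\ge\delta^TQ_\lambda\delta$, while $\hat F(\thrlam)\le\hat F(\thrid)$ gives $F(\thrlam)-F(\thrid)\le[F-\hat F](\thrlam)-[F-\hat F](\thrid)=2\langle\delta,M_n\rangle+\delta^T(Q-\hat Q)\delta$; on the event $Q-\hat Q\preceq\tfrac12 Q_\lambda$ (weak law of large numbers) these combine into $\lVert Q_\lambda^{1/2}\delta\rVert\le4\lVert Q_\lambda^{-1/2}M_n\rVert$ and hence $F(\thrlam)-F(\thrid)\le 16\,M_n^TQ_\lambda^{-1}M_n$, after which your CLT and Laurent--Massart steps apply verbatim. (A second, minor, loose end is the case $\lambda=0$ with singular $Q$: $Q_\lambda^{-1}$ must be read as a pseudo-inverse, which is harmless since $\vp(X)$ lies almost surely in the range of $Q$.)
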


This theorem shows that the ordinary least squares estimator
(obtained when $\Theta = \B{R}^d$ and $\lambda = 0$), 
as well as the empirical risk minimizer on any closed convex 
set, asymptotically reach a $d/n$ speed of convergence
under very weak hypotheses. It shows also the regularization 
effect of the ridge regression. There emerges an {\em effective
dimension} $D$, 
where the ridge penalty has a threshold effect on the eigenvalues
of the Gram matrix. 

On the other hand, the weakness of this result is 
its asymptotic nature : $n_\eps$ may be arbitrarily large
under such weak hypotheses, and this shows even in the 
simplest case of the estimation of the mean of a real-valued
random variable by its empirical mean, which is the case when 
$d = 1$ and $\vp(X) \equiv 1$ \cite{Cat09}.
Typically, the proof of Theorem \ref{th:hfrlam}
shows that $n_\eps$ is of order $1/\eps$.
To avoid this limitation, we were conducted to
consider more involved algorithms as described in 
the following two sections. 

\subsection{A min-max estimator for robust estimation} 
\label{sec:minmax}

This section provides an alternative to the empirical risk minimizer 
with non asymptotic exponential risk deviations of order $d/n$ for 
any confidence level.
Moreover, we will assume only a second order moment condition 
on the output and cover the case of unbounded inputs, the requirement on the random variables $g_j(X)$
being only a finite fourth order moment. On the other hand, we assume 
that the set $\Theta$ of the vectors of coefficients is bounded.
(This still allows to solve problem \pbl\ as soon as we know a bounded set in
which $\gl$ lies for sure.)

Let $\alpha>0$ and consider the truncation function:
$$
T(x) = \begin{cases} 
- \log \bigl(1 - x + x^2/2 \bigr) & 0 \leq x \leq 1, \\
\log(2) & x \geq 1, \\ 
- T(-x) & x \leq 0, 
\end{cases} 
$$
For any $g,g'\in\G$, introduce 
  $$
  \cD(g,g')= \sum_{i=1}^n T\Big(\alpha\big[Y_i-g(X_i)\big]^2-\alpha\big[Y_i-g'(X_i)\big]^2\Big).
  $$
Let us assume in this section that 
for any $j\in\{1,\dots,d\}$, 
  \beglab{eq:as1}
  \E\big\{g_j(X)^2[Y-g^*(X)]^2\big\}<+\infty,
  \endlab
and
  \beglab{eq:as2}
  \E\big[g_j^4(X)\big]<+\infty.
  \endlab

Define 
  \begin{align} 
  \cS & = \{ g\in\Span\{g_1,\dots,g_d\}: \E[g(X)^2]=1\}, \label{eq:cs}\\
\sigma & = \sqrt{\E\big\{[Y-g^*(X)]^2\big\}}= \sqrt{R(g^*)} , \\ 
\chi & = \max_{g\in\cS} \sqrt{\E [g(X)^4] },\\ 
\kappa & = \frac{ \sqrt{\E\big\{[\vp(X)^TQ^{-1}\vp(X)]^2\big\}}}
  {\E\big[\vp(X)^TQ^{-1}\vp(X) \big]},\\ 
\kappa' & = \frac{\sqrt{\E\big\{[Y-g^*(X)]^4\big\}}}{\E\big\{[Y-g^*(X)]^2\big\}} 
  = \frac{\sqrt{\E\big\{[Y-g^*(X)]^4\big\}}}{\sigma^2},\\
\cR & = \max_{g',g''\in\G} \sqrt{\E\big\{[g'(X)-g''(X)]^2\big\}}. \label{eq:kapp}
  \end{align}

\begin{thm} \label{th:3.1}
Let us assume that \eqref{eq:as1} and \eqref{eq:as2} hold.
For some numerical constants $c$ and $c'$, 
for
  $$
  n > c \kappa \chi d,
  $$
by taking
  \beglab{eq:alpha}
  \alpha = \frac{1}{2 \chi \bigl[ 2 \sqrt{\kappa'} \sigma
  + \sqrt{\chi} \cR
  \bigr]^2} \biggl(1  - \frac{c \kappa \chi d}{n} \biggr), 
  \endlab
for any estimator $\hg$ satisfying $\hg\in\G$ a.s., for any $\eps>0$,  
with probability at least $1-\eps$, we have
  \begin{align*}  
  R(\hg) - R(g^*) \le 
& \frac1{n\alpha}\bigg( \und{\max}{g'\in\G} \cD(\hg,g')
    - \und{\inf}{g\in{\G}} \und{\max}{g'\in{\G}} \cD(g,g')  \bigg) \\
& \qquad + \frac{c \kappa \kappa' d \sigma^2}{n}  + \frac{8 \chi \bigl( \frac{\log(\epsilon^{-1})}{n} + 
\frac{c' \kappa^2 d^2}{n^2} \bigr) \bigl[ 2 \sqrt{\kappa'}\sigma 
+ \sqrt{\chi} \cR \bigr]^2}{ 1 - \frac{c\kappa \chi d}{n} }.
  \end{align*}  
\end{thm}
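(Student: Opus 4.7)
The plan is to carry out a PAC-Bayesian analysis of the truncated empirical discrepancy $\cD(g,g')$, in the spirit of Catoni's truncated-moment approach to robust mean estimation. The key analytic property of the truncation function is the two-sided bound
$$e^{T(x)} \le 1 + x + \tfrac{x^2}{2}, \qquad e^{-T(x)} \le 1 - x + \tfrac{x^2}{2} \qquad (x\in\R),$$
which holds with equality on $[-1,1]$ and follows on $|x|>1$ from the saturation of $T$. Setting $V_i(g,g') = \alpha\{[Y_i-g(X_i)]^2 - [Y_i-g'(X_i)]^2\}$, independence of the observations yields, for any fixed $(g,g')\in\G^2$,
$$\E \exp\!\big\{\cD(g,g')\big\} \le \exp\!\Big\{n\alpha[R(g)-R(g')] + \tfrac{n\alpha^2}{2} W(g,g')\Big\},$$
with $W(g,g') := \E\big\{[(Y-g(X))^2 - (Y-g'(X))^2]^2\big\}$, and the analogous bound holds for $-\cD$. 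By Markov's inequality, with $\PP$-probability at least $1-\eps$, both $\pm\tfrac{1}{n\alpha}\cD(g,g')$ lie within $\tfrac{\alpha}{2}W(g,g') + \tfrac{1}{n\alpha}\log(\eps^{-1})$ of $R(g)-R(g')$.

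To lift this to a uniform statement over $g,g'\in\G$, I would apply a PAC-Bayesian randomization: to each $\theta\in\Theta$ associate the Gaussian posterior $\rho_\theta = \cN(\theta,\beta^{-1}Q^{-1})$ and take as prior a Gaussian $\cN(\theta_0,\beta^{-1}Q^{-1})$ centred at a fixed reference in $\Theta$. Integration of $\cD(\langle\theta,\vp\rangle,\langle\theta',\vp\rangle)$ against the product posterior produces a smoothed discrepancy differing from $\cD(g,g')$ by a remainder controlled through the fourth-moment constant $\chi$ on $\cS$, while the Kullback--Leibler divergence is $\tfrac{\beta}{2}\|\theta-\theta_0\|_Q^2$, of order $\beta\cR^2$ when maximised over $\Theta$. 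Choosing $\beta$ of order $n\alpha^2$ balances these two contributions and delivers, simultaneously for all $g,g'\in\G$ with probability $1-\eps$,
$$\Big|\tfrac{1}{n\alpha}\cD(g,g') - [R(g)-R(g')]\Big| \le \tfrac{\alpha}{2} W(g,g') + \tfrac{C}{n\alpha}\big(d + \log(\eps^{-1})\big) + \mathrm{(smoothing\ remainder)}.$$
The factor $d$ arises from $\Tr(Q\cdot \beta^{-1}Q^{-1}) = d/\beta$, while $\kappa$ enters when bounding the self-normalised fourth moment $\E\{[\vp(X)^T Q^{-1}\vp(X)]^2\}$ produced by the smoothing correction.

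To control $W(g,g')$ I would decompose
$(Y-g)^2 - (Y-g')^2 = (g'-g)\big(2(Y-g^*) + (2g^* - g - g')\big),$
square it and apply Cauchy--Schwarz. The $L^4$--$L^2$ equivalence constant $\chi$ on $\cS$ bounds the fourth moment of both $g'-g$ and $2g^*-g-g'$, the fourth-moment constant $\kappa'$ bounds the noise contribution $Y-g^*$, and the diameter $\cR$ bounds $L^2$-norms, giving $W(g,g') \le C\chi(2\sqrt{\kappa'}\sigma + \sqrt{\chi}\cR)^2 \|g-g'\|_{L^2(P_X)}^2$. The calibration of $\alpha$ prescribed in \eqref{eq:alpha} is exactly the one that equates the coefficient $\tfrac{\alpha}{2}\cdot C\chi(2\sqrt{\kappa'}\sigma+\sqrt{\chi}\cR)^2$ in front of the squared $L^2$-distance with the $\tfrac{C\kappa d}{n\alpha}$ residual, under the sample-size condition $n>c\kappa\chi d$ that guarantees the bracket in \eqref{eq:alpha} is positive.

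To conclude, I combine the two one-sided uniform bounds with the identity $\cD(g^*,g^*)=0$, which yields $\inf_g\max_{g'}\cD(g,g') \le \max_{g'}\cD(g^*,g')$. Applying the upper half of the bound to the pairs $(g^*,g')$ and using $R(g^*)-R(g')\le 0$ gives $\max_{g'}\cD(g^*,g')\le \tfrac{n\alpha^2}{2}\sup_{g'} W(g^*,g') + C(d + \log(\eps^{-1}))$; applying the lower half to the pairs $(\hg,g^*)$ yields $R(\hg)-R(g^*) \le \tfrac{1}{n\alpha}\cD(\hg,g^*) + \tfrac{\alpha}{2}W(\hg,g^*) + \tfrac{C}{n\alpha}(d+\log(\eps^{-1}))$. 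Using $\cD(\hg,g^*)\le \max_{g'}\cD(\hg,g')$ and inserting $\pm\inf_g\max_{g'}\cD(g,g')$ produces the stated inequality, the $\tfrac{c\kappa\kappa' d\sigma^2}{n}$ contribution arising from the bound on $\sup_{g'} W(g^*,g')$ (which does not involve $\cR$ because $g^*$ is fixed on one side), and the remaining terms collecting the confidence and complexity contributions. The main technical obstacle is the PAC-Bayesian smoothing step: the Gaussian width $\beta^{-1}$ must be small enough that the Gaussian-smoothed $\cD$ remains close to $\cD$ itself (with a correction bounded by fourth-moment quantities rather than worst-case quantities, forcing the appearance of $\chi$ and $\kappa$), yet large enough that the KL term contributes only $O(d)$ rather than $O(d\log n)$; this delicate balance is what constrains the calibration \eqref{eq:alpha} and the sample-size requirement $n>c\kappa\chi d$.
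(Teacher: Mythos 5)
Your overall architecture --- the exponential-moment property of $T$, the PAC-Bayesian Gaussian smoothing with covariance $\beta^{-1}Q^{-1}$, the Cauchy--Schwarz decomposition of the variance term $W$, and the final min-max combination --- is indeed the route the paper takes. But there is a genuine gap at the heart of the argument: you never invoke the convexity of $\G$, and without it this strategy cannot deliver a $d/n$ rate. Since $\Theta$ is convex and $g^*$ minimizes $R$ over $\G$, first-order optimality gives $\E\big[(Y-g^*(X))(g(X)-g^*(X))\big]\le 0$, hence the localization inequality $\E\big[(g(X)-g^*(X))^2\big]\le R(g)-R(g^*)$ for every $g\in\G$. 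This is what allows the variance terms $\tfrac{\alpha}{2}W(\hg,g^*)$ and $\tfrac{\alpha}{2}W(g^*,g')$ --- which by your own bound are at most $\alpha\chi\bigl[2\sqrt{\kappa'}\sigma+\sqrt{\chi}\cR\bigr]^2\,\E[(g-g^*)^2]$ --- to be \emph{absorbed} into the excess risks $R(\hg)-R(g^*)$ and $R(g')-R(g^*)$ sitting on the other side of the inequality. The calibration \eqref{eq:alpha} is exactly the absorption condition $2\alpha\chi\bigl[2\sqrt{\kappa'}\sigma+\sqrt{\chi}\cR\bigr]^2=1-c\kappa\chi d/n<1$; it is not, as you write, a balance between the variance coefficient and a $\kappa d/(n\alpha)$ residual --- such a balance would force $\alpha\asymp\sqrt{d/n}$ and cap the rate at $\sqrt{d/n}$. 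As your sketch stands you carry $\tfrac{\alpha}{2}W(\hg,g^*)$ and $\tfrac{n\alpha^2}{2}\sup_{g'}W(g^*,g')$ as residuals; both are of order $\cR^2$ (your claim that $\sup_{g'}W(g^*,g')$ ``does not involve $\cR$'' is wrong, since $g'$ ranges over all of $\G$), so the bound degenerates to a constant.

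The same absorption is needed to make the PAC-Bayesian step close. With a fixed prior, $K(\rho_\theta\otimes\rho_{\theta'},\pi\otimes\pi)$ is of order $\beta\cR^2$, which with your choice $\beta\asymp n\alpha^2$ contributes $\alpha\cR^2$ --- constant order --- to the final bound. The fix is to centre the prior at $\theta^*$, so that $K=\tfrac{\beta}{2}\bigl(\lVert\theta-\theta^*\rVert_Q^2+\lVert\theta'-\theta^*\rVert_Q^2\bigr)=\tfrac{\beta}{2}\bigl(\E[(g-g^*)^2]+\E[(g'-g^*)^2]\bigr)$, and to absorb this too into the excess risks via the same convexity inequality; that is the true origin of the constraint tying $\beta$ to $n\alpha^2\chi\bigl[2\sqrt{\kappa'}\sigma+\sqrt{\chi}\cR\bigr]^2$. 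Two smaller points: $e^{T(x)}=1+x+x^2/2$ holds with equality only on $[-1,0]$, on $[0,1]$ it is a strict inequality (which is all you need); and the $c\kappa\kappa' d\sigma^2/n$ term arises from the smoothing corrections of the type $\alpha\,\E\bigl[(Y-g^*(X))^2\langle W,\vp(X)\rangle^2\bigr]$, controlled through $\kappa$ and $\kappa'$, not from $\sup_{g'}W(g^*,g')$.
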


The above theorem suggest to look for function realizing the min-max of $(g,g')\mapsto\cD(g,g')$.
More precisely, an estimator such that 
  $$\und{\max}{g'\in\G} \cD(\hg,g') < \und{\inf}{g\in{\G}} \und{\max}{g'\in{\G}} \cD(g,g') + \sigma^2 \frac{d}n,$$
has a non asymptotic bound for the excess 
risk with a $d/n$ convergence rate and an exponential tail even 
when neither the output $Y$ nor the input vector $\vp(X)$ has exponential moments. This stronger non asymptotic bound compared to the bounds of the previous section comes 
at the price of replacing the empirical risk
minimizer by a more involved estimator. Nevertheless,
reasonable heuristics can be developed to compute it approximately 
\cite[Section 3]{Aud09b}, and leads to a significantly better estimator 
of $\gl$ than the ordinary least squares estimator when there is some heavy-tailed noise (see Appendix \ref{app:exper}).

\subsection{A simple tight risk bound for a sophisticated PAC-Bayes algorithm} \label{sec:sophis}

A disadvantage of the min-max estimator proposed in the previous section
is that its theoretical guarantee depends (implicitly) on kurtosis like coefficients.
We provide in \cite[Section 4]{Aud09b}
a more sophisticated estimator, 
having the following simple excess risk bound independent of these 
kurtosis like quantities, and still of order $\frac{d}n$.
It holds under stronger assumption on the input vector $\vp(X)$ 
(precisely, uniform boundedness), still assumes that the set $\Theta$ is 
bounded, and holds under a second order moment condition on the output.

\begin{thm} \label{th:sophis}
Assume that $\G$ has a diameter $H$ for $L^\infty$-norm:
	\beglab{eq:hh}
	\sup_{g',g''\in \G,x\in\X} |g'(x)-g''(x)| = H
	\endlab
and that, for some $\sigma>0$, 
	\[
	\sup_{x\in\X} \E\big\{[Y-g^*(X)]^2 \big| X=x\big\} \le \sigma^2 < +\infty.
	\]
There exists an estimator $\hg$ such that for any $\eps>0$, with probability 
at least $1-\eps$, we have
	\[
	R( \hg ) - R(g^*) \le 17 (2\sigma+H)^2 \, \frac{d 
    		+ \log(2\eps^{-1})}{n}.	
	\]
\end{thm}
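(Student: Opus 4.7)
The plan is to apply PAC-Bayesian duality to \emph{truncated} differences of squared losses, in the spirit of Catoni's truncation function $T(x) = -\log(1 - x + x^2/2)$ for $x \in [0,1]$ extended antisymmetrically, rather than to the raw losses themselves. This bypasses the need for any exponential moment on $Y$ because the basic bound $e^{T(x)} \le 1 + x + x^2/2$ (valid for all $x \in \R$) only requires a second moment of the loss differences: for $U = \ell_\th(Z) - \ell_{\th^*}(Z)$ one gets $\log \E e^{T(\alpha U)} \le \alpha \E U + \tfrac{\alpha^2}{2}\E U^2$. First I would set up Gaussian prior/posterior $\pi = \cN(\th_0, \gamma^{-1} I_d)$ and $\rho_\th = \cN(\th, \gamma^{-1} I_d)$ in parameter space, so that $K(\rho_\th, \pi) = \tfrac{\gamma}{2}\|\th - \th_0\|^2$ has a dimensional contribution only via the posterior variance (which enters the Gaussian integration of the risk).

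The next step is to bound the variance term under the hypotheses of the theorem. Writing $\ell_\th - \ell_{\th^*} = 2[Y - \bar g(X)]\,\langle \th^* - \th, \vp(X)\rangle$ with $\bar g = \tfrac12(g_\th + g_{\th^*})$ and applying Minkowski conditionally on $X$,
\[
  \sqrt{\E_{Y \mid X}[Y - \bar g(X)]^2} \;\le\; \sigma + \tfrac12 H \cdot 2 \;=\; \sigma + H, \qquad \sqrt{\E_{Y\mid X}[Y - g^*(X)]^2} \le \sigma,
\]
so that $\E[\ell_\th - \ell_{\th^*}]^2 \le (2\sigma + H)^2 \, \E \langle \th - \th^*, \vp(X) \rangle^2$ (the factor $2\sigma + H$ arising as the sum of the two conditional $L^2$ scales). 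Integrating the PAC-Bayes inequality against $\rho_\th$, the Gaussian spread converts $\E_{\tilde\th \sim \rho_\th}[R(\tilde\th) - R(\th)]$ into $\tfrac{1}{\gamma}\Tr Q$, where $Q = \E[\vp(X)\vp(X)^T]$, and similarly inflates the variance integral by $\tfrac{1}{\gamma}\Tr Q \cdot (2\sigma+H)^2$. Taking $\alpha$ of order $1/[(2\sigma+H)^2]$ and $\gamma = \alpha n$ balances the Laplace-transform variance, the KL complexity, and the Gaussian overhead into a single term of order $(2\sigma+H)^2 (d + \log\eps^{-1})/n$.

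To turn this Gibbs-style bound into a point estimator I would iterate: given a pilot $\hat\th^{(0)}$, use the PAC-Bayes bound above with prior centred at $\hat\th^{(k)}$ and posterior at $\th$ to obtain an empirical upper bound $B_k(\th)$ on $R(\th) - R(\hat\th^{(k)})$ (which is empirical because only the truncated sum appears on the data side); set $\hat\th^{(k+1)} \in \argmin_{\th \in \Theta} B_k(\th)$ and apply a union bound at confidence level $\eps/[k(k+1)]$ to keep all iterates simultaneously controlled. Monotonicity of $R(\hat\th^{(k)})$ together with the lower bound $R(\hat\th^{(k)}) \ge R(g^*)$ forces the iteration to stabilise; the limit $\hat g$ satisfies the stated bound once the pilot is chosen inside $\G$ (so the localisation centre has $\|\hat\th^{(0)} - \th^*\|^2_Q \le H^2$, which absorbs the $\tfrac{\gamma}{2}\|\th - \th_0\|^2$ part of the KL into the $(2\sigma+H)^2 d/n$ term). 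The principal obstacle is constant tracking: obtaining the specific numerical factor $17$ and the clean form $(2\sigma+H)^2$ rather than something like $8(\sigma^2 + H^2)$ requires simultaneously optimising $\alpha$ and $\gamma$ in the Legendre transform arising from the PAC-Bayes bound, and verifying that the truncation bias $\sum_i T(\alpha U_i) - \alpha \sum_i U_i$ introduces no extra term at the target scale — this is where the finiteness of \emph{only} second moments, and not fourth moments as in the min-max estimator of Theorem~\ref{th:3.1}, becomes delicate.
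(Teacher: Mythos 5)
Your overall strategy --- PAC-Bayes duality applied to Catoni-truncated differences of squared losses, Gaussian prior/posterior in parameter space, and iterative re-centering of the prior --- is exactly the route of the proof this theorem is quoted from (\cite[Section 4]{Aud09b}; the present text only states the result and defers the argument). Your variance computation is also the right one and is indeed where the constant comes from: writing $\ell_\th-\ell_{\th^*}=2(g_{\th^*}-g_\th)(Y-\bar g)$ with $\bar g=\frac12(g_\th+g_{\th^*})$ and noting $g_{\th^*}=g^*$, one gets $|\bar g-g^*|\le H/2$ (not $H$; your intermediate line ``$\sigma+\frac12 H\cdot 2$'' is inconsistent with your own conclusion), hence $\E[\ell_\th-\ell_{\th^*}]^2\le 4(\sigma+H/2)^2\,\E\langle\th-\th^*,\vp(X)\rangle^2=(2\sigma+H)^2\|\th-\th^*\|_Q^2$.

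There are, however, two concrete gaps. First, the localization does not close as you describe it. With the prior centred at a pilot $\hat\th^{(0)}\in\Theta$ and $\gamma=\alpha n$, the KL term is $\frac{\gamma}{2}\|\hat\th^{(0)}-\th^*\|^2\asymp nH^2/(2\sigma+H)^2$, which is of order $n$, not $d+\log(\eps^{-1})$; it is \emph{not} absorbed into the $(2\sigma+H)^2 d/n$ term, and ``monotonicity plus boundedness below forces stabilisation'' yields convergence but no rate. What is needed is a quantitative contraction: since $\th^*$ minimizes $R$ over the convex set $\Theta$, one has $\|\th-\th^*\|_Q^2\le R(\th)-R(\th^*)$, so the step-$k$ bound gives $\|\hat\th^{(k+1)}-\th^*\|_Q^2\le c\,\|\hat\th^{(k)}-\th^*\|_Q^2+C(2\sigma+H)^2\bigl(d+\log(\eps^{-1})\bigr)/n$ with $c<1$, and one must iterate $O(\log n)$ times (with your union bound) before the KL reaches the target scale. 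Second, your bookkeeping of the Gaussian overhead $\gamma^{-1}\Tr Q$ against the Euclidean KL $\frac{\gamma}{2}\|\th-\th_0\|^2$ is only coherent after reparametrizing so that $Q=\Id$ (equivalently, taking the prior covariance proportional to $Q^{-1}$): in raw coordinates $\Tr Q$ is not controlled by the hypotheses, and the Euclidean distance in the KL is incommensurate with the $Q$-norm that the excess risk controls, so ``$\gamma=\alpha n$'' does not by itself produce a $d/n$ term. Relatedly, the unrestricted Gaussian posterior charges parameters outside $\Theta$, where the $L^\infty$-diameter bound --- and hence your variance inequality --- fails; this tail contribution must be handled separately, since the boundedness of $T$ helps on the empirical side but not in $\log\E e^{T(\alpha U)}\le\alpha\E U+\frac{\alpha^2}{2}\E U^2$.
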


On the negative side, when the target is to solve problem \pbl, it requires the knowledge 
of a $L^\infty$-bounded ball in which $\flin$ lies and an upper bound
on $\sup_{x\in\X} \E\big\{[Y-\flin(X)]^2 \big| X=x\big\}$.
The looser this knowledge is, the bigger the constant in front of $d/n$ is.
On the positive side, the convergence rate is of order $d/n$, without
neither extra logarithmic factor, nor constant factors involving the conditioning
of the Gram matrix $Q$ or some Kurtosis like coefficients.

To conclude this section, let us add that,
when the output admits uniformly bounded conditional exponential moments,
 a relatively simple Gibbs estimator also achieves the $d/n$ convergence rate.
Precisely we have the following theorem.

\begin{thm} \label{th:v1c}
Assume that \eqref{eq:hh} holds for $H<+\infty$, and that there exist $\alpha > 0$ and $M>0$
such that for any $x\in\X$,
            $$
            \E \big(e^{\alpha|Y-\gl(X)|}\big|X=x\big) \leq M.
            $$
Consider the probability distribution $\hpig$ on $\G$ defined by its density
with respect to the uniform distribution $\pi$ on $\G$:
	\[
	\frac{\hpig}{\pi} (g)  
    =\frac{e^{ -\lam \sum_{i=1}^n [Y_i-g(X_i)]^2}}
		{\E_{g'\sim\pi} e^{ -\lam \sum_{i=1}^n [Y_i-g'(X_i)]^2}},
	\]
where $\lam>0$ is appropriately chosen (depending on $\alpha$, $H$ and $M$).
For any $\eps>0$, with probability 
at least $1-\eps$, we have 
	\[
	R( \E_{g\sim \hpig} g ) - R(g^*) \le C \, \frac{d 
    		+ \log(2\eps^{-1})}{n},
	\]
where the quantity $C>0$ only depends on $\alpha$, $H$ and $M$.
\end{thm}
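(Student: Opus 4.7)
The strategy is a PAC-Bayesian analysis of unbounded losses combined with the variational characterization of the Gibbs distribution and the quadratic structure of least-squares regression.

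By the identity $\E_{\hpig}R(g)=R(\E_{\hpig}g)+\E_{P_{X}}\Var_{g\sim\hpig}g(X)$, it suffices to bound $\E_{\hpig}[R(g)-R(g^*)]$ (the variance correction is itself controlled by $d/(n\lam)$ for the near-Gaussian Gibbs). I then establish a Bernstein-type Laplace-transform estimate for the loss increment: using the decomposition
\[
\ell(Y,g(X))-\ell(Y,g^*(X))=(g-g^*)^{2}(X)+2(g-g^*)(X)(g^*(X)-Y),
\]
the diameter bound $|g-g^*|\le H$, and the conditional exponential moment hypothesis, for $\lam$ smaller than a threshold depending only on $\alpha,H,M$,
\[
\log\E_{Z}e^{-\lam[\ell(Y,g(X))-\ell(Y,g^*(X))]}\le -\lam[R(g)-R(g^*)]+c\lam^{2}\|g-g^*\|_{L^{2}(P_{X})}^{2}.
\]
Since $\G$ is convex and $g^*=\argmin_{\G}R$, the first-order optimality condition yields $\|g-g^*\|_{L^{2}(P_{X})}^{2}\le R(g)-R(g^*)$, so the quadratic correction is absorbed into the linear one and the Laplace transform is dominated by $-\kappa\lam[R(g)-R(g^*)]$ for some $\kappa=\kappa(\alpha,H,M)>0$.

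I then apply Zhang's PAC-Bayes bound \eqref{eq:Z} to the shifted loss with posterior $\hpig$ and prior $\pi$. Inverting the Laplace estimate on the left-hand side gives, with $\Pn$-probability at least $1-\eps$,
\[
\kappa n\lam\,\E_{\hpig}[R(g)-R(g^*)]\le n\lam\,\E_{\hpig}[r(g)-r(g^*)]+K(\hpig,\pi)+\log\eps^{-1}.
\]
The Gibbs variational identity, applied to $\hpig=\pi_{-n\lam[r(g)-r(g^*)]}$, rewrites the right-hand side as a log-partition:
\[
n\lam\,\E_{\hpig}[r(g)-r(g^*)]+K(\hpig,\pi)=-\log\E_{\pi}e^{-n\lam[r(g)-r(g^*)]}.
\]

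This last step is the main obstacle: to bound the log-partition by $C(d+\log\eps^{-1})$ with high probability. Parameterizing $g=\langle\theta,\vp\rangle$, $g^*=\langle\theta^*,\vp\rangle$ and $u=\theta-\theta^*$, the exponent is the random quadratic form $n\lam(u^{T}\hat Q_{n}u-2u^{T}\hat v_{n})$ with $\hat Q_{n}=\frac1n\sum_{i}\vp(X_{i})\vp(X_{i})^{T}$ and $\hat v_{n}=\frac1n\sum_{i}(Y_{i}-g^*(X_{i}))\vp(X_{i})$. Completing the square, the Gaussian integral against the uniform $\pi$ on $\G$ produces an expression of the form $-n\lam\hat v_{n}^{T}\hat Q_{n}^{-1}\hat v_{n}+\tfrac{d}{2}\log(n\lam)+\tfrac{1}{2}\log\det\hat Q_{n}+O(d)$, in which the $\tfrac{d}{2}\log(n\lam)$ naively generates a spurious $\log n$ factor. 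Removing it requires a finer localized PAC-Bayes step in the spirit of Catoni's \eqref{eq:C3} or Alquier's localization, where the fixed uniform $\pi$ is replaced by the data-independent pivot $\pi_{-\beta R}$ (for a suitable $\beta$) whose Gaussian-like curvature matches that of $\hpig$ and whose differential entropy cancels the log-determinant. The stochastic offset $n\lam\hat v_{n}^{T}\hat Q_{n}^{-1}\hat v_{n}$ is then an empirical quadratic form in the centered residuals $Y_{i}-g^*(X_{i})$ whose conditional exponential moments license a Bernstein-type deviation bound of the form $\Tr[\hat Q_{n}^{-1}Q]\cdot O(1)+O(\log\eps^{-1})=O(d+\log\eps^{-1})$. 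Combining all pieces with $\lam$ fixed as a function of $\alpha,H,M$, and dividing through by $\kappa n\lam$, yields the bound $R(\E_{\hpig}g)-R(g^*)\le C(d+\log(2\eps^{-1}))/n$.
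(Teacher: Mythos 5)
Your first half is sound and follows the standard machinery for this result: the identity $R(\E_{g\sim\hpig}g)\le\E_{g\sim\hpig}R(g)$ for the quadratic loss, the Bernstein-type control of $\log\E_Z e^{-\lam[\ell(Y,g(X))-\ell(Y,g^*(X))]}$ via the diameter $H$ and the conditional exponential moments, the inequality $\E[(g-g^*)(X)]^2\le R(g)-R(g^*)$ from first-order optimality of $g^*$ on the convex set $\G$ (this is indeed the key structural lemma that lets the variance term be absorbed), and the reduction via the Gibbs variational identity to bounding the log-partition function $-\log\E_{g\sim\pi}e^{-n\lam[r(g)-r(g^*)]}$ by $C(d+\log(\eps^{-1}))$.

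The gap is that this last bound --- which you yourself flag as ``the main obstacle'' --- is never actually established, and the route you sketch would not close it under the stated hypotheses. Completing the square and integrating against the uniform prior yields $\tfrac d2\log(n\lam)+\tfrac12\log\det\hat Q_n+n\lam\,\hat v_n^T\hat Q_n^{-1}\hat v_n+O(d)$ only after you (i) justify that the Gibbs mass does not sit against the boundary of the bounded set $\G$ (which fails precisely when $g^*$ lies on the boundary, i.e.\ when $\gl\notin\G$), and (ii) relate $\hat Q_n^{-1}$, $\log\det\hat Q_n$ and the quadratic form $\hat v_n^T\hat Q_n^{-1}\hat v_n$ to their population counterparts. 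Point (ii) is a concentration statement for the empirical Gram matrix, and any exponential-deviation bound of the form $\Tr[\hat Q_n^{-1}Q]=O(d)$ reintroduces exactly the conditioning or kurtosis dependence that Theorems \ref{th:sophis} and \ref{th:v1c} are designed to avoid; the same objection applies to the claim that a localized prior $\pi_{-\beta R}$ has ``curvature matching that of $\hpig$'' --- the former is governed by $Q$, the latter by $\hat Q_n$, so the alleged cancellation of the log-determinant is again an empirical-versus-population Gram comparison in disguise. The underlying proof does not compute the log-partition function at all: it runs the PAC-Bayesian inequality against explicit smoothing distributions $\rho_\theta$ centered at each $\theta$, for which $K(\rho_\theta,\rho_{\theta^*})$ is a dimension-free quadratic form and the smoothing bias $\E_{g\sim\rho_\theta}R(g)-R(\langle\theta,\vp\rangle)$ is a trace term of order $d$; this is how the $d/n$ rate is obtained with no $\log n$ factor and no constants involving the conditioning of $Q$. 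As written, your argument proves the theorem only up to a spurious $\tfrac d2\log(n\lam)$ term and under implicit lower bounds on the eigenvalues of $\hat Q_n$.
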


\section{High-dimensional input and sparsity}

From the minimax rates of the three aggregation problems, 
we see that for $n\ll d\ll e^n$, 
one can predict as well as the best
convex combination up to a small additive term, which
is at most of order $\sqrt{\frac{\log d}n}$,
but one cannot expect to predict in general as well as the best
linear combination up to a small additive term.
In this setting, one may want to reduce its target by trying 
to predict as well as (still up to a small additive term) the best linear combination 
of at most $s\ll d$ functions, that is the function
  \beglab{eq:newg}
  g^*\in\und{\argmin}{g\in\{\sum_{j=1}^d \th_j g_j;\th_1\in\R,\dots,
    \th_d\in\R,\sum_{j=1}^d {\dsone}_{\th_j\neq 0} \le s\}} R(g).
  \endlab
It is well-established that $L^1$ regularization allows to perform
this task. The procedure is known as Lasso \cite{Tib94,Osb00} 
and is defined by $\hfllam= \langle\thllam,\vp\rangle$ with
	\[
	\thllam \in \und{\argmin}{\th\in\R^d} 
		\frac{1}{n}\sum_{i=1}^n \big( Y_i-\langle\th,\vp(X_i) \rangle \big)^2 
		  + \lam \|\theta\|_1,
	\]
where $\lam>0$ is a parameter to be tuned to retrieve the desired number of relevant 
variables/functions\footnote{
The functions $g_1,\dots,g_d$ can be called the explanatory variables of the output.
Note also that we can consider without loss of generality that the input space is $\R^d$ and that the functions $g_1,\dots,g_d$ are the coordinate functions.}.
As the $L^2$ penalty used in ridge regression, the $L^1$ penalty shrinks the coefficients. The difference is that for coefficients 
which tend to be close to zero, the shrinkage makes them equal to zero. This allows to select relevant 
variables/functions (i.e., find the $j$'s such that $\th^*_j\neq 0$). 

If we assume that the regression function $\greg$ is a linear combination of only $s \ll d$ variables 
among $\{g_1,\dots,g_d\}$,
the typical result is to prove that the expected excess risk 
of the Lasso estimator for $\lam$ of order $\sqrt{\fracl{\log d}{n}}$
is of order $\fracl{s \log d}{n}$ \cite{Bun07,van08,Mei09,Lou08}. 
Since this quantity is much smaller than $d/n$, this makes a huge improvement (provided that
the sparsity assumption is true). This kind of results usually requires strong conditions on the
eigenvalues of submatrices of $Q$, essentially assuming that the functions 
$g_j$ are near orthogonal. Here we will argue
that by combining the estimators solving \pbms\ and \pbl, one can achieve minimax optimal
learning rate without requiring such conditions. The guarantees presented here are also stronger
than the ones associated with $L_0$-regularization (penalization proportional to the number of nonzero coefficient) whatever criterion (Mallows' $C_p$ \cite{Mal73}, AIC \cite{Aka73} or BIC \cite{Sch78}) is used to tune the penalty constant. 
Recent advances on theoretical guarantees of $L_0$-regularization can be found in the works of Bunea, Tsybakov and Wegkamp \cite{Bun07}
and of Birgé and Massart \cite{Bir07} for the fixed design setting and in the work of Raskutti, Wainwright and Yu
\cite{Ras09} for the random design setting considered here. These results for $L_0$-regularization are not as good for the ones for the estimator described in this section since the $(s \log d) / n$ excess risk bound holds only when the conditional expectation of the output knowing the input is inside the model.

%

Precisely, let us assume\footnote{We make boundedness assumptions for sake of simplicity. The
results can be generalized to outputs having exponential conditional moments since
both building blocks of the estimator can handle this type of noisy outputs: for the empirical star algorithm,
see the supplemental material of \cite{Aud07b}. Further generalizations are open problems.} that for some $B>0$,
$\|g^*\|_\infty\le B$ and $|Y| \le B$.
Let $\cL_1$ denote the first half of the training set $\{Z_1,\dots,Z_{n/2}\},$ 
and $\cL_2$ denote the second half of the training set $\{Z_{n/2+1},\dots,Z_{n}\},$ 
where for simplicity we have assumed that $n$ is even.
For any $I\subset\{1,\dots,d\}$ of size $s$, let $\hg_I$
be the sophisticated estimator that satisfies Theorem \ref{th:sophis} 
\emph{trained on $\cL_1$} and associated with the set 
$\G_I = \big\{\langle \th,\vp\rangle: \|\langle \th,\vp\rangle\|_\infty \le B,
\th_j=0 \text{ for any }j\notin I \big\}.$ (One can alternatively consider the Gibbs estimator of Theorem \ref{th:v1c}.)
Let $\hg$ be the empirical star estimator (defined in Section \ref{sec:star})
\emph{trained on $\cL_2$} and associated with the $\binom{d}{s}$ functions $\hg_I$
(that are non-random given $\cL_1$).
This two-stage estimator satisfies the following theorem.

\begin{thm}
For any $\eps>0$, with probability at least $1-\eps$,
  \beglab{eq:sparse}
  R(\hg) - R(g^* ) \le C B^2 \frac{s \log(d/s)+\log(2\eps^{-1})}{n},
  \endlab 
for some numerical constant $C>0$.  
\end{thm}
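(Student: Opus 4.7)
The plan is to combine the two building blocks of the estimator, namely Theorem \ref{th:sophis} (giving a $d/n$ rate inside a fixed bounded linear model) and Theorem \ref{th:star} (selecting from a finite collection at the ideal rate $\log(d)/n$), using the sample splitting $\cL_1, \cL_2$ to decouple the randomness of the two stages. The key observation is that the minimax rate for $s$-sparse aggregation is naturally obtained as $\log\binom{d}{s}/n \asymp s \log(d/s)/n$ plus a $d/n$-type cost inside the best $s$-subset.

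\textbf{Step 1 (oracle in the collection).} Fix a (non-random) subset $I^\star \subset \{1,\dots,d\}$ of size $s$ that contains the support of some coefficient vector realizing $g^*$ in \eqref{eq:newg}, so that $g^* \in \G_{I^\star}$. Condition on $\cL_1$. Since $|Y| \le B$ and every $g \in \G_{I^\star}$ has $\|g\|_\infty \le B$ by construction, the diameter satisfies $H \le 2B$ and, by $\|g^*\|_\infty \le B$ and $|Y|\le B$, the conditional noise level satisfies $\sigma^2 \le 4B^2$. Theorem \ref{th:sophis}, applied with sample size $n/2$ and confidence $\eps/2$, then yields, with $\P(\cdot \mid \cL_1)$-probability $\ge 1-\eps/2$,
\[
R(\hg_{I^\star}) - R(g^*) \le C_1 B^2 \frac{s + \log(4\eps^{-1})}{n}.
\]

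\textbf{Step 2 (star-aggregation of the collection) and conclusion.} Condition again on $\cL_1$. Then the $N := \binom{d}{s}$ functions $\{\hg_I : |I|=s\}$ are deterministic and, possibly after truncating to $[-B,B]$ (which only decreases the risk because $|Y| \le B$), each has sup-norm $\le B$. Applying Theorem \ref{th:star} to the empirical star estimator on the independent sample $\cL_2$ of size $n/2$, with confidence $\eps/2$, gives with $\P(\cdot \mid \cL_1)$-probability $\ge 1-\eps/2$,
\[
R(\hg) - \min_{|I|=s} R(\hg_I) \le \frac{600 B^2 \log[3 N (N-1) \cdot 2 \eps^{-1}]}{n/2} \le C_2 B^2 \frac{s\log(d/s) + \log(2\eps^{-1})}{n},
\]
where the last step uses $\log \binom{d}{s} \le s \log(ed/s)$. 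Combining the two displays by a conditional union bound and then integrating over $\cL_1$, since $\min_{|I|=s} R(\hg_I) \le R(\hg_{I^\star})$, one obtains \eqref{eq:sparse} with probability $\ge 1-\eps$, absorbing the $s$ term from Step 1 into $s\log(d/s)$ in the sparse regime $d \ge 2s$ (the complementary regime $d < 2s$ is the non-sparse regime and is already covered by Theorem \ref{th:sophis} applied directly on the full model).

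\textbf{Main obstacle.} There is no deep obstacle: both constituent theorems already deliver the right rates with exponential deviations, and sample splitting handles the dependency. The main care needed is (i) to pick the oracle set $I^\star$ as a deterministic set (based only on the unknown $g^*$, not on $\cL_1$), so that the hypotheses of Theorem \ref{th:sophis} are satisfied conditionally on no data; (ii) to verify that Theorem \ref{th:star}, whose bound scales with the number of reference functions, still gives the desired $s\log(d/s)/n$ after inserting $N = \binom{d}{s}$; and (iii) to make sure the functions $\hg_I$ can be treated as uniformly bounded by $B$ when feeding them to the star estimator, which is what motivates restricting to $\G_I$ in the very definition of $\hg_I$ (or applying a harmless truncation).
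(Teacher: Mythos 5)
Your proposal is correct and follows essentially the same route as the paper's proof: apply Theorem \ref{th:star} to aggregate the $\binom{d}{s}$ candidates $\hg_I$ (using $\binom{d}{s}\le (ed/s)^s$), apply Theorem \ref{th:sophis} to the oracle subset $I^*$ containing the support of $g^*$, and combine by a union bound. Your additional care about conditioning on $\cL_1$, the bounds $H\le 2B$ and $\sigma^2\le 4B^2$, and the absorption of the $s$ term into $s\log(d/s)$ only makes explicit what the paper leaves implicit.
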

\begin{proof}
From Theorem \ref{th:star}, since we have $\binom{d}{s}\le \big(\fracc{ed}s\big)^{s}$, 
with probability at least $1-\eps/2$,
we have
  $$
  R(\hg) - \min_{I\subset\{1,\dots,d\}:|I|=s} R(\hg_I)
    \le 1200 B^2 \frac{s\log(ed/s)+\log(2\eps^{-1})}n.
  $$
Let $I^*$ be a set of $s$ variables containing the set of at most 
$s$ variables involved in $g^*$. 
From Theorem \ref{th:sophis}, with probability at least $1-\eps/2$,
we have
  $$
  R(\hg_{I^*})- R(g^*) \le 1224 B^2 \frac{s + \log(4\eps^{-1})}n.
  $$
By using an union bound, we obtain 
  $$
  R(\hg) - R(g^* ) \le 1224 B^2 \bigg( \frac{s\log(ed/s)+\log(2\eps^{-1})}n + \frac{s +\log(4\eps^{-1})}{n} \bigg).
  $$
which gives the desired result.  
\end{proof}

Due to the particular structure of the empirical star algorithm,
the estimator $\hg$ can be written as a linear combination 
of at most $2s$ functions among $\{g_1,\dots,g_d\}$,
so that the estimator can be used for variable selection.
The functions involved in $g^*$ do not necessarily
belong to this set of at most $2s$ functions. I do not believe
that achieving such identifiability of these particular relevant variables
should be the goal, since pursuing this target would definitely require that the
different variables are not too much correlated, a situation
which will rarely occurs in practice. 

Adaptivity with respect to the sparsity level of $g^*$ can
also be obtained. Indeed, let $s^*$ be the number of nonzero coefficients of 
the function $g^*$ defined by \eqref{eq:newg}. By using 
a three-stage estimator procedure  
using successively the empirical star algorithm
at a given level of sparsity and then on the $s$ functions thus designed, it is 
easy that \eqref{eq:sparse} still holds with $s$ replaced by $s^*$.
Note that $g^*$ defined in \eqref{eq:newg} depends on a sparsity level $s$, 
which can be taken equal to $d$. Then we have $g^*=\gl$, and 
the three-stage procedure is adaptive to the sparsity level of $\gl$.
In the fixed design setting, Bunea, Tsybakov and Wegkamp \cite{Bun07}
have shown that these rates are minimax optimal, and it is natural
to consider that their lower bound extends to our random design case.

Another possible use of the algorithms solving problems \pbms\ and \pbl\ is 
when we consider sparsity with group structure. This occurs when
the variables are naturally organized into groups: in computer vision,
this naturally occurs since there exist different families of image descriptors,
and the grouping can be done by family, scale and/or position.
Let $I_1,\dots,I_D \subset \{1,\dots,d\}$ be $D$ sets of grouped variables.
For a vector $\th$, let us say that a group $I_k$ is active if there exists $j\in I_k$
such that $\th_j\neq0$. let $S(\th)$ be the number of active groups among $I_1,\dots,I_D$.
  
For a given sparsity level $s\in\{1,\dots,D\}$, the target is
  \beglabc{eq}
  \ggroup\in\und{\argmin}{g\in\{\langle \th, \vp\rangle;\th\in\R^d,S(\th)\le s\}} R(g).
  \endlabc
There exist only $\binom{D}{s}$ different sets of $s$ groups that could be active.
So a two-stage estimator $\hggroup$ similar to the one described before satisfies
that with probability at least $1-\eps$, 
  $$
  R(\hggroup) - R(\ggroup) \le C B^2 \frac{s \log(D/s)+J+\log(2\eps^{-1})}{n},
  $$
where $J$ denotes the number of nonzero coefficients in the linear combination defining $\ggroup$. 
This type of results has not been obtained yet for the group Lasso \cite{Yua06} even when assuming
low correlation between the variables, except for the fixed design setting \cite{Hua09,LouPon09}.

We have presented in this section an example of theoretical results easily obtainable from
the estimators solving problems \pbms\ and \pbl. The results are expressed
in terms of sub-exponential excess risk bounds, which were not obtainable
before the introduction of the empirical star algorithm.
An advantage of the approach is its genericity: it is not
restricted to particular families of estimators.

There are yet some limitations. First, there is no 
variable selection consistency with this approach, but
as stated before, this stronger type of results would require strong
assumptions on the input vector distribution, that are often not met in practice.
In the fixed design setting, for overlapping groups, Jenatton, Bach and I \cite{Jen09} 
have proved a high dimensional variable consistency result extending the corresponding result for the Lasso \cite{ZhaoYu06,Wai09}.

Second, the approach does not extend easily to the case of generalized additive models,
in which linear combinations of a fixed number of functions are replaced by
functional spaces \cite{MeiVanBuh09}, such as reproducing kernel Hilbert spaces in the cases of
multiple kernel learning \cite{Lan04,Bach04,Ong05,Mic06,Bach08,Kol08}.

Finally, the most important limitation, which is often encountered when using classical model selection approach, is its computational intractability. So this leaves open the following fundamental problem: is it possible to design a computationally efficient algorithm with the above guarantees (i.e., without assuming low correlation between the explanatory variables)?

\chapter{Multi-armed bandit problems}

\section{Introduction}

Bandit problems illustrate the fundamental difficulty of decision making in the face of uncertainty:
a decision maker must choose between following what seems to be the best choice in view of the past (``exploiting'') or 
testing (``exploring'') some alternative,
hoping to discover a choice that beats the current best choice.
More precisely, in the multi-armed bandit problem, at each stage,
an agent (or decision maker) chooses one action (or arm), and receives a reward from it.
The agent aims at maximizing his rewards. Since he does not know the process
generating the rewards, he needs to explore (try) the different actions and yet, exploit (concentrate its draws
on) the seemingly most rewarding arms.

The multi-armed bandit problem is the simplest setting where one encounters the 
exploration-exploitation dilemma. It has a wide range of applications
including advertizement \cite{Baba09,Dev09}, economics \cite{BerVal08,LamPagTar04}, games \cite{Gel06} and
optimization \cite{Kle05,Coq07,Kle08,Bub08}.
It can be a central building block of larger systems, like in evolutionary programming \cite{Hol92} and reinforcement learning \cite{Sut98}, in particular in large state space Markovian Decision Problems \cite{KocSze06}.
The name ``bandit'' comes from imagining a gambler in a casino playing with $K$ slot machines, where at each round, 
the gambler pulls the arm of any of the machines and gets a payoff as a result.
The seminal work of Robbins \cite{ro52} casts the bandit problem in a stochastic setting
in which essentially the rewards obtained from an arm are independent and identically distributed random variables 
that are also independent from the rewards obtained from the other arms.
Since the work of Auer, Cesa-Bianchi, Freund and Schapire \cite{ACFS03}, it was also studied in an adversarial setting. 

To set the notation, let $K\ge 2$ be the number of actions (or arms) and $n\ge K$ be the time horizon.
A $K$-armed bandit problem is a game between an agent and an environment in which, at each time step $t \in \{1,\dots,n\}$,
(i) the agent chooses a probability distribution $p_t$ on a finite set $\{1,\dots,K\}$,
(ii) the environment chooses a reward vector $g_t=(g_{1,t},\dots,g_{K,t}) \in [0,1]^K$
(possibly through some external randomization),
and simultaneously (independently), the agent draws the arm $I_t$ according to the distribution $p_t$,
(iii) the agent only gets to see his own reward $g_{I_t,t}$.
The goal of the decision maker is to maximize his cumulative reward $\sum_{t=1}^n g_{I_t,t}$.

In the stochastic bandit problem, the environment cannot choose any reward vectors:
the reward vectors $g_t$ have to be independent and identically distributed, and its components 
should be independent random variables\footnote{The independence 
of the components is always made in the literature, but is not fundamentally useful 
(up to rare modifications of the numerical constants).}.  
So an environment is just parameterized by a $K$-tuple 
of probability distributions $(\nu_1,\hdots,\nu_K)$ on $[0,1]$. 
Note that the term ``stochastic bandit'' can be a bit misleading
since the assumption is not just stochasticity but rather an i.i.d. assumption.

In the adversarial bandit problem, no such restriction is put
so that past gains have no reason to be representative of future ones. This contrasts with 
the stochastic setting in which confidence bounds on the mean reward of the arms can be 
deduced from the rewards obtained so far.

A policy is a strategy for choosing the drawing probability distribution $p_t$ based on the history formed by the past plays and the associated rewards.
So it is a function that maps any history to a probability distribution on $\{1,\dots,K\}$. 
We define the regret of a policy with respect to the best constant decision as
  \beglab{eq:regret}
  R_n = \max_{i=1,\dots,K} \sum_{t=1}^n \big( g_{i,t} - g_{I_t,t} \big). 
  \endlab

To compare to the best constant decision is a reasonable target
since it is well-known that (i) there exist randomized policies
ensuring that $\E R_n/n$ tends to zero as $n$ goes to infinity,
(ii)~this convergence property would not hold if the maximum and the sum would be inverted in the definition of $R_n$.
This chapter will first present my contributions to the stochastic bandit problems, essentially:
  \begin{itemize}
  \item how to use empirical variance estimates in upper confidence based policies? (Section \ref{sec:4})
  \item how thin is the tail distribution of the regret of standard policies, and how can we improve it? (Section \ref{sec:5})
  \item provide a minimax optimal policy (Section \ref{sec:6}), 
  \item propose a model and an arm-increasing rule to deal with bandit problems with more arms than draws: $K\ge n$
    (Section \ref{sec:7}),
  \item design and use a Bernstein's bound with estimated variances to have better stopping rules
    (Section \ref{sec:8}),
  \item provide a policy to identify the best arm at the end of the $n$ time steps (Section \ref{sec:9}).
  \end{itemize}
Sébastien Bubeck and I \cite{AudBub10} contribute to the adversarial setting
by designing a new type of weighted average forecaster characterized
by an implicit normalization of the weights, and for which a new type of
analysis can be developed. The advantage of the policy and the analysis
is that it allows to bridge the long open logarithmic gap
in the characterization of the minimax rate for the multi-armed bandit problem,
and to have a common framework for addressing other sequential prediction problems
(full information, label efficient, tracking the best expert) (Section \ref{sec:adv}).

\section{The stochastic bandit problem}

\subsection{Notation}

Let $T_i(t)$ denote the number of times arm $i$ is chosen by the policy during the first $t$ plays.
Define $\mu_i=\int x \nu_i(dx)$ the expectation and $V_i=\int (x-\mu_i)^2 \nu_i(dx)$ the variance
of the distribution $\nu_i$ characterizing arm $i$. Let $i^*\in\argmin_{i\in\{1,\dots,K\}} \mu_i$ denote an index of an optimal arm. 
The suboptimality of an arm $i$ is measured by:
  $$
  \Delta_i= \max_{j=1,\dots,K} \mu_j - \mu_i = \mu_{i^*} - \mu_i.
  $$
Let $X_{i,t}$ be the $t$-th reward obtained from arm $i$ if $T_i(n)\ge t$, and for $t>T_i(n)$, let $X_{i,t}$ be other
independent realizations of $\nu_i$.
For any $i\in\{1,\dots,K\}$ and $s\in\N$, introduce $\oX_{i,s}$ and $\bV_{i,s}$ the empirical mean and variance 
of $X_{i,1},\dots,X_{i,s}$. 
    $$
    \oX_{i,s} \eqdef \frac{1}{s} \sum_{j=1}^s X_{i,j} \quad {\rm and} \quad
    \bV_{i,s} \eqdef \frac{1}{s} \sum_{j=1}^s (X_{i,j}-\oX_{i,s})^2.
    $$

\subsection{Regret notion}

Previous works in the stochastic bandit problem do not use the regret defined by \eqref{eq:regret}, which is a 
regret with respect to the best constant decision, but a (pseudo-)regret that compares the reward of the policy
to the reward of an optimal arm in expectation, that is $i^*\in\argmin_{i\in\{1,\dots,K\}} \mu_i$:
  $$
  \oR_n = \sum_{t=1}^n \big( g_{i^*,t} - g_{I_t,t} \big) \le R_n.
  $$
Results concerning this regret are easier to state, and we will follow hereafter the trend of 
previous works to state the results in terms of $\oR_n$. In this section, we gather results
showing how to go from an upper bound on $\oR_n$ to an upper bound on $R_n$.
The following lemma shows that logarithmic regret bounds on $\E \oR_n$ extend to logarithmic regret bounds on $\E R_n$ 
\emph{when the optimal arm is unique}, that is $\mu_i<\mu_{i^*}$ for any $i\neq i^*$.
Besides, unlike known upper bounds for $\E \oR_n$, the ones on $\E R_n$ depends on the variance $V_{i^*}$
of the reward distribution of the optimal arm.
(When there are several optimal arms, it is the smallest variance of the optimal arms distributions which appears in the 
expected regret bound.)
\begin{lemma}[\cite{AudBub10}]
For a given $\delta\ge 0$, let $I=\big\{i\in\{1,\dots,K\}:\Delta_i\le\delta\big\}$ be
the set of arms ``$\delta$-close'' to the optimal ones, and $J=\{1,\dots,K\}\setminus I$ the remaining set of arms.
In the stochastic bandit game, we have
$$\E R_n - \E \oR_n \leq \sqrt{\frac{n \log |I|}{2}}
  + \sum_{i\in J} \frac{1}{2 \Delta_i} \exp(- n \Delta_i^2),$$
and also
$$\E R_n - \E \oR_n \leq \sqrt{\frac{n \log |I|}{2}}
  + \sum_{i\in J} \frac{2V_{i^*}+2V_{i}+2 \Delta_i/3}{\Delta_i} 
    \exp\left(- \frac{n\Delta_i^2}{2V_{i^*}+2V_{i}+2\Delta_i/3}\right).$$

In particular when there exists a unique arm $i^*$ such that $\Delta_{i^*}=0,$
we have
$$\E R_n - \E \oR_n \leq 2 \sum_{i\neq i^*} \frac{V_{i^*}+V_{i}+\Delta_i/3}{\Delta_i},$$
and also for any $t> 0$
$$\P\big( R_n - \oR_n > t \big) \le \sum_{i\neq i^*} \exp\left(- \frac{(t+n \Delta_i)^2}{n\min(1,2V_{i^*}+2V_{i}+2(t/n+ \Delta_i)/3)}\right).$$
\end{lemma}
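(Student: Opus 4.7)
The starting identity, using $S_i=\sum_{t=1}^n g_{i,t}$ and $\E S_{i^*}=n\mu_{i^*}$, is
\[
\E R_n - \E \oR_n \;=\; \E\bigl[\max_i S_i\bigr] - n\mu_{i^*} \;=\; \E\bigl[\max_i (S_i - S_{i^*})\bigr],
\]
since pointwise $\max_i(S_i - n\mu_{i^*}) = \max_i(S_i - S_{i^*}) + (S_{i^*}-n\mu_{i^*})$ (the last summand does not depend on $i$). The resulting expression is pointwise nonnegative (take $i=i^*$). For the two general bounds (arbitrary $\delta\ge 0$) I would next use the pointwise decomposition
\[
\max_i(S_i - S_{i^*}) \;\le\; \max_{i\in I}(S_i - S_{i^*}) + \sum_{j\in J}(S_j - S_{i^*})_+,
\]
verified by case analysis according to whether the argmax lies in $I$ or in $J$; the inclusion $i^*\in I$ (since $\Delta_{i^*}=0\le\delta$) is what makes both cases go through.

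\textbf{Bounding each piece.} For the first term, rewrite $\max_{i\in I}(S_i - S_{i^*}) \le \max_{i\in I}(S_i - n\mu_i) - (S_{i^*} - n\mu_{i^*})$ using $n\mu_i - n\mu_{i^*}\le 0$ for $i\in I$; taking expectation the second summand vanishes, and the standard sub-Gaussian maximal inequality (each $S_i - n\mu_i$ is centered sub-Gaussian with parameter $n/4$ by Hoeffding's lemma) gives $\E[\max_{i\in I}(S_i-n\mu_i)] \le \sqrt{n\log|I|/2}$. For $j\in J$, the random variable $D_j = S_j - S_{i^*}$ is a sum of $2n$ independent bounded terms with mean $-n\Delta_j$ and variance $n(V_j+V_{i^*})$, and I compute $\E[(D_j)_+] = \int_0^\infty \P(D_j > t)\,dt$ by feeding in two standard tails: Hoeffding (each independent summand has range $1$, so the centered difference is sub-Gaussian with parameter $n/2$) yields $\P(D_j > t) \le \exp(-(t+n\Delta_j)^2/n)$, while Bernstein yields $\P(D_j>t) \le \exp\!\bigl(-(t+n\Delta_j)^2/(2n(V_j+V_{i^*})+2(t+n\Delta_j)/3)\bigr)$.

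\textbf{Integration and conclusion.} The Hoeffding integral reduces after the change of variable $s=t+n\Delta_j$ to $\int_{n\Delta_j}^\infty e^{-s^2/n}\,ds$, which integration by parts (Mill's ratio) bounds by $\frac{1}{2\Delta_j}e^{-n\Delta_j^2}$; this gives the first stated inequality. For the Bernstein integral the key algebraic trick is to divide numerator and denominator of the exponent by $(t+n\Delta_j)$ and use $t+n\Delta_j \ge n\Delta_j$ to produce the linear-in-$t$ lower bound
\[
\frac{(t+n\Delta_j)^2}{2n(V_j+V_{i^*})+2(t+n\Delta_j)/3} \;\ge\; \frac{(t+n\Delta_j)\Delta_j}{2(V_j+V_{i^*})+2\Delta_j/3},
\]
after which the resulting exponential integrates explicitly to $\frac{2V_{i^*}+2V_j+2\Delta_j/3}{\Delta_j}\exp\!\bigl(-\frac{n\Delta_j^2}{2V_{i^*}+2V_j+2\Delta_j/3}\bigr)$, matching the second inequality. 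The unique-$i^*$ bound follows by specialising to $\delta=0$ (so $I=\{i^*\}$, and the sub-Gaussian contribution drops) and then bounding the exponential by $1$. Finally, the deviation bound is a union bound $\P(\max_i(S_i - S_{i^*})>t)\le \sum_{i\ne i^*} \P(S_i - S_{i^*}>t)$ followed by taking, for each $i$, the better of the Hoeffding and Bernstein exponents; combining the two denominators produces exactly the $\min(1,\cdot)$ appearing in the statement. The only non-routine step is the algebraic simplification of the Bernstein exponent displayed above, which is what makes the tail integrate to a closed-form Bernstein-type expression; everything else reduces to standard sub-Gaussian/Bernstein concentration and Fubini.
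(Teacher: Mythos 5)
The paper states this lemma without proof (it is imported from \cite{AudBub10}), so there is nothing internal to compare against; judged on its own, your argument is correct and is the standard route. The key points all check out: the pointwise identity $R_n-\oR_n=\max_i(S_i-S_{i^*})$, the split into $\max_{i\in I}$ (handled by the sub-Gaussian maximal inequality with variance proxy $n/4$, using $i^*\in I$ to make the case analysis valid) and $\sum_{j\in J}(S_j-S_{i^*})_+$ (handled by integrating the Hoeffding and Bernstein tails of a sum of $2n$ independent range-$1$ terms, with the linearization $t+n\Delta_j\ge n\Delta_j$ in the Bernstein exponent), and the final union bound giving the $\min(1,\cdot)$ denominator all reproduce the stated constants exactly.
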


The uniqueness of the optimal arm is really needed to have logarithmic (in $n$) bounds on the expected regret.
This can be easily seen by considering a two-armed bandit in which both reward distributions are identical (and non degenerated).
In this case, the expected pseudo-regret is equal to zero while the expected regret will be at least of order $\sqrt{n}$ for any forecaster.
This reveals a fundamental difference between the expected regret and the pseudo-regret.

Previous works on stochastic bandits use the expected pseudo-regret criterion since it satisfies
  $$\E \oR_n = \sum_{i=1}^K \Delta_i \E T_i(n),$$
meaning that one has only to control the expected sampling times of suboptimal arms to
understand how the expected pseudo-regret behaves.

\subsection{Introduction to upper confidence bounds policies}

Early papers have studied stochastic bandit problems under Bayesian assumptions (e.g., Gittins \citealt{gittins89}).
On the contrary, Lai and Robbins \citet{LaiRo85} have considered a parametric minimax framework.
They have introduced an algorithm that follows what is now called the ``optimism in the face of uncertainty principle''.
At time $t\equiv k_t$ (mod $K$) with $k_t\in\{1,\dots,K\}$, their policy 
compares an {\em upper confidence bound} (UCB) of the mean reward $\mu_{k_t}$
of arm $k_t$ to a reasonable target defined as the highest empirical mean of ``sufficiently'' drawn arms.
If the upper confidence bound exceeds the target, arm $k_t$ is drawn, and otherwise, the arm
defining the reasonable target is drawn.
Lai and Robbins proved that the expected regret of this policy increases at most at a logarithmic rate with the number of trials
 and that the algorithm achieves the smallest possible regret up to some sub-logarithmic additive term (for the considered family of distributions).
Agrawal \citet{Agr95} proposed computationally easier UCB algorithms in a more general setting that have also logarithmic expected regret
(at the price of a higher numerical constant in the upper bound on the regret).
More recently, Auer, Cesa-Bianchi and Fischer \citet{AueCes02} have proposed even simpler policies achieving 
logarithmic regret \emph{uniformly over time} rather than just for a fixed number $n$ of rounds known in advance by the agent.
Besides, unlike previous works, they have provided non asymptotic bounds.

%
 
Upper confidence bounds policies can be described as follows. From time $1$ to $K$, draw each arm once.
At time $t\ge K+1$, draw the arm maximizing $B_{i,T_i(t-1),t}$,
where $B_{i,s,t}$ is a high probability bound on $\mu_i$ computed from
the i.i.d. sample $X_{i,1},\dots,X_{i,s}$. 
The confidence level of this high probability bound might depend on the current round $t$.
For instance, the UCB$1$ policy of Auer, Cesa-Bianchi and Fischer \citet{AueCes02} uses 
  $$
  B_{i,s,t}=\oX_{i,s}+\sqrt{ \frac{2 \log t}{s}},
  $$
which is an upper bound on $\mu_i$ holding with probability at least $1-t^{-4}$
according to Hoeffding's inequality. 

Auer, Cesa-Bianchi and Fischer \citet{AueCes02} also noted that plugging 
an upper confidence bound of the variance in the square root term performs
empirically substantially better than UCB$1$. Precisely, their experiments used
  \beglab{eq:aue}
  B_{i,s,t}=\oX_{i,s}+\sqrt{ \min\bigg(\bV_{i,s}+\sqrt{\frac{2\log t}s},\frac14 \bigg)\frac{\log t}{s}}.
  \endlab
My first contribution to the multi-armed bandit problem was to provide a 
theoretical justification of these empirical findings, as described in the following section.

\subsection{UCB policy with variance estimates} \label{sec:4}
 
Rémi Munos, Csaba Sze\-pesv\'ari and I \cite{AuMuSz09} have 
proposed the following slight modification of the arm indexes given by \eqref{eq:aue}:
  \beglab{eq:ucbv}
  B_{i,s,t}=\oX_{i,s}+\sqrt{ \frac{2\zeta\bV_{i,s}\log t}s}+\frac{3 \zeta \log t}s,
  \endlab
with $\zeta>1$. The associated policy achieves a logarithmic regret as UCB$1$ with a constant factor potentially
much smaller than the one of UCB$1$.
Indeed, from \citet{AueCes02}, UCB$1$ satisfies 
  \beglab{eq:ucbun}
  \E \oR_n \le \sum_{i:\Delta_i>0}\frac{10}{\Delta_i} \log n,
  \endlab
whereas our algorithm, called UCB-V (V for variance), satisfies for $\zeta>1$,
  \beglab{eq:czeta}
  \E \oR_n \le c_\zeta \sum_{i:\Delta_i>0} \bigg( \frac{\sigma_k^2}{\Delta_k} + 2 \bigg) \log n,
  \endlab
with $c_\zeta>0$ a function of $\zeta$ satisfying $c_{1.2}\le 10$ and
$c_\zeta\le C \Big(\sum_{t=1}^{+\infty} t^{-\zeta} + \zeta \Big)$
for some numerical constant $C>0$. 
%
We also proved that for specific distributions of the rewards,
UCB-V with $\zeta<1$ suffers a polynomial expected (pseudo-)regret, that is $\E \oR_n \ge C n^C$
for some $C>0$.
The argument proving this later assertion also implies that
using exactly the upper bound \eqref{eq:aue} 
can dramatically fail in some specific situations\footnote{For instance,
when the optimal arm 
concentrates its rewards on $0$ and $1$ (Bernoulli distribution with parameter $1/2$), and when the other arms 
always provide a reward equal to $1/2-1/n^{1/6}$, the expected regret is lower bounded by 
$C n^{1/7}$.}.

\subsection{Deviation of the regret of UCB policies} \label{sec:5}

In this section, we consider that there is a unique optimal arm $i^*$. 
In \cite{AuMuSz09}, we show that the UCB-V policy defined by \eqref{eq:ucbv} satisfies
  \beglab{eq:noconc}
  \P(\oR_n \ge C \log n) \le \bigg(\frac{C'}{\log n}\bigg)^{\zeta/2}.
  \endlab
for quantities $C$ and $C'$ depending on $K, \zeta, \sigma_1,\ldots,\sigma_K, \Delta_1,\ldots,\Delta_K$, but not on $n$.
The ``polynomial'' rate in \eqref{eq:noconc} is not due to the looseness of the bound.
It can be shown that as soon as the essential infimum 
of the optimal arm's distribution $\tmu = \sup\{v\in\R: \nu_{i^*}([0,v) ) = 0 \}$
is smaller than the mean reward of the second best arm, the pseudo-regret admits a polynomial tail only: there exists $C'>0$ (depending on the distributions $\nu_1,\dots,\nu_K$)
such that for any $C>0$, there exists $n_0>0$ such that for any $n\ge n_0$, 
$\P\big( \oR_n > C \log n\big) \ge \big(\frac{1}{C'C\log n}\big)^{C'}$. In particular,
there is no positive quantities $C,C'$ for which for any $n$, we have\footnote{
An entirely analogous result holds for UCB1: using the variance estimates or not does not change the form of the
tail distribution of the regret.} $\P\big( \oR_n > C \log n\big) \le \frac{C'}n$.

The regret concentration, although it improves as $\zeta$ grows, is thus pretty slow.
The slow concentration happens when the first draws $\Omega$ of the optimal arm are unlucky (yielding small rewards) 
in which case the optimal arm will not be selected any more during roughly the first $e^{\Omega}$ steps. 
As a result, the distribution of the regret can be seen as a mixture of a peaky mode corresponding
to situations in which the optimal arm has a ``normal'' behaviour (with small variations due to 
the suboptimal arms) and a very thick-tailed mode corresponding to the unlucky start described above.
Our theoretical study shows that the mass of this mode decays only at a polynomial rate controlled by $\zeta$.
Recall that the larger $\zeta$ is, the more all arms are explored, the larger the bound on the expected regret is (see 
\eqref{eq:czeta}). In our experiments, this mode does appear (see Figure \ref{fig:graph}).

\begin{figure}[thbp]
\begin{center}
  \includegraphics[width=0.47\textwidth]{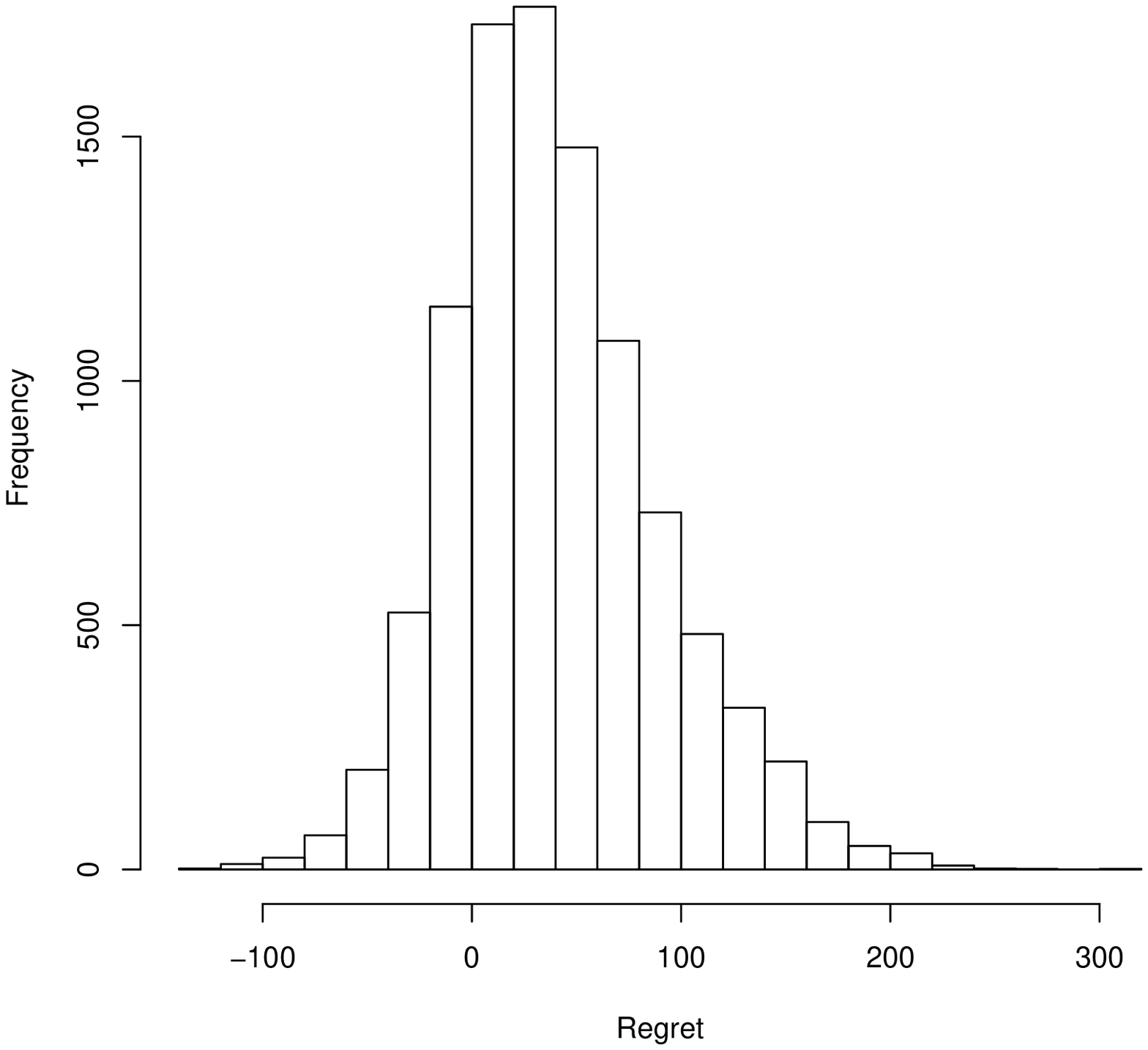}
  \includegraphics[width=0.47\textwidth]{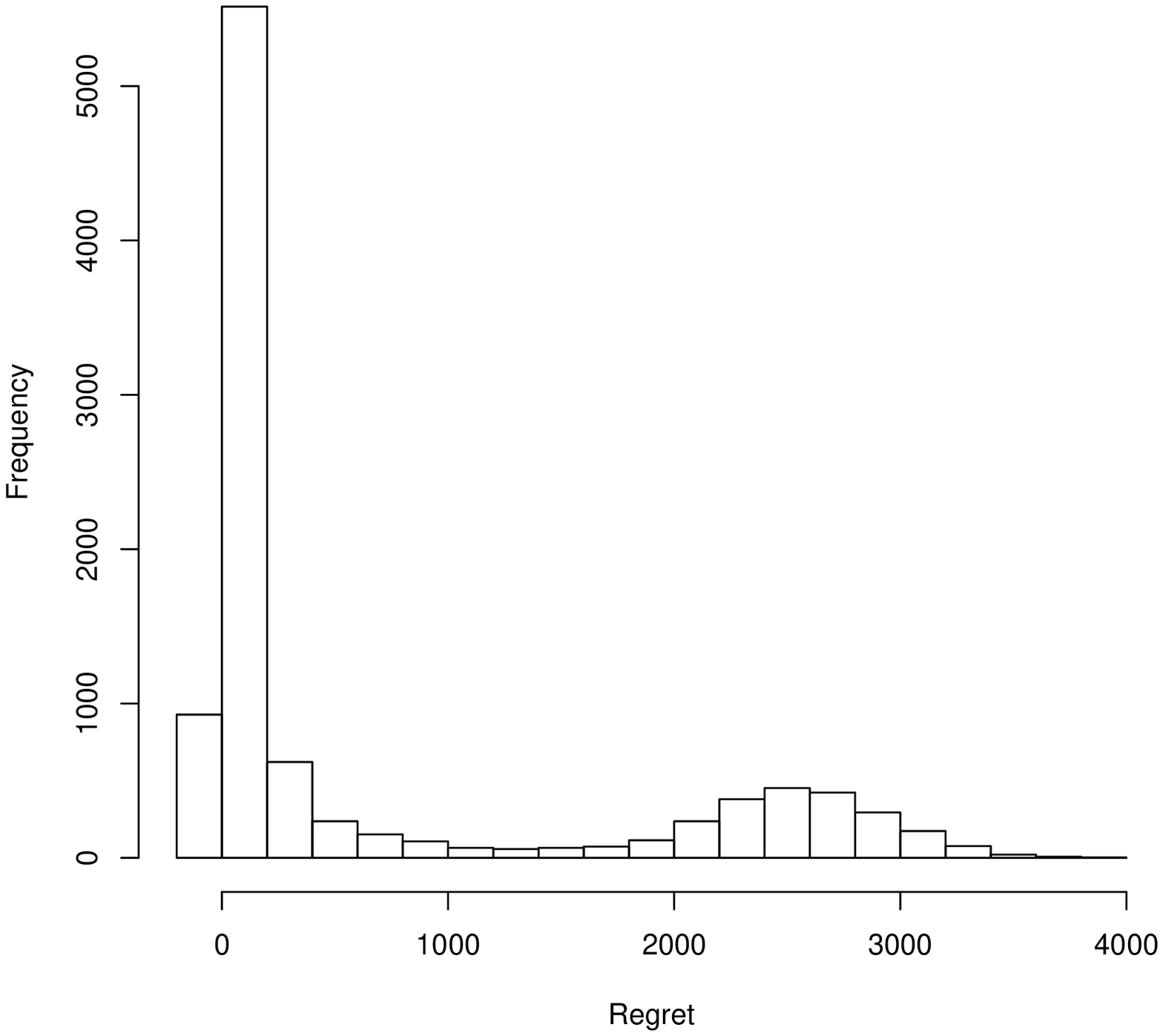}
\end{center}
\vs{-.9}
\caption{Distribution of the pseudo-regret for UCB-V ($\zeta=1$) 
for horizon $n=16,384$ (l.h.s. figure) and $n=524,288$ (r.h.s. figure).
The bandit problem is defined by $K=2$, a Bernoulli distribution with parameter $0.5$ and a Dirac distribution at $0.495$.}
\label{fig:graph}
\end{figure}

When the time horizon  $n$ is known, one may consider the UCB policy with
  \beglab{eq:logn}
  B_{i,s,t}=\oX_{i,s}+\sqrt{ \frac{6\bV_{i,s}\log n}s}+\frac{9 \log n}s,
  \endlab
which is an upper bound on $\mu_i$ which holds with probability at least $1-n^{-3}.$
The associated UCB policy, called hereafter UCB-Horizon, concentrates its exploration phase at the beginning of the plays,
and then switches to the exploitation mode. 
On the contrary, the UCB-V induced by \eqref{eq:ucbv}, which looks deceptively similar to UCB-Horizon (with $\zeta=3$),
explores and exploits at any time during the interval $\clint{1}{n}$.
Both policies have similar guarantee on their expected regret. However, 
on the one hand, UCB-Horizon
always satisfies
\beglab{eq:surprise}
    \P\big( \oR_n > C \log n\big) \le \frac{C'}{n},
    \endlab
where $C$ and $C'$ are quantities depending only on $K, \zeta, \sigma_1,\ldots,\sigma_K, \Delta_1,\ldots,\Delta_K$,
which contrasts with the significantly worse tail distribution of UCB-V.
On the other hand, unlike UCB-Horizon, UCB-V has the anytime property: the policy satisfies
the logarithmic expected regret bound for any time horizon $n$ (since its pulling 
strategy does not depend on the time horizon).
The open question here is thus: could we have both properties? In other words, is there an algorithm that does not need to know the time horizon
and which regret has a tail distribution satisfying~\eqref{eq:surprise}?
We conjecture that the answer is no.

%
%
%



\subsection{Distribution-free optimal UCB policy} \label{sec:6}

The inequalities \eqref{eq:ucbun} and \eqref{eq:czeta} may have surprised the reader
since the right-hand sides diverge for $\Delta_i$ going to $0$.
For $\Delta_i=\text{o}(n^{-1/2})$, this is an artefact of the bounds, which is easily rectifiable.
For instance, for UCB$1$, the more general bound (but less readable one) is 
  \[
  \E \oR_n \le \max_{t_i\ge 0, \sum_i t_i=n} \sum_{i:\Delta_i>0} \min\bigg(\frac{10}{\Delta_i}\log n\,, \ t_i \Delta_i\bigg).
  \]
In the worst case \big(i.e., $\Delta_1=0$ and $\Delta_2=\cdots=\Delta_K=\sqrt{10 K (\log n)/n}$\big), the right-hand side
of the bound is equal to $\sqrt{10 n(K-1) \log n}$. 
This has to be compared with the following lower bound of Auer, Cesa-Bianchi, Freund and Schapire 
\cite{ACFS03}:
  $$
  \inf \sup \E \oR_n \geq \frac{1}{20} \sqrt{n K},
  $$
where the infimum is taken over all policies and the supremum is taken over all 
$K$-tuple of probability distributions on $[0,1]$. 
We thus observe a logarithmic gap.
In \cite{AudBub09,AudBub10}, Sébastien Bubeck and I close this logarithmic gap,
by using a different UCB policy based on
  $$
  B_{i,s,t}=\oX_{i,s}+\sqrt{ \frac{\log\max(\frac{n}{Ks},1)}{s}},
  $$
which, for $s< n/K$, is an upper bound on $\mu_i$ which holds with probability at least $1-(Ks/n)^{-2}$
according to Hoeffding's inequality. In this policy,
an arm that has been drawn more than $n/K$ times has an index equal 
to the empirical mean of the rewards obtained from the arm, and when it has been drawn close to $n/K$ times, the logarithmic
term is much smaller than the one of UCB$1$, implying less exploration of this already intensively drawn arm.
For this policy, we prove
\begin{thm} \label{th:mossb}
For $\Delta= \und{\min}{i\in\{1,\dots,K\}:\Delta_i>0}\, \Delta_i$, the above policy satisfies
  \beglab{eq:beaua}
  \oR_n \le \frac{23 K}{\Delta} \log\left( \max\left(\frac{110n\Delta^2}{K} , 10^4 \right)\right),
  \endlab
and
  \beglab{eq:beauc}
  \E \oR_n \le 24 \sqrt{nK}.
  \endlab 
\end{thm}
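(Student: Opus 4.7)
The plan is to follow the standard template for UCB regret analysis, but with a maximal-inequality replacement for the per-round union bound. Starting from the decomposition $\oR_n = \sum_{i : \Delta_i > 0} \Delta_i T_i(n)$, I would bound $\E[T_i(n)]$ for each suboptimal arm by analyzing the event $\{I_t = i\}$, which forces $B_{i, T_i(t-1), t} \ge B_{i^*, T_{i^*}(t-1), t}$. Splitting classically, at any such round either the optimal arm's index dropped below $\mu_{i^*}$ for some past number of pulls, or arm $i$'s upper confidence bound remained above $\mu_{i^*}$ despite already being sampled enough times.

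The heart of the argument is a maximal inequality of the form
\[
\P\!\left(\exists\, s \in \{1, \ldots, n\} : \oX_{i^*, s} + \sqrt{\tfrac{\log\max(n/(Ks), 1)}{s}} < \mu_{i^*} \right) \le \frac{c K}{n},
\]
which plays the role formerly played by the time-indexed union bound in UCB1. I would prove it by a peeling argument: slice $\{1, \ldots, \lfloor n/K \rfloor\}$ into geometric blocks $\{s : 2^j \le s < 2^{j+1}\}$, apply a Hoeffding-type maximal inequality inside each block, and sum, obtaining a geometric series bounded by a constant multiple of $K/n$. The specific shape $\log(n/(Ks))$ of the exploration term is exactly what makes this summation total $O(K/n)$ rather than $O((\log n)/n)$. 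The regime $s \ge n/K$, where the exploration term vanishes and the MOSS index coincides with the empirical mean, must be handled separately by a single Hoeffding bound.

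For the distribution-dependent inequality \eqref{eq:beaua}, once the optimal arm's index is controlled, a symmetric upper-deviation estimate on $\oX_{i,s}$ shows that $B_{i, s, t} \ge \mu_{i^*}$ can only occur for $s$ at most of order $\log(n \Delta_i^2 / K)/\Delta_i^2$ outside a failure event of probability $\lesssim K/n$; the remaining pulls contribute a bounded additive term. Summing over $i$ and using $\Delta_i \ge \Delta$ produces the stated bound once the numerical factors are chased. The distribution-free inequality \eqref{eq:beauc} then follows from the classical two-regime trick: for any $\Delta_0 > 0$,
\[
\oR_n \le n \Delta_0 + \sum_{i : \Delta_i > \Delta_0} \Delta_i T_i(n),
\]
where the sum is controlled by the first part and $\Delta_0 \asymp \sqrt{K/n}$ is optimized to yield the $24\sqrt{nK}$ rate.

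The main obstacle is the maximal inequality and its constants: Hoeffding's maximal bound leaks a factor compared to its pointwise analogue, and the geometric peeling must be tuned (block ratio, alignment with the $\log(n/(Ks))$ profile) so that the final constants remain as tight as $23$ and $24$. A secondary difficulty is cleanly treating the boundary case $s \ge n/K$ in both the maximal inequality and the pull-counting step, so that the argument hands off smoothly between the ``exploration-dominated'' and ``exploitation-only'' regimes of the index.
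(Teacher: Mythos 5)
The thesis states Theorem \ref{th:mossb} without proof, deferring to \cite{AudBub09,AudBub10}, so I am comparing your outline against the argument given there. Your overall architecture --- the decomposition $\oR_n=\sum_i \Delta_i T_i(n)$, a peeling treatment of the optimal arm's index, and the two-regime trick $\oR_n \le n\Delta_0 + \sum_{i:\Delta_i>\Delta_0}\Delta_i T_i(n)$ with $\Delta_0\asymp\sqrt{K/n}$ for \eqref{eq:beauc} --- is the right family of ideas and matches the MOSS analysis in spirit.

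However, the lemma you place at the heart of the argument is false, and not by a constant:
\[
\P\Bigl(\exists\, s\le n:\ \oX_{i^*,s}+\sqrt{\tfrac{\log\max(n/(Ks),1)}{s}}<\mu_{i^*}\Bigr)
\]
is bounded \emph{below} by an absolute constant, not above by $cK/n$. Take $s=\lfloor n/(2K)\rfloor$: the exploration bonus is only $\sqrt{2K\log 2/n}$, while $\oX_{i^*,s}-\mu_{i^*}$ has standard deviation of order $\sqrt{K/n}$, so the index undershoots $\mu_{i^*}$ with constant probability; for $s\ge n/K$ the bonus vanishes entirely and the undershoot probability tends to $1/2$, so the ``single Hoeffding bound'' you propose for that regime gives nothing. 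This is not a boundary nuisance but the crux of MOSS: the pointwise failure probability of the index is $(Ks/n)^2$, which \emph{grows} with $s$, so no margin-free anytime lower-confidence statement for the optimal arm can hold. The actual proof introduces a deviation margin $x$ and shows, by peeling over geometric blocks of $s$ combined with a maximal inequality for the partial sums $s(\mu_{i^*}-\oX_{i^*,s})$, a bound of the shape $\P(\exists s: B_{i^*,s,t}\le \mu_{i^*}-x)\lesssim \frac{K}{nx^2}$ (up to logarithmic factors), useful only for $x$ above roughly $\sqrt{K/n}$; one then applies it at $x\asymp\max(\Delta_i,\sqrt{K/n})$ and integrates the tail in $x$ to pass to expectations. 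That margin, and the integration over it, are exactly where the $1/\Delta$ in \eqref{eq:beaua} and the $\sqrt{nK}$ floor in \eqref{eq:beauc} come from; your outline omits both, so the pull-counting step cannot be completed as described. (A minor point: Hoeffding's maximal inequality loses nothing over its pointwise analogue; the real cost in the constants comes from the threshold $\sqrt{\log(n/(Ks))/s}$ varying inside each peeling block.)
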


This means that this UCB policy has the minimax rate $\sqrt{nK}$, while 
still having a distribution-dependent bound increasing logarithmically
in $n$.

\subsection{UCB policy with an infinite number of arms} \label{sec:7}

When the number of arms is infinite (or larger than the available number of experiments), 
the exploration of all the arms is impossible: if no additional assumption is made, it may be arbitrarily hard to 
find a near-optimal arm. In \cite{Wang08}, Yizao Wang, Rémi Munos and I consider a stochastic assumption on the \textit{mean-reward} of any new selected arm. 
When a new arm $i$ is pulled, its mean-reward $\mu_i$ is assumed to be an independent sample from a fixed distribution. 
Our assumptions essentially characterize the probability of pulling near-optimal arms. 
That is, given $\mu^*\in[0,1]$ as the best possible mean-reward and $\beta\geq0$ 
a parameter of the mean-reward distribution, the probability that a new arm is 
$\delta$-optimal is of order $\delta^\beta$ for small $\delta$,
 i.e. $\P(\mu_k\geq\mu^*-\delta) =\Theta(\delta^\beta)$ for $\delta\rightarrow 0$\footnote{
 We write 
 $f(\delta)=\Theta(g(\delta))$ for $\delta\rightarrow 0$ when $\exists c_1,c_2,\eps_0>0$ such that $\forall \delta\leq\delta_0$,
$c_1 g(\delta) \leq f(\delta)\leq c_2 g(\delta)$.}. 
In contrast with the previous many-armed bandits \cite{BCZHS97,teytaud07anytime}, our setting allows 
general reward distributions for the arms, under a simple assumption on the mean-reward. 

When there is more arms than the available number of experiments, the 
exploration takes two forms: 
discovery (pulling a new arm that has never been tried before) 
and sampling (pulling an arm already discovered in order to gain information about its actual mean-reward).

Numerous applications can be found e.g. in \cite{BCZHS97}. 
It includes labor markets (a worker has many opportunities for jobs), mining for valuable resources (such as gold or oil) when there are many areas available for exploration (the miner can move to another location or continue in the same location, depending on results), and 
path planning under uncertainty in which the path planner 
has to decide among a route that has proved to be efficient in the past (exploitation), or a known route that has not been explored many times (sampling), or a brand new route that has never been tried before (discovery).

In \cite{Wang08}, we propose an arm-increasing rule policy. It has the anytime property and 
consists in adding a new arm from time to time into the set of sampled arms.
It is done such that at time $t$, the number of sampled arms is of order $n^{\beta/2}$ if $\mu^*<1$ 
and $\beta<1$, and of order $n^{\beta/(1+\beta)}$ otherwise. It uses a modified version of the UCB-V
policy on this set of arms: specifically, the policy associated with
  $$
  B_{i,s,t}=\oX_{i,s}+\sqrt{ \frac{4\bV_{i,s}\log(10 \log t)}s}+\frac{6\log(10 \log t)}s.
  $$

The pseudo-regret of this policy is still defined as the difference between
the rewards we would have obtained by drawing an optimal arm (an arm having a mean-reward
equal to $\mu^*$) and the rewards we did obtain during the time steps $1,\dots,n$, hence, from the tower rule,
$\E \oR_n=n\mu^* - \sum_{t=1}^n \mu_{I_t}$.
Its behaviour depends on whether $\mu^*=1$ or $\mu^*<1$. 
Let us write $v_n=\tilde O(u_n)$ when for some $n_0,C>0$, $v_n \le C u_n (\log(u_n))^2$, for all $n\geq n_0$.
For $\mu^*=1$, our algorithms are such that $\E \oR_n=\tilde O(n^{\beta/(1+\beta)})$. For $\mu^*<1$, we have $\E \oR_n=\tilde O(n^{\beta/(1+\beta)})$ if $\beta>1$, and (only) $\E \oR_n=\tilde O(n^{1/2})$ if $\beta\leq 1$. 
Moreover we derive the lower bound: for any $\beta> 0$, $\mu^*\leq 1$, any algorithm satisfies
$\E \oR_n \ge C n^{\beta/(1+\beta)}$ for some $C>0$.


In continuum-armed bandits (see e.g. \cite{Agr95:continuumbandit,Kleinberg04:NIPS,auer07improvedrates}), an infinity of arms is also considered.
The arms lie in some Euclidean (or metric) space and their mean-reward is a deterministic and smooth (e.g. Lipschitz) 
function of the arms. This setting is different from ours since 
our assumption is stochastic and does not consider regularities of the mean-reward w.r.t. the arms.
However, if we choose an arm-pulling strategy which consists in selecting randomly the arms, then our setting encompasses continuum-armed bandits. For example, consider the domain $[0,1]^d$ and a mean-reward function $\mu$ assumed to be locally equivalent to a H\"older function (of order $\alpha\in[0,+\infty)$) around any maximum $x^*$ (the number of maxima is assumed to be finite), i.e. 
\begin{equation}\label{eq:holder}
\mu(x^*)-\mu(x) = \Theta( \left\|x^*-x\right\|^\alpha) \mbox{ when } x\rightarrow x^*.
\end{equation}
Pulling randomly an arm $X$ according to the Lebesgue measure on $[0,1]^d$, we have: 
$\P(\mu(X) > \mu^*-\eps) = \Theta( \P(\left\|X-x^*\right\|^\alpha<\eps) ) =\Theta ( \eps^{d/\alpha} )$, for $\eps\rightarrow 0$. Thus our assumption holds with $\beta=d/\alpha$, and our results say that if $\mu^*=1$, we have $\E \oR_n = \tilde O(n^{\beta/(1+\beta)})=\tilde O(n^{d/(\alpha+d)})$. 

For $d=1$, under the assumption that $\mu$ is $\alpha$-H\"older (i.e. $|\mu(x)-\mu(y)| \le c\left\|x-y\right\|^\alpha$ for $0<\alpha\leq 1$), \cite{Kleinberg04:NIPS} provides upper and lower bounds on the pseudo-regret $\oR_n=\Theta(n^{(\alpha+1)/(2\alpha+1)})$. Our results gives $\E \oR_n = \tilde O(n^{1/(\alpha+1)})$ which is better for all values of $\alpha$. The reason for this apparent contradiction is that the lower bound in \cite{Kleinberg04:NIPS} is obtained by the construction of a very irregular function, which actually does not satisfy our local assumption (\ref{eq:holder}). 

Now, under assumptions (\ref{eq:holder}) for any $\alpha>0$ (around a finite set of maxima), \cite{auer07improvedrates} provides the rate $\E \oR_n = \tilde O(\sqrt{n})$. Our result gives the same rate when $\mu^*<1$ but in the case $\mu^*=1$ we obtain the improved rate $\E \oR_n = \tilde O(n^{1/(\alpha+1)})$ which is better whenever $\alpha>1$ (because we are able to exploit the low variance of the good arms).
Note that like our algorithm, the algorithms in \cite{auer07improvedrates} as well as in \cite{Kleinberg04:NIPS}, do not make an explicit use (in the procedure) of the smoothness of the function. They just use a ``uniform'' discretization of the domain. 

On the other hand, the zooming algorithm of \cite{kleinberg08} adapts to the smoothness of $\mu$ (more arms are sampled at areas where $\mu$ is high). For any dimension $d$, they obtain $\E \oR_n = \tilde O(n^{(d'+1)/(d'+2)})$, where $d'\leq d$ is their ``zooming dimension''. Under assumptions (\ref{eq:holder}) we deduce $d'=\frac{\alpha-1}{\alpha}d$ using the Euclidean distance as metric, thus their pseudo-regret is $\E \oR_n = \tilde O(n^{(d(\alpha-1)+\alpha)/(d(\alpha-1)+2\alpha)})$. For locally quadratic functions (i.e. $\alpha=2$), their rate is $\tilde O(n^{(d+2)/(d+4)})$, whereas ours is $\tilde O(n^{d/(2+d)})$. Again, we have a smaller pseudo-regret although we do not use the smoothness of $\mu$ in our algorithm. Here the reason is that the zooming algorithm does not make full use of the fact that the function is locally quadratic (it considers a Lipschitz property only). 
However, in the case $\alpha<1$, our rates are worse than algorithms specifically designed for continuum armed bandits.
 
Hence, the comparison between the many-armed and continuum-armed bandits settings is not easy because of the difference in nature of the basis assumptions. Our setting is an alternative to the continuum-armed bandit setting which does not require the existence of an underlying metric space in which the mean-reward function would be smooth. Our assumption naturally deals with possibly very complicated functions where maxima may be located in any part of the space. For the continuum-armed bandit problems when there are relatively many near-optimal arms, our algorithm will be also competitive compared to the specifically designed continuum-armed bandit algorithms. This result matches the intuition that in such cases, a random selection strategy will perform well. 

Another contribution of our work is to show that, 
for infinitely many-armed bandits, we need much less exploration of each arm than for finite-armed bandits:
as shown in the next section, the index $B_{i,s,t}$ is an upper bound on $\mu_i$ which holds with probability at least $1-[\log(10t)]^{-2}$.
The use of this low confidence upper bound (compared to the ones of UCB$1$ and UCB-V for instance) 
can be explained by the fact that many sampled arms have a mean really close 
to the optimal one, and consequently exploiting not the best one but just one of the best arms
is enough to achieve the minimax pseudo-regret.

\subsection{The empirical Bernstein inequality} \label{sec:8}

A key lemma to analyze the policies using variance estimates as UCB-V
and the one used in the previous section 
is the following maximal inequality,
which in particular implies that the arm index \eqref{eq:ucbv} of UCB-V is an upper bound 
on $\mu_i$ which holds with probability at least $1-3 t^{-\zeta}$.
The interest of the lemma goes beyond the particular setting of the multi-armed bandit problems
as it provides a \emph{non asymptotic} confidence interval on the expectation of a distribution for which we observe a sample
(and for which we know a bounded interval containing its support).

\begin{lemma} \label{le:empber}
Let $U,U_1,\dots,U_n$ be independent and identically distributed random variables taking
their values in $[0,1]$. Let 
  $$
  \bar{U}_{t} = \frac1t\sum_{i=1}^{t}U_i \text{\qquad and\qquad} \bV_t = \frac1t\sum_{i=1}^{t}(U_i-\bar{U}_t)^2.
  $$
\begin{enumerate}
\item For any $\eps>0$, with probability at least $1-\eps$, for any $t\in \{1,\dots,n\}$ and $\ell_t=\frac{n\log(2 \eps^{-1})}{t^2}$,
we have
   \beglab{eq:eb}
   \bar{U}_{t}-\E U < \min\Bigg(\sqrt{2 \ell_t (\bV_t +\ell_t)}+\ell_t\Big(\frac13 +\sqrt{1- 3 \bV_t}\Big)\ ,\ \sqrt{\frac{\ell_t}2}\Bigg).
   \endlab
\item For any $\eps>0$, with probability at least $1-\eps$, for any $t\in \{1,\dots,n\}$ and $\tilde{\ell}_t=\frac{n\log(3 \eps^{-1})}{t^2}$,
we have
   \beglab{eq:eba}
   \big| \bar{U}_{t}-\E U \big| < \min\Bigg(\sqrt{2 \tilde{\ell}_t (\bV_t +\tilde{\ell}_t)}+\tilde{\ell}_t\Big(\frac13 +\sqrt{1- 3 \bV_t}\Big)\ ,\ \sqrt{\frac{\tilde{\ell}_t}2}\Bigg).
   \endlab   
\end{enumerate}
\end{lemma}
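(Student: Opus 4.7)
The lemma is a maximal empirical Bernstein inequality valid uniformly over $t \in \{1, \dots, n\}$. My plan is to prove part~1 by controlling separately the two arguments of the minimum, each with probability at least $1 - \eps/2$, and combine by a union bound. Both arguments rely on \emph{maximal} (rather than pointwise) inequalities applied to the martingale $S_t = \sum_{i=1}^t (U_i - \E U)$, and this is exactly what produces the characteristic scaling $\ell_t = n \log(2\eps^{-1})/t^2$ after dividing the bound on $S_t$ by~$t$.

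The sub-Gaussian arm $\sqrt{\ell_t/2}$ comes directly from Hoeffding's maximal inequality: since $|U_i - \E U| \leq 1$, one has $\P(\max_{t \leq n} S_t \geq \lambda) \leq \exp(-2 \lambda^2/n)$. Equating the right-hand side to $\eps/2$ gives $\lambda = \sqrt{(n/2) \log(2\eps^{-1})}$; dividing by $t$ yields the claim uniformly in~$t$.

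For the empirical Bernstein arm I would first apply a Bennett/Freedman maximal inequality to the same martingale, using that its cumulative conditional variance is at most $n V$ with $V = \Var(U)$. This gives, with probability at least $1 - \eps/4$, a uniform bound $d_t \leq \sqrt{2 V \ell_t} + c\, \ell_t$ for all $t \leq n$, where $d_t = \bar{U}_t - \E U$ and $c$ is a numerical constant. The unknown true variance is then handled by a second maximal Bennett applied to the i.i.d. variables $W_i = (U_i - \E U)^2 \in [0,1]$ with mean $V$; combined with the algebraic identity $\frac{1}{t} \sum_{i=1}^t (U_i - \E U)^2 = \bV_t + d_t^2$, this yields, with probability at least $1 - \eps/4$, a bound $V \leq \bV_t + d_t^2 + O(\ell_t)$. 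Substituting this into the Bernstein bound produces an inequality of the form $d_t \leq \sqrt{2 \ell_t (\bV_t + d_t^2 + O(\ell_t))} + c\, \ell_t$, which is a quadratic in $d_t$; its positive root simplifies, after careful tracking of constants, to $\sqrt{2 \ell_t (\bV_t + \ell_t)} + \ell_t \bigl( \tfrac{1}{3} + \sqrt{1 - 3 \bV_t} \bigr)$. The factor $\sqrt{1 - 3 \bV_t}$ arises precisely as the square root of the discriminant of this quadratic, and is real because $\bV_t \leq 1/4 < 1/3$ for any sample of $[0,1]$-valued variables.

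Part~2 follows the same scheme but with three failure events of probability at most $\eps/3$ each, namely the one-sided Hoeffding maximal, the one-sided Bennett maximal, and its mirror counterpart applied to $1 - U_i$ (which leaves $\bV_t$ invariant), producing the $\log(3 \eps^{-1})$ factor in $\tilde\ell_t$. The main technical difficulty throughout is the final algebraic step: the exact form $\ell_t(\tfrac{1}{3} + \sqrt{1 - 3\bV_t})$ emerges only if the quadratic is solved exactly rather than bounded loosely at an intermediate stage, and the nonnegativity $\bV_t \leq 1/3$ must be explicitly invoked for the expression to remain meaningful.
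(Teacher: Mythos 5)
Your overall architecture (Doob/optional-stopping maximal inequalities for $S_t=\sum_{i\le t}(U_i-\E U)$ and for the centered squares, plus the identity $\frac1t\sum_{i\le t}(U_i-\E U)^2=\bV_t+(\bar U_t-\E U)^2$) is the right one and matches the paper. But there is a genuine gap at the exact point you flag as ``the main technical difficulty'': the factor $\sqrt{1-3\bV_t}$ does \emph{not} arise as the discriminant of a quadratic in $d_t=\bar U_t-\E U$. It comes from a moment inequality that your argument never invokes: for $U\in[0,1]$ with variance $V$, one has $\E[(U-\E U)^4]\le V(1-3V)$ (proved in the paper via Chebyshev's association inequality). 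This refined bound on $\E[W^2]$, $W=(U-\E U)^2$, is what makes the Bernstein-type control of the variance estimator read $V-\bV_t-d_t^2<\sqrt{2V(1-3V)\ell_t}$ rather than the crude $V-\bV_t-d_t^2<\sqrt{2V\ell_t}+O(\ell_t)$ you would get from $\E[W^2]\le\E W=V$. Your version, $V\le\bV_t+d_t^2+O(\ell_t)$, loses this factor irretrievably: carrying it through yields a final bound of the shape $\sqrt{2\ell_t(\bV_t+\tfrac43\ell_t)}+\tfrac43\ell_t+\cdots$, which is strictly weaker than the stated $\sqrt{2\ell_t(\bV_t+\ell_t)}+\ell_t(\tfrac13+\sqrt{1-3\bV_t})$ (note $\tfrac13+\sqrt{1-3\bV_t}\le\tfrac43$, with strict inequality whenever $\bV_t>0$). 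Relatedly, the paper never solves a quadratic in $d_t$: it uses the Hoeffding arm to bound $d_t^2\le\ell_t/2$ outright, substitutes $1-3V<1-3\bV_t$ in the case $\bV_t<V$, and completes a square in $\sqrt V$ to get $\sqrt V<\sqrt{\bV_t+\ell_t}+\sqrt{\ell_t(1-3\bV_t)/2}$, which plugged into the Bennett arm gives exactly the claimed expression. Solving your quadratic in $d_t$ instead would introduce a $(1-2\ell_t)^{-1}$-type prefactor that does not reduce to the stated constants.

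A secondary accounting issue: your $\eps/2+\eps/4+\eps/4$ split would put $\log(4\eps^{-1})$ into the Bernstein arm, not the uniform $\log(2\eps^{-1})$ of the statement. The paper needs only two events for part~1 because the single maximal inequality $\P(\max_{t\le n}S_t\ge s)\le\inf_{\lam>0}e^{-\lam s+n\Lambda(\lam)}$ with $\Lambda(\lam)\le\min(\lam^2/8,(e^\lam-1-\lam)V)$ delivers the Hoeffding and Bennett arms simultaneously on one event of probability $\eps/2$; the second $\eps/2$ goes to the variance event. Likewise part~2 uses exactly three events ($U_i$, $-U_i$, and $-W_i$), whence $\log(3\eps^{-1})$; your list for part~2 omits the variance-concentration event.
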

In particular, for any $\eps>0$, with probability at least $1-\eps$, for any $t\in \{1,\dots,n\}$, we have
  \beglab{eq:empber}
  \big| \bar{U}_{t}-\E U \big| < \sqrt{\frac{2n \bV_t \log(3\eps^{-1})}{t^2}}+\frac{3 n \log(3\eps^{-1})}{t^2}.
  \endlab
Inequality \eqref{eq:empber} is the one used in \cite{AuMuSz09,Mni08}, but
its tighter version \eqref{eq:eba} should be preferred. The proof of this lemma is given in Appendix \ref{sec:eb}. 
Fot $t=n$, the lemma is an empirical version of Bernstein's inequality, which differs from
the latter to the following extent: the true variance has been replaced by its empirical estimate (at the price
of having $\log(3\eps^{-1})$ terms instead of $\log(\eps^{-1})$, and 
a factor $3$ in the last term in the right-hand side instead of $1/3$. 
Inequality \eqref{eq:empber} relies on the following empirical upper bound of the variance $V$ of $U$, which
simultaneously holds with probability at least $1-\eps$: for any $t\in \{1,\dots,n\}$, we have
  $$
  V \le \Bigg(\sqrt{\bV_t+\frac{n\log(3\eps^{-1})}{t^2}}+\sqrt{\frac{n\log(3\eps^{-1})}{2t^2}(1-3\bV_t)}\Bigg)^2.
  $$  
This bound can be seen as an improvement of Inequality (5.27)  of Blanchard~\cite{Bla01}.
For $t=n \ge 2$, i.e. without the stopping time argument due to Freedman~\cite{Fre75}
allowing to have the inequality uniformly over time, Maurer and Pontil \cite{Mau09} improves on the constants of the above inequality
when the empirical variance is close to $0$.
Considering the unbiased variance estimator $\bV'_t = \frac1{t-1}\sum_{s=1}^{t}(U_s-\bar{U}_t)^2=\frac{t}{t-1}\bV_t$, they obtain that
with probability at least $1-\eps$,
  $$
  V \le \Bigg(\sqrt{\bV'_t+\frac{\log(\eps^{-1})}{2(t-1)}}+\sqrt{\frac{\log(\eps^{-1})}{2(t-1)}}\Bigg)^2.
  $$
Combined with Bernstein's bound, this gives  
that with probability at least $1-\eps$, 
  $$\big| \bar{U}_{t}-\E U \big| \le \sqrt{\frac{2 \log(3\eps^{-1})}t \bigg(\bV'_t+\frac{\log(3\eps^{-1})}{2(t-1)}\bigg)}+\frac{4 \log(3\eps^{-1})}{3(t-1)},$$ 
where the gain is on the factor of the logarithmic term when the empirical variance is much smaller than $\fracc{\log(3\eps^{-1})}t$.

Volodymyr Mnih, Csaba Szepesv\'ari and I \cite{Mni08} have used Lemma \ref{le:empber}
to address the problem of stopping the sampling of an unknown distribution $\nu$
as soon as we can output an estimate $\hmu$ of the mean $\mu$ of $\nu$ with relative error $\delta$
with probability at least $1-\eps$, that is
\begin{equation}
\label{eq:epsdelta}
\P\big( |\hmu - \mu| \leq \delta|\mu| \big) \geq 1 - \eps,
\end{equation}

\begin{figure}[t]
\bookbox{
Parameters of the problem: $\delta$, $\eps$ and the unknown distribution $\nu$.\\
Parameters of the algorithm: $q>0$, $t_1\ge1$ and $\alpha>1$ defining 
the geometric grid $t_{k}=\lceil \alpha t_{k-1}\rceil$.
(In our simulations, we take $q=0.1$, $t_1=20$ and $\alpha=1.1$.)

\medskip\noindent
Initialization:\\
$c= \frac3{\eps t_1^q(1-\alpha^{-q})}$\\
LB $\leftarrow 0$\\
UB $\leftarrow \infty$\vs{.2}\\
For $t=1,\dots,t_1-1$,\\
\hs{.3} sample $U_{t}$ from $\nu$\\
End For\vs{.2}\\
For $k=1,2,\dots$,\\
\hs{.3} For $t=t_k,\dots,t_{k+1}-1$,\\
\hs{.6} sample $U_{t}$ from $\nu$ and compute the empirical mean $\bar{U}_t=\frac1t\sum_{s=1}^{t}U_s$\\
\hs{.6} $\ell_t=\frac{t_{k+1}}{t^2} \log( c t_k^q).$\\
\hs{.6} $c_t=\min\Big(\sqrt{2 \ell_t (\bV_t +\ell_t)}+\ell_t\Big(\frac13 +\sqrt{1- 3 \bV_t}\Big),\sqrt{\frac{\ell_t}2}\Big)$\\ 
\hs{.6} LB $\leftarrow$ max(LB, $|\bar{U}_{t}|-c_t$)\\
\hs{.6} UB $\leftarrow$ min(UB, $|\bar{U}_{t}|+c_t$)\\
\hs{.6} If $(1+\delta)$LB $< (1-\delta)$UB, Then \\
\hs{1}   stop simulating $U$ and return the mean estimate $\sign(\bar{U}_{t}) \frac{(1+\delta)\text{LB}+ (1-\delta)\text{UB}}{2}$ 
\hs{.6} End If\\
\hs{.3} End For\\
End For}
\caption{Empirical Bernstein stopping (EBGStop* in our experiments).} \label{fig:ebs}
\end{figure}

For a distribution $\nu$ supported by $[a,a+1]$ for some $a\in\R$, 
we have proposed the empirical Bernstein stopping algorithm described in Figure~\ref{fig:ebs}.
It uses a geometric grid and parameters ensuring that the event $\cE = \{ \left|
\bar{U}_{t}-\mu\right| \le c_t, \,t \ge t_1 \}$ occurs with probability at least $1-\eps$.
It operates by maintaining a lower bound, LB, and an upper bound, UB,
on the absolute value of the mean of the random variable being sampled,
terminates when $(1+\delta)$LB $< (1-\delta)$UB, and
returns the mean estimate $\hmu=\sign(\bar{U}_{t}) \frac{(1+\delta)\text{LB}+ (1-\delta)\text{UB}}{2}$.
We prove that this output indeed satisfies \eqref{eq:epsdelta} 
and that the stopping time $T$ of the algorithm is upper bounded by
  $$
  T \le C\cdot\max\left(\frac{\sigma^2}{\delta^2\mu^2},\frac{1}{\delta|\mu|}\right) 
    \left(\log\bigg(\frac{2}{\eps}\bigg)+\log\bigg(\log\frac{3}{\delta|\mu|}\bigg) \right).
  $$
Up to the $\log \log$ term, this is optimal according to the work of Dagum, Karp, Luby and Ross \cite{dagum00}.

Besides, our experimental simulations show that it significantly outperforms previously known stopping rules,
in particular $\AAalg$ \cite{dagum00} and the Nonmonotonic Adaptive Sampling
(NAS) algorithm due to Domingo, Gavalda and Watanabe \cite{watanabe2000simple,Dom02}.
Figure~\ref{fig:stopmu} shows the results of running different stopping rules 
for the distribution $\nu$ of the average of $10$ uniform random variables on $[\mu-1/2,\mu+1/2]$
with varying $\mu$ and also on Bernoulli distributions. 
The experience is repeated a hundred times so that the differences observed in
Figure~\ref{fig:stopmu} are statistically significant.

\begin{figure}[t]
\begin{center}
{\includegraphics[width=.48\columnwidth]{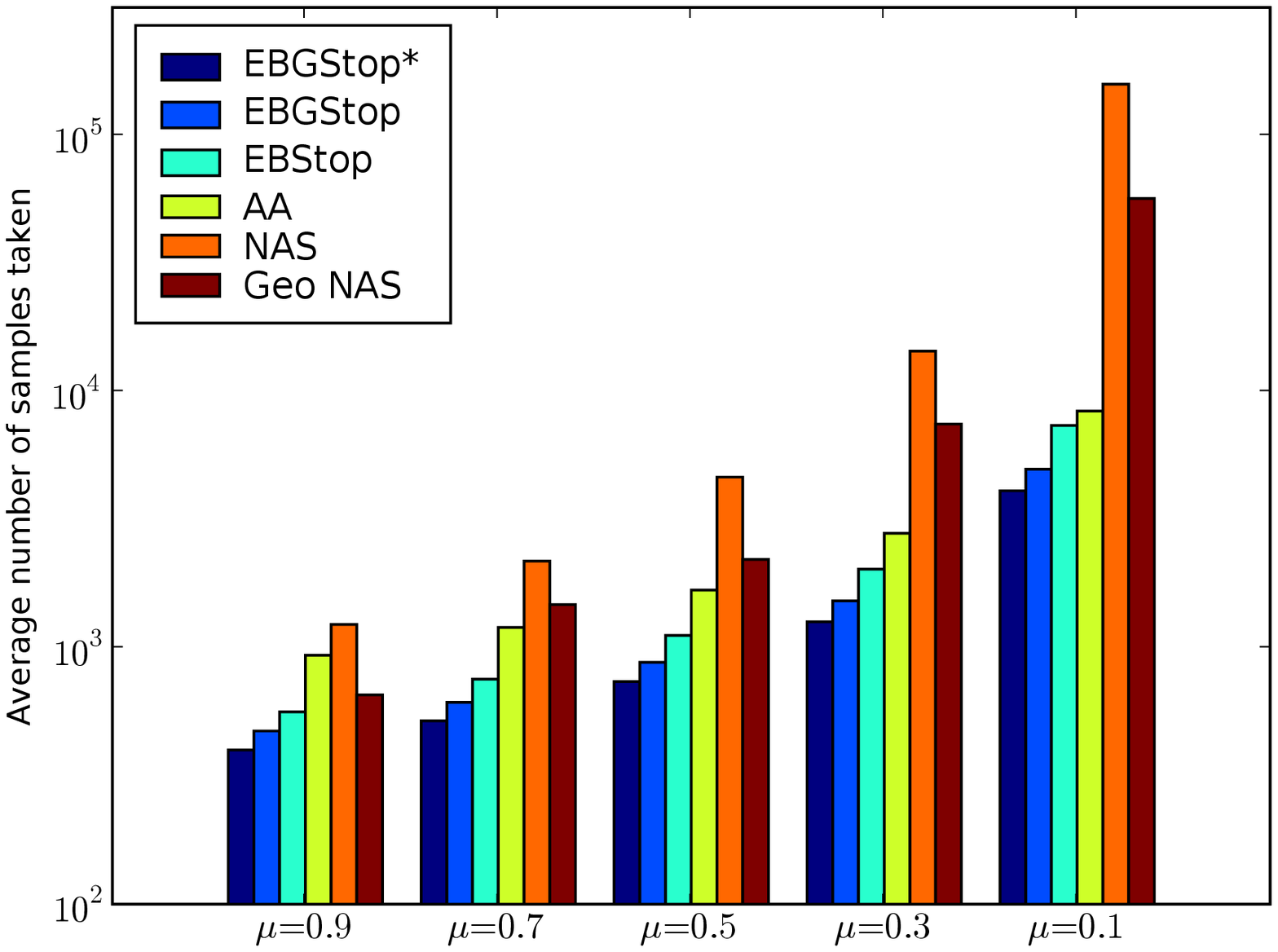}}
{\includegraphics[width=.48\columnwidth]{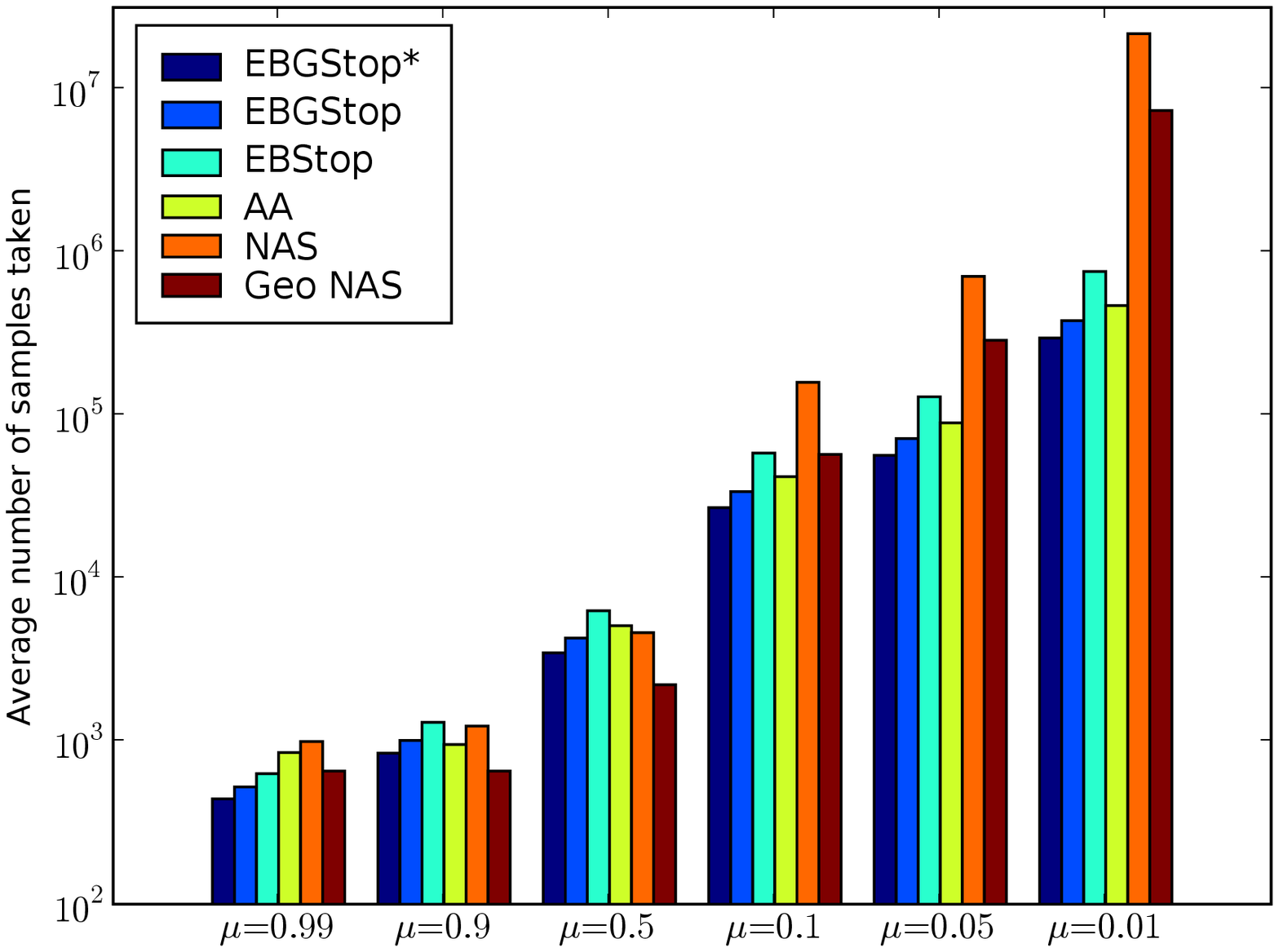}}
\begin{center}\end{center}
\vs{-1.6}
\caption{Comparison of stopping rules on (l.h.s. figure) averages of uniform random variables with 
varying means and (r.h.s. figure) Bernoulli random variables with means $0.99$, $0.9$, $0.5$, $0.1$, $0.05$, and $0.01$, averaged over $100$ runs. The average number of samples is shown in log scale.}
\label{fig:stopmu}
\end{center}
\vskip -0.4in
\end{figure}

We also use the empirical Bernstein bound in the context of racing algorithms.
Racing algorithms aim to reduce the computational burden of performing tasks
such as model selection using a hold-out set by discarding poor models quickly
\cite{MarMoo93,ortiz00}.  The context of racing algorithms is the one of multi-armed
bandit problems. 
Let $\eps>0$ be the confidence level parameter.
A racing algorithm either terminates when it runs out of time (i.e. at the end of the $n$-th round)
or when it can say that with probability at least $1-\eps$,
it has found the best option, i.e. an option $i^*\in\argmax_{i \in \{1,\dots,K\}} \mu_i$.

The Hoeffding race introduced by \cite{MarMoo93} is an algorithm based on discarding options which are likely to have smaller mean
than the  optimal one until only one option remains.
Precisely, for each time step and each distribution, $\frac{\delta}{nK}$-confidence intervals are
constructed for the mean. Options with upper confidence smaller than the lower confidence bound of
another option are discarded. The algorithm samples one by one all the options that have not been discarded yet.
Our empirical and theoretical study show that replacing the Hoeffding's inequality
by the empirical Bernstein bound leads to significant improvement.
In particular, Table~\ref{tab:1} shows the
percentage of work saved by each method ($1 - $ number of samples taken by
method divided by $Kn$), as well as the number of options remaining after termination (see \cite{Mni08}
for a more detailed description of the experiments).

\begin{table}[!ht]
\caption{Percentage of work saved / number of options left after termination.}
\label{tab:1}
\begin{center}
\begin{tabular}{lcccr}
\hline
Data set &  Hoeffding & Empirical Bernstein \\
\hline
SARCOS		& 0.0\% / 11 & 44.9\% / 4 \\
Covertype2	& 14.9\% / 8& 29.3\% / 5\\
Local		& 6.0\% / 9 & 33.1\% / 6 \\
\hline
\end{tabular}
\end{center}
\vskip -0.2in
\end{table}

\subsection{Best arm identification} \label{sec:9}

Racing algorithms \cite{MarMoo93} try to identify the best
action at a given confidence level while consuming the minimal number of pulls. 
They essentially try to optimize the exploration ``budget'' for a given confidence level.
In some applications, the budget size is fixed (say $n$ rounds), and one may want to predict
the best arm at the end of the $n$-th round. 
A motivating example concerns channel allocation for mobile phone communications. During a
very short time before the communication starts, a cellphone can explore the set of channels to find the
best one to operate. Each evaluation of a channel is noisy and there is a limited number of evaluations
before the communication starts. The connection is then launched on the channel which is believed to be the
best. 

More formally, the setting of identifying the best arm is summarized in Fi\-gure~\ref{fig:descr}.
It differs from the traditional multi-armed bandit problem by its target: the cumulative regret 
is no longer appropriate to measure the performance of a policy. The aim is rather to minimize
the simple regret:
  $$
  r_n = \Delta_{J_n},
  $$
where $J_n$ is the final recommendation of the algorithm and $\Delta_i$
still denotes the gap between the mean reward of the best arm or the mean reward
of the selected arm.
Let $i^*$ still denote the optimal arm. The simple regret is linked to the probability of error
  $$e_n= \P(J_n\neq i^*),$$ 
since, from $\E r_n = \sum_{i\neq i^*} \P(J_n=i) \Delta_i$, we have 
  $ 
  \min_{i:\Delta_i>0} \Delta_{i} e_n \le \E r_n 
    \le e_n.
  $

\begin{figure}[t]
\bookbox{
Parameters available to the forecaster: the number of rounds $n$ and the number of arms $K$.\\

Parameters unknown to the forecaster: the reward distributions $\nu_1,\dots,\nu_K$ of the arms.\\
  
For each round $t=1,2,\ldots,n$;
\begin{itemize}
\item[(1)]
the forecaster chooses $I_t \in \{ 1,\ldots,K \}$,
\item[(2)]
the environment draws the reward $X_{I_t, T_{I_t}(t)}$ from $\nu_{I_t}$ and independently of the past given $I_t$.
\end{itemize}

\medskip\noindent
At the end of the $n$ rounds, the forecaster outputs a recommendation $J_n \in \{ 1,\ldots,K \}$.
}
\caption{\label{fig:descr}
Best arm identification in multi-armed bandits.}
\end{figure}
In \cite{AuBuMu10}, Sébastien Bubeck, Rémi Munos and I prove that UCB policies can still be used provided that the
exploration term is taken much larger: precisely, for $H=\sum_{i:\Delta_i>0} \Delta_i^{-2}$
and a numerical constant $c>0$, we introduce the UCB-E (E for exploration) policy characterized by
  $$
  B_{i,s,t}=\oX_{i,s}+\sqrt{ \frac{c n}{2s H}},
  $$
which is an extremely high confidence upper bound on $\mu_i$ (probability at least $1-\exp(-\frac{c n}{H})$,
hence much higher than the confidence level of UCB$1$ and UCB-V),
and by taking $J_n$ as the arm with the largest empirical mean.
We also propose a new algorithm, called SR, based on successive rejects. 
We show that these algorithms are essentially
optimal since their simple regret decreases exponentially at a rate which is, up to a logarithmic factor, the
best possible. However, while the UCB policy needs the tuning of a parameter depending on the
unobservable hardness of the task, the successive rejects policy benefits from being parameter-free,
and also independent of the scaling of the rewards. As a by-product of our analysis, we show
that identifying the best arm (when it is unique) requires a number of samples of order $H$
(up to a $\log(K)$ factor). 
This generalizes the well-known fact that one needs of order of $1/\Delta^2$ samples to 
differentiate the means of two distributions with gap $\Delta$.
A precise understanding of both SR and the UCB-E policy leads
us to define a new algorithm, Adaptive UCB-E. It comes without guarantee of optimal rates, 
but performs slightly better than SR in practice as shown in Figure~\ref{fig:exp}.

Another variant of the best arm identification task is the problem of minimal sampling
times required to identify an $\epsilon$-optimal arm with a given confidence level, see in particular \cite{Dom02} 
and \cite{Eve06}. 
In \cite{Gru10}, Steffen Gr\"unew\"alder, Manfred Opper, John Shawe-Taylor and I also study a non-cumulative regret notion,
but in the context of a continuum of arms. Precisely, we consider the scenario in which the reward distribution for arms is modelled by a Gaussian process and there is no noise in the observed reward, and provide upper and lower bounds under
reasonable assumptions about the covariance function defining the Gaussian process. 

\begin{figure}[t]
\begin{center}
\includegraphics[width=0.47\textwidth]{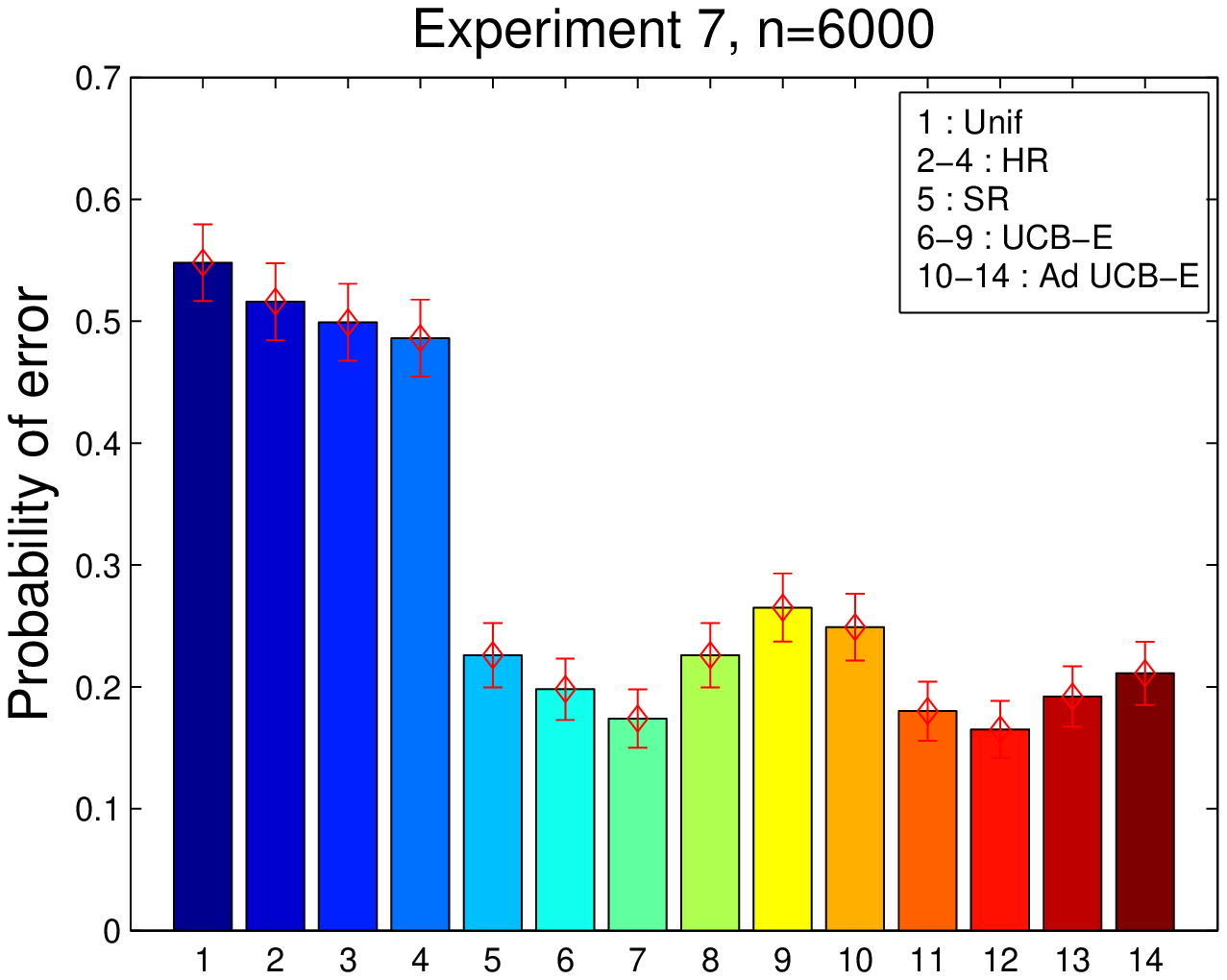}\hfill \includegraphics[width=0.47\textwidth]{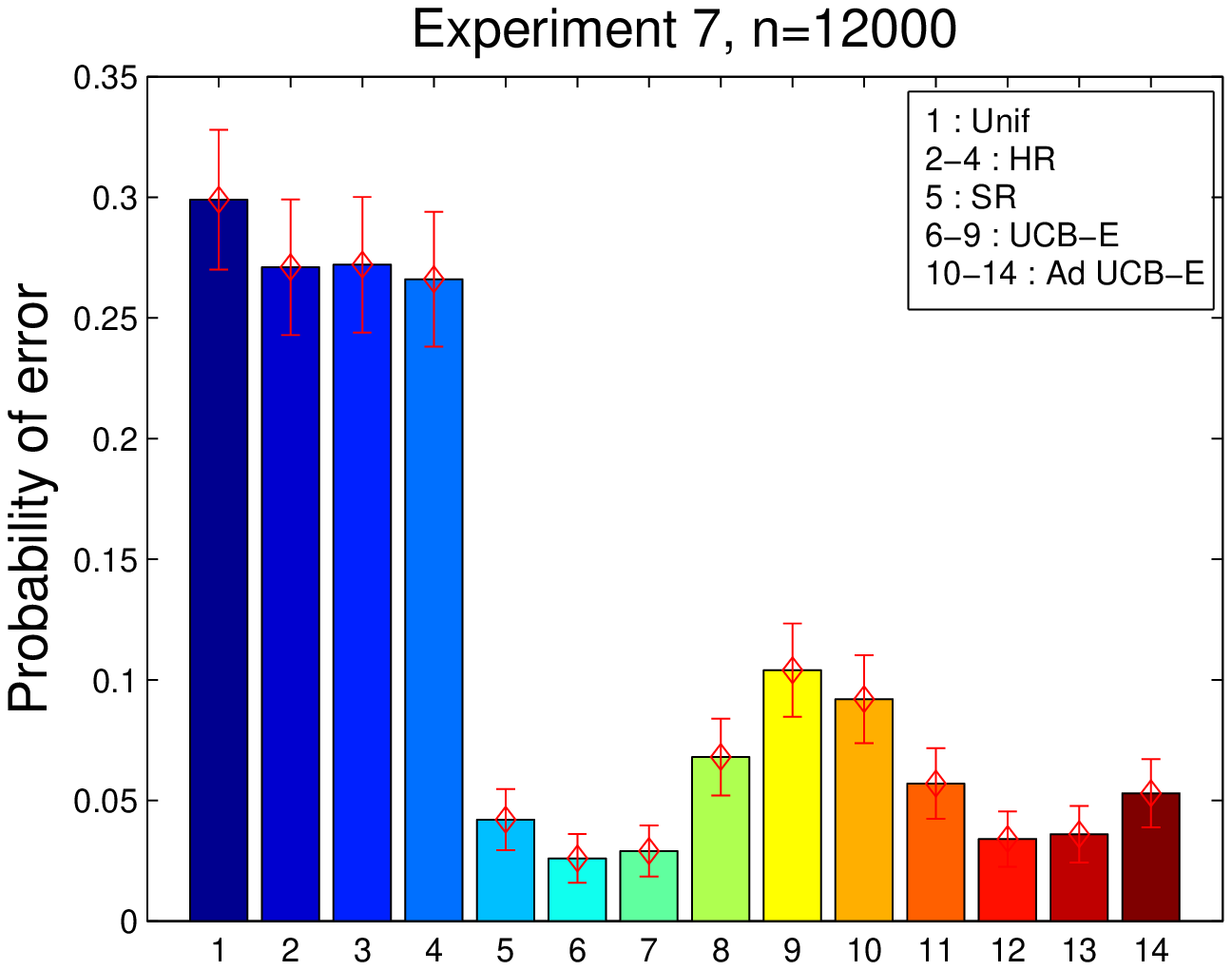}
\end{center}
\vspace{-.7cm}
\caption{
Probability of error of different algorithms for $n=6000$ (l.h.s.) and $n=12000$ (r.h.s.), 
and  $K=30$ arms having 
Bernoulli distributions with parameters $0.5$ (one arm), $0.45$ (five arms), $0.43$ (fourteen arms), 
$0.38$ (ten arms). Each bar represents a different algorithm and the bar's height 
represents the probability of error of this algorithm. 
``Unif'' is the uniform sampling strategy, ``HR'' is the Hoeffding Race algorithm
(run for three different values of the confidence level parameter), UCB-E is tested for four different
values of $c$: $2, 4, 8, 16$, Adaptive UCB-E is tested for five different values of its parameter.
More extensive experiments are presented in \cite{AuBuMu10} and confirm the ranking of algorithms 
observed on these simulations: Ad UCB-E > SR > HR > Unif, where '>' means 'has better performance than'. (UCB-E is not ranked as 
it requires the knowledge of $H$.)
%
}
\label{fig:exp}
\end{figure}

\section{Sequential prediction} \label{sec:adv}

This section summarizes my work with Sébastien Bubeck \cite{AudBub10}. It starts with
the adversarial bandit problem, and goes on with the extension to other sequential prediction games.

\subsection{Adversarial bandit} \label{sec:band}

In the general bandit problem, the environment is not constrained 
to generate the reward vectors independently as in the stochastic bandit problem.
However, the target is still to minimize the regret
      $$
      R_n = \max_{i=1,\dots,K} \sum_{t=1}^n \big( g_{i,t} - g_{I_t,t} \big). 
      $$
In the most general form of the game, called the non-oblivious/adaptive adversarial game,
the adversary may choose the reward vector $g_t$ as a function of the past decisions $I_1,\dots,I_{t-1}$.
Upper bounds on the regret $R_n$ for this type of adversary have a less
straightforward interpretation since the target cumulative reward 
is now depending on the agent's policy! 
I will not provide here results for this type of adversary but the extension of the results presented hereafter
can be found in \cite{AudBub10}.

Thus we will focus on the oblivious adversarial bandit game, in which 
the reward vector $g_t$ is \emph{not} a function of the past decisions $I_1,\dots,I_{t-1}$.
The environment is then simply defined by a distribution on $[0,1]^{nK}$, while
the agent's policy is still defined by a mapping, denoted $\varphi$ from $\cup_{t\in\{1,\dots,n-1\}} \big(\{1,\dots,K\} \times[0,1]\big)^t$
to the set of distributions of $\{1,\dots,K\}$.
Now we can see the game a bit differently.
The ``master'' of the game draws a matrix
$(g_{i,t})_{1\le i \le K,1\le t\le n}$ from the distribution defining the environment,
and at each time step $t$, draws the arm $I_t$ according to the distribution $p_t=\varphi(\cH_t)$
chosen by the agent,
where $\cH_t = \{(I_1,g_{I_1,1}),\dots,(I_{t-1},g_{I_{t-1},t-1})\}$ is the past information.
The regret $R_n$
is a random variable since it depends on the draw of the reward matrix and the draws from
the distributions $p_t$'s.

In \cite{ACFS03}, Auer, Cesa-Bianchi, Freund and Schapire have shown that a forecaster 
based on exponentially weighted averages has a regret upper bounded
by $2.7 \sqrt{n K \log K}$.
As stated before, they also show that this is optimal up to the logarithmic factor:
precisely, there is no forecaster satisfying $\E \oR_n \le \frac{1}{20} \sqrt{n K},$
for any environment.
In \cite{AudBub09,AudBub10}, we close the logarithmic gap between the above upper and lower bounds
by introducing a new 
class of randomized policies. 
To define it, consider a function
$\psi: \R_-^* \rightarrow \R^*_+$ such that
\beglab{eq:psiconds}
\begin{array}{lll}
\psi \text{ increasing and continuously differentiable,}\\
\psi'/\psi \text{ nondecreasing,}\\
\lim_{u\ra -\infty} \psi(u)<1/K, \text{ and}
\lim_{u\ra 0} \psi(u)\ge 1.
\end{array}
\endlab
It can be easily shown that there exists a continuously differentiable function $C: \R_+^K \rightarrow \R$ satisfying
for any $x=(x_1,\hdots,x_K) \in \R_{+}^K$,
  \beglab{eq:cmax}
  \max_{i=1,\dots,K} x_i < C(x) \le \max_{i=1,\dots,K} x_i - \psi^{-1}\left(\fracc{1}{K}\right),
  \endlab
and
  \beglab{eq:sum}
  \sum_{i=1}^K \psi(x_i - C(x)) = 1.
  \endlab
\begin{figure}
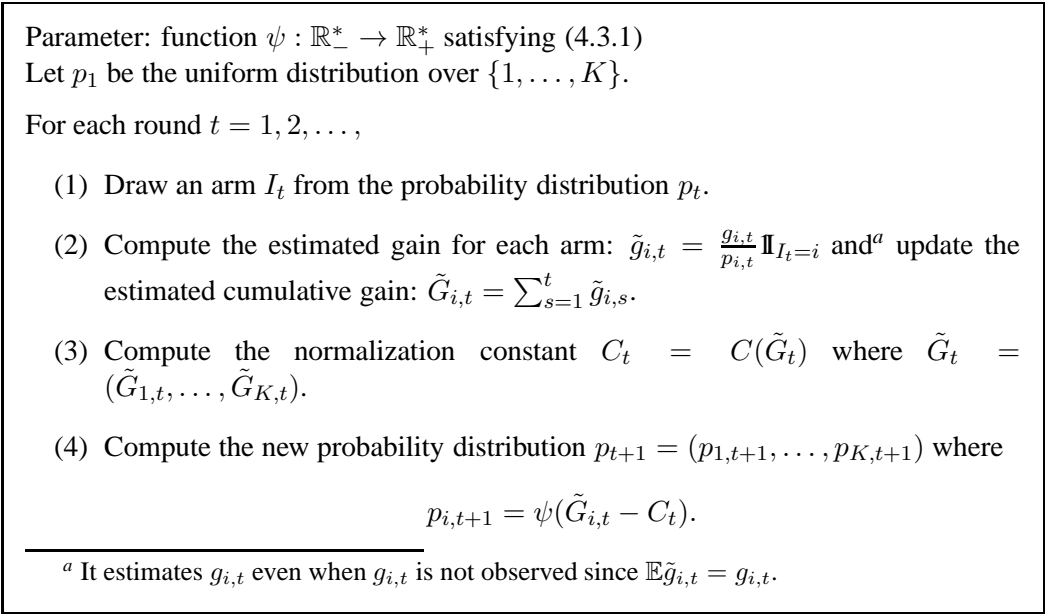
 
\bookbox{

Parameter: function $\psi: \R_-^* \rightarrow \R^*_+$ satisfying \eqref{eq:psiconds}\\
Let $p_1$ be the uniform distribution over $\{1,\hdots,K\}$.

\medskip\noindent
For each round $t=1,2,\ldots,$
\begin{itemize}
\item[(1)]
Draw an arm $I_t$ from the probability distribution $p_t$.
\item[(2)]
Compute the estimated gain for each arm: $\tilde{g}_{i,t} = \frac{g_{i,t}}{p_{i,t}} \dsone_{I_t = i}$ and\footnote{
It estimates $g_{i,t}$ even when $g_{i,t}$ is not observed since $\E \tg_{i,t}=g_{i,t}$.} update the estimated cumulative gain:
$\tilde{G}_{i,t} = \sum_{s=1}^t \tilde{g}_{i,s}$.
\item[(3)]
Compute the normalization constant $C_t = C(\tilde{G}_{t})$ where $\tilde{G}_{t}=(\tilde{G}_{1,t},\hdots,\tilde{G}_{K,t})$.
\item[(4)]
Compute the new probability distribution $p_{t+1}=(p_{1,t+1},\hdots,p_{K,t+1})$ where
$$p_{i,t+1}=\psi(\tilde{G}_{i,t} - C_t) .$$
\end{itemize}
}
\caption{INF (Implicitly Normalized Forecaster) for the adversarial bandit.} \label{fig:inf1}
\end{figure}
So we can define the implicitly normalized forecaster (INF) as detailed in Figure \ref{fig:inf1}.
Indeed, Equality \eqref{eq:sum} makes the fourth step in Figure \ref{fig:inf1} legitimate.
From \eqref{eq:cmax}, $C(\tilde{G}_{t})$ is roughly equal to $\max_{i=1,\dots,K} \tilde{G}_{i,t}$.
This means that INF chooses the probability assigned to arm $i$ as a function of the (estimated) regret.
In spirit, it is similar to the traditional weighted average forecaster, see e.g. Section 2.1 of \cite{CesLug06},  where the probabilities are proportional to a function of the difference between the (estimated) cumulative reward of arm $i$ and the cumulative reward of the policy, which should be, for a well-performing policy, of order $C(\tilde{G}_t)$. Weigthed average forecasters and implicitly normalized forecasters
are in fact two different classes of forecasters which intersection
contains exponentially weighted average forecasters such as the one considered in \cite{ACFS03}.
The interesting feature of the implicit normalization is the following argument, which
allows to recover the result of \cite{ACFS03} and more interestingly
to propose a policy having a regret of order $\sqrt{nK}$. 
It starts with an Abel transformation
and consequently is ``orthogonal'' to the usual argument which, for sake of comparison,
has been reproduced in Appendix \ref{sec:lexp}. 
Letting  $\tG_{0} =0 \in\R^K$.
We have
\begin{align}
\sum_{t=1}^{n} g_{I_t,t} & = \sum_{t=1}^{n} \sum_{i=1}^K p_{i,t} \tg_{i,t} \notag \\
& = \sum_{t=1}^{n} \sum_{i=1}^K p_{i,t} (\tG_{i,t} - \tG_{i,t-1}) \notag \\
& = \sum_{i=1}^K p_{i,n+1} \tG_{i,n} + \sum_{i=1}^K \sum_{t=1}^{n} \tG_{i,t} (p_{i,t} - p_{i,t+1}) \notag \\
& = \sum_{i=1}^K p_{i,n+1} \left(\psi^{-1}(p_{i,n+1}) + C_{n} \right) + \sum_{i=1}^K \sum_{t=1}^{n} (\psi^{-1}(p_{i,t+1}) + C_{t}) (p_{i,t} - p_{i,t+1}) \notag \\
& = C_{n} + \sum_{i=1}^K p_{i,n+1} \psi^{-1}(p_{i,n+1}) + \sum_{i=1}^K \sum_{t=1}^{n} \psi^{-1}(p_{i,t+1}) (p_{i,t} - p_{i,t+1}),
  \label{eq:fund}
\end{align}
where the remarkable simplification in the last step is closely linked to our
specific class of randomized algorithms.
The equality is interesting since, from \eqref{eq:cmax}, $C_n$ approximates the maximum estimated cumulative reward $\max_{i=1,\dots,K} \tG_{i,n}$,
which should be close to the cumulative reward of the optimal arm
$\max_{i=1,\dots,K} G_{i,n}$, where $G_{i,n}=\sum_{t=1}^n g_{i,t}$. Besides, the last term in the right-hand side is roughly 
equal to 
  $$\sum_{i=1}^K\sum_{t=1}^{n} \int_{p_{i,t}}^{p_{i,t+1}} \psi^{-1}(u) du
    = \sum_{i=1}^K\int_{1/K}^{p_{i,n+1}} \psi^{-1}(u) du$$
To make this precise, we use a Taylor-Lagrange expansion and technical arguments to control the residual terms.
Putting this together, we roughly have
  $$
  \max_{i=1,\dots,K} G_{i,n}-\sum_{t=1}^{n} g_{I_t,t} 
    \lessapprox
    - \sum_{i=1}^K p_{i,n+1} \psi^{-1}(p_{i,n+1}) + \sum_{i=1}^K \int_{1/K}^{p_{i,n+1}} \psi^{-1}(u) du.
  $$  
The right-hand side is easy to study: it depends only on the final probability vector 
and has simple upper bounds for adequate choices of $\psi$. For instance, for 
$\psi(x) = \exp(\eta x) + \frac{\gamma}{K}$ with $\eta >0$ and $\gamma \in [0,1)$, the right-hand side 
is smaller than $\frac{1-\gamma}{\eta} \log\big(\frac{K}{1-\gamma}\big) + \gamma C_n$.
For $\psi(x) = \big(\frac{\eta}{-x}\big)^{q} + \frac{\gamma}{K}$ with $\eta >0,$ $q >1$ and $\gamma \in [0,1)$,
it is smaller than $ \frac{q}{q-1} \eta K^{1/q} + \gamma C_n$.
For sake of simplicity, we have been hiding the residual terms but
these terms when added together ($nK$ terms!) are not that small, and in fact constrain the choice 
of the parameters $\gamma$ and $\eta$ if one wishes to get the tightest bound.
Our main result is the following.
\begin{thm} \label{th:polybandits}
The INF algorithm with $\psi(x) = \big(\frac{3\sqrt{n}}{-x}\big)^{2} + \frac{1}{\sqrt{nK}}$ satisfies
$$\E R_n \leq 11 \sqrt{n K}.$$
\end{thm}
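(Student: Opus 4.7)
The plan is to start from the Abel-summation identity \eqref{eq:fund} already derived in the excerpt and combine it with the two defining properties of $C$ in \eqref{eq:cmax} and \eqref{eq:sum}. Since $\psi^{-1}$ takes negative values and $C(x) > \max_i x_i$, taking expectation of \eqref{eq:fund} and using that $\E \max_i \tilde{G}_{i,n} \ge \max_i \E \tilde{G}_{i,n} = \max_i G_{i,n}$ gives
\[
\E R_n \le -\E \sum_{i=1}^K p_{i,n+1}\, \psi^{-1}(p_{i,n+1}) \;-\; \E \sum_{i=1}^K \sum_{t=1}^{n} \psi^{-1}(p_{i,t+1})\,(p_{i,t}-p_{i,t+1}).
\]
So the whole problem reduces to estimating two functionals of the trajectory of the probability vectors.

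Next, I would approximate the telescoping sum by an integral. Writing a first-order Taylor expansion of $\psi^{-1}$ at $p_{i,t+1}$ gives
\[
\psi^{-1}(p_{i,t+1})(p_{i,t}-p_{i,t+1}) = \int_{p_{i,t+1}}^{p_{i,t}} \psi^{-1}(u)\,du - \frac12 (\psi^{-1})'(\xi_{i,t})(p_{i,t}-p_{i,t+1})^2
\]
for some $\xi_{i,t}$ between $p_{i,t}$ and $p_{i,t+1}$. The integrals telescope, and recalling $p_{i,1}=1/K$ we obtain $\sum_t \int_{p_{i,t+1}}^{p_{i,t}} \psi^{-1}(u)\,du = -\int_{1/K}^{p_{i,n+1}} \psi^{-1}(u)\,du$. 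For the specific choice $\psi(x)= (3\sqrt n/(-x))^2 + 1/\sqrt{nK}$ one has $\psi^{-1}(y) = -3\sqrt n/\sqrt{y - 1/\sqrt{nK}}$, so
\[
-\int_{1/K}^{p_{i,n+1}} \psi^{-1}(u)\,du \;\le\; 6\sqrt{n}\,\sqrt{p_{i,n+1}},
\]
and summing over $i$ and using Cauchy--Schwarz against $\sum_i p_{i,n+1}=1$ produces a main contribution of order $6\sqrt{nK}$. The boundary term $-\sum_i p_{i,n+1}\psi^{-1}(p_{i,n+1})$ is handled by the same primitive and is of the same order.

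The main obstacle will be the quadratic residual $\tfrac12 \sum_{i,t} |(\psi^{-1})'(\xi_{i,t})|\, (p_{i,t}-p_{i,t+1})^2$. The difficulty is twofold: the increment $p_{i,t}-p_{i,t+1}$ can be as large as $O(1/p_{i,t})$ because $\tilde g_{i,t}$ is an inverse-probability estimator, and simultaneously $(\psi^{-1})'$ blows up as $y \downarrow 1/\sqrt{nK}$. The floor $\gamma/K = 1/\sqrt{nK}$ that has been inserted into $\psi$ is precisely tuned so that $p_{i,t} \ge 1/\sqrt{nK}$, which caps $(\psi^{-1})'$ on the relevant range. One then takes conditional expectations given $\mathcal H_t$: since arm $i$ is selected with probability $p_{i,t}$ and otherwise $\tilde g_{i,t}=0$, the conditional second moment of $p_{i,t}-p_{i,t+1}$ is controlled by $p_{i,t}$ times $((\psi)'(\psi^{-1}(p_{i,t})))^2/p_{i,t}^{\,2}$, and a direct computation with $\psi(x) = (3\sqrt n/(-x))^2 + 1/\sqrt{nK}$ shows these factors combine to give a residual bounded by a constant multiple of $\sqrt{nK}$. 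Collecting the main contribution and the residual, one arrives at $\E R_n \le 11 \sqrt{nK}$ after a numerical verification of the constants, and I would expect that most of the technical work is in making the residual estimate tight enough to obtain a leading constant of $11$ rather than something larger.
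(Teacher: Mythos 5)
Your proposal follows essentially the same route as the paper: the Abel-transformation identity \eqref{eq:fund}, the bound $\E C_n \ge \max_i G_{i,n}$ from \eqref{eq:cmax}, a Taylor--Lagrange expansion turning the telescoping sum into $-\int_{1/K}^{p_{i,n+1}}\psi^{-1}$ plus a quadratic residual, the explicit primitive computation for $\psi(x)=(3\sqrt{n}/(-x))^2+1/\sqrt{nK}$ giving the $6\sqrt{nK}$ main term, and a conditional second-moment bound on the increments $p_{i,t}-p_{i,t+1}$ for the residual. This is exactly the argument the paper sketches (with the full technical control of the residual, and of the boundary term via $-\psi^{-1}(p_{i,n+1})=C_n-\tilde{G}_{i,n}$ and absorption of the resulting $\gamma C_n$ into the left-hand side, deferred to \cite{AudBub10}), so no further comparison is needed.
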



\subsection{Extensions to other sequential prediction games}

Let us now describe a more general setting, in which the feedback received by the forecaster
after drawing an arm differs from game to game.  
The four games are detailed in Figure \ref{fig:game}.
\begin{figure} 
\bookbox{

\medskip\noindent

Parameters: the number of arms (or actions) $K$ and the number of rounds $n$ with $n \geq K \geq 2$.

\medskip\noindent
For each round $t=1,2,\ldots, n$
\begin{itemize}
\item[(1)]
The forecaster chooses an arm $I_t \in \{1,\hdots, K\}$, possibly with the help of an external randomization.
\item[(2)]
Simultaneously the adversary chooses the reward vector 
  $$g_t=(g_{1,t},\hdots,g_{K,t}) \in [0,1]^K$$
\item[(3)]
The forecaster receives the gain $g_{I_t,t}$ (without systematically observing it). He observes
\begin{itemize}  
\item the reward vector $(g_{1,t},\dots,g_{K,t})$ in the {\bf full information} game,
\item the reward vector $(g_{1,t},\dots,g_{K,t})$ if he asks for it with the global constraint that he is not allowed to ask it more than $m$ times for some fixed integer number $1\le m \le n$. This prediction game is the {\bf label efficient} game,
\item only $g_{I_t,t}$ in the {\bf bandit} game,
\item only his obtained reward $g_{I_t,t}$ if he asks for it with the global constraint that he is not allowed to ask it more than $m$ times for some fixed integer number $1\le m \le n$. This prediction game is the {\bf bandit label efficient} game.
\end{itemize}
\end{itemize}

Goal : The forecaster tries to maximize his cumulative gain $\sum_{t=1}^n g_{I_t,t}$.
}
\caption{The four prediction games considered in this section.} \label{fig:game}
\end{figure}
As for the weighted average forecasters, the INF forecaster can be adapted to the 
different games by simply modifying the estimates $\tg_{i,t}$ of $g_{i,t}$.
The resulting slightly modified INF forecaster is given in Figure \ref{fig:inf2}.
Interestingly, we can provide a unified analysis of 
these games for the INF forecaster. It allows to essentially recover 
the known minimax bounds, while sometimes improving the best known upper bound 
by a logarithmic term. It also leads to high probability bounds on
the regret holding for any confidence level, which contrasts with
previously known results. Let us now detail the main results for the last three games
of Figure~\ref{fig:inf2} and for the tracking the best expert scenario.

\begin{figure}[ht]
\bookbox{
{\em INF (Implicitly Normalized Forecaster):}

\medskip\noindent

Parameters: \begin{itemize}
\item the continuously differentiable function $\psi: \R_-^* \rightarrow \R^*_+$ satisfying \eqref{eq:psiconds}
\item the estimates $\tg_{i,t}$ of $g_{i,t}$ based on the (drawn arms and) observed rewards at time $t$ (and before time $t$)
\end{itemize}
Let $p_1$ be the uniform distribution over $\{1,\hdots,K\}$.

\medskip\noindent
For each round $t=1,2,\ldots,$
\begin{itemize}
\item[(1)]
Draw an arm $I_t$ from the probability distribution $p_t$.
\item[(2)]
Use the (potentially) observed reward(s) to build the estimate $\tg_t=(\tg_{1,t},\hdots,\tg_{K,t})$ 
of $(g_{1,t},\hdots,g_{K,t})$ and let:
$\tG_t = \sum_{s=1}^t \tg_s = (\tG_{1,t},\hdots,\tG_{K,t})$.
\item[(3)]
Compute the normalization constant $C_t = C(\tG_{t})$.
\item[(4)]
Compute the new probability distribution $p_{t+1}=(p_{1,t+1},\hdots,p_{K,t+1})$ where
$$p_{i,t+1}=\psi(\tG_{i,t} - C_t) .$$
\end{itemize}
}
\caption{The proposed policy for the four prediction games.}\label{fig:inf2}
\end{figure}

\subsubsection{The label efficient game} \label{sec:le}

This game was introduced by \cite{Hel97}: as explained in Figure \ref{fig:game}, 
the forecaster observes the reward vector only if he asks for it, and 
he is not allowed to ask it more than $m$ times for some fixed integer number $1\le m \le n$. 
Following the work of Cesa-Bianchi, Lugosi and Stoltz \cite{CeLuSt05},
we consider the following policy for requesting the reward vector. 
At each round, we draw a Bernoulli random variable $Z_t$, with parameter 
$\delta=\frac{3m}{4n}$, to decide whether we ask for the rewards or not. To fulfil the game requirement, 
we naturally do not ask for the rewards if $\sum_{s=1}^{t-1} Z_s \ge m$.

\begin{thm} \label{th:exphLE}
Let $\psi(x) = \exp\big(\frac{\sqrt{m \log K}}{n} x\big)$ and $\tg_{i,t}= \frac{g_{i,t}}{\delta} Z_t$
with $\delta=\frac{3m}{4n}$. Then, for any $\eps>0$, with probability at least
$1-\eps$, INF satisfies:
  \[
  R_n \le 2 n \sqrt{\frac{\log K}{m}}+ n \sqrt{\frac{27\log(2 K \eps^{-1})}{m}},
  \]
hence
$$\E R_n \le 8 n \sqrt{\frac{\log(6 K)}{m}}.$$
\end{thm}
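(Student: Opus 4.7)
The plan is to apply the implicitly normalized forecaster analysis based on the Abel-transformation identity \eqref{eq:fund} (which is valid for any INF instance regardless of the feedback structure) to bound a pseudo-regret on the estimated cumulative gains $\tG_{i,n}=\sum_t\tg_{i,t}$, and then transfer this to the true regret by Bernstein-type concentration. I fix $\eta=\sqrt{m\log K}/n$ and let $i^\star\in\argmax_i G_{i,n}$, where $G_{i,n}=\sum_t g_{i,t}$. The decomposition
\begin{align*}
R_n&=\underbrace{(G_{i^\star,n}-\tG_{i^\star,n})}_{(A)}+\underbrace{\Bigl(\tG_{i^\star,n}-\sum_{t,i} p_{i,t}\tg_{i,t}\Bigr)}_{(B)}\\
&\quad+\underbrace{\sum_{t,i} p_{i,t}(\tg_{i,t}-g_{i,t})}_{(C)}+\underbrace{\sum_t\Bigl(\sum_i p_{i,t}g_{i,t}-g_{I_t,t}\Bigr)}_{(D)}
\end{align*}
reduces the task to four pieces, three of which are centered.

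Term $(B)$ is at most $\max_i\tG_{i,n}-\sum_{t,i}p_{i,t}\tg_{i,t}$, and the standard exponential-weighting analysis (using $C_t=\tfrac{1}{\eta}\log\sum_i e^{\eta\tG_{i,t}}$, which follows from \eqref{eq:sum}, together with $e^y\le 1+y+y^2$ for $y\le 1$, applicable since $\eta\tg_{i,t}\le\eta/\delta\le 1$ when $\log K\le 9m/16$) produces the deterministic bound $(B)\le\log K/\eta+\eta\sum_t\sum_i p_{i,t}\tg_{i,t}^2$. Since $\tg_{i,t}=g_{i,t}Z_t/\delta\le 1/\delta$ and $Z_t^2=Z_t$, the random residual is dominated by $(\eta/\delta^2)\sum_t Z_t$, a sum of $n$ independent Bernoulli$(\delta)$ variables with mean $\eta n/\delta$. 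The centered pieces $(A)$, $(C)$, $(D)$ are handled by Bernstein-type inequalities (the Freedman martingale version where needed): in $(A)$, $\tg_{i,t}-g_{i,t}$ has variance $\le 1/\delta$ and boundedness $1/\delta$, yielding after a union bound over $i$ the uniform estimate $|\tG_{i,n}-G_{i,n}|\le\sqrt{2(n/\delta)\log(8K/\eps)}+\tfrac{2\log(8K/\eps)}{3\delta}$ with probability $\ge 1-\eps/4$; the martingale differences in $(C)$ and $(D)$ have conditional variance $\le 1/\delta$ and $\le 1$, bounded by $1/\delta$ and $1$, giving contributions of the same or lower order. Substituting $\delta=3m/(4n)$ turns every stochastic contribution into an $O(n\sqrt{\log(K/\eps)/m})$ term.

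Combining by a union bound at total level $\eps$, the deterministic part $\log K/\eta+\eta n/\delta=(1+4/3)n\sqrt{\log K/m}$ folds into the claimed $2n\sqrt{\log K/m}$ once the sharpest form of the exponential-weighting inequality $e^y\le 1+y+(e{-}2)y^2$ on $[0,1]$ is used, while the stochastic fluctuations of $(A)$, $(C)$, $(D)$ and of the residual in $(B)$ collect into the single $n\sqrt{27\log(2K\eps^{-1})/m}$ term after careful bookkeeping of the Bernstein constants and of the way $\eps$ is shared across the four controls. The expectation bound then follows by tail integration, $\E R_n\le 2n\sqrt{\log K/m}+\int_0^\infty\min(1,8K\,e^{-u^2 m/(27n^2)})\,du$, which is bounded by $2n\sqrt{\log K/m}+n\sqrt{27\log(2K)/m}+O(n/\sqrt{m})$ and in turn by $8n\sqrt{\log(6K)/m}$. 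The main obstacle is driving all constants down to these sharp values: this requires economical allocation of the $\eps$ budget across the four controls, invoking the tightest available Bernstein bound in the variance term (with factor $2$ rather than $8/3$), and using the precise form of the exponential-weighting inequality that matches this choice of $\psi$.
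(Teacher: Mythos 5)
Your route is, in substance, the paper's own: the theorem is proved in \cite{AudBub10} (following the scheme of Cesa-Bianchi, Lugosi and Stoltz \cite{CeLuSt05}) by exactly this four-term decomposition --- a deterministic exponential-weights bound on the estimated-gain pseudo-regret, of the form $\max_i \tG_{i,n}-\sum_{t}\E_{i\sim p_t}\tg_{i,t}\le \frac{\log K}{\eta}+\eta\,\Psi(\eta/\delta)\sum_t\E_{i\sim p_t}\tg_{i,t}^2$, plus Bernstein--Freedman control of the three centered terms, a union bound, and tail integration for the expectation. Your observation that for exponential $\psi$ one may bypass the general Abel-transformation/Taylor--Lagrange machinery of Section \ref{sec:band} is legitimate: the paper itself notes that for exponential $\psi$ the INF forecaster coincides with the classical exponentially weighted forecaster, and \eqref{eq:sum} then gives $C_t=\frac1\eta\log\sum_i e^{\eta\tG_{i,t}}$ as you use it.

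Two points still need to be supplied. First, you never address the request budget: the forecaster does \emph{not} ask for the rewards once $\sum_{s=1}^{t-1}Z_s\ge m$, so on that event the estimate $\tg_{i,t}=g_{i,t}Z_t/\delta$ is not computable from the observations and the unbiasedness $\E[\tg_{i,t}\mid \text{past}]=g_{i,t}$ underlying your terms $(A)$ and $(C)$ fails. The standard repair is to work on the event $\{\sum_{t=1}^n Z_t\le m\}$, whose complement has probability at most $\exp(-3m/80)$ by Bernstein (since $\E\sum_t Z_t=3m/4$), and to check that this is dominated by the share of $\eps$ you allot to it whenever the claimed bound is nontrivial, i.e.\ whenever $27\log(2K\eps^{-1})\le m$; outside that regime the bound exceeds $n$ and is vacuous. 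Second, the constants $2$, $27$ and $8$ are asserted rather than derived. The orders are right --- the deterministic part is $\bigl(1+\tfrac43(e-2)\bigr)n\sqrt{\log K/m}<2n\sqrt{\log K/m}$ once the fluctuation of $\sum_t Z_t$ about $3m/4$ is shunted to the stochastic side, and each Bernstein term is $O\bigl(n\sqrt{\log(K\eps^{-1})/m}\bigr)$ --- but the bookkeeping that produces exactly $\sqrt{27}$ in the deviation term and $8$ in the expectation bound (where the margin is thin for large $K$) is precisely the nontrivial part of the published proof and cannot be waved through.
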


This theorem is similar to Theorem 6.2 of \cite{CesLug06}. The main difference and novelty is that the policy does not
depend on the confidence level, so the high probability bound is valid for any confidence level \emph{for the
same policy}, 
and the expected regret of this policy has also the minimax optimal rate, i.e. $n \sqrt{\frac{\log(K)}{m}}$.

\subsubsection{High probability bounds for the bandit game} \label{sec:hp}

Here the main difference with Section \ref{sec:adv} is to use the biased estimates
$\tg_{i,t} = \frac{g_{i,t}}{p_{i,t}} \dsone_{I_t=i}+\frac{\be}{p_{i,t}}$ for some appropriate small $\beta>0$.
It may appear surprising as it introduces a bias in the estimate of $g_{i,t}$. However this modification allows to have 
high probability upper bounds with the correct rate on the difference
$\sum_{t=1}^n g_{i,t} - \sum_{t=1}^n \tg_{i,t}$. 
A second reason for this modification (but useless for this particular section) 
is that it allows to track the best expert (see Section \ref{sec:track}). 
For sake of simplicity, the following theorem concerns deterministic adversaries (which is defined
by a fixed matrix of the $nK$ rewards).
\begin{thm}
For a deterministic adversary, The INF algorithm with 
$\psi(x) = \big(\frac{3\sqrt{n}}{-x}\big)^{2} + \frac{1}{\sqrt{nK}}$ and
$\tg_{i,t} = \frac{g_{i,t}}{p_{i,t}} \dsone_{I_t=i}+\frac{1}{p_{i,t}\sqrt{nK}}$
satisfies: for any $\eps>0$, with probability at least
$1-\eps$, 
  \[
  R_n \le 10 \sqrt{nK} + 2 \sqrt{n K}\log(\eps^{-1}).
  \]
(Consequently, it also satisfies 
  $
  \E R_n \le 12 \sqrt{n K}.)
  $
\end{thm}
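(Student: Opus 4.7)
The plan is to combine two ingredients: a pathwise (almost sure) bound relating $\max_{i}\tG_{i,n}$ to the forecaster's cumulative gain $\sum_t g_{I_t,t}$, which comes from the same algebraic manipulation as in the proof of Theorem \ref{th:polybandits}; and a single-arm concentration inequality for the deviation $G_{i^*,n}-\tG_{i^*,n}$, where $i^* \in \argmax_i G_{i,n}$. The biased estimator together with the deterministic-adversary assumption are precisely what makes the second ingredient possible without paying a union bound over the $K$ arms.

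First, I would observe that the identity \eqref{eq:fund} is a purely algebraic consequence of the INF update and holds for \emph{any} input ``gain'' sequence fed to it; in particular it holds for our biased $\tg$'s. Since $\psi(x)\ge 1/\sqrt{nK}$ for all $x<0$, the probabilities $p_{i,t}$ are always bounded below by $1/\sqrt{nK}$, and hence $\tg_{i,t}\le (1+\beta)/p_{i,t} \le 2\sqrt{nK}$ almost surely, where $\beta = 1/\sqrt{nK}$. Running verbatim the Taylor--Lagrange expansion used in the proof of Theorem \ref{th:polybandits} (which only requires $\tg\ge 0$ and a uniform a.s.\ upper bound on $\tg$, not unbiasedness) then yields the deterministic bound
\[
\max_{i=1,\dots,K}\tG_{i,n}\ -\ \sum_{t=1}^n g_{I_t,t}\ \le\ 10\sqrt{nK}\qquad\text{a.s.}
\]

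Second, because the adversary is deterministic, $i^* \in \argmax_i G_{i,n}$ is a \emph{non-random} index. The elementary inequality $\max_i \tG_{i,n}\ge \tG_{i^*,n}$ therefore gives
\[
R_n\ =\ G_{i^*,n}-\sum_{t=1}^n g_{I_t,t}\ \le\ \bigl(G_{i^*,n}-\tG_{i^*,n}\bigr)\ +\ 10\sqrt{nK},
\]
so only the scalar deviation $G_{i^*,n}-\tG_{i^*,n}$ remains to be controlled in high probability. No union bound over the $K$ arms is needed, which is exactly why the statement has $\log(\eps^{-1})$ rather than $\log(K\eps^{-1})$ in front of $\sqrt{nK}$. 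Setting $X_t := g_{i^*,t}-\tg_{i^*,t}$, a short computation gives $X_t\le 1$ a.s., $\E[X_t\mid\cH_t]=-\beta/p_{i^*,t}$ and $\E[X_t^2\mid\cH_t]\le 2/p_{i^*,t}$ (using $p_{i^*,t}\ge\beta$). Plugging $e^{\lambda X_t}\le 1+\lambda X_t+\lambda^2 X_t^2$ into the conditional expectation and choosing $\lambda=\beta/2=1/(2\sqrt{nK})$ makes the ``drift'' terms cancel exactly, so $\exp\bigl(\lambda(G_{i^*,n}-\tG_{i^*,n})\bigr)$ is a supermartingale and Markov's inequality yields
\[
\P\bigl(G_{i^*,n}-\tG_{i^*,n}\ge 2\sqrt{nK}\log(\eps^{-1})\bigr)\ \le\ \eps.
\]
Combining with the deterministic bound gives the announced tail bound; integrating the sub-exponential tail then produces $\E R_n\le 10\sqrt{nK}+2\sqrt{nK}=12\sqrt{nK}$.

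The main obstacle is the first paragraph: the Taylor--Lagrange analysis of Theorem \ref{th:polybandits} has to be re-examined with the biased $\tg$ to verify that the residual ``variance'' terms of the form $\tg_{i,t}^2\,p_{i,t}$ arising from that expansion remain well-controlled. These terms are $O(1/p_{i,t})$ after the substitution (since $(1+\beta)^2\le 4$) and the lower bound $p_{i,t}\ge 1/\sqrt{nK}$ keeps their total contribution of order $\sqrt{nK}$; tracking the absolute constant down to the value $10$ is the most tedious -- and the only non-routine -- step of the whole argument. Everything else follows from standard exponential-martingale machinery and the structural identity \eqref{eq:fund}.
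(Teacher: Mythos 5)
Your proposal is correct and follows essentially the same route as the paper's (sketched) argument: the pathwise Abel-transform bound on $\max_i \tG_{i,n}-\sum_t g_{I_t,t}$ coming from the implicit normalization, combined with a single-arm exponential-supermartingale bound on $G_{i^*,n}-\tG_{i^*,n}$ whose drift is cancelled exactly by the bias $\be/p_{i,t}$ with $\be=1/\sqrt{nK}$, the deterministic adversary making $i^*$ non-random so that no union bound over arms (hence no $\log K$) is needed. The only point to tighten is your claim that \eqref{eq:fund} is ``purely algebraic'': its first line uses unbiasedness, and with the biased estimates one has $\sum_i p_{i,t}\tg_{i,t}=g_{I_t,t}+K\be$, so the left-hand side picks up an extra $nK\be=\sqrt{nK}$ that must be absorbed into the constant $10$ alongside the residual terms you already flag.
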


The novelty of the result, which is similar to Theorem 6.10 of \cite{CesLug06}, 
is both the absence of the $\log K$ factor and that the high probability bound is valid 
for the same policy at any confidence level.

\subsubsection{Label efficient and bandit game (LE bandit)} \label{sec:leb}

In this game first considered by Gy{\"o}rgy and Ottucs{\'a}k \cite{Gyo06} and which is a combination of two
previously seen games, the forecaster observes the reward of the arm he selected 
only if he asks for it, and he is not allowed to request it more than $m$ times for some fixed integer number $1\le m \le n$. 
We consider a similar policy for requesting the reward vector as in the label efficient game. 
At each round, we draw a Bernoulli random variable $Z_t$, with parameter 
$\delta=\frac{3m}{4n}$, to decide whether we ask for the obtained reward or not. To fulfil the game requirement, 
we do not ask for the rewards if $\sum_{s=1}^{t-1} Z_s \ge m$.

\begin{thm}
For 
$\psi(x) = \big(\frac{3n}{-\sqrt{m}x}\big)^{2} + \frac{1}{\sqrt{nK}}$ and
$\tg_{i,t} = g_{i,t}\frac{\dsone_{I_t=i}}{p_{i,t}} \frac{Z_t}\delta$,
the INF algorithm satisfies
  $$
  \E R_n \le 40 n \sqrt{\frac{K}{m}}.
  $$
\end{thm}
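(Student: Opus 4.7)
The plan is to adapt the implicit-normalization analysis leading to Theorem~\ref{th:polybandits}, adjusting only for the fact that the label-efficient ``filtering'' by $Z_t$ inflates the second moment of the reward estimate by a factor $1/\delta = 4n/(3m)$. This single modification should explain the appearance of $n/\sqrt{m}$ in place of $\sqrt{n}$ in the scale parameter of $\psi$, and hence the regret rate $n\sqrt{K/m}$ in place of $\sqrt{nK}$.

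First I would check the basic moments of the estimator. Since $Z_t$ is drawn independently of $I_t$ and of the past, conditionally on the information available before round $t$ one has $\E\tg_{i,t} = g_{i,t}\cdot \E[\dsone_{I_t=i}/p_{i,t}]\cdot \E[Z_t/\delta] = g_{i,t}$, while $\E\tg_{i,t}^2 = g_{i,t}^2/(p_{i,t}\delta)\le 1/(p_{i,t}\delta)$. Next, the purely algebraic Abel identity \eqref{eq:fund} of Section~\ref{sec:band} applies verbatim to these new estimates (it depends only on the INF update rule, not on how $\tg$ is built), giving
\[
\sum_{t=1}^{n}\sum_{i=1}^K p_{i,t}\tg_{i,t} \;=\; C_n + \sum_i p_{i,n+1}\psi^{-1}(p_{i,n+1}) + \sum_{i,t}\psi^{-1}(p_{i,t+1})(p_{i,t}-p_{i,t+1}).
\]
Taking expectations, the left-hand side equals $\E\sum_t g_{I_t,t}\cdot\E[Z_t/\delta]=\E\sum_t g_{I_t,t}$, and by Jensen applied to the fixed oblivious adversary together with \eqref{eq:cmax} one has $\max_i G_{i,n}=\max_i\E\tG_{i,n}\le \E[C_n-\psi^{-1}(1/K)]$. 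Subtracting then yields an almost-sure bound on $\E R_n$ in terms of the boundary term $-\psi^{-1}(1/K)$ and the two sums above.

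The third step is a Taylor expansion of $\psi^{-1}$ around $p_{i,t+1}$, exactly as for Theorem~\ref{th:polybandits}. The leading contribution telescopes into $\sum_i \int_{1/K}^{p_{i,n+1}}\psi^{-1}(u)\,du$; an explicit calculation with $\psi^{-1}(y)=-3n/(\sqrt{m}\sqrt{y-\gamma/K})$ and Cauchy-Schwarz ($\sum_i\sqrt{p_{i,n+1}}\le \sqrt{K}$) bounds it by a constant multiple of $(n/\sqrt{m})\sqrt{K}$. The second-order Taylor remainder is $\tfrac12\sum_{i,t}(-\psi^{-1})'(\xi_{i,t})(p_{i,t}-p_{i,t+1})^{2}$; controlling $(p_{i,t}-p_{i,t+1})^{2}$ through $\psi'$ evaluated at $\tG_{i,t}-C_t$ and then through $\tg_{i,t}^2$, and using the floor $p_{i,t}\ge \gamma/K=1/\sqrt{nK}$ to bound $(-\psi^{-1})'$, one obtains after conditioning an expected bound proportional to $\frac{1}{\delta}\cdot\frac{n}{\sqrt m}\cdot\sqrt K$, which with $\delta=3m/(4n)$ is again a constant multiple of $n\sqrt{K/m}$. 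Summing and tracking constants produces the announced factor $40$.

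The hard part is the second-order remainder: the $1/\delta$ blow-up of $\E\tg_{i,t}^2$ is of order $n/m$, which is large, and one must verify that the choice $\eta=3n/\sqrt{m}$ (rather than the pure-bandit choice $3\sqrt{n}$) combined with the small floor $\gamma/K=1/\sqrt{nK}$ makes the remainder exactly of the same order $n\sqrt{K/m}$ as the main term, without any spurious logarithmic factor. The $\gamma/K$ term in $\psi$ is crucial here, both to keep $\psi^{-1}$ (and its derivative) finite and to make the upper bound \eqref{eq:cmax} meaningful; without it the remainder would not be controllable. Everything else is routine bookkeeping to obtain the explicit numerical constant.
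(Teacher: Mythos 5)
Your overall strategy is the paper's: the theorem is proved by the unified INF analysis of Section \ref{sec:band} (the Abel transformation \eqref{eq:fund} followed by a Taylor--Lagrange expansion of $\psi^{-1}$), the only changes being the new estimates $\tg_{i,t}$, whose conditional second moment is inflated by $1/\delta$, and the rescaled parameter $\eta=3n/\sqrt{m}$. Your first two steps (unbiasedness, the identity \eqref{eq:fund}, passing to expectations, the explicit computation of $\sum_i\int_{1/K}^{p_{i,n+1}}\psi^{-1}$) are sound, except that you silently ignore the budget constraint: on the event $\sum_{s<t}Z_s\ge m$ the forecaster can no longer query, the estimate degenerates, and unbiasedness fails. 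With $\delta=3m/(4n)$ a Chernoff bound makes this event negligible, but it must be said.

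The genuine gap is in the step you yourself flag as the hard one, the second-order remainder, and it is twofold. First, the mechanism you propose --- ``using the floor $p_{i,t}\ge\gamma/K=1/\sqrt{nK}$ to bound $(-\psi^{-1})'$'' --- cannot work: for $\psi(x)=(\eta/(-x))^2+\gamma/K$ one has $(\psi^{-1})'(y)=\tfrac{\eta}{2}(y-\gamma/K)^{-3/2}$, which \emph{diverges} as $y\downarrow\gamma/K$, so the floor on the probabilities gives no upper bound on $(\psi^{-1})'(\xi_{i,t})$ (the mean-value point can sit arbitrarily close to $\gamma/K$). The correct route is to use $(\psi^{-1})'=1/(\psi'\circ\psi^{-1})$ together with $|p_{i,t+1}-p_{i,t}|\lesssim \psi'(\cdot)\,|\tg_{i,t}-\Delta C_t|$, so that the remainder is controlled by $\sum_{i,t}\psi'(\cdot)\,\tg_{i,t}^2$ with $\psi'(x)\le \tfrac{2}{\eta}\,p^{3/2}$; the floor on $p_{i,t}$ serves instead to keep $\tg_{i,t}\le \sqrt{nK}/\delta$ and hence the increments small enough for the expansion to be legitimate. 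Second, the order of magnitude you announce for the remainder, $\tfrac{1}{\delta}\cdot\tfrac{n}{\sqrt m}\cdot\sqrt K$, equals $\tfrac{4n^2\sqrt K}{3m^{3/2}}$, which exceeds the target $n\sqrt{K/m}$ by a factor $n/m$: the parameter $\eta$ must appear in the \emph{denominator}. Carrying out the computation above, $\E[\psi'(\cdot)\tg_{i,t}^2]\le \tfrac{2}{\eta}p_{i,t}^{3/2}\cdot\tfrac{1}{p_{i,t}\delta}=\tfrac{2}{\eta\delta}p_{i,t}^{1/2}$ and $\sum_i p_{i,t}^{1/2}\le\sqrt K$, giving a remainder of order $\tfrac{n\sqrt K}{\eta\delta}=\tfrac{4}{9}\,n\sqrt{K/m}$ --- the right order, but obtained from a balance different from the one you describe. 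As written, your bound on the remainder neither follows from your argument nor has the claimed order, so the proof is incomplete at exactly the point where the work lies.
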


As for the bandit game, the use of the INF forecaster allows to get rid of the 
$\log K$ factor which was appearing in previous works.

\subsubsection{Tracking the best expert in the bandit game} \label{sec:track} 

In the previous sections, the cumulative gain of the forecaster was compared to 
the cumulative gain of the best single expert. Here, it will be compared to more flexible strategies that are allowed to switch actions. 
A switching strategy is described by a vector $(i_1,\dots,i_n)\in\{1,\dots,K\}^n$. 
Its size is defined by 
  \[
  \size{\its} = \sum_{t=1}^{n-1} \dsone_{i_{t+1}\neq i_{t}},
  \]
and its cumulative gain is 
  \[
  G_{\its} = \sum_{t=1}^n g_{i_t,t}.
  \]
The regret of a forecaster with respect to the best switching strategy with $S$ switches is then given by:
\[
R_n^S = \max_{\its:\,\size\its\le S} G_{\its} - \sum_{t=1}^n g_{I_t,t}.
\]
As in Section \ref{sec:hp}, we use the estimates
  \[
  \tg_{i,t}= g_{i,t} \frac{\dsone_{I_t=i}}{p_{i,t}} + \frac{\be}{p_{i,t}},
  \] 
and
$0 < \be \le 1$. The $\be$ term, which, as already stated, introduces a bias in the estimate of $g_{i,t}$, constrains the differences 
  $\max_{i=1,\dots,K} \tG_{i,t} - \min_{j=1,\dots,K} \tG_{j,t}$ 
to be relatively small. This is the key property in order to track the best switching strategy, provided that the number of switches is not too large.
We have the following result for the INF forecaster using the above estimates and an exponential function $\psi$ (recall that for 
exponential $\psi$, the INF forecaster reduces to the traditional exponentially weighted forecasters).
  
\begin{thm}
Let $\tS=S \log\big(\frac{enK}S\big) + \log(2K)$ with $e=\exp(1)$ and the natural convention $S\log(enK/S)=0$ for $S=0$. 
Consider $\psi(x) = \exp(\eta x)+ \frac{\gamma}{K}$ with 
$\gamma=\min\Big(\frac12,\sqrt{\frac{K \tS}{n}}\Big)$ and $\eta = \sqrt{\frac{\tS}{20 n K}}$,
and the estimates $\tg_{i,t}= g_{i,t} \frac{\dsone_{I_t=i}}{p_{i,t}} + \frac{\be}{p_{i,t}}$ with $\be=2\sqrt{\frac{\tS}{nK}}$. 
For these choices, for any $0 \le S \le n-1$, for any $\eps>0$, with probability at least $1-\eps$, INF satisfies:
  $$
  R_n^S \leq 9 \sqrt{n K s}
    + \sqrt{\frac{n K}{s}} \logeps,
  $$
and
$$\E R_n^S \le 10 \sqrt{n K s}.$$
\end{thm}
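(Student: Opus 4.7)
The plan is to adapt the INF analysis used for the bandit game (Section \ref{sec:band}) by replacing the single-arm benchmark with switching strategies of size at most $S$. Two structural features of the algorithm will be exploited. First, with $\psi(x)=\exp(\eta x)+\gamma/K$ the INF reduces to EXP3 with uniform mixing $\gamma/K$, so $p_{i,t}\ge \gamma/K$ uniformly; this is what allows the algorithm to ``track'' a change of arm without paying more than $\eta^{-1}\log(K/\gamma)$ per switch. Second, the additive bias $+\beta/p_{i,t}$ in the estimates $\tg_{i,t}$ keeps the spread $\max_i\tG_{i,t}-\min_j\tG_{j,t}$ under control and, more importantly, acts as a built-in confidence correction that should allow handling the exponential number of competing switching strategies without losing an extra $\log$ factor.

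First, I would use the Abel transformation \eqref{eq:fund}, combined with a second-order Taylor expansion of $\psi^{-1}$ in the ``variation'' term, to reproduce in the INF framework the classical exp-weights bound: for any reference sequence $\its=(i_1,\dots,i_n)$ with $\size{\its}\le S$,
\[
\sum_{t=1}^n \tg_{i_t,t}\ \le\ \sum_{t=1}^n\sum_{i=1}^K p_{i,t}\tg_{i,t}\ +\ \frac{S+1}{\eta}\log\Bigl(\frac{K}{\gamma}\Bigr)\ +\ \eta\sum_{t=1}^n\sum_{i=1}^K p_{i,t}\tg_{i,t}^{\,2},
\]
the factor $(S+1)\log(K/\gamma)$ arising from lower-bounding the product weight that the EXP3-with-mixing process assigns to $\its$ by $(\gamma/K)^{S+1}$. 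The second-order term is bounded per round by $\eta(1+\beta)^2 K/\gamma$ via the uniform floor $p_{i,t}\ge \gamma/K$, giving an accumulated cost of order $\eta nK/\gamma$. Summing up, since $(S+1)\log(K/\gamma)\asymp \tS$ for the prescribed $\gamma$, this step contributes $\tS/\eta+\eta nK/\gamma$ to the regret on the \emph{estimated} gains.

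Second, I would transfer the bound from estimated to true rewards. Since $\E[\tg_{i,t}\mid \cF_{t-1}]=g_{i,t}+\beta/p_{i,t}$, the process $\sum_t \bigl(\tg_{i_t,t}-g_{i_t,t}-\beta/p_{i_t,t}\bigr)$ is a martingale with bounded increments (bounded by $(1+\beta)K/\gamma$) and conditional variance $\le (1+\beta)^2 K/\gamma$ per round. A Bernstein inequality for martingale differences, combined with the deterministic drift $\beta\sum_t 1/p_{i_t,t}\le \beta nK/\gamma$ that the bias contributes on the left-hand side, yields a bound of the form $\sum_t g_{i_t,t}\le \sum_t\tg_{i_t,t}+c\sqrt{nK/\gamma}\cdot\sqrt{\log(\eps^{-1})+\tS}-\beta nK/\gamma$ holding uniformly over all $\le K(enK/S)^S$ reference sequences of size $\le S$; the point is that the built-in bias $\beta/p_{i,t}$ absorbs the union-bound cost $\log K(enK/S)^S\asymp \tS$, leaving only a genuine $\log(\eps^{-1})$ deviation term.

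Combining the two steps produces $R_n^S\le \tS/\eta+\eta nK/\gamma+\beta nK/\gamma+\sqrt{nK/\tS}\log(\eps^{-1})+\text{(lower order)}$, and the prescribed tuning $\eta=\sqrt{\tS/(20nK)}$, $\gamma=\min(1/2,\sqrt{K\tS/n})$, $\beta=2\sqrt{\tS/(nK)}$ is precisely the one that balances the first three terms at a common value of order $\sqrt{nK\tS}$, yielding the claimed high-probability bound and, by integration, the expectation bound. The hard part will be the uniform concentration across exponentially many switching strategies: a naive union bound costs an unaffordable $\log K(enK/S)^S$ factor in the deviation, so the argument must carefully use the $\beta/p_{i,t}$ bias as a per-round high-probability correction (in the spirit of Section \ref{sec:hp}) to make the $\log(\eps^{-1})$ term in the final bound truly independent of $S$.
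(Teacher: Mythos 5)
Your overall architecture---bound the regret on the estimated gains against the switching comparator, then use the $\be/p_{i,t}$ bias to transfer to true gains without paying a union bound over the exponentially many comparators---has the right shape, but the central step rests on a mechanism that does not apply to this algorithm. The inequality with the $(S+1)\eta^{-1}\log(K/\gamma)$ term is the Fixed-Share/EXP3.S bound, and its derivation requires \emph{additive sharing of the weights} ($w_{i,t+1}=(1-\alpha)w_{i,t}e^{\eta\tg_{i,t}}+\frac{\alpha}{K}\sum_j w_{j,t}e^{\eta\tg_{j,t}}$); that is what makes ``the product weight assigned to $\its$'' a meaningful quantity lower bounded by $(\gamma/K)^{S+1}$. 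INF with $\psi(x)=e^{\eta x}+\gamma/K$ mixes the uniform distribution into the \emph{probabilities} only: the weights remain the plain cumulative exponentials $e^{\eta \tG_{i,t}}$, and the floor $p_{i,t}\ge\gamma/K$ gives no lower bound on the weight ratio $e^{\eta\tG_{j,\tau}}\big/\sum_i e^{\eta\tG_{i,\tau}}$ at a switch time $\tau$. For plain exponential weights this ratio can be exponentially small, and the per-switch cost in the potential argument is $\frac{\log K}{\eta}+\bigl(\max_i\tG_{i,\tau}-\tG_{j,\tau}\bigr)$, i.e.\ it is governed by the \emph{spread} of the estimated cumulative gains. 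The proof therefore hinges on a lemma you never state or establish: because $\tg_{i,t}\ge\be/p_{i,t}$, any lagging arm (for which $p_{i,t}\approx\gamma/K$) sees its $\tG_{i,t}$ grow at rate at least $\be K/\gamma=2$ per round, which caps $\max_i\tG_{i,t}-\min_j\tG_{j,t}$ at roughly $\eta^{-1}\log(K/\gamma)+O(\be^{-1})$. This is precisely what the text flags as ``the key property in order to track the best switching strategy''; in your write-up the bias is invoked only in the concentration step, so the tracking step is left without valid support.

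There is also a quantitative error in the balancing. Bounding the second-order term by $(1+\be)^2K/\gamma$ per round and the bias drift by $\be nK/\gamma$ gives, in the main regime $\gamma=\sqrt{K\tS/n}$, the values $\eta nK/\gamma=n/\sqrt{20}$ and $\be nK/\gamma=2n$: both are linear in $n$, so the three terms do \emph{not} meet at $\sqrt{nK\tS}$ as claimed. The correct accounting bounds the quadratic term through $\sum_i p_{i,t}\tg_{i,t}^2\lesssim g_{I_t,t}^2/p_{I_t,t}$, whose conditional expectation is at most $K$ (not $K/\gamma$), and charges the bias at $\be\sum_t\sum_i p_{i,t}\cdot(1/p_{i,t})=\be nK$; these give $\eta nK=\sqrt{nK\tS/20}$ and $\be nK=2\sqrt{nK\tS}$, which is what the prescribed tuning actually balances.
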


\section*{Acknowledgements}

I am indebted to the Lab directors Renaud Keriven at Ecole des Ponts ParisTech and 
Jean Ponce at Ecole Normale Supérieure-INRIA for giving me the opportunity to work in an excellent research environment.
I am also very grateful to the reviewers of this work: Peter Bartlett, Pascal Massart and Arkadi Nemirovski.
  
I would like to thank all the colleagues with whom I had the pleasure to work or discuss, in particular,
Francis Bach, S\'ebastien Bubeck, Olivier Catoni, Matthias Hein, Hui Kong, Rémi Munos, Csaba Szepesv\'ari and Sacha Tsybakov.
I would also like to thank Yuri Golubev for having accepted to participate and chair the habilitation committee.
Special thanks to my loved ones who have constantly and unconditionally supported me while I was developing this research.

\appendix

\numberwithin{equation}{chapter}

\chapter{Some basic properties of the Kullback-Leibler divergence} \label{app:kl}

The KL divergence between two distributions on some measurable space $\G$
  \beglab{eq:test}
  K(\rho,\pi) = \left\{ \begin{array}{lll}
    \E_{g\sim\rho} \log( \frac{\rho}{\pi} (g) ) & \text{if } \rho \ll \pi\\
  +\infty   & \text{otherwise}
  \end{array} \right.
  \endlab
satisfies for $\rho \ll \pi$, 
  $
  K(\rho,\pi)=\E_{g\sim\pi} \chi\big(\frac{\rho}{\pi} (g)\big),
  $ 
with $\chi$ the function defined on $(0,+\infty)$ by 
  $\chi(u) \mapsto u\log(u)+1-u$.
Since the function $\chi$ is nonnegative and equals zero only at $1$, we have
  \beglab{eq:klpos}
  K(\rho,\pi)\ge 0,
  \endlab
and 
  \beglab{eq:klzero}
  K(\rho,\pi)= 0 \Lra \rho=\pi.
  \endlab
Let $h:\G\ra\R$ s.t. $\E_{g\sim\pi} e^{h(g)} < +\infty$. Define
  $$
  \pi_h(dg) = \frac{e^{h(g)}}{\E_{g'\sim\pi} e^{h(g')} } \cdot \pi(dg)
  $$
By expanding the definition of the KL divergence $K(\rho,\pi_h)$, we get
  $$
  K(\rho,\pi_h)=K(\rho,\pi)-\E_{g\sim\rho} h(g) + \log \E_{g\sim\pi} e^{h(g)},
  $$
which implies from \eqref{eq:klpos} and \eqref{eq:klzero}
  \beglab{eq:klb}
  \sup_{\rho} \big\{ \E_{g\sim\rho} h(g) - K(\rho,\pi) \big\} = \log \E_{g\sim\pi} e^{h(g)},
  \endlab
and
  \beglab{eq:klc}
  \argmax_{\rho} \big\{ \E_{g\sim\rho} h(g) - K(\rho,\pi) \big\} = \pi_h.
  \endlab
By differentiating, one may note that the function
$\lam\mapsto K(\pi_{\lam h},\pi)$ is nondecreasing on $[0,+\infty)$.
Finally, if $\G$ is finite and $\pi$ is the uniform distribution on $\G$, we have
    \beglab{eq:kla}
    K(\rho,\pi) = \log(|\G|) - H(\rho) \le \log( |\G| ),
    \endlab
where $H(\rho)=-\sum_{g\in\G} \rho(g) \log \rho(g)$ is the Shannon entropy of $\rho$.

\chapter{Proof of McAllester's PAC Bayesian bound} \label{app:McA}

McAllester's bound \eqref{eq:McA} (p.\pageref{eq:McA}) states that
with probability at least $1-\eps$, for any $\rho \in \M$, we have
  \beglab{mca1}
  \E_{g\sim\rho} R(g)-\E_{g\sim\rho} r(g) 
    \le \sqrt{\frac{K(\rho,\pi)+\log(2n)+\log(\eps^{-1})}{2n-1}} .
  \endlab
Here is a short proof of this statement that essentially follows the one proposed by Seeger.

Let us first recall that a real-valued random variable $V$ such that $\E e^V \le 1$
satisfies: for any $\eps>0$, with probability at least $1-\eps$, we have
  $V\le \log(\eps^{-1}).$
So to prove \eqref{mca1}, we only need to check that the random variable 
  $$V=\sup_{\rho} \Big\{ (2n-1)\big[ \max\big(\E_{\rho(df)} R(f) - \E_{\rho(df)} r(f),0\big)\big]^2  - K(\rho,\pi) - \log(4n) \Big\}$$
satisfies $\E e^V\le 1$.

From Jensen's inequality applied to the convex function $x\mapsto \big[\max(x,0)\big]^2$ and the Legendre transform of the KL divergence \eqref{eq:klb}, we have
  \begin{align*}
  V & \le \sup_{\rho} \Big\{ (2n-1) \E_{\rho(df)} [ \max\big(R(f) - r(f),0\big) ]^2 - K(\rho,\pi) - \log(2n) \Big\} \\
  & = - \log(2n) + \log \E_{\pi(df)} e^{(2n-1)[ \max(R(f) - r(f),0) ]^2},
  \end{align*}
hence
\begin{align*}
\E e^V & \le \frac{1}{2n} \E \E_{\pi(df)} e^{(2n-1)[ \max(R(f) - r(f),0) ]^2}\\
& = \frac{1}{2n} \E_{\pi(df)} \Big( 1 + \E \big\{ e^{(2n-1)[ \max(R(f) - r(f),0) ]^2}-1\big\} \Big) \qquad\text{from Fubini's theorem}\\
& = \frac{1}{2n} \E_{\pi(df)} \bigg( 1 + \int_0^{+\infty} \P(e^{(2n-1)[ \max(R(f) - r(f),0) ]^2} - 1 > t) dt\bigg)\\ 
& = \frac{1}{2n} \E_{\pi(df)} \Bigg( 1 + \int_0^{+\infty} \P\bigg( R(f) - r(f) > \sqrt\frac{\log(t+1)}{2n-1}\bigg) dt\Bigg)\\
& \le \frac{1}{2n} \E_{\pi(df)} \bigg( 1 + \int_0^{+\infty} e^{-2n \frac{\log(t+1)}{2n-1}} dt\bigg)\qquad\qquad\text{from Hoeffding's inequality}\\
& = \frac{1}{2n} \E_{\pi(df)} \bigg( 1 + \int_0^{+\infty} (t+1)^{-\frac{2n}{2n-1}} dt\bigg)\\
& = 1,
\end{align*}
which ends the proof.

\chapter{Proof of Seeger's PAC Bayesian bound} \label{app:S}

Here we sketch the proof of \eqref{eq:S} (p.\pageref{eq:S}), which states that 
with probability at least $1-\eps$, for any $\rho \in \M$, we have
  \beglab{seeger}
  K(\E_{g\sim\rho} r(g)||\E_{g\sim\rho} R(g)) \le \frac{K(\rho,\pi)+\log(2\sqrt{n}\eps^{-1})}{n},
  \endlab
where $K(q||p)=K(\text{Be}(q),\text{Be}(p))$ with $\text{Be}(q)$ and $\text{Be}(p)$ denoting the Bernoulli distributions of parameter $q$ and $p$.
The proof follows the same line as the one of \eqref{eq:McA}.
We introduce 
$$V=\sup_{\rho} \Big\{ n K( \E_{\rho(df)} r(f)) || \E_{\rho(df)} R(f)) - K(\rho,\pi) - \log(2\sqrt{n}) \Big\},$$
and as in the previous proof, we only need to check that $\E e^V\le 1$.
This is done by using Jensen's inequality for the convex function $(q,p)\mapsto K(q||p)$ and using the Legendre transform of the KL divergence \eqref{eq:klb}.
We have 
  \begin{align*}
  \E e^V & \le \E e^{\sup_{\rho} \big\{ n \E_{\rho(df)}K( r(f) || R(f)) - K(\rho,\pi) - \log(2\sqrt{n}) \big\}}\\
  & = \frac1{2\sqrt{n}} \E \E_{\pi(df)} e^{n K( r(f)) || R(f)) }\\
  & = \frac1{2\sqrt{n}} \E_{\pi(df)} \sum_{k=0}^{n} \P(n r(f)=k) \Big(\frac{k}{nR(f)}\Big)^k \Big(\frac{n-k}{n[1-R(f)]}\Big)^{n-k}\\
  & = \frac1{2\sqrt{n}} \E_{\pi(df)} \sum_{k=0}^{n} \binom{n}{k} \Big(\frac{k}{n}\Big)^k \Big(\frac{n-k}{n}\Big)^{n-k}\\
  & \le 1,
  \end{align*}  
where the last inequality is obtained from computations using Stirling's approximation.

The same procedure can be used to prove the other PAC-Bayesian bounds of Chapter \ref{chap:PB}, Section \ref{sec:PBB}.
A similar way of approaching PAC-Bayesian theorems is given in \cite{Ger09}.

\chapter{Proof of the learning rate\\ of the progressive mixture rule} \label{app:proofpm}

Here is the proof in a concise form under the boundedness assumptions of 
Theorem \ref{th:pim} that the expected excess risk of the progressive mixture rule
is upper bounded by $\frac{\log d}{\lam(n+1)}$ for $\lam\ge \frac18$. The condition on $\lam$
guarantees that for any $y\in[-1,1]$, the function $y'\mapsto e^{-\lam (y-y')^2}$
is concave on $[-1,1]$. Thus we can write
  \begin{align}
  & \E R\bigg(\frac1{n+1}\sum_{i=0}^n \expec{g}{\pi_{-\lam \Sigma_i}} g\bigg) \notag\\
  \le & \frac1{n+1}\sum_{i=0}^n \E R\big(\expec{g}{\pi_{-\lam \Sigma_i}} g\big) \label{eq:p1}\\
  = & \frac1{n+1}\sum_{i=0}^n \E_{Z_1^{i+1}} [Y_{i+1}-\expec{g}{\pi_{-\lam \Sigma_i}}g(X_{i+1})]^2 \label{eq:p2}\\
  = & \frac1{n+1} \E_{Z_1^{n+1}} \sum_{i=0}^n [Y_{i+1}-\expec{g}{\pi_{-\lam \Sigma_i}}g(X_{i+1})]^2 \label{eq:pb}\\
  \le & \frac1{n+1} \E_{Z_1^{n+1}} \sum_{i=0}^n \bigg\{ -\frac1\lam \log \expec{g}{\pi_{-\lam \Sigma_i}} 
    e^{-\lam[Y_{i+1}-g(X_{i+1})]^2} \bigg\} \label{eq:p3}\\
  = & \frac1{\lam(n+1)} \E_{Z_1^{n+1}} \sum_{i=0}^n \log\bigg(\frac{\expec{g}{\pi} e^{-\lam \Sigma_i(g)}}
    {\expec{g}{\pi} e^{-\lam \Sigma_{i+1}(g)}}\bigg) \notag\\ 
  = & - \frac1{\lam(n+1)} \E_{Z_1^{n+1}} \log \expec{g}{\pi} e^{-\lam \Sigma_{n+1}(g)} \label{eq:p4}\\ 
  \le & - \frac1{\lam(n+1)} \E_{Z_1^{n+1}} \log \bigg(\frac{e^{-\lam \Sigma_{n+1}(\gms)}}{d}\bigg) \notag\\ 
  = & R(\gms) + \frac{\log d}{\lam(n+1)},\notag
  \end{align}
where \eqref{eq:p1} comes from Jensen's inequality on the convex function $y'\mapsto(y-y')^2$,
\eqref{eq:p2} uses that the distribution $\pi_{-\lam \Sigma_i}$ depends only on $Z_1^i$,
\eqref{eq:p3} comes from Jensen's inequality on the concave function $y'\mapsto e^{-\lam (y-y')^2}$,
and \eqref{eq:p4} is the core of the proof and explains why PM is based on a Cesaro mean.
The steps \eqref{eq:p2} and \eqref{eq:pb} are exactly the two steps of the proof of Lemma \ref{lem:rand}.
Note that this analysis gives a result similar to the one in Theorem \ref{th:pim}, except that the
factor $2$ is replaced by $\frac1\lam\ge 8$. 
For the progressive indirect mixture rule, 
$\expec{g}{\pi_{-\lam \Sigma_i}}$ are replaced by $\hh_i$,
and the step \eqref{eq:p3} is still valid from the very definition \eqref{eq:pimdef} of $\hh_i$.

\chapter{The empirical Bernstein's inequality} \label{sec:eb}

The goal of the empirical Bernstein's inequality is to provide confidence bounds 
on the expectation of a distribution with bounded support, say in $[0,1]$, given a sample from it.
Let $U,U_1,U_2,\dots$ be independent and identically distributed random variables taking
their values in $[0,1]$. Let 
  $$
  \bar{U}_{t} = \frac1t\sum_{i=1}^{t}U_i \text{\qquad and\qquad} \bV_t = \frac1t\sum_{i=1}^{t}(U_i-\bar{U}_t)^2.
  $$
Here we prove the empirical Bernstein's inequality (Lemma \ref{le:empber}, p.\pageref{le:empber}), which states that for 
any $\eps>0$, with probability at least $1-2\eps$, for any $t\in \{1,\dots,n\}$ and $\bl=\frac{n\log(\eps^{-1})}{t^2}$,
we have
   \beglab{eq:ebb}
   \bar{U}_{t}-\E U < \min\Bigg(\sqrt{2 \bl (\bV_t +\bl)}+\bl\Big(\frac13 +\sqrt{1- 3 \bV_t}\Big)\ ,\ \sqrt{\frac{\bl}2}\Bigg).
   \endlab
\begin{proof}
Let $\Lambda(\lam)=\log \E e^{\lam(U-\E U)}$ be the log-Laplace transform of the random variable $U-\E U$.
Let $S_t=\sum_{i=1}^t (U_i-\E U_i)$ with the convention $S_0=0$.
From Inequality (2.17) of \cite{Hoe63}, we have\footnote{This comes from a martingale argument due to Doob. For any $\lam>0$, 
the sequence $(e^{\lam S_t-t \Lambda(\lam)})_{t\ge0}$ is a martingale with respect to the filtration $\big(\sigma(U_1,\dots,U_t)\big)_{t\ge 0}$
since $\E (e^{\lam S_t-t \Lambda(\lam)}|U_1,\dots,U_{t-1})=e^{\lam S_{t-1}-(t-1) \Lambda(\lam)}$.
Introduce the stopping time $T=\min\big(n+1,\min\{t\in\N: S_t  \ge s\}\big)$. From the optional stopping theorem, for any $\lam>0$, we have
  $$1 = \E e^{\lam S_T - T \Lambda(\lam)} \ge \P(T\le n) e^{\lam s-n\Lambda(\lam)},$$
hence
  $$\P\big(\max_{1\le t \le n} S_t \ge s \big)=\P(T\le n) \le \und{\inf}{\lam>0} e^{-\lam s+n\Lambda(\lam)}.$$
}
  $$
  \P\big( \max_{1\le t \le n} S_t \ge s \big) \le \und{\inf}{\lam>0} e^{-\lam s+n\Lambda(\lam)}.
  $$ 
Let $V=\Var U$. Hoeffding's inequality and Bennett's inequality implies 
  $$\Lambda(\lam)\le\min\Big(\frac{\lam^2}8,(e^\lam-1-\lam) V \Big),$$
which by standard computations (see, e.g., Inequality (45) of \cite{AuMuSz09}) gives that for any $\eps>0$, with probability at least $1-\eps$,
    \beglab{eq:bez}
    \max_{1\le t \le n} S_t < \min\bigg( \sqrt{\frac{ n\logeps}{2}}, \sqrt{2nV\logeps}+{\frac{\logeps}{3}}\bigg).
    \endlab
Let $W=(U-\E U)^2$ and $W_i=(U_i-\E U_i)^2$ for $i\ge 1$.
Let $S'_t=\sum_{i=1}^t (-W_i+\E W_i)$ and $\Lambda'(\lam)=\log \E e^{\lam(-W+\E W)}$. As above, from Inequality (2.17) of \cite{Hoe63}, we have
  $$
  \P\big( \max_{1\le t \le n} S'_t \ge s \big) \le \und{\inf}{\lam>0} e^{-\lam s+n\Lambda'(\lam)}.
  $$
Now using that $e^{-u} \le 1-u+\frac{u^2}2$ for $u\ge 0$ and $\log(1+u)\le u$ from $u\ge -1$,
we have
  $\log \E e^{-\lam W} \le \log \E(1-\lam W+\frac{\lam^2W^2}{2})
  \le -\lam \E W+\frac{\lam^2}2\E(W^2)$,
hence $\Lambda'(\lam) \le \frac{\lam^2}2 \E(W^2)$.
Optimizing with respect to $\lam$ gives that for any $\eps>0$, with probability at least $1-\eps$,
    \beglab{eq:bea}
    \max_{1\le t \le n} S'_t < \sqrt{{2n\E(W^2)\logeps}}.
    \endlab
Now we use the following lemma to bound $\E(W^2)$.
\begin{lemma}
A random variable $U$ taking its values in $[0,1]$ satisfies
  \beglab{eq:funny}
  \E[(U-\E U)^4] \le V(1-3V),
  \endlab
where $V=\E[(U-\E U)^2]$ is the variance of $U$.
If $U$ admits a Bernoulli distribution, one can put an equality in \eqref{eq:funny}.
\end{lemma}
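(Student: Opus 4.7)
The plan is to exploit a symmetrization trick. Introduce an independent copy $U'$ of $U$ and set $Z = U - \E U$, $Z' = U' - \E U$, so that $Z - Z' = U - U'$. Since both $U$ and $U'$ lie in $[0,1]$, one has $Z - Z' \in [-1, 1]$, hence the pointwise inequality
\[
(Z - Z')^4 \le (Z - Z')^2.
\]
The strategy will then be to take expectations of both sides and unpack each side using independence together with $\E Z = \E Z' = 0$.

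For the right-hand side, independence immediately gives $\E[(Z-Z')^2] = 2 \E Z^2 = 2V$. For the left-hand side, a standard binomial expansion yields
\[
\E[(Z - Z')^4] = \E Z^4 + \E Z'^4 + 6 \, \E Z^2 \, \E Z'^2 - 4 \, \E Z^3 \, \E Z' - 4 \, \E Z \, \E Z'^3,
\]
and the two odd-order cross terms vanish because $\E Z = \E Z' = 0$, leaving $2 \E Z^4 + 6 V^2$. Combining, $2 \E Z^4 + 6 V^2 \le 2 V$, which rearranges exactly into $\E Z^4 \le V(1 - 3V)$.

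The tightness claim then comes for free: when $U$ is Bernoulli, $U - U' \in \{-1, 0, 1\}$, so $(Z - Z')^4 = (Z - Z')^2$ holds pointwise (since $0^4 = 0^2$ and $(\pm 1)^4 = (\pm 1)^2$), and every inequality in the chain becomes an equality. No step in the argument looks delicate; the only ``trick'' is to notice that symmetrizing places the supremum of $(Z - Z')^2$ at exactly $1$, which is precisely what produces the sharp constants in the bound $V(1 - 3V)$.
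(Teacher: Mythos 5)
Your proof is correct, and it takes a genuinely different route from the one in the paper. You symmetrize: introducing an independent copy $U'$ of $U$ and using the pointwise bound $(U-U')^4\le (U-U')^2$ (valid because $|U-U'|\le 1$), you expand both sides via independence and centering to get $2\,\E[(U-\E U)^4]+6V^2\le 2V$, which is exactly the claim. The paper instead writes the difference $\E[(U-\E U)^4]-V(1-3V)$ as the sum of $\E\big([U^3-U+\E U][U-\E U]\big)$ and $3\big([\E(U^2)]^2-\E(U)\E(U^3)\big)$ and invokes Chebyshev's association (FKG) inequality to argue both terms are nonpositive, pointing to an expansion of a lemma of Maurer and Pontil as an alternative, and treats the Bernoulli case by direct computation. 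Your argument buys several things. It is entirely self-contained: no association inequality, no moment log-convexity, no external lemma — the whole loss is localized in the single scalar inequality $x^4\le x^2$ on $[-1,1]$. It makes the equality case transparent: equality holds exactly when $|U-U'|\in\{0,1\}$ almost surely, which for a $[0,1]$-valued variable forces two-point support at distance one, i.e.\ the Bernoulli case. Finally, it sidesteps a delicate step in the paper's route: the first term of the paper's decomposition equals $\Cov(U^3-U,U)$, and since $u\mapsto u^3-u$ is not monotone on $[0,1]$ the association inequality does not apply to it directly (for instance, for $U$ uniform on $\{0.8,1\}$ that term is positive, even though the sum of the two terms is of course still nonpositive), so your symmetrization argument is not only different but arguably cleaner and more robust.
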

\begin{proof}
We have
  \begin{align*}
  \E[(U-\E U)^4] - V(1-3V) = \E\big([U^3-& U+\E(U)][U-\E(U)]\big)\\ & + 3 \big( [\E(U^2)]^2-\E(U)\E(U^3)\big).
  \end{align*}
From Chebyshev's association inequality (also referred to as the Fortuin-Kasteleyn-Ginibre inequality), both terms in the right-hand side are nonpositive.
An alternative proof consists in expanding the terms in Lemma 8 of \cite{Mau09} and noticing that this exactly gives \eqref{eq:funny}.
The result for Bernoulli distributions comes from direct computations.
\end{proof}

Combining the above lemma with \eqref{eq:bea}, we get that with probability at least $1-\eps$,
  \beglab{eq:beb}
  \max_{1\le t \le n} S'_t < \sqrt{2{n}V(1-3V)\logeps}.
  \endlab
We now work on the event $\calE$ of probability at least $1-2\eps$ on which both \eqref{eq:beb} and \eqref{eq:bez}
hold. The variance decomposition gives $\bV_t=\frac1t\sum_{i=1}^{t}(U_i-\bar{U}_t)^2=-(\E U-\bar{U}_t)^2+\frac1t\sum_{i=1}^{t} W_i$, hence
$S'_t=t(V-\bV_t)-t(\E U-\bar{U}_t)^2.$
For any $1\le t\le n$, we have 
  \beglab{eq:bed}
  \bar{U}_{t}-\E U < \min\bigg( \sqrt{\frac{\bl}{2}}, \sqrt{2V\bl}+{\frac{\bl}{3}}\bigg),
  \endlab
and
  \beglab{eq:bee}
  V-\bV_t < \sqrt{2V(1-3V)\bl}+(\bar{U}_t-\E U)^2
  \endlab
If $\bar{U}_t < \E U$, then \eqref{eq:ebb} is trivial.
If $\bV_t \ge V$, \eqref{eq:ebb} is a direct consequence of \eqref{eq:bed} (since $\frac43 - 3 \bV_t\ge\frac43 - \frac34>\frac13$).
Therefore, from now and on, we consider $\bar{U}_t \ge \E U$ and $\bV_t < V$.
Then \eqref{eq:bed} implies $(\bar{U}_t-\E U)^2\le \bl/2,$ and \eqref{eq:bee} leads to
$$\bV_t > V - \sqrt{2V(1-3\bV_t)\bl}-\bl/2 = \Bigg( \sqrt{V}-\sqrt{\frac{\bl(1-3\bV_t)}2} \Bigg)^2 - \frac{\bl(2-3\bV_t)}2,$$
hence
  $$\sqrt{V} < \sqrt{\bV_t+\frac{\bl(2-3\bV_t)}2}+\sqrt{\frac{\bl(1-3\bV_t)}2} \le \sqrt{\bV_t+\bl}+\sqrt{\frac{\bl(1-3\bV_t)}2}.$$
By plugging this inequality into \eqref{eq:bed}, we get \eqref{eq:bez}.
For the two-sided inequality \eqref{eq:eba}, one just needs to add the same inequality
as \eqref{eq:bed} for $-\bar{U}_t$. At the end, three maximal inequalities are used (corresponding to
$U_i$, $-U_i$ and $-W_i$), so that the result holding with probability at least $1-\eps$
contains $\log(3\eps^{-1})$ terms.
\end{proof}

\chapter{On Exploration-Exploitation with Exponential weights (EXP3)} \label{app:exptrois}

\section{The variants of EXP3}

\begin{figure}[th] 
\bookbox{
Parameters: $\eta\in(0,1/K]$ and $\gamma\in[0,1]$.\\
Let $p_1$ be the uniform distribution over $\{1,\hdots,K\}$.

\medskip\noindent
For each round $t=1,2,\ldots,$
\begin{itemize}
\item[(1)]
Draw an arm $I_t$ according to the probability distribution $p_t$.
\item[(2)]
Compute the estimated gain for each arm: 
  $$\tilde{g}_{i,t} = \left\{\begin{array}{ll}
  \frac{g_{i,t}}{p_{i,t}} \dsone_{I_t = i} & \text{for the reward-magnifying version of EXP3}\\
  1-\frac{1-g_{i,t}}{p_{i,t}} \dsone_{I_t = i} & \text{for the loss-magnifying version of EXP3}\\
  \frac{g_{i,t}}{p_{i,t}} \dsone_{I_t = i}+\frac{\be}{p_{i,t}} & \text{for the tracking version of EXP3}\\
  \frac{g_{i,t}(1+\be\frac{g_{i,t}}{p_{i,t}})}{p_{i,t}} \dsone_{I_t = i} & \text{for the tightly biased version of EXP3}
  \end{array}\right.
  $$
and update the estimated cumulative gain:
$\tilde{G}_{i,t} = \sum_{s=1}^t \tilde{g}_{i,s} $.
\item[(3)]
Compute the new probability distribution over the arms:
  $$p_{t+1} = \gamma p_1 + (1-\gamma) q_{t+1},$$
with
  $$q_{i,t+1} = \frac{\exp{\left(\eta \tilde{G}_{i,t}\right)}}{\sum_{k=1}^K \exp{\left(\eta \tilde{G}_{k,t}\right)}}.$$
\end{itemize}
}
\caption{EXP3 (Exploration-Exploitation with Exponential weights) for the adversarial bandit problem.}\label{fig:exptrois}
\end{figure}

There are several variants of EXP3. 
They differ by the way $g_{i,t}$ is estimated as shown in Figure~\ref{fig:exptrois}.
For deterministic adversaries, the loss-magnifying version of EXP3 has the advantage 
to provide the best known constant in front of the $\sqrt{nK\log K}$ term, that is $\sqrt{2}$ (note that our work
succeeds in removing the $\log K$ term but at the price of a larger numerical constant factor).
For deterministic adversaries, the reward-magnifying version of EXP3 (which is the one in 
the seminal paper of Auer, Cesa-Bianchi, Freund and Schapire \cite{ACFS03} for $\gamma=K\eta$) has the advantage that 
the factor $n$ in $\sqrt{nK\log K}$ can be replaced by $\max_{i=1,\dots,n} G_{i,n}$, where $G_{i,n}=\sum_{t=1}^n g_{i,t}$. 
The tracking version of EXP3 is the one proposed in Section 6.8 of \cite{CesLug06} (and the one presented in Section \ref{sec:track}).
It (slightly) overestimates the rewards since we have $\E_{I_t\sim p_t} \tg_{i,t} = g_{i,t}+\frac{\be}{p_{i,t}}$.
This idea was introduced in \cite{Aue02} for tracking the best expert. 
In \cite{AudBub10}, we have introduced the tightly biased version of EXP3 to achieve regret bounds depending on the performance of the optimal arm.
Contrarily to the reward-magnifying version of EXP3, these bounds hold for any adversary and 
high probability regret bounds are also obtained.

\section{Proof of the learning rate of the reward-magnifying EXP3} \label{sec:lexp}

Here we give an analysis of the reward-magnifying EXP3 (defined in Figure~\ref{fig:exptrois}),
which is an improvement (in terms of constant only) of the one in \cite[Section 3]{ACFS03}.

\begin{thm} \label{th:rew}
Let $\Gmax=\max_{i=1,\dots,K} G_{i,n}.$
For deterministic adversaries, if $4\eta K \le 5\gamma$, the expected 
regret of the reward-magnifying EXP3 satisfies
  $$\E R_n \le \frac{\log K}{\eta} + \gamma \Gmax.$$
In particular, if $\eta=\sqrt{ \frac{5\log K}{4n K}}$ and $\gamma=\min\Big(\sqrt{ \frac{4K\log K}{5n}},1\Big)$, we have
  $$\E R_n \le \sqrt{\frac{16}5n K\log(K)}.$$
\end{thm}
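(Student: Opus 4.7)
The plan is to run the classical weighted-majority potential argument, with the twist that the constants are driven by a tailored one-step exponential inequality matching the hypothesis $4\eta K\le 5\gamma$. I would introduce the unmixed weights $w_{i,t}=\exp(\eta \tilde G_{i,t-1})$ and the normalizer $W_t=\sum_i w_{i,t}$, so that $q_{i,t}=w_{i,t}/W_t$ with $W_1=K$. The mixing identity $p_{i,t}=\gamma/K+(1-\gamma)q_{i,t}$ (which holds because $p_1$ is uniform) gives both the useful algebraic identity $q_{i,t}=(p_{i,t}-\gamma/K)/(1-\gamma)$ and the lower bound $p_{i,t}\ge \gamma/K$, so that $\eta\tilde g_{i,t}\in[0,\eta K/\gamma]\subset[0,5/4]$.

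The calculus lemma to verify is the asymmetric inequality $e^x\le 1+x+\tfrac{4}{5} x^2$ for $x\in[0,5/4]$; it holds by an endpoint check at $x=5/4$ together with a short monotonicity argument on the difference, and this is the unique place where the constant $4/5$ (equivalently the hypothesis $4\eta K\le 5\gamma$) enters. Applying it term-by-term inside $W_{t+1}/W_t=\sum_i q_{i,t}e^{\eta \tilde g_{i,t}}$, then using $\log(1+u)\le u$ and telescoping over $t=1,\dots,n$, the lower bound $\log(W_{n+1}/W_1)\ge \eta \tilde G_{j^\ast,n}-\log K$, with $j^\ast\in\argmax_i G_{i,n}$, gives after dividing by $\eta$
\[
\tilde G_{j^\ast,n}-\frac{\log K}{\eta}\ \le\ \sum_{t=1}^n\sum_{i=1}^K q_{i,t}\tilde g_{i,t}+\frac{4}{5}\eta\sum_{t=1}^n\sum_{i=1}^K q_{i,t}\tilde g_{i,t}^2.
\]

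The key, and delicate, step is to take expectation and rewrite both sums through the exact identity $q_{i,t}=(p_{i,t}-\gamma/K)/(1-\gamma)$ rather than the cruder $q_{i,t}\le p_{i,t}/(1-\gamma)$. Since for a deterministic adversary $\E\tilde g_{i,t}=g_{i,t}$ and $\E\tilde g_{i,t}^2=g_{i,t}^2/p_{i,t}$, setting $S=\sum_{i=1}^K G_{i,n}$ I would obtain
\[
\E\sum_{t,i}q_{i,t}g_{i,t}=\frac{\E\sum_t g_{I_t,t}-(\gamma/K)S}{1-\gamma},\qquad \E\sum_{t,i}q_{i,t}\frac{g_{i,t}^2}{p_{i,t}}\ \le\ \frac{S}{1-\gamma},
\]
where the last bound uses $g_{i,t}^2\le g_{i,t}$ and drops the nonpositive $-\frac{\gamma}{K(1-\gamma)}\sum g_{i,t}^2/p_{i,t}$ that comes out of the substitution. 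Plugging these into the previous display, the coefficient of $S$ collects to $((4/5)\eta-\gamma/K)/(1-\gamma)$, which is nonpositive exactly under the hypothesis $4\eta K\le 5\gamma$. Discarding this term and multiplying by $1-\gamma$ yields $(1-\gamma)\Gmax\le (1-\gamma)\log(K)/\eta+\E\sum_t g_{I_t,t}$, i.e.\ $\E R_n\le \gamma\Gmax+(1-\gamma)\log(K)/\eta\le \gamma\Gmax+\log(K)/\eta$.

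The second assertion is a one-line substitution: with $\eta=\sqrt{5\log K/(4nK)}$ and $\gamma=\sqrt{4K\log K/(5n)}$ one checks $4\eta K=5\gamma$ and $\log(K)/\eta=\gamma n=\sqrt{4nK\log K/5}$, so bounding $\Gmax\le n$ gives $\E R_n\le 2\sqrt{4nK\log K/5}=\sqrt{16nK\log(K)/5}$.
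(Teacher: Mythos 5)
Your proof is correct and follows essentially the same route as the paper's: the same second-order exponential inequality (your explicit constant $4/5$ is exactly the paper's condition $\Psi(\eta K/\gamma)\le \gamma/(\eta K)$ under $4\eta K\le 5\gamma$), the same exact substitution $q_{i,t}=(p_{i,t}-\gamma/K)/(1-\gamma)$, and the same cancellation of the second-moment term against the exploration bonus. The only cosmetic difference is that you extract the best arm from the potential pathwise and then take expectations, whereas the paper keeps $\frac{1-\gamma}{\eta}\log \E_{i\sim p_1} e^{\eta \tilde{G}_{i,n}}$ as a pathwise lower bound on the cumulative gain, valid for any adversary, and invokes determinism only at the very end via a Jensen-type inequality that moves the expectation inside the exponential.
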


\begin{proof}
The condition $4\eta K \le 5\gamma$ is put to guarantee that 
$\Psi\big(\frac{\eta K}\gamma\big) \le \frac\gamma{\eta K}$,
where $\Psi: u\mapsto \frac{e^u-1-u}{u^2}$ is an increasing function.
For any adversary, we have
  \begin{align*}
  \sum_{t=1}^n g_{I_t,t} & = \sum_{t=1}^n \E_{k\sim p_t} \tg_{k,t}\\
    & = \frac{1-\gamma}\eta \sum_{t=1}^n \bigg(\log \E_{i\sim q_t} e^{\eta \tg_{i,t}} 
      - \log\big[e^{-\frac{\eta}{1-\gamma} \E_{k\sim p_t} \tg_{k,t}} \E_{i\sim q_t} e^{\eta \tg_{i,t}}\big] \bigg) \\
    & = \frac{1-\gamma}\eta \bigg(S-\sum_{t=1}^n \log(D_t) \bigg),
  \end{align*}
where
  $$
  S=\sum_{t=1}^n \log \E_{i\sim q_t} e^{\eta \tg_{i,t}}
  =\sum_{t=1}^n \log\bigg( \frac{\E_{i\sim p_1} e^{\eta \tG_{i,t}}}{\E_{i\sim p_1} e^{\eta \tG_{i,t-1}}} \bigg)
  =\log \E_{i\sim p_1} e^{\eta \tG_{i,n}}
  $$
and
  \begin{align}
  D_t& =e^{-\frac{\eta}{1-\gamma} \E_{k\sim p_t} \tg_{k,t}} \E_{i\sim q_t} e^{\eta \tg_{i,t}} \notag\\
  & \le e^{-\frac{\eta}{1-\gamma} \E_{k\sim p_t} \tg_{k,t}} \E_{i\sim q_t} 
    \bigg( 1+\eta \tg_{i,t}+\Psi\Big(\frac{\eta K}\gamma\Big)\eta^2 \tg_{i,t}^2\bigg) \label{c2}\\
  & = e^{-\frac{\eta}{1-\gamma} \E_{k\sim p_t} \tg_{k,t}}  
    \bigg( 1+\eta \frac{\E_{i\sim p_t} \tg_{i,t}-\gamma \E_{i\sim p_1} \tg_{i,t}}{1-\gamma}+\Psi\Big(\frac{\eta K}\gamma\Big)\eta^2 \E_{i\sim q_t}\tg_{i,t}^2\bigg) \notag\\
  & \le e^{-\frac{\eta}{1-\gamma} \E_{k\sim p_t} \tg_{k,t}}  
    \bigg( 1+\frac{\eta}{1-\gamma} \E_{i\sim p_t} \tg_{i,t}-\frac{\eta\gamma}{1-\gamma} \E_{i\sim p_1} \tg_{i,t}
      +\frac{\Psi(\frac{\eta K}\gamma)\eta^2 K}{1-\gamma} \E_{i\sim p_1}\tg_{i,t}\bigg) \label{c3}\\
  & \le e^{-\frac{\eta}{1-\gamma} \E_{k\sim p_t} \tg_{k,t}}  
    \bigg( 1+\frac{\eta}{1-\gamma} \E_{i\sim p_t} \tg_{i,t}\bigg) \label{c4}\\
  & \le 1.\notag
  \end{align}
To get \eqref{c2}, we used that $\Psi$ is an increasing function and that 
  $\eta\tg_{i,t} \le \frac\eta{p_{i,t}} \le \frac{\eta K}\gamma$.
For \eqref{c3}, we used $(1-\gamma)\E_{i\sim q_t} \tg_{i,t}^2\le\E_{i\sim p_t} \tg_{i,t}^2=\frac{g_{I_t,t}^2}{p_{I_t,t}}\le \sum_{i=1}^K \tg_{i,t}=K\E_{i\sim p_1} \tg_{i,t}$.
For \eqref{c4}, we used $\eta K\Psi\big(\frac{\eta K}\gamma\big) \le \gamma$.
We have thus proved 
  \beglab{eq:eee}
  \sum_{t=1}^n g_{I_t,t} \ge \frac{1-\gamma}\eta \log \E_{i\sim p_1} e^{\eta \tG_{i,n}}
  \endlab
For a deterministic adversary, we have
  $\E \tG_{i,n} = \E G_{i,n} = G_{i,n}$, so that
  \begin{align}
  \E \sum_{t=1}^n g_{I_t,t} & \ge \frac{1-\gamma}\eta \E \log \E_{i\sim p_1} e^{\eta \tG_{i,n}}\notag\\
  & \ge \frac{1-\gamma}\eta \log \E_{i\sim p_1} e^{\eta \E \tG_{i,n}} \label{eq:cc1}\\
  & = \frac{1-\gamma}\eta \log \E_{i\sim p_1} e^{\eta G_{i,n}} \ge - \frac{(1-\gamma)\log K}{\eta} + (1-\gamma) \max_{i=1,\dots,K} G_{i,n},\notag
  \end{align}
where Inequality \eqref{eq:cc1} which moves the expectation sign inside the exponential 
can be viewed as an infinite dimensional Jensen's inequality (see Lemma 3.2 of \cite{Aud09a}). For a deterministic adversary,
we have proved 
  \begin{align*}
  \E R_n=\max_{i=1,\dots,K} G_{i,n}-\E \sum_{t=1}^n g_{I_t,t} \le \frac{(1-\gamma)\log K}{\eta} + \gamma \max_{i=1,\dots,K} G_{i,n},
  \end{align*}
hence the first claimed result.

 The second result is trivial when $\sqrt{\fracr{4K\log K}{5n}}\ge 1$ since the upper bound is then larger than $n$.
Otherwise, we have $\gamma=\sqrt{\fracr{4K\log K}{5n}}<1$ and $4\eta K = 5\gamma$ so that the result follows from the first one.
\end{proof}


\chapter{Experimental results for the min-max truncated estimator defined in Section \ref{sec:minmax}} \label{app:exper}

In Section \ref{sec:noise}, we detail the different kinds of noises we work with.
Then, Sections \ref{sec:expind}, \ref{sec:hcc} and \ref{sec:ts} describe 
the three types of functional relationships between the input, the output and the noise involved
in our experiments. A motivation for choosing these input-output distributions was the ability to compute
exactly the excess risk, and thus to compare easily estimators.
Section \ref{sec:exper} presents the experimental results. 

\section{Noise distributions} \label{sec:noise}

In our experiments, we consider different types of noise that are centered and with unit variance:
\begin{itemize}
\item the standard Gaussian noise: $W \sim \cN(0,1)$,
\item a heavy-tailed noise defined by: 
  $W=\sign(V)/|V|^{1/q}$, 
with $V \sim \cN(0,1)$ a standard Gaussian random variable and $q=2.01$ (the 
real number $q$ is taken strictly larger than $2$ as for $q=2$, the random variable $W$ would not admit
a finite second moment).  
\item an asymmetric heavy-tailed noise defined by: 
  $$
  W=\left\{\begin{array}{lll} 
  |V|^{-1/q} & \text{if } V>0, \\
  -\frac{q}{q-1}  & \text{otherwise},
  \end{array}\right.
  $$
with $q=2.01$
with $V \sim \cN(0,1)$ a standard Gaussian random variable.
\item a mixture of a Dirac random variable with a low-variance Gaussian random variable defined by: 
with probability $p$, $W=\sqrt{\fracl{1-\rho}p}$, and with probability $1-p$, $W$ is drawn from 
  $$\N\bigg(-\frac{\sqrt{p(1-\rho)}}{1-p},\frac{\rho}{1-p}-\frac{p(1-\rho)}{(1-p)^2}\bigg).$$ 
The parameter $\rho\in[p,1]$ characterizes the 
part of the variance of $W$ explained by the Gaussian part of the mixture.
Note that this noise admits exponential moments, but for $n$ of order $1/p$, the
Dirac part of the mixture generates low signal to noise points.
\end{itemize}

\section{Independent normalized covariates (INC$(n,d)$)} \label{sec:expind}
In INC$(n,d)$, the input-output pair is such that
  $$
  Y=\langle \th^* , X \rangle + \sigma W,
  $$
where the components of $X$ are independent standard normal distributions,
$\th^*=(10,\dots,10)^T \in\R^d$, and $\sigma=10$.

\section{Highly correlated covariates (HCC$(n,d)$)} \label{sec:hcc}
In HCC$(n,d)$, the input-output pair is such that
  $$
  Y=\langle \th^* , X \rangle + \sigma W,
  $$
where $X$ is a multivariate centered normal Gaussian with
covariance matrix $Q$ obtained by drawing a $(d,d)$-matrix $A$ of uniform random variables 
in $[0,1]$ and by computing $Q=A A^T$,
$\th^*=(10,\dots,10)^T \in\R^d$, and $\sigma=10$.
So the only difference with the setting of Section \ref{sec:expind} is 
the correlation between the covariates.

\section{Trigonometric series (TS$(n,d)$)} \label{sec:ts}
Let $X$ be a uniform random variable on $[0,1]$.
Let $d$ be an even number.
Let 
  $$\vp(X)= \big(\cos(2 \pi X),\dots,\cos(d \pi X),\sin(2 \pi X),\dots,\sin(d \pi X)\big)^T.$$
In TS$(n,d)$, the input-output pair is such that
  $$
  Y=20 X^2-10 X-\frac53+\sigma W,
  $$
with $\sigma=10$.
One can check that this implies
  $$\theta^*=\bigg(\frac{20}{\pi^2},\dots,\frac{20}{\pi^2 (\frac{d}2)^2},
   -\frac{10}{\pi},\dots,-\frac{10}{\pi (\frac{d}2)}\bigg)^T \in\R^d.$$ 
  
\section{Results} \label{sec:exper}

Tables \ref{tab:b01} and \ref{tab:b04} give the results for the mixture noise.
Tables \ref{tab:a201}, \ref{tab:a-201} and \ref{tab:a0} provide the results for the heavy-tailed noise
and the standard Gaussian noise. Each line of the tables has been obtained after $1000$ generations of the training set.
These results show that the min-max truncated estimator $\hg$ is often equal to the ordinary least squares estimator $\hfols$,
while it ensures impressive consistent improvements when it differs from $\hfols$.
In this latter case, the number of points that are not considered in $\hg$, i.e. the number of points
with low signal to noise ratio, varies a lot from $1$ to $150$ and is often of order $30$. Note that not only the points
that we expect to be considered as outliers (i.e. very large output points) are erased, and
that these points seem to be taken out by local groups: see Figures 
\ref{fig:1} and \ref{fig:2} 
in which the erased points are marked by surrounding circles.

Besides, the heavier the noise tail is (and also the larger the variance of the noise is), the more often the truncation modifies the initial ordinary least squares estimator, and the more improvements we get from the min-max truncated estimator, which also becomes much more robust than the ordinary least squares estimator (see the confidence intervals in the tables).

\begin{table}[!ht]
\caption{Comparison of the min-max truncated estimator $\hg$
with the ordinary least squares estimator $\hfols$ for the mixture noise (see Section \ref{sec:noise})
with $\rho=0.1$ and $p=0.005$. In parenthesis, the $95\%$-confidence intervals for the estimated quantities.}
\label{tab:b01} 
\begin{center}
\scalebox{0.7}{\begin{tabular}{l|c|c|c|c|c|c|c}
\hline 
& \rotatebox{90}{nb of iterations } & \rotatebox{90}{nb of iter. with $R(\hg)\neq R(\hfols)$} & 
\rotatebox{90}{nb of iter. with $R(\hg)< R(\hfols)$} & \rotatebox{90}{$\E R(\hfols)-R(\gl)$}
& \rotatebox{90}{$\E R(\hg)-R(\gl)$} & \rotatebox{90}{$\E R[(\hfols)|\hg\neq\hfols]-R(\gl)$} 
& \rotatebox{90}{$\E [R(\hg)|\hg\neq\hfols]-R(\gl)$}\\ 
\hline 
INC(n=200,d=1)& $1000$& $419$& $405$& $ 0.567 (\pm  0.083)$& $ 0.178 (\pm  0.025)$& $ 1.191 (\pm  0.178)$& $ 0.262 (\pm  0.052)$\\
INC(n=200,d=2)& $1000$& $506$& $498$& $ 1.055 (\pm  0.112)$& $ 0.271 (\pm  0.030)$& $ 1.884 (\pm  0.193)$& $ 0.334 (\pm  0.050)$\\
HCC(n=200,d=2)& $1000$& $502$& $494$& $ 1.045 (\pm  0.103)$& $ 0.267 (\pm  0.024)$& $ 1.866 (\pm  0.174)$& $ 0.316 (\pm  0.032)$\\
TS(n=200,d=2)& $1000$& $561$& $554$& $ 1.069 (\pm  0.089)$& $ 0.310 (\pm  0.027)$& $ 1.720 (\pm  0.132)$& $ 0.367 (\pm  0.036)$\\
INC(n=1000,d=2)& $1000$& $402$& $392$& $ 0.204 (\pm  0.015)$& $ 0.109 (\pm  0.008)$& $ 0.316 (\pm  0.029)$& $ 0.081 (\pm  0.011)$\\
INC(n=1000,d=10)& $1000$& $950$& $946$& $ 1.030 (\pm  0.041)$& $ 0.228 (\pm  0.016)$& $ 1.051 (\pm  0.042)$& $ 0.207 (\pm  0.014)$\\
HCC(n=1000,d=10)& $1000$& $942$& $942$& $ 0.980 (\pm  0.038)$& $ 0.222 (\pm  0.015)$& $ 1.008 (\pm  0.039)$& $ 0.203 (\pm  0.015)$\\
TS(n=1000,d=10)& $1000$& $976$& $973$& $ 1.009 (\pm  0.037)$& $ 0.228 (\pm  0.017)$& $ 1.018 (\pm  0.038)$& $ 0.217 (\pm  0.016)$\\
INC(n=2000,d=2)& $1000$& $209$& $207$& $ 0.104 (\pm  0.007)$& $ 0.078 (\pm  0.005)$& $ 0.206 (\pm  0.021)$& $ 0.082 (\pm  0.012)$\\
HCC(n=2000,d=2)& $1000$& $184$& $183$& $ 0.099 (\pm  0.007)$& $ 0.076 (\pm  0.005)$& $ 0.196 (\pm  0.023)$& $ 0.070 (\pm  0.010)$\\
TS(n=2000,d=2)& $1000$& $172$& $171$& $ 0.101 (\pm  0.007)$& $ 0.080 (\pm  0.005)$& $ 0.206 (\pm  0.020)$& $ 0.083 (\pm  0.012)$\\
INC(n=2000,d=10)& $1000$& $669$& $669$& $ 0.510 (\pm  0.018)$& $ 0.206 (\pm  0.012)$& $ 0.572 (\pm  0.023)$& $ 0.117 (\pm  0.009)$\\
HCC(n=2000,d=10)& $1000$& $669$& $669$& $ 0.499 (\pm  0.018)$& $ 0.207 (\pm  0.013)$& $ 0.561 (\pm  0.023)$& $ 0.125 (\pm  0.011)$\\
TS(n=2000,d=10)& $1000$& $754$& $753$& $ 0.516 (\pm  0.018)$& $ 0.195 (\pm  0.013)$& $ 0.558 (\pm  0.022)$& $ 0.131 (\pm  0.011)$\\

\hline 
\end{tabular}}
\end{center}
\end{table}

\begin{table}[!ht]
\caption{Comparison of the min-max truncated estimator $\hg$
with the ordinary least squares estimator $\hfols$ for the mixture noise (see Section \ref{sec:noise})
with $\rho=0.4$ and $p=0.005$. In parenthesis, the $95\%$-confidence intervals for the estimated quantities.} \label{tab:b04}
{
\begin{center}\scalebox{0.7}{\begin{tabular}{l|c|c|c|c|c|c|c}
\hline 
& \rotatebox{90}{nb of iterations } & \rotatebox{90}{nb of iter. with $R(\hg)\neq R(\hfols)$} & 
\rotatebox{90}{nb of iter. with $R(\hg)< R(\hfols)$} & \rotatebox{90}{$\E R(\hfols)-R(\gl)$}
& \rotatebox{90}{$\E R(\hg)-R(\gl)$} & \rotatebox{90}{$\E R[(\hfols)|\hg\neq\hfols]-R(\gl)$} 
& \rotatebox{90}{$\E [R(\hg)|\hg\neq\hfols]-R(\gl)$}\\
\hline 
INC(n=200,d=1)& $1000$& $234$& $211$& $ 0.551 (\pm  0.063)$& $ 0.409 (\pm  0.042)$& $ 1.211 (\pm  0.210)$& $ 0.606 (\pm  0.110)$\\
INC(n=200,d=2)& $1000$& $195$& $186$& $ 1.046 (\pm  0.088)$& $ 0.788 (\pm  0.061)$& $ 2.174 (\pm  0.293)$& $ 0.848 (\pm  0.118)$\\
HCC(n=200,d=2)& $1000$& $222$& $215$& $ 1.028 (\pm  0.079)$& $ 0.748 (\pm  0.051)$& $ 2.157 (\pm  0.243)$& $ 0.897 (\pm  0.112)$\\
TS(n=200,d=2)& $1000$& $291$& $268$& $ 1.053 (\pm  0.079)$& $ 0.805 (\pm  0.058)$& $ 1.701 (\pm  0.186)$& $ 0.851 (\pm  0.093)$\\
INC(n=1000,d=2)& $1000$& $127$& $117$& $ 0.201 (\pm  0.013)$& $ 0.181 (\pm  0.012)$& $ 0.366 (\pm  0.053)$& $ 0.207 (\pm  0.035)$\\
INC(n=1000,d=10)& $1000$& $262$& $249$& $ 1.023 (\pm  0.035)$& $ 0.902 (\pm  0.030)$& $ 1.238 (\pm  0.081)$& $ 0.777 (\pm  0.054)$\\
HCC(n=1000,d=10)& $1000$& $201$& $192$& $ 0.991 (\pm  0.033)$& $ 0.902 (\pm  0.031)$& $ 1.235 (\pm  0.088)$& $ 0.790 (\pm  0.067)$\\
TS(n=1000,d=10)& $1000$& $171$& $162$& $ 1.009 (\pm  0.033)$& $ 0.951 (\pm  0.031)$& $ 1.166 (\pm  0.098)$& $ 0.825 (\pm  0.071)$\\
INC(n=2000,d=2)& $1000$& $80$& $77$& $ 0.105 (\pm  0.007)$& $ 0.099 (\pm  0.006)$& $ 0.214 (\pm  0.042)$& $ 0.135 (\pm  0.029)$\\
HCC(n=2000,d=2)& $1000$& $44$& $42$& $ 0.102 (\pm  0.007)$& $ 0.099 (\pm  0.007)$& $ 0.187 (\pm  0.050)$& $ 0.120 (\pm  0.034)$\\
TS(n=2000,d=2)& $1000$& $47$& $47$& $ 0.101 (\pm  0.007)$& $ 0.099 (\pm  0.007)$& $ 0.147 (\pm  0.032)$& $ 0.103 (\pm  0.026)$\\
INC(n=2000,d=10)& $1000$& $116$& $113$& $ 0.511 (\pm  0.016)$& $ 0.491 (\pm  0.016)$& $ 0.611 (\pm  0.052)$& $ 0.437 (\pm  0.042)$\\
HCC(n=2000,d=10)& $1000$& $110$& $105$& $ 0.500 (\pm  0.016)$& $ 0.481 (\pm  0.015)$& $ 0.602 (\pm  0.056)$& $ 0.430 (\pm  0.044)$\\
TS(n=2000,d=10)& $1000$& $101$& $98$& $ 0.511 (\pm  0.016)$& $ 0.499 (\pm  0.016)$& $ 0.601 (\pm  0.054)$& $ 0.486 (\pm  0.051)$\\

\hline 
\end{tabular}}
\end{center}}
\end{table}

\begin{table}[!ht]
\caption{Comparison of the min-max truncated estimator $\hg$
with the ordinary least squares estimator $\hfols$ with the heavy-tailed noise (see Section \ref{sec:noise}).}
\label{tab:a201} {
\begin{center}\scalebox{0.75}{\begin{tabular}{l|c|c|c|c|c|c|c}
\hline 
& \rotatebox{90}{nb of iterations } & \rotatebox{90}{nb of iter. with $R(\hg)\neq R(\hfols)$} & 
\rotatebox{90}{nb of iter. with $R(\hg)< R(\hfols)$} & \rotatebox{90}{$\E R(\hfols)-R(\gl)$}
& \rotatebox{90}{$\E R(\hg)-R(\gl)$} & \rotatebox{90}{$\E R[(\hfols)|\hg\neq\hfols]-R(\gl)$} 
& \rotatebox{90}{$\E [R(\hg)|\hg\neq\hfols]-R(\gl)$}\\
\hline 
INC(n=200,d=1)& $1000$& $163$& $145$&  $ 7.72   (\pm  3.46)$& $ 3.92 (\pm  0.409)$& $30.52 (\pm 20.8)$& $ 7.20 (\pm  1.61)$\\
INC(n=200,d=2)& $1000$& $104$& $98$&   $22.69    (\pm 23.14)$& $19.18 (\pm 23.09)$&  $45.36 (\pm 14.1)$& $11.63 (\pm  2.19)$\\
HCC(n=200,d=2)& $1000$& $120$& $117$&  $18.16   (\pm 12.68)$& $ 8.07 (\pm  0.718)$& $99.39 (\pm 105)$& $ 15.34 (\pm  4.41)$\\
TS(n=200,d=2)& $1000$& $110$& $105$&   $43.89    (\pm 63.79)$& $39.71 (\pm 63.76)$&  $48.55 (\pm 18.4)$& $10.59 (\pm  2.01)$\\
INC(n=1000,d=2)& $1000$& $104$& $100$& $ 3.98  (\pm  2.25)$& $ 1.78 (\pm  0.128)$& $23.18 (\pm 21.3)$& $ 2.03 (\pm  0.56)$\\
INC(n=1000,d=10)& $1000$& $253$& $242$&$16.36 (\pm  5.10)$& $ 7.90 (\pm  0.278)$& $41.25 (\pm 19.8)$& $ 7.81 (\pm  0.69)$\\
HCC(n=1000,d=10)& $1000$& $220$& $211$&$13.57 (\pm  1.93)$& $ 7.88 (\pm  0.255)$& $33.13 (\pm  8.2)$& $ 7.28 (\pm  0.59)$\\
TS(n=1000,d=10)& $1000$& $214$& $211$& $18.67  (\pm 11.62)$& $13.79 (\pm 11.52)$&  $30.34 (\pm  7.2)$& $ 7.53 (\pm  0.58)$\\
INC(n=2000,d=2)& $1000$& $113$& $103$& $ 1.56  (\pm  0.41)$& $ 0.89 (\pm  0.059)$& $ 6.74 (\pm  3.4)$& $ 0.86 (\pm  0.18)$\\
HCC(n=2000,d=2)& $1000$& $105$& $97$&  $ 1.66   (\pm  0.43)$& $ 0.95 (\pm  0.062)$& $ 7.87 (\pm  3.8)$& $ 1.13 (\pm  0.23)$\\
TS(n=2000,d=2)& $1000$& $101$& $95$&   $ 1.59    (\pm  0.64)$& $ 0.88 (\pm  0.058)$& $ 8.03 (\pm  6.2)$& $ 1.04 (\pm  0.22)$\\
INC(n=2000,d=10)& $1000$& $259$& $255$&$ 8.77 (\pm  4.02)$& $ 4.23 (\pm  0.154)$& $21.54 (\pm 15.4)$& $ 4.03 (\pm  0.39)$\\
HCC(n=2000,d=10)& $1000$& $250$& $242$&$ 6.98 (\pm  1.17)$& $ 4.13 (\pm  0.127)$& $15.35 (\pm  4.5)$& $ 3.94 (\pm  0.25)$\\
TS(n=2000,d=10)& $1000$& $238$& $233$& $ 8.49  (\pm  3.61)$& $ 5.95 (\pm  3.486)$& $14.82 (\pm  3.8)$& $ 4.17 (\pm  0.30)$\\

\hline 
\end{tabular}}
\end{center}}
\end{table}

\begin{table}[!ht]
\caption{Comparison of the min-max truncated estimator $\hg$
with the ordinary least squares estimator $\hfols$ with the asymmetric heavy-tailed noise (see Section \ref{sec:noise}).}
\label{tab:a-201} {
\begin{center}
\scalebox{0.72}{\begin{tabular}{l|c|c|c|c|c|c|c}
\hline 
& \rotatebox{90}{nb of iterations } & \rotatebox{90}{nb of iter. with $R(\hg)\neq R(\hfols)$} & 
\rotatebox{90}{nb of iter. with $R(\hg)< R(\hfols)$} & \rotatebox{90}{$\E R(\hfols)-R(\gl)$}
& \rotatebox{90}{$\E R(\hg)-R(\gl)$} & \rotatebox{90}{$\E R[(\hfols)|\hg\neq\hfols]-R(\gl)$} 
& \rotatebox{90}{$\E [R(\hg)|\hg\neq\hfols]-R(\gl)$}\\
\hline 
INC(n=200,d=1)& $1000$& $87$& $77$&    $ 5.49 (\pm  3.07)$& $ 3.00 (\pm  0.330)$& $35.44 (\pm 34.7)$&  $ 6.85 (\pm  2.48)$\\
INC(n=200,d=2)& $1000$& $70$& $66$&    $19.25 (\pm 23.23)$& $17.4  (\pm 23.2)$&   $37.95 (\pm 13.1)$&  $11.05 (\pm  2.87)$\\
HCC(n=200,d=2)& $1000$& $67$& $66$&    $ 7.19 (\pm  0.88)$& $ 5.81 (\pm  0.397)$& $31.52 (\pm 10.5)$&  $10.87 (\pm  2.64)$\\
TS(n=200,d=2)& $1000$& $76$& $68$&     $39.80 (\pm 64.09)$& $37.9  (\pm 64.1)$&   $34.28 (\pm 14.8)$&  $ 9.21 (\pm  2.05)$\\
INC(n=1000,d=2)& $1000$& $101$& $92$&  $ 2.81 (\pm  2.21)$& $ 1.31 (\pm  0.106)$& $16.76 (\pm 21.8)$&  $ 1.88 (\pm  0.69)$\\
INC(n=1000,d=10)& $1000$& $211$& $195$&$10.71 (\pm  4.53)$& $ 5.86 (\pm  0.222)$& $29.00 (\pm 21.3)$&  $ 6.03 (\pm  0.71)$\\
HCC(n=1000,d=10)& $1000$& $197$& $185$&$ 8.67 (\pm  1.16)$& $ 5.81 (\pm  0.177)$& $20.31 (\pm  5.59)$& $ 5.79 (\pm  0.43)$\\
TS(n=1000,d=10)& $1000$& $258$& $233$& $13.62 (\pm 11.27)$& $11.3  (\pm 11.2)$&   $14.68 (\pm  2.45)$& $ 5.60 (\pm  0.36)$\\
INC(n=2000,d=2)& $1000$& $106$& $92$&  $ 1.04 (\pm  0.37)$& $ 0.64 (\pm  0.042)$& $ 4.54 (\pm  3.45)$& $ 0.79 (\pm  0.16)$\\
HCC(n=2000,d=2)& $1000$& $99$& $90$&   $ 0.90 (\pm  0.11)$& $ 0.66 (\pm  0.042)$& $ 3.23 (\pm  0.93)$& $ 0.82 (\pm  0.16)$\\
TS(n=2000,d=2)& $1000$& $84$& $81$&    $ 1.11 (\pm  0.66)$& $ 0.60 (\pm  0.042)$& $ 6.80 (\pm  7.79)$& $ 0.69 (\pm  0.17)$\\
INC(n=2000,d=10)& $1000$& $238$& $222$&$ 6.32 (\pm  4.18)$& $ 3.07 (\pm  0.147)$& $16.84 (\pm 17.5)$&  $ 3.18 (\pm  0.51)$\\
HCC(n=2000,d=10)& $1000$& $221$& $203$&$ 4.49 (\pm  0.98)$& $ 2.98 (\pm  0.091)$& $ 9.76 (\pm  4.39)$& $ 2.93 (\pm  0.22)$\\
TS(n=2000,d=10)& $1000$& $412$& $350$& $ 5.93 (\pm  3.51)$& $ 4.59 (\pm  3.44)$&  $ 6.07 (\pm  1.76)$& $ 2.84 (\pm  0.16)$\\

\hline 
\end{tabular}}
\end{center}}
\end{table}

\begin{table}[!ht]
\caption{Comparison of the min-max truncated estimator $\hg$
with the ordinary least squares estimator $\hfols$ for standard Gaussian noise.}
\label{tab:a0} {
\begin{center}\scalebox{0.74}{\begin{tabular}{l|c|c|c|c|c|c|c}
\hline 
& \rotatebox{90}{nb of iter. } & \rotatebox{90}{nb of iter. with $R(\hg)\neq R(\hfols)$} & 
\rotatebox{90}{nb of iter. with $R(\hg)< R(\hfols)$} & \rotatebox{90}{$\E R(\hfols)-R(\gl)$}
& \rotatebox{90}{$\E R(\hg)-R(\gl)$} & \rotatebox{90}{$\E R[(\hfols)|\hg\neq\hfols]-R(\gl)$} 
& \rotatebox{90}{$\E [R(\hg)|\hg\neq\hfols]-R(\gl)$}\\
\hline 
INC(n=200,d=1)& $1000$& $20$& $8$& $ 0.541 (\pm  0.048)$& $ 0.541 (\pm  0.048)$& $ 0.401 (\pm  0.168)$& $ 0.397 (\pm  0.167)$\\
INC(n=200,d=2)& $1000$& $1$& $0$& $ 1.051 (\pm  0.067)$& $ 1.051 (\pm  0.067)$& $ 2.566 $& $ 2.757 $\\
HCC(n=200,d=2)& $1000$& $1$& $0$& $ 1.051 (\pm  0.067)$& $ 1.051 (\pm  0.067)$& $ 2.566 $& $ 2.757 $\\
TS(n=200,d=2)& $1000$& $0$& $0$& $ 1.068 (\pm  0.067)$& $ 1.068 (\pm  0.067)$& --& --\\
INC(n=1000,d=2)& $1000$& $0$& $0$& $ 0.203 (\pm  0.013)$& $ 0.203 (\pm  0.013)$& --& --\\
INC(n=1000,d=10)& $1000$& $0$& $0$& $ 1.023 (\pm  0.029)$& $ 1.023 (\pm  0.029)$& --& --\\
HCC(n=1000,d=10)& $1000$& $0$& $0$& $ 1.023 (\pm  0.029)$& $ 1.023 (\pm  0.029)$& --& --\\
TS(n=1000,d=10)& $1000$& $0$& $0$& $ 0.997 (\pm  0.028)$& $ 0.997 (\pm  0.028)$& --& --\\
INC(n=2000,d=2)& $1000$& $0$& $0$& $ 0.112 (\pm  0.007)$& $ 0.112 (\pm  0.007)$& --& --\\
HCC(n=2000,d=2)& $1000$& $0$& $0$& $ 0.112 (\pm  0.007)$& $ 0.112 (\pm  0.007)$& --& --\\
TS(n=2000,d=2)& $1000$& $0$& $0$& $ 0.098 (\pm  0.006)$& $ 0.098 (\pm  0.006)$& --& --\\
INC(n=2000,d=10)& $1000$& $0$& $0$& $ 0.517 (\pm  0.015)$& $ 0.517 (\pm  0.015)$& --& --\\
HCC(n=2000,d=10)& $1000$& $0$& $0$& $ 0.517 (\pm  0.015)$& $ 0.517 (\pm  0.015)$& --& --\\
TS(n=2000,d=10)& $1000$& $0$& $0$& $ 0.501 (\pm  0.015)$& $ 0.501 (\pm  0.015)$& --& --\\

\hline 
\end{tabular}}
\end{center}}
\end{table}\pagebreak

\begin{figure}[!ht] 
\caption{Surrounding points are the points of the training set generated several times from $TS(1000,10)$ (with the mixture noise with $p=0.005$ and $\rho=0.4$) that are not taken into account in the min-max truncated estimator (to the extent that the estimator would not change by removing simultaneously all these points). 
The min-max truncated estimator $x\mapsto\hf(x)$ appears in dash-dot line, while $x\mapsto\E(Y|X=x)$ is in solid line. In these six simulations, 
it outperforms the ordinary least squares estimator.} \label{fig:1}
\begin{center}
   \vspace*{-.2cm}\begin{minipage}{.49\linewidth}
      \hspace*{-0.8cm}\includegraphics[width=1.15\linewidth]{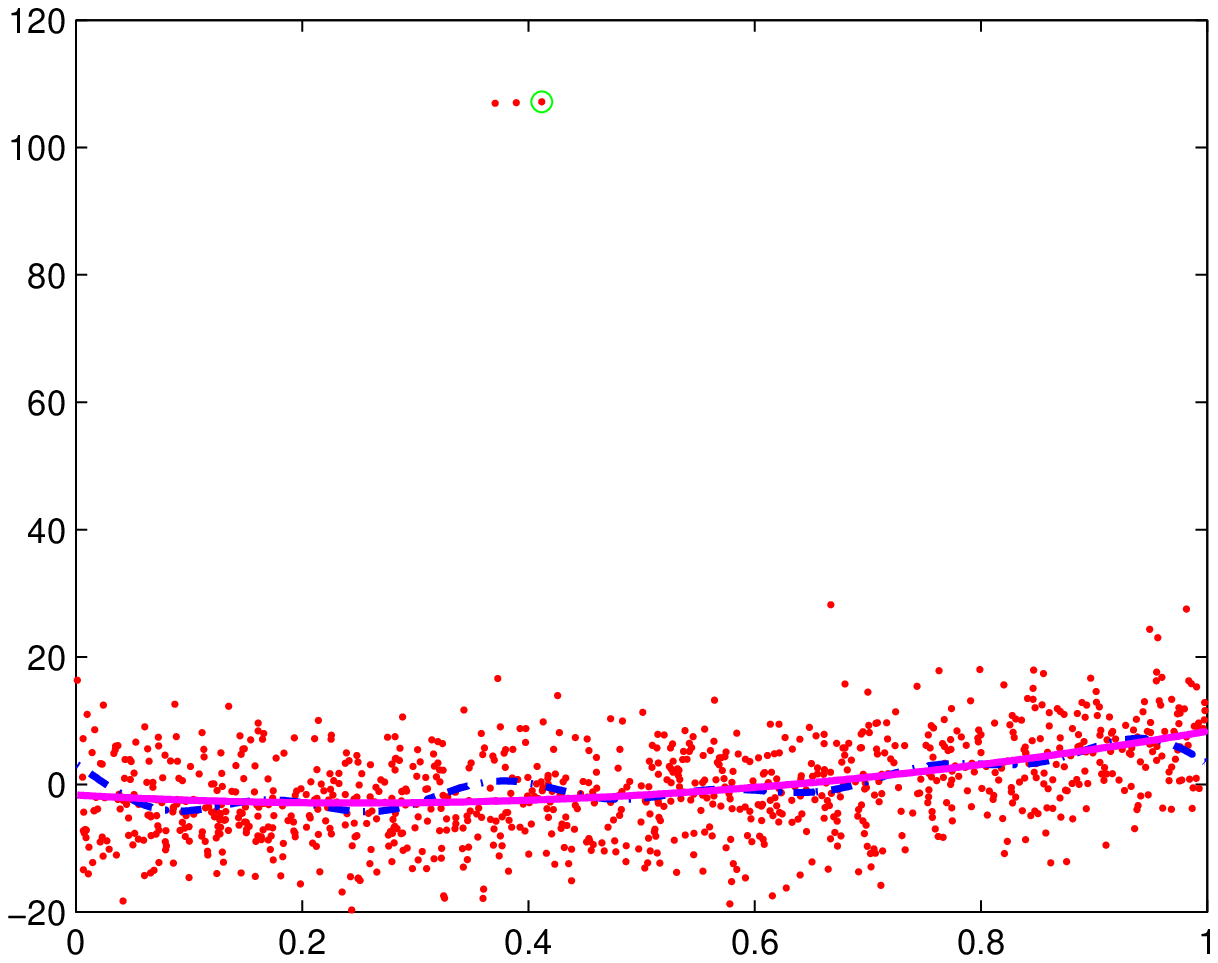}
   \end{minipage} 
   \hfill
   \begin{minipage}{0.49\linewidth}
      \hspace*{-0.4cm}\includegraphics[width=1.15\linewidth]{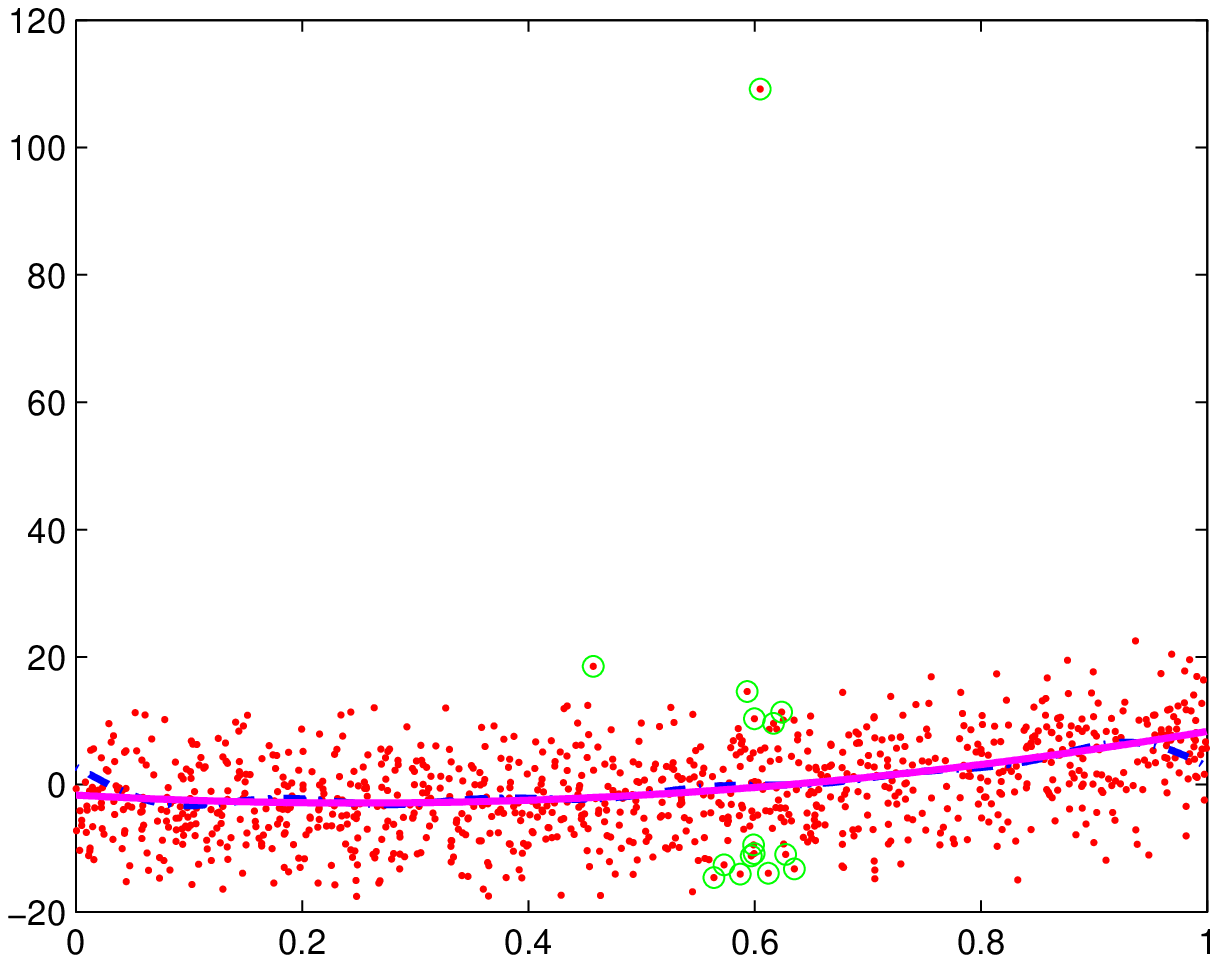}
   \end{minipage}
   \vspace*{-.2cm}\begin{minipage}{.49\linewidth}
      \hspace*{-0.8cm}\includegraphics[width=1.15\linewidth]{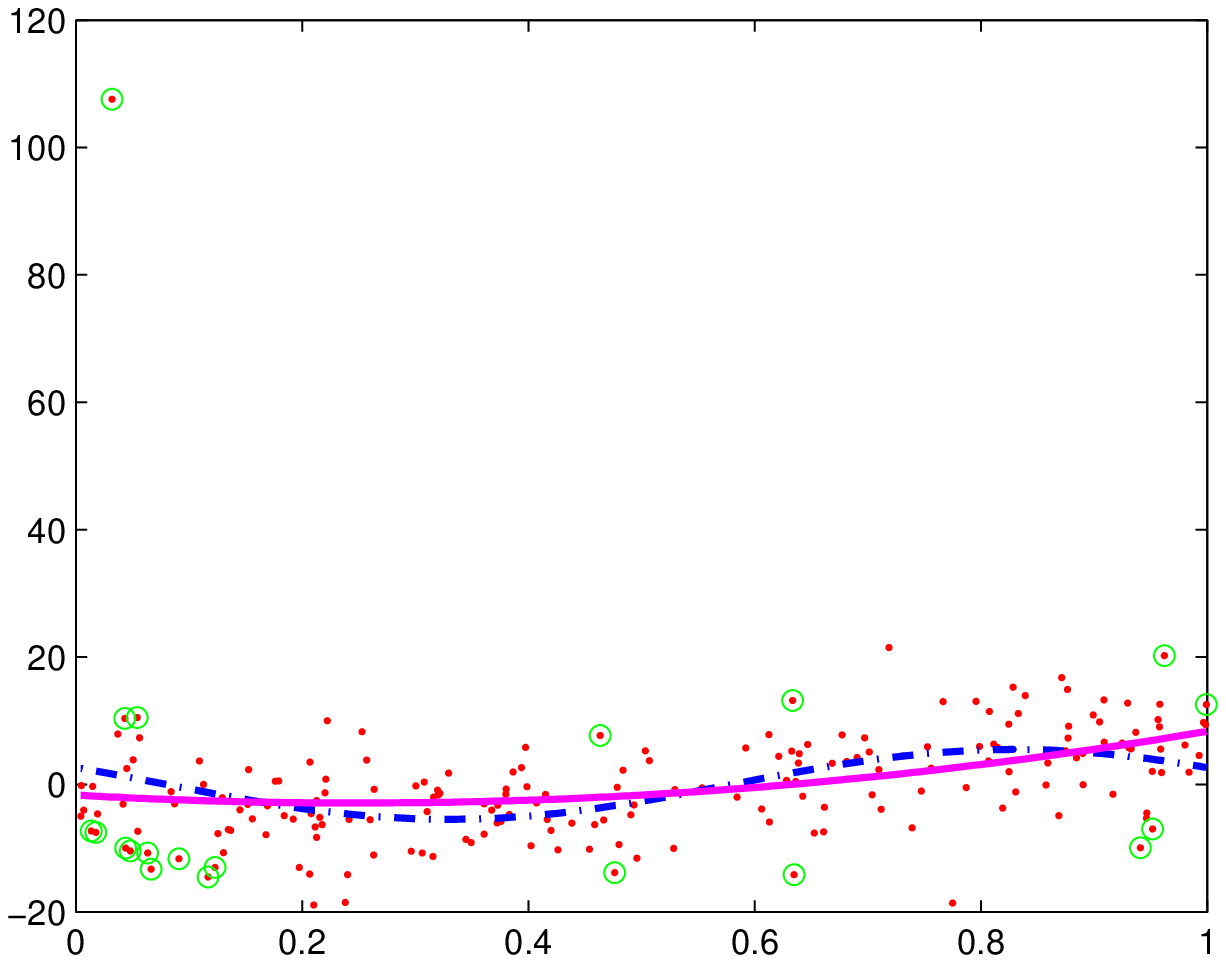}
   \end{minipage} 
   \hfill
   \begin{minipage}{0.49\linewidth}
      \hspace*{-0.4cm}\includegraphics[width=1.15\linewidth]{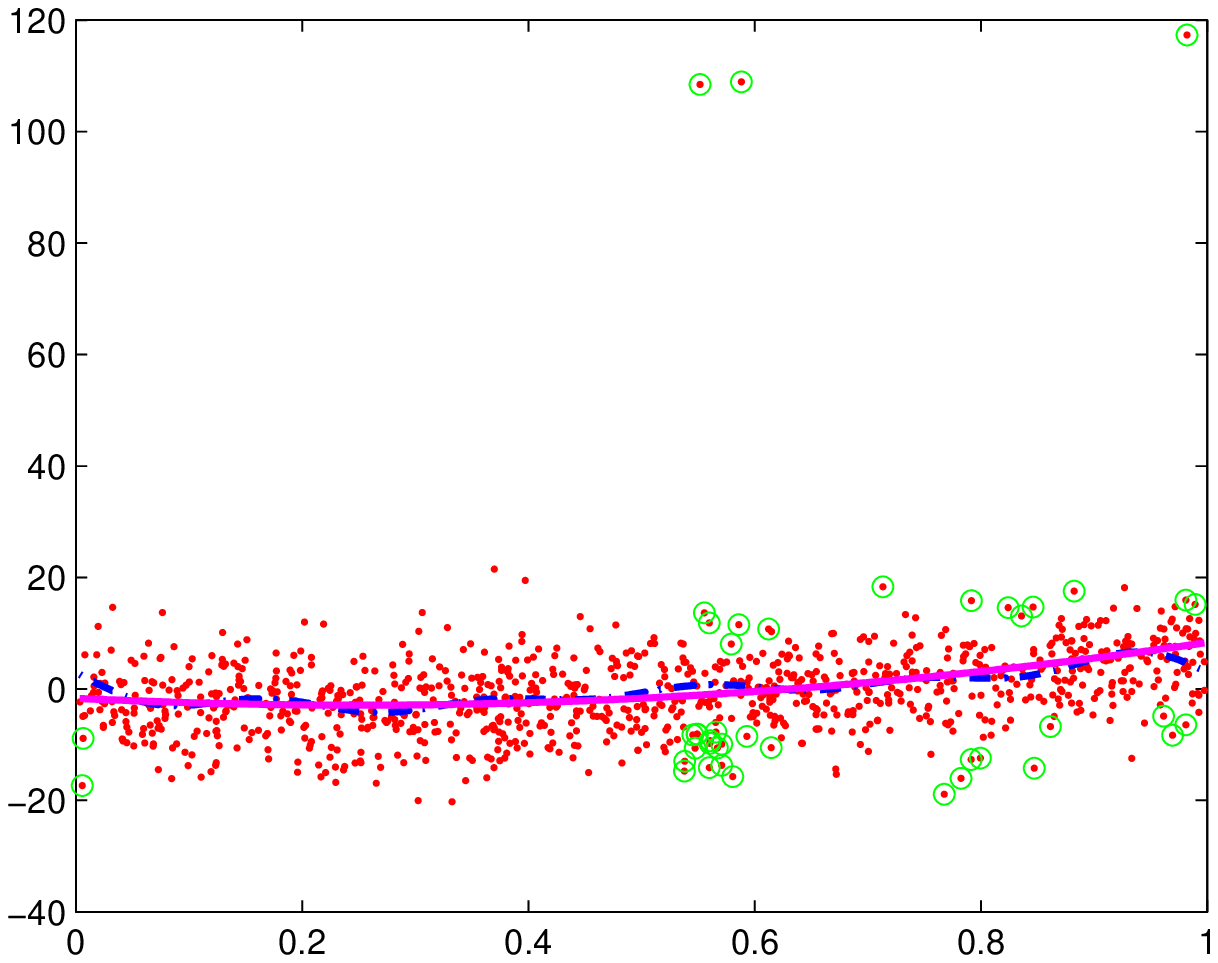}
   \end{minipage}
   \vspace*{-.2cm}\begin{minipage}{.49\linewidth}
      \hspace*{-0.8cm}\includegraphics[width=1.15\linewidth]{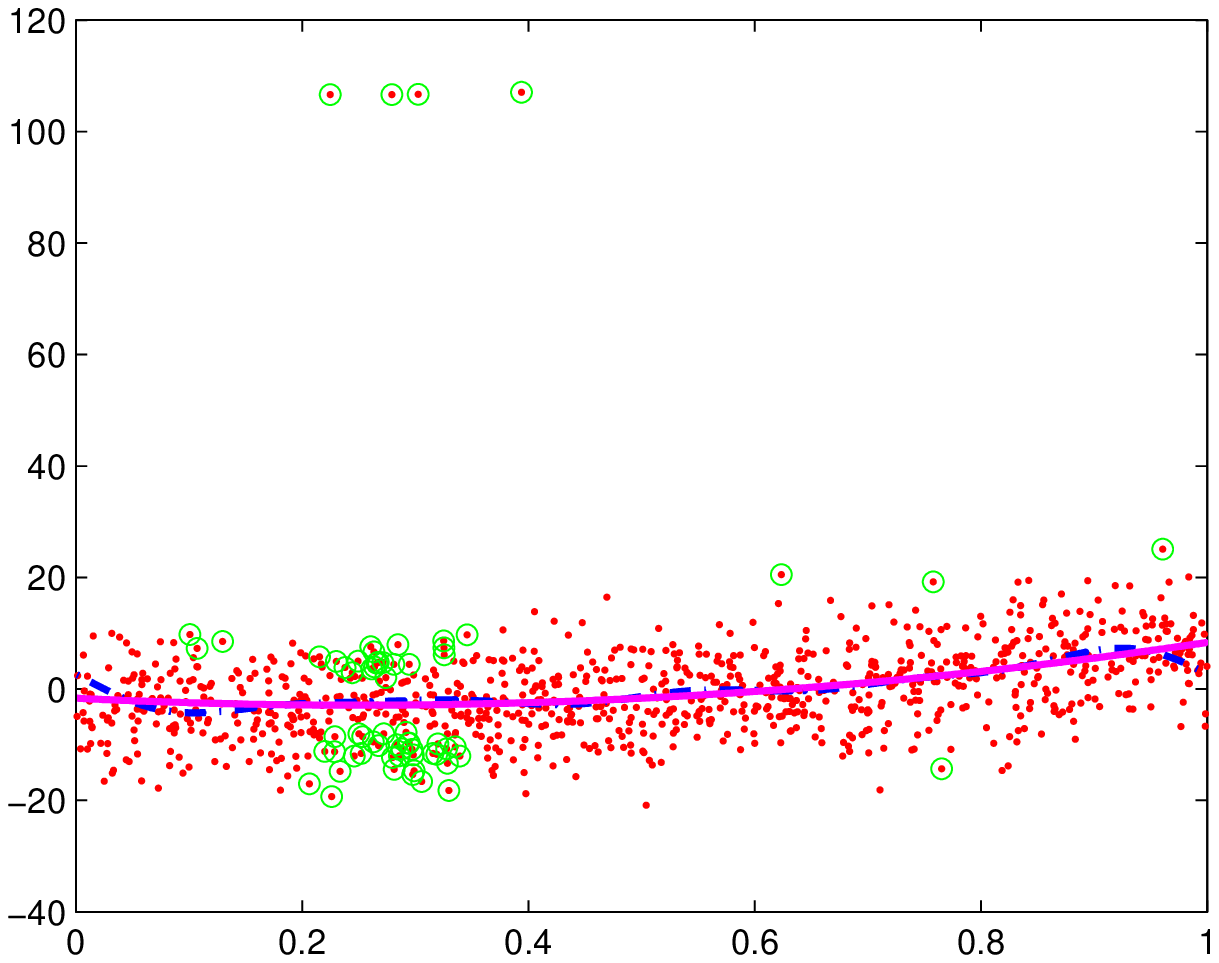}
   \end{minipage} 
   \hfill
   \begin{minipage}{0.49\linewidth}
      \hspace*{-0.4cm}\includegraphics[width=1.15\linewidth]{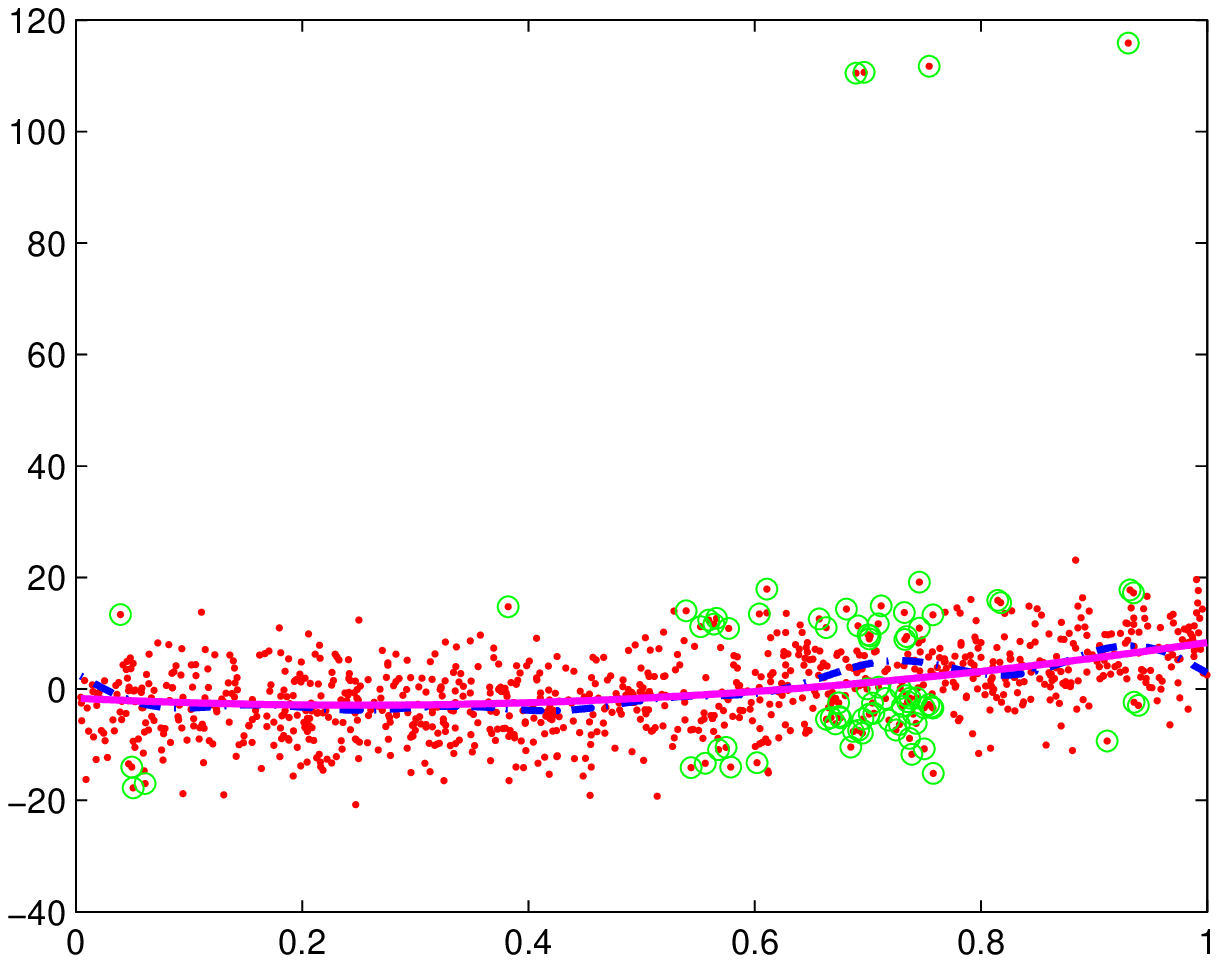}
   \end{minipage}
\end{center}
\end{figure}

\begin{figure}[!ht] 
\caption{Surrounding points are the points of the training set generated several times from $TS(200,2)$ (with the heavy-tailed noise) that are not taken into account in the min-max truncated estimator (to the extent that the estimator would not change by removing these points). 
The min-max truncated estimator $x\mapsto\hf(x)$ appears in dash-dot line, while $x\mapsto\E(Y|X=x)$ is in solid line. In these six simulations, it outperforms the ordinary least squares estimator. Note that in the last figure, it does not consider $64$ points among the $200$ training points.} \label{fig:2}
\begin{center}
   \vspace*{-.2cm}\begin{minipage}{.49\linewidth}
      \hspace*{-0.8cm}\includegraphics[width=1.15\linewidth]{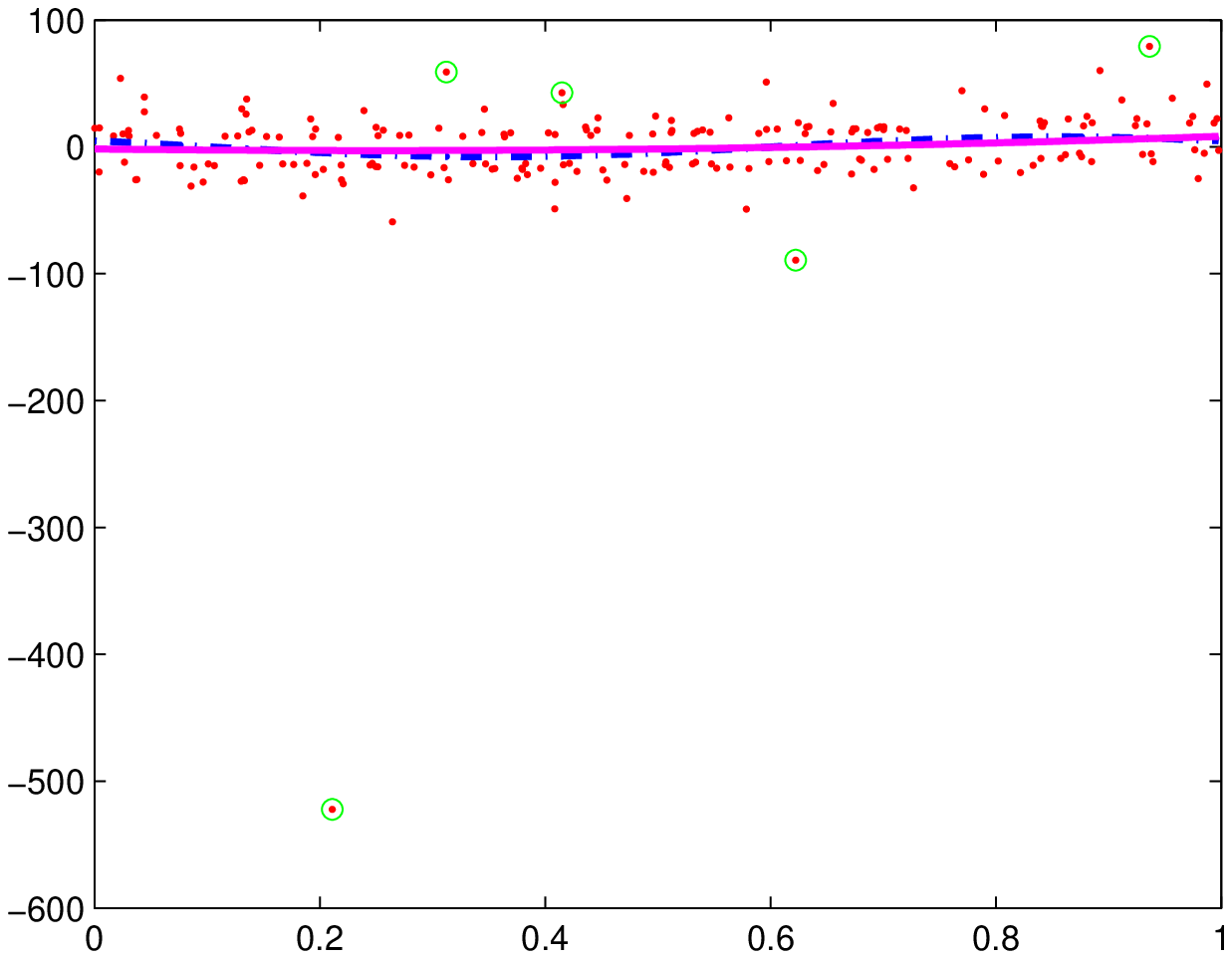}
   \end{minipage} 
   \hfill
   \begin{minipage}{0.49\linewidth}
      \hspace*{-0.4cm}\includegraphics[width=1.15\linewidth]{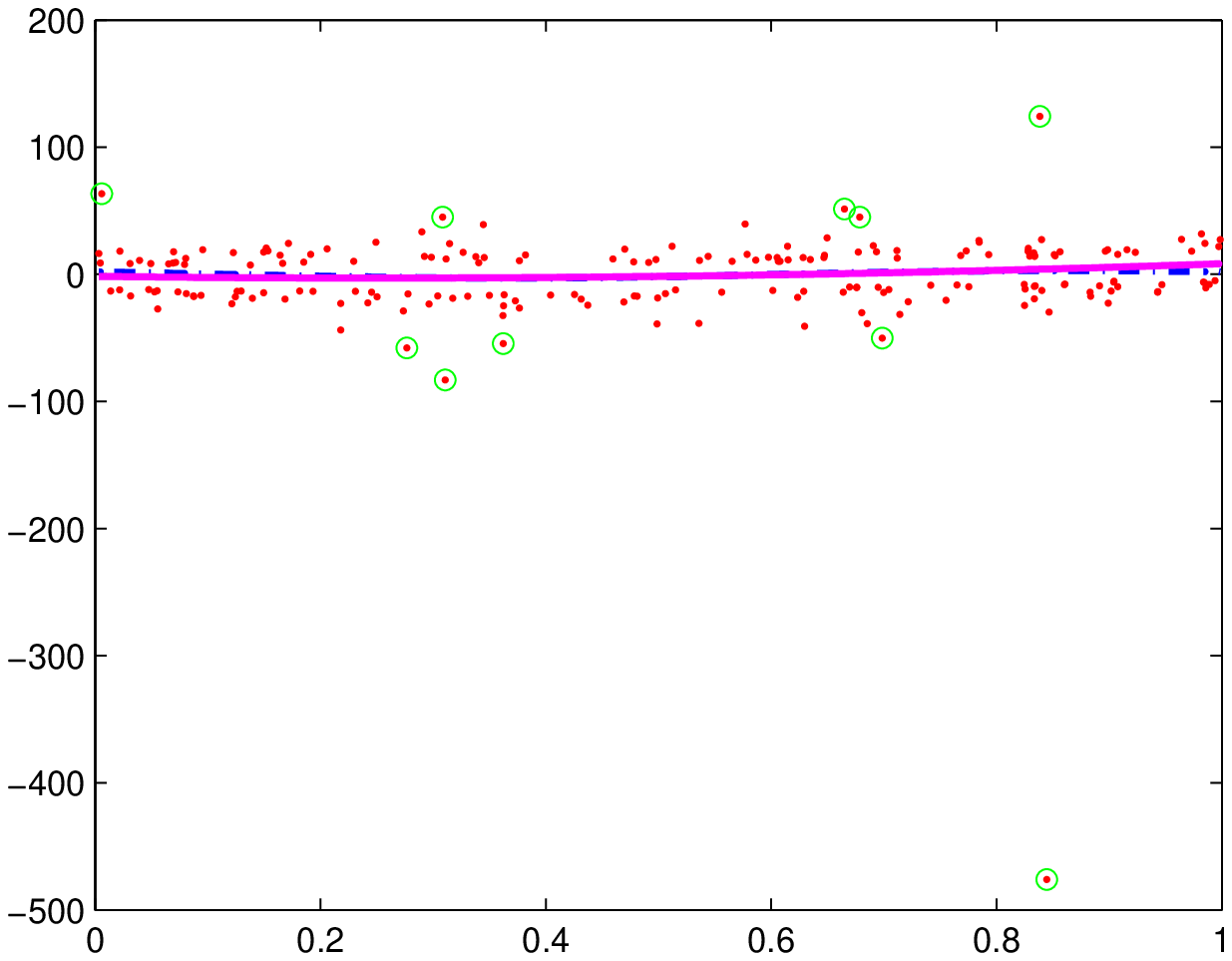}
   \end{minipage}
   \vspace*{-.2cm}\begin{minipage}{.49\linewidth}
      \hspace*{-0.8cm}\includegraphics[width=1.15\linewidth]{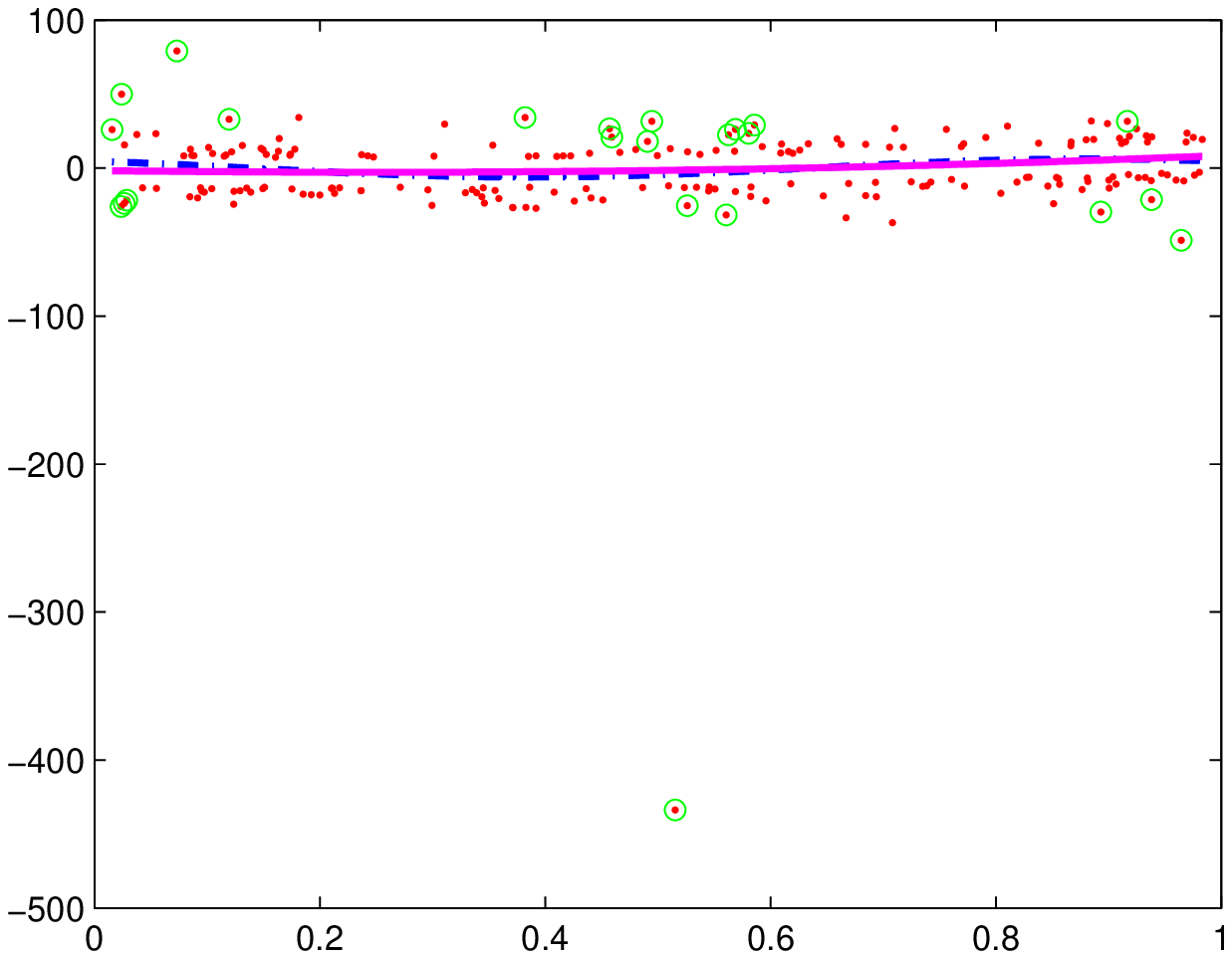}
   \end{minipage} 
   \hfill
   \begin{minipage}{0.49\linewidth}
      \hspace*{-0.4cm}\includegraphics[width=1.15\linewidth]{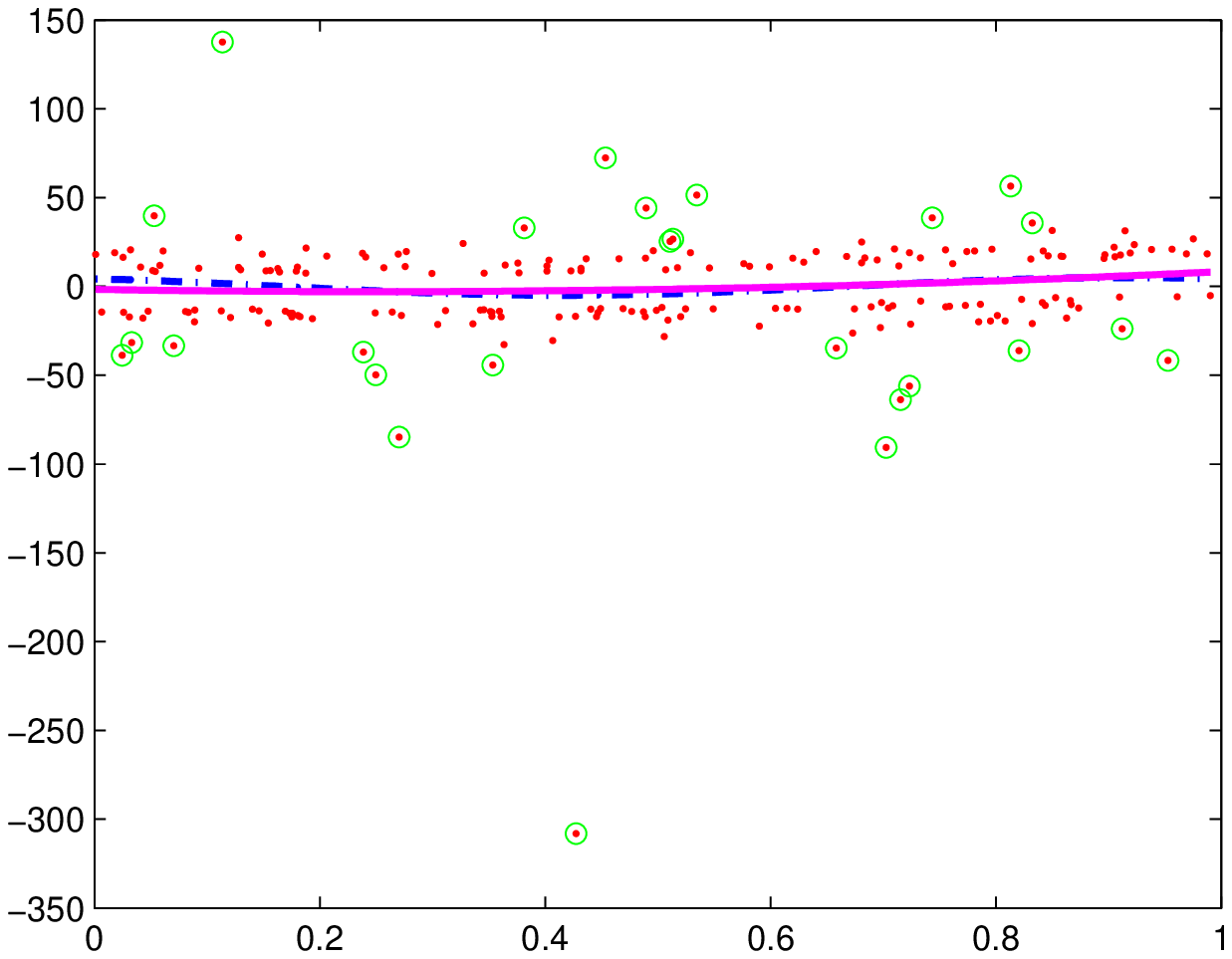}
   \end{minipage}
   \vspace*{-.2cm}\begin{minipage}{.49\linewidth}
      \hspace*{-0.8cm}\includegraphics[width=1.15\linewidth]{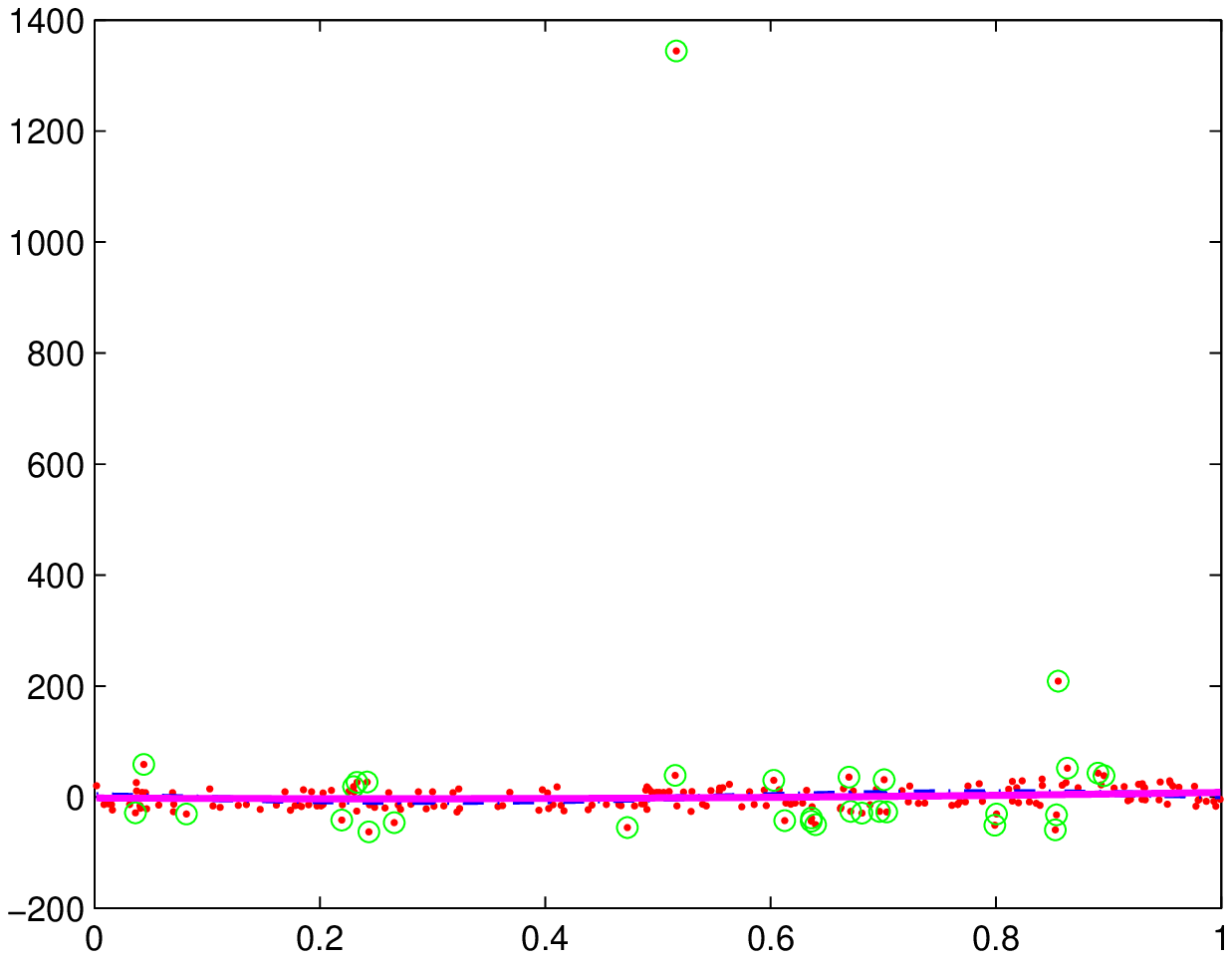}
   \end{minipage} 
   \hfill
   \begin{minipage}{0.49\linewidth}
      \hspace*{-0.4cm}\includegraphics[width=1.15\linewidth]{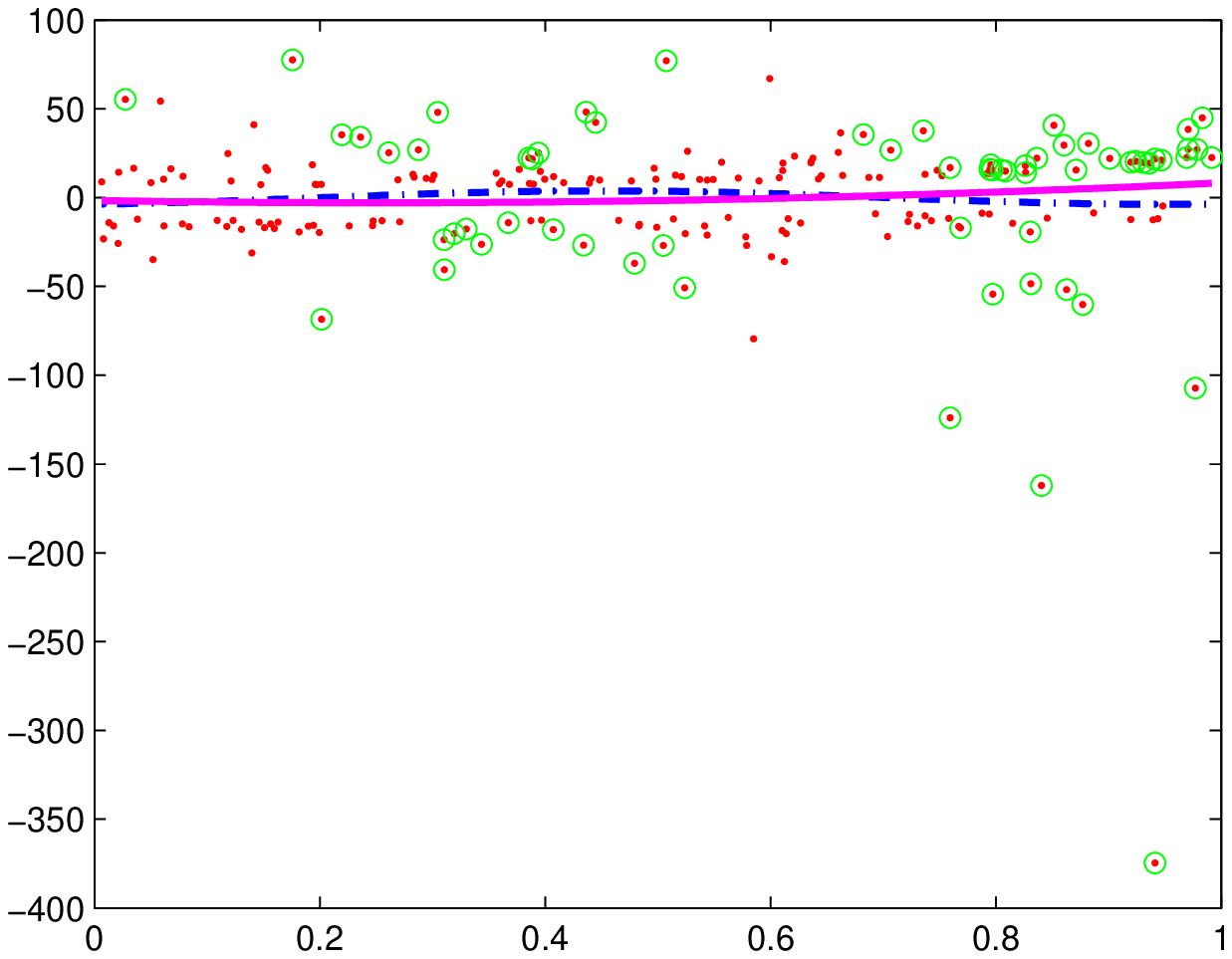}
   \end{minipage}
\end{center}
\end{figure}

\bibliographystyle{plain}
\bibliography{ref.bib}
\end{document}